\newtheorem{mainthm}{Theorem}[]\newtheorem{maincor}[mainthm]{Corollary}
\newtheorem{thm}{Theorem}[section]
\newtheorem*{problem}{Problem}
\newtheorem*{thm*}{Theorem} 
\newtheorem{cor}[thm]{Corollary}
\newtheorem{lem}[thm]{Lemma}
\newtheorem{prop}[thm]{Proposition}
\newtheorem{sublem}[thm]{Sublemma}
\theoremstyle{definition}\newtheorem{stp}{Step}\newtheorem{ste}{Step}
\newtheorem{step}{Step}\newtheorem{example}{Example}
\newtheorem{defin}[thm]{Definition}
\newtheorem*{rems*}{Remarks}
\theoremstyle{remark}
\newtheorem{rem}[thm]{Remark}
\newcommand{\N}{\mathbb{N}} 
\newcommand{\Q}{\mathbb{Q}}
\newcommand{\R}{\mathbb{R}}
\newcommand{\Z}{\mathbb{Z}}\newcommand{\pe}{\alpha}
\newcommand{\tg}{\tilde{g}}\newcommand{\gN}{\mathsf{N}} 
\newcommand{\Sph}{\mathbb{S}}
\newcommand{\Mx}{\mathop{\rm Mx}}\newcommand{\Mxr}{\mathrm {Mx} _r}
\newcommand{\Mxrh}{\mathrm {Mx} _\rho}
\newcommand{\Mxtrh}{\mathrm{Mx} _{2\rho}}
\newcommand{\conv}{^{_{^{^{G-H}}}}{\hspace*{-1.7em}\longrightarrow} }
\newcommand{\gA}{\mathsf{A}}
\newcommand{\tc}{\tilde{c}}
\newcommand{\gF}{\mathsf{F}}
\newcommand{\G}{\mathsf{G}}
\newcommand{\hG}{\hat{\mathsf{G}}}
 \newcommand{\gS}{\mathsf{S}}
\newcommand{\gH}{\mathsf{H}}
\newcommand{\ty}{\tilde{y}}
\newcommand{\tdelta}{\tilde{\delta}}
\newcommand{\fN}{\mathbb{N}}
\DeclareMathOperator{\GL}{GL}
\DeclareMathOperator{\Ric}{Ric}
\DeclareMathOperator{\dt}{dt}
\DeclareMathOperator{\Or}{O}\DeclareMathOperator{\Iso}{Iso}
\newcommand{\gT}{\mathsf{T}}
\newcommand{\hgN}{\hat{\mathsf{N}}}
\newcommand{\gL}{\mathsf{L}}
\newcommand{\gK}{\mathsf{K}}
\DeclareMathOperator{\vol}{vol}
\DeclareMathOperator{\pr}{pr}
\DeclareMathOperator{\id}{id} 
\DeclareMathOperator{\Tor}{Tor}
\DeclareMathOperator{\im}{im}
 \DeclareMathOperator{\diam}{diam}
\DeclareMathOperator{\rank}{rank}
\newcommand{\ml}{\langle}                     
\newcommand{\mr}{\rangle}                     
\newcommand{\tM}{\tilde{M}}
\newcommand{\tf}{\tilde{f}}
\newcommand{\eps}{\varepsilon}\newcommand{\beps}{\bar{\varepsilon}}
\newcommand{\hg}{\hat{g}}\newcommand{\hgL}{\hat{\mathsf{L}}}
\newcommand{\tphi}{\tilde{\phi}}
\newcommand{\bsigma}{\bar{\sigma}}
\newcommand{\tiota}{\tilde{\iota}}
\newcommand{\ta}{\tilde{a}}
\newcommand{\tb}{\tilde{b}}\newcommand{\teps}{\widetilde{\varepsilon}}
\newcommand{\tK}{\widetilde{K}}\newcommand{\tD}{\widetilde{D}}
\newcommand{\ene}{\end{equation} }
\newcommand{\Out}{\mathrm{Out}}
\newcommand{\ba}{\begin{eqnarray}}
\newcommand{\ea}{\end{eqnarray}}
\newcommand{\ban}{\begin{eqnarray*}}
\newcommand{\ean}{\end{eqnarray*}}
\newcommand{\lG}{\mathsf{G}}
\newcommand{\hGamma}{\hat{\Gamma}}
\newcommand{\Ll}{\mathfrak{l}}
\newcommand{\tp}{\tilde{p}}
\newcommand{\tq}{\tilde{q}}
\newcommand{\Aut}{\mbox{\rm Aut}}
\newcommand{\Hess}{\mathrm{Hess}}
\newcommand{\fint}{{-}\hspace*{-1.05em}\int}
\newcommand{\sfint}{{-}\hspace*{-0.90em}\int}
\def\co{\colon\thinspace}
\def\G{\Gamma}
\def\b{\beta}
\def\a{\alpha}
\def\g{\gamma}\def\tp{\tilde{p}}
\newcommand{\tx}{\tilde{x}}
\newcommand{\tX}{\tilde{X}}
\newcommand{\hM}{\hat{M}}\newcommand{\hY}{\hat{Y}}
\newcommand{\hp}{\hat{p}}
\newcommand{\tN}{\tilde{N}}
\newcommand{\bC}{\bar{C}}
\newcommand{\bGamma}{\bar{\Gamma}}
\def\GH{\textrm{G-H}}
\newtheorem{remark}[thm]{Remark}
\newenvironment{rmk}{\begin{remark}\rm}{\end{remark}}
\newtheorem{Fact}[thm]{Fact}
\newtheorem{Nothing}[thm]{$\!\!\!$}
\begin{document}
\abovedisplayskip=6pt plus3pt minus3pt \belowdisplayskip=6pt
plus3pt minus3pt
\title[Structure of fundamental groups]{Structure of fundamental groups of manifolds with Ricci curvature bounded below}

\thanks{\it 2000 Mathematics Subject classification.\rm\ Primary
53C20. Keywords: almost nonnegative Ricci curvature, Margulis Lemma}\rm
\thanks{\it The first author was supported in part by a Discovery grant from NSERC}
\author{ Vitali Kapovitch and Burkhard Wilking}
\maketitle

\setcounter{page}{1} \setcounter{tocdepth}{0}


The main result of this paper is the following theorem which settles  a conjecture of Gromov. 
\begin{mainthm}[Generalized Margulis Lemma]\label{thm: intro margulis}\label{intro: margulis}
In each dimension $n$ there  are positive constants $C(n)$ and $\eps(n)$ 
such that the following holds for any complete $n$-dimensional Riemannian manifold $(M,g)$ 
with $\Ric>-(n-1)$ on a metric ball $B_1(p)\subset M$.
The image of the natural homomorphism 
\[ \pi_1\bigl(B_{\eps}(p),p\bigr)\rightarrow \pi_1\bigl(B_1(p),p\bigr)\] 
contains a nilpotent subgroup $\gN$ of index $\le C(n)$. 
Moreover, $\gN$ has a nilpotent basis of length at most $n$.
\end{mainthm}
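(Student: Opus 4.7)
The plan is a contradiction argument combined with equivariant Gromov--Hausdorff compactness and an induction on dimension. Assume the theorem fails in dimension $n$: then there exist pointed manifolds $(M_i,p_i)$ with $\Ric\ge -(n-1)$ on $B_1(p_i)$ and radii $\eps_i\to 0$ such that the image $\Gamma_i:=\mathrm{Im}\bigl(\pi_1(B_{\eps_i}(p_i))\to \pi_1(B_1(p_i))\bigr)$ contains no nilpotent subgroup of index $\le C_i\to\infty$ with nilpotent length $\le n$. Passing to a subsequence, $(M_i,p_i)$ converges in the pointed Gromov--Hausdorff topology to a Ricci-limit space $(X,p_\infty)$ in the sense of Cheeger--Colding; to exploit information about $\Gamma_i$ I would simultaneously lift to local universal covers $\tilde B_{1/2}(p_i)$ (equipped with their pseudo-group actions), equivariantly extract a closed limit subgroup $G\le \Iso(\tilde X)$, and aim to derive a contradiction from the structure of $G$.

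The structural heart of the argument is then to show that $G$ is a Lie group and that its identity component $G_0$ is nilpotent of length at most $n$. For the Lie property I would appeal to a no-small-subgroups theorem for isometry groups of Ricci-limit spaces, using the Colding--Naber H\"older regularity of geodesics through almost-regular points to rule out arbitrarily small loops in $G$. To see that $G_0$ is nilpotent I would use the Cheeger--Colding almost splitting theorem: after passing to a suitable blow-up, each nontrivial one-parameter subgroup of $G_0$ produces an $\R$-factor in the limit space, and since the number of independent splitting factors is bounded by $n$, induction on dimension forces the derived series of $G_0$ to terminate in at most $n$ steps. The component group $\pi_0(G)$ is then the target of the residual part of $\Gamma_i$ after quotienting by the subgroup generated by very short loops, and must be analyzed separately.

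To close the induction one needs a scale-selection principle: I would choose $\eps_i$ to be the largest scale at which the conclusion fails, so that on slightly larger balls the induction hypothesis provides a bounded-index nilpotent subgroup, and rescaling from that larger scale down to $\eps_i$ yields, via almost splitting, an extra Euclidean factor in the equivariant limit. The main obstacle is the quantitative propagation of the bounded-index property from the limit group $G$ back down to the discrete groups $\Gamma_i$. This requires a gap lemma of the form: if $[\Gamma_i:\gN]$ is very large, then the equivariant limit acquires an additional splitting direction; a phenomenon that can occur at most $n$ times by dimension. Turning this heuristic into the uniform constants $C(n)$ and $\eps(n)$ is the technically hardest part, where Cheeger--Colding's segment inequality, volume convergence, and the equivariant compactness of isometric actions with lower Ricci bounds must be combined carefully with the inductive hypothesis applied at a slightly coarser scale.
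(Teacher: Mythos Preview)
Your outline has the right overall shape—contradiction, equivariant convergence, induction on a dimension parameter—but there is a genuine gap at the structural core, and the mechanism you propose for the inductive step does not match what actually works.

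First, the claim that $G_0$ is nilpotent of length $\le n$ because ``each nontrivial one-parameter subgroup of $G_0$ produces an $\R$-factor'' is not correct. A one-parameter subgroup of $\Iso(\tilde X)$ produces a line (hence a splitting factor) only when it acts by translations; rotational one-parameter subgroups do not, and the limit of free actions need not be free. So nothing prevents $G_0$ from containing a compact factor such as $\mathrm{SO}(3)$, and your derived-series argument breaks down. The paper never tries to prove that the limit group is nilpotent; indeed it explicitly avoids any deep structure theory of the limit isometry group for this theorem, using only the elementary Gap Lemma~\ref{lem: gap} (local path-connectedness of generic orbits). The Colding--Naber Lie-group result is invoked only later, for Theorem~\ref{normal margulis}, not for the Margulis Lemma itself.

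Second, you correctly identify the real obstacle—transferring bounded index from the limit back to $\Gamma_i$—but your gap-lemma heuristic is too vague to close it. The paper's mechanism is quite different and is the main technical contribution: the Rescaling Theorem~\ref{thm: rescale} produces, after a blow-up, diffeomorphisms with the \emph{zooming in property} (Section~\ref{sec: zoom}) that are close to isometries on \emph{all} scales and lift to the universal cover. These are used to modify long deck transformations so that they still converge to isometries after rescaling, which lets one keep track of how long loops act by conjugation on short ones. This extra data—$k$ diffeomorphisms $f_i^j$ normalizing the deck-transformation group—is carried through a reverse induction on the Euclidean dimension $k$ of the limit (the Induction Theorem~\ref{thm: induction}), and it is precisely this bookkeeping that produces the nilpotent chain of length $\le n$ with a uniform index bound. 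Your proposal contains no analogue of this device, and without it the induction cannot be made to close.
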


We call a generator system $b_1,\ldots,b_n$ of a group 
$\gN$ a nilpotent basis if 
the commutator $[b_i,b_j]$ is contained in the subgroup $\ml b_1,\ldots,b_{i-1}\mr $
for  $1\le i<j\le n$.
Having a nilpotent basis of length $n$ implies in particular $\rank(\gN)\le n$. 
We will also show that equality in this inequality can only occur 
if $M$ is homeomorphic to an infranilmanifold, see Corollary~\ref{cor: rank n}.

In the case of a sectional curvature bound 
the theorem is due to Kapovitch, Petrunin and Tuschmann~\cite{KPT},
based on an earlier version which was proved by Fukaya and Yamaguchi~\cite{FY}.

In the case of Ricci curvature a weaker form of the theorem was 
stated by Cheeger and Colding ~\cite{CC}. The main difference is that 
-- similar to Fukaya and Yamaguchi's theorem --
no uniform bound on the index of the subgroup is provided. 
Cheeger and Colding  never wrote up the details of the proof,
and relied on some claims in \cite{FY}
stating that their results would carry over from 
lower sectional curvature bounds to lower Ricci curvature bounds if 
certain structure results would be obtained.
But compare also
Remark~\ref{rem: checol case} below.

\begin{maincor}\label{intro: almost nonneg} Let $(M,g)$ be a compact manifold with $\Ric>-(n-1)$ 
and $\diam(M)\le \eps(n)$ then $\pi_1(M)$ contains a nilpotent subgroup
$\gN$ of index $\le C(n)$. Moreover, $\gN$ has a nilpotent basis of
length $\le n$. 
\end{maincor}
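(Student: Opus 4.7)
The plan is to deduce this corollary as a direct consequence of the main theorem applied at unit scale, since a small-diameter compact manifold looks, from the viewpoint of the main theorem, like its own metric unit ball around any point. The minor technicality is just making the constants match up, and this is essentially the only thing to check.

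First I would fix an arbitrary basepoint $p\in M$ and pass from the constant $\eps(n)$ coming from Theorem~\ref{thm: intro margulis} to a slightly smaller constant $\eps'(n):=\min\bigl\{\eps(n)/2,\,1/2\bigr\}$ used in the corollary; replacing $\eps(n)$ by $\eps'(n)$ only strengthens the hypothesis and loses nothing in the conclusion. Since $\diam(M,g)\le \eps'(n)\le 1/2<1$, every point of $M$ lies within distance less than $1$ of $p$, so
\[
B_1(p)\;=\;M.
\]
Consequently the hypothesis $\Ric>-(n-1)$ on $B_1(p)$ of Theorem~\ref{thm: intro margulis} is automatically fulfilled, and we may apply the main theorem with this basepoint.

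Next I would invoke Theorem~\ref{thm: intro margulis} directly. It produces a nilpotent subgroup $\gN$ of index at most $C(n)$, with a nilpotent basis of length at most $n$, sitting inside the image of the inclusion-induced homomorphism
\[
\pi_1\bigl(B_{\eps(n)}(p),p\bigr)\longrightarrow \pi_1\bigl(B_1(p),p\bigr)\;=\;\pi_1(M,p).
\]
Because $\diam(M)\le \eps'(n)<\eps(n)$, every $q\in M$ satisfies $d(p,q)<\eps(n)$, so $B_{\eps(n)}(p)=M$. The homomorphism above is therefore the identity, and its image is all of $\pi_1(M,p)$. Hence $\gN\subseteq \pi_1(M,p)$ is the required subgroup, with all stated properties inherited verbatim from the main theorem.

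There is no real obstacle here: the corollary is essentially a compact-manifold repackaging of the main theorem, and the entire content is in Theorem~\ref{thm: intro margulis}. The only thing one has to be mildly careful about is the convention on open versus closed balls, which is why I would build in a factor of $1/2$ when defining the constant $\eps'(n)$ used in the statement of the corollary.
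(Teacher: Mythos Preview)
Your argument is correct: once Theorem~\ref{thm: intro margulis} is available, Corollary~\ref{intro: almost nonneg} follows exactly as you say, since for $\diam(M)<\eps(n)<1$ both $B_{\eps(n)}(p)$ and $B_1(p)$ coincide with $M$ and the inclusion homomorphism is the identity.

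The paper, however, does not derive the corollary from Theorem~\ref{thm: intro margulis}; in its logical ordering the corollary is obtained \emph{earlier}, as an immediate consequence of the Induction Theorem~\ref{thm: induction} (see the opening of Section~\ref{sec: induction}). One argues by contradiction: a violating sequence $(M_i,g_i)$ with $\diam(M_i)\to 0$ converges to a point, i.e.\ to $\R^0\times K$ with $K=\{pt\}$, and the Induction Theorem with $k=0$ (and no auxiliary diffeomorphisms $f_i^j$) already gives the nilpotent subgroup of bounded index. Theorem~\ref{thm: intro margulis} is then proved afterwards in Section~\ref{sec: margulis}, also via the Induction Theorem but with additional work (Step~2 there). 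So your route is logically heavier---it invokes the full Margulis Lemma, whose proof requires more than what Corollary~\ref{intro: almost nonneg} actually needs---but it is cleaner to state and is the natural reading of the introduction, where the corollary is placed right after Theorem~\ref{thm: intro margulis}.
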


One of the tools used to prove these results is

\begin{mainthm}\label{intro: finite generation}
Given $n$ and $D$ there exists $C$ such that for any $n$-manifold 
with $\Ric\ge -(n-1)$ and $\diam(M,g)\le D$, the fundamental group
$\pi_1(M)$ can be generated by at most $C$ elements. 
\end{mainthm}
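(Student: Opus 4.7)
The plan is to apply Gromov's short-basis construction to $\pi_1(M,p)$ in the universal cover $\tM$. Fix a lift $\tp$ of $p$, set $|g|:=d(\tp,g\tp)$, and define short generators $g_1,g_2,\ldots$ inductively so that $g_k$ achieves the minimal displacement among elements of $\pi_1(M)\setminus\ml g_1,\ldots,g_{k-1}\mr$; by construction these generate $\pi_1(M)$. Since $\diam(M)\le D$, a standard subdivision argument on minimizing geodesics in $\tM$ shows every $\gamma\in\pi_1(M)$ is a product of elements of displacement at most $2D$, so in particular $|g_k|\le 2D$ for every $k$. The defining minimality of short generators forces the separation inequality $d(g_i\tp,g_j\tp)\ge |g_{\max(i,j)}|$, since otherwise $g_i^{-1}g_j$ would be a shorter element outside $\ml g_1,\ldots,g_{\max(i,j)-1}\mr$, contradicting the choice of $g_{\max(i,j)}$.

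From the separation inequality the balls $B_{|g_k|/2}(g_k\tp)$ are pairwise disjoint (since $|g_j|\ge|g_i|$ for $j\ge i$ yields $d(g_i\tp,g_j\tp)\ge |g_i|/2+|g_j|/2$) and they all sit inside $B_{3D}(\tp)$. Because each such ball is an isometric translate of $B_{|g_k|/2}(\tp)$ by a deck transformation, Bishop--Gromov volume comparison applied in $\tM$ (which inherits $\Ric\ge -(n-1)$) yields
\[
\sum_{k=1}^{N} V^{n}_{-1}(|g_k|/2)\;\le\; V^{n}_{-1}(3D),
\]
where $V^{n}_{-1}(r)$ denotes the volume of a radius-$r$ geodesic ball in the simply connected space form of curvature $-1$. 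In the ``non-collapsed'' case where $|g_k|\ge\eps_0$ for all $k$, this immediately produces $N\le V^{n}_{-1}(3D)/V^{n}_{-1}(\eps_0/2)$, a bound depending only on $n$, $D$, and $\eps_0$.

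The serious difficulty is the collapsing case, when the displacements $|g_k|$ may accumulate at $0$. I would argue by contradiction and induction on $n$. Suppose a sequence $(M_i,p_i)$ obeying the hypotheses has minimal generator number $N_i\to\infty$, and pass to an equivariant pointed Gromov--Hausdorff limit $(\tM_i,\tp_i,\Gamma_i)\to(Y,y_\infty,\lG)$, where $\lG\le\Iso(Y)$ is closed and $Y/\lG$ has diameter $\le D$. Choose a cutoff scale $\eps_i\to 0$, decaying slowly enough that the short generators of length $\ge\eps_i$ (the \emph{macroscopic} ones) are bounded in number by the volume estimate above, yet fast enough that the remaining \emph{microscopic} short generators accumulate in a closed subgroup $\lG^{\circ}\le\lG$ which, by the Cheeger--Colding almost splitting theorem and the structure theory of equivariant Ricci limits, is a connected nilpotent Lie subgroup whose dimension is bounded by the dimension drop $n-\dim Y$. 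The count then reduces to generators of $\lG/\lG^{\circ}$ acting on $Y/\lG^{\circ}$, a space of strictly smaller effective dimension, where induction on $n$ closes the argument.

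The hardest step is this last one: showing that the microscopic short generators actually concentrate into a genuine connected Lie subgroup of the limit, and extracting a quantitative dimension drop so that the induction on $n$ applies. This forces one to lean on the deep structure theory of equivariant Ricci limits, and the choice of cutoff scale $\eps_i$ is delicate --- too large a cutoff loses too many generators when passing to the limit, while too small a cutoff fails to produce the required connected subgroup. The macroscopic half of the argument, by contrast, is essentially just Bishop--Gromov packing.
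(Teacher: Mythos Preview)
Your macroscopic argument via Bishop--Gromov packing is correct and matches how the paper bounds short generators of length bounded away from zero. The contradiction-plus-induction framework is also the right shape. But the collapsing step, which you correctly flag as the hard part, does not close as written.

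The specific problem is the proposed induction. You want to pass to $\lG/\lG^{\circ}$ acting on $Y/\lG^{\circ}$ and invoke ``induction on $n$.'' But $Y/\lG^{\circ}$ is a limit space, not a Riemannian $n'$-manifold with a Ricci lower bound, so the induction hypothesis simply does not apply to it. Even if you had a generator bound for $\lG/\lG^{\circ}$, you would still need to transfer it back to a bound on generators of $\Gamma_i$, and your outline has no mechanism for this --- relating open subgroups of the limit group to subgroups in the sequence is genuinely delicate (the paper gives an explicit example of $\Z_l$-actions converging to a $\Z^2$-action to illustrate the pitfall). Separately, the claim that the microscopic generators accumulate in a \emph{connected} subgroup is not free from Cheeger--Colding splitting; it is essentially the content of the paper's Gap Lemma, proved for this purpose.

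The paper's route avoids both issues by never leaving the category of manifolds. The induction is \emph{reverse} induction on $\dim(X)$, where $X$ is the limit of the base manifolds $(M_i,p_i)$. Given a contradicting sequence, one first blows up at a regular point so the new sequence converges to $\R^k$. Then, using Cheeger--Colding's harmonic almost-splitting maps with $L^2$ Hessian control and a Product Lemma, one locates points where \emph{every} further blowup splits off an $\R^k$ factor. A gap argument on short-generator lengths then selects a scale $\lambda_i\to\infty$ at which $(\lambda_i M_i,q_i)$ converges to $\R^k\times Z$ with $Z$ compact and nontrivial --- hence a contradicting sequence of \emph{manifolds} converging to a strictly higher-dimensional limit. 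Since the limit dimension cannot exceed $n$, the reverse induction terminates. The essential difference from your proposal is that each step manufactures a new sequence of manifolds with the same curvature bound, so the argument is self-contained and never has to reason about actions on singular quotient spaces.
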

This estimate was previously proven by Gromov \cite{G1} under the stronger assumption of a lower sectional curvature bound $K\ge -1$.
Recall that a conjecture of Milnor states that the fundamental group 
of an open manifold with nonnegative Ricci curvature 
is finitely generated. Although Theorem~\ref{intro: finite generation}
is far from a solution to that problem, the  
Margulis Lemma immediately implies the following.

\begin{maincor}\label{cor: open} Let $(M,g)$ be an open $n$-manifold with 
nonnegative Ricci curvature. Then 
$\pi_1(M)$ contains a nilpotent subgroup $\gN$ 
of index $\le C(n)$ such that any finitely 
generated subgroup of $\gN $ has a nilpotent basis of length 
$\le n$.
\end{maincor}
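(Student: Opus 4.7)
\emph{Proof proposal.} The strategy is to apply Theorem~\ref{intro: margulis} at every scale by rescaling the metric, and then to patch the resulting local nilpotent subgroups into a single global one via a diagonal pigeonhole argument on an exhaustion of $\pi_1(M,p)$ by finitely generated subgroups.

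Fix $p\in M$ and a lift $\tilde p$ to the universal cover $\tilde M$. For any $R>0$, the rescaled metric $R^{-2}g$ still satisfies $\Ric\ge 0>-(n-1)$ and has $B_1^{R^{-2}g}(p)=B_R^g(p)$. Theorem~\ref{intro: margulis} applied to $(M,R^{-2}g)$ therefore produces a nilpotent subgroup $\gN_R$, of index $\le C(n)$ and with a nilpotent basis of length $\le n$, of the image of $\pi_1(B_{\eps R}^g(p),p)\to\pi_1(B_R^g(p),p)$. Composing with the map into $\pi_1(M,p)$ gives $\gN_R\le \tilde G_R:=\im(\pi_1(B_{\eps R}(p))\to\pi_1(M,p))$ with $[\tilde G_R:\gN_R]\le C(n)$. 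Let $\Gamma_R\le\pi_1(M,p)$ be the subgroup generated by $\{\gamma:d(\tilde p,\gamma\tilde p)\le 2\eps R\}$. Since the action on $\tilde M$ is properly discontinuous and $\bar B_{2\eps R}(\tilde p)$ is compact (Hopf-Rinow), this generating set is finite, so $\Gamma_R$ is finitely generated. Projecting a minimizing geodesic from $\tilde p$ to $\gamma\tilde p$ gives a loop of length $\le 2\eps R$ contained in $B_{\eps R}(p)$, so $\Gamma_R\subset\tilde G_R$; and every element of $\pi_1(M)$ has finite displacement, whence $\bigcup_R \Gamma_R=\pi_1(M)$.

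Now pick $R_i\to\infty$ and, for $j\ge i$, set $H_i^{(j)}:=\Gamma_{R_i}\cap\gN_{R_j}$. This is a nilpotent subgroup of $\Gamma_{R_i}$ of index $\le C(n)$ which inherits a nilpotent basis of length $\le n$ from $\gN_{R_j}$ by the structure theory of nilpotent groups admitting a Malcev-type basis. As $\Gamma_{R_i}$ is finitely generated, it has only finitely many index-$\le C(n)$ subgroups, so a standard Cantor diagonal extraction arranges that, for each $i$, $H_i^{(j)}$ is eventually constant in $j$; call this value $\gN^*_i$. Then $\gN^*_i=\Gamma_{R_i}\cap\gN^*_{i+1}$, so $\gN^*_i\subset\gN^*_{i+1}$, and $\gN:=\bigcup_i\gN^*_i$ is a nilpotent subgroup of $\pi_1(M)$ every finitely generated subgroup of which lies in some $\gN^*_i$ and therefore carries a nilpotent basis of length $\le n$. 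Finally, if $[\pi_1(M):\gN]>C(n)$, then $C(n)+1$ elements in distinct $\gN$-cosets all lie in a common $\Gamma_{R_i}$, contradicting $[\Gamma_{R_i}:\gN^*_i]\le C(n)$.

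The main obstacle is producing a single globally defined finite-index nilpotent subgroup out of the family $(\gN_R)$, which need not be nested or determined by any canonical rule. The pigeonhole on the finitely many bounded-index subgroups of the finitely generated groups $\Gamma_{R_i}$ is exactly what makes the choices coherent as $R\to\infty$. A secondary technical ingredient is the purely algebraic statement that a subgroup of a nilpotent group admitting a nilpotent basis of length $\le n$ again admits such a basis, which is used to pass from $\gN_{R_j}$ to $\gN^*_i$ and then to arbitrary finitely generated subgroups of $\gN$.
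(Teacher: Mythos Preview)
Your proposal is correct and is precisely the natural way to extract this corollary from Theorem~\ref{intro: margulis}. The paper itself does not give a proof; it simply states that the Margulis Lemma ``immediately implies'' Corollary~\ref{cor: open}. Your argument---rescaling to apply Theorem~\ref{intro: margulis} on $B_R(p)$ for every $R$, exhausting $\pi_1(M,p)$ by the finitely generated groups $\Gamma_R$, and then running a Cantor diagonal over the finitely many index-$\le C(n)$ subgroups of each $\Gamma_{R_i}$---is exactly the standard way to make this precise, and is surely what the authors intend.

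Two minor remarks, neither of which is a gap. First, your claim that a subgroup $H$ of a group $\gN$ with a nilpotent basis $b_1,\ldots,b_n$ again has one of length $\le n$ is correct, but the justification ``by the structure theory of nilpotent groups admitting a Malcev-type basis'' is vague; the direct argument is to set $\gN_i=\langle b_1,\ldots,b_i\rangle$ (which is normal in $\gN$ with $[\gN,\gN_i]\subset\gN_{i-1}$), intersect with $H$ to get a chain $H_i=H\cap\gN_i$ with cyclic quotients and $[H,H_i]\subset H_{i-1}$, and read off a basis from generators of the nontrivial quotients. Second, ``eventually constant in $j$'' should read ``constant along a subsequence'', which is of course what you then use.
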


Of course, by work of Milnor~\cite{Mil1} and Gromov~\cite{G6},  the corollary is well known 
(without uniform bound on the index) in the case of finitely 
generated fundamental groups.
We should mention that by work of Wei \cite{Wei} (and an extension in \cite{Wil1})
every finitely generated
virtually  nilpotent 
group appears as fundamental group of an open manifold with positive Ricci curvature 
in some dimension. Corollary~\ref{cor: open} also implies 
that the first $\Z_p$-Betti number of $M$ is finite for any prime $p$, 
which was previously only known for rational coefficients.

{The proofs of our results} are based on the structure results of Cheeger 
and Colding for limit spaces of manifolds with lower Ricci curvature bounds~\cite{Col3, Col4, CC, CC1, CC2, CC3}. 
This is also true for the proof of following new tool
which  can be considered as  a Ricci curvature replacement of Yamaguchi's 
fibration theorem as well as a replacement of the gradient
flow of semi-concave functions used by Kapovitch, Petrunin and Tuschmann. 

\begin{mainthm}[Rescaling theorem]\label{intro: rescale} Suppose a sequence of Riemannian  
$n$-manifolds
$(M_i,p_i)$ with $\Ric\ge -\tfrac{1}{i}$ converges in the pointed Gromov--Hausdorff topology 
to the Euclidean space $(\R^k,0)$ with $k<n$. 
Then after passing to a subsequence 
there is a subset $G_1(p_i)\subset B_1(p_i)$, a rescaling sequence $\lambda_i\to \infty$ 
and a compact metric space $K\neq \{pt\}$ such that
\begin{enumerate}
\item[$\bullet$]  $\vol(G_1(p_i))\ge(1-\tfrac{1}{i})\vol(B_1(p_i))$.
\item[$\bullet$] For all $x_i\in G_1(p_i)$ 
the isometry type of the limit of any convergent subsequence of 
$(\lambda_i M_i,x_i)$ is given by $K\times \R^k$.
\item[$\bullet$]
For all $x_i, y_i \in G_1(p_i)$ we can find a diffeomorphism 
$f_i\colon M_i\rightarrow M_i$ such that 
$f_i$ subconverges in the weakly measured Gromov--Hausdorff sense 
to an isometry of the Gromov--Hausdorff limits
\[
f_\infty\colon \lim_{\GH,i\to\infty} (\lambda_iM_i,x_i)\longrightarrow \lim_{\GH,i\to\infty}(\lambda_iM_i,y_i).
\]
\end{enumerate}
\end{mainthm}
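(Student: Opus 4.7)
The strategy combines Cheeger--Colding's almost splitting theorem with a measure-theoretic selection of scale and a harmonic approximation of the $\R^k$ coordinates. The central preliminary observation is that for any $\lambda_i\to\infty$ and any $x_i\in B_1(p_i)$, any pointed Gromov--Hausdorff sublimit of $(\lambda_i M_i,x_i)$ automatically splits off an $\R^k$ factor: since $(M_i,p_i)\to(\R^k,0)$, starting at $x_i$ one can produce $k$ almost-minimizing segments whose lengths tend to infinity and whose endpoints project to a basis of $\R^k$; after rescaling by $\lambda_i\to\infty$ these become bi-infinite lines in the blow-up, and the Cheeger--Colding almost splitting theorem forces the limit to be isometric to $Y\times\R^k$ for some pointed $(Y,y_\infty)$. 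The theorem then reduces to choosing $\lambda_i$ so that the compact factor $Y$ is both non-trivial and bounded, uniformly over a set of nearly full measure.

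For this, and for a small fixed $\eps>0$, let $\lambda_\eps(x)$ be the largest $\lambda$ for which the unit ball of $(\lambda M_i,x)$ is within GH-distance $\eps$ of the unit ball of $\R^k$. Such a scale exists: for small $\lambda$ the rescaled ball is close to a Euclidean $k$-ball (from the ambient convergence), while for large $\lambda$, $M_i$ being smooth Riemannian forces the rescaled ball to be close to a Euclidean $n$-ball and hence at definite GH-distance from any $k$-dimensional Euclidean ball. The splitting step above then produces $Y\times\R^k$ with $Y$ non-trivial (by the $\eps$-gap) and of bounded diameter (via Bishop--Gromov). A pigeonhole on the dyadic value of $\log\lambda_\eps$ yields a single dyadic window carrying at least $(1-1/i)\vol(B_1(p_i))$ of the volume; call this set $G_1(p_i)$ and the corresponding scale $\lambda_i$. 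Doubling together with continuity of tangent-cone structure under basepoint perturbation inside $G_1(p_i)$ then yield, after passing to a subsequence, a single isometry type $K\times\R^k$ for the limits of $(\lambda_i M_i,x_i)$, uniformly in $x_i\in G_1(p_i)$.

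To construct the diffeomorphisms $f_i$ I would use Cheeger--Colding's harmonic approximation of GH-maps. Produce $b_i=(b_i^1,\ldots,b_i^k)\colon B_{10}(p_i)\to\R^k$ harmonic and $W^{1,2}$-close to the coordinate functions of the $\R^k$ limit; the gradients $\nabla b_i^j$ are then almost parallel and almost $L^2$-orthonormal, so their combined gradient flow implements an approximate translation in the $\R^k$ factor of $K\times\R^k$. Given $x_i,y_i\in G_1(p_i)$ with GH-images $\bar x,\bar y\in\R^k$, let $f_i$ be the time-one map of the vector field $\sum_j(\bar y^j-\bar x^j)\nabla b_i^j$, cut off to the identity outside a neighborhood of a generic flow line. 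Then $f_i$ is a diffeomorphism whose induced limit is translation by $\bar y-\bar x$ on $\R^k$ and the identity on $K$, which is the desired isometry of $K\times\R^k$.

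The principal technical obstacle is the last step: upgrading the pointed GH-convergence of $f_i$ to the \emph{weakly measured} Gromov--Hausdorff sense demanded by the theorem. This requires integral Hessian bounds on the $b_i^j$ along a set of gradient trajectories of nearly full measure, obtained from Cheeger--Colding's segment inequality together with Poincar\'e-type controls, and it requires coordinating the set $G_1(p_i)$ (on which $\lambda_i$ is uniform) with the set on which the harmonic flow is well-controlled. Producing a single $G_1(p_i)$ that is simultaneously good for both purposes is the central difficulty, forcing the combination of several Cheeger--Colding estimates into one unified selection.
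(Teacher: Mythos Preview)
Your proposal has the right ingredients---harmonic $\R^k$-approximations, gradient flows for the diffeomorphisms---but two of the steps do not go through as written.

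\textbf{The splitting step is not valid at arbitrary points.} You claim that for \emph{any} $x_i\in B_1(p_i)$ and any $\lambda_i\to\infty$, a sublimit of $(\lambda_iM_i,x_i)$ splits off $\R^k$, because almost-minimizing segments through $x_i$ become lines. But the excess of such a segment is only $o(1)$ at the original scale; after rescaling by $\lambda_i$ the excess becomes $\lambda_i\cdot o(1)$, which is uncontrolled. So you do not get lines in the blow-up, and the splitting theorem does not apply. This is exactly why the paper restricts to a \emph{good set}: one defines $h_i=\sum_{j,l}|\langle\nabla b^i_j,\nabla b^i_l\rangle-\delta_{jl}|+\sum_j\|\Hess b^i_j\|^2$ and sets $G_1(p_i)=\{x:\Mx h_i(x)\le\eps_i^2\}$ using the Hardy--Littlewood maximal function. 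At such points the harmonic map $b^i$ is an $\eps_i$-approximation to $\R^k$ on \emph{every} subball, and the Product Lemma then gives the $\R^k$-splitting of any blow-up. The maximal function, not a line argument, is what transfers scale-$1$ information to all smaller scales.

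\textbf{The pigeonhole selection of $\lambda_i$ fails.} Pigeonhole over dyadic windows of $\log\lambda_\eps$ can at best produce a window carrying a $1/N$ fraction of the volume (with $N$ the number of windows); it cannot produce one carrying $(1-\tfrac1i)$ of the volume. The paper's mechanism is different: for each good $q$ let $\rho_i(q)$ be the largest radius at which the distortion of $b^i$ on $B_{\rho_i(q)}(q)$ equals $\rho_i(q)\cdot 10^{-n^2}$, and set $\lambda_i=1/\sup_{q\in G_1(p_i)}\rho_i(q)$. At this single scale every good point sees a compact factor of diameter $\le 10^{-n^2}$; that the factor is the \emph{same} $K$ for all good points is then deduced a posteriori from the existence of the diffeomorphisms $f_i$, not from any concentration of scales.

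Your idea for $f_i$---the time-one flow of $\sum_j(\bar y^j-\bar x^j)\nabla b^i_j$---matches the paper, and you correctly identify the difficulty: one must control $\int_0^1\Mx(\|\nabla X^t\|^{3/2})^{2/3}$ along a trajectory joining $x_i$ to $y_i$. The paper achieves this not by the segment inequality but by a multiscale induction (its Sublemma~\ref{lem:subl}): one covers $B_{Lr}(p)$ by balls of radius $r/2$, uses the single vector field above to flow each small ball to $B_r(p)$, and concatenates with the inductively constructed flows on $B_r(p)$. The $\Mx\Mx$-type estimate \eqref{eq:mmf1} converts the pointwise maximal-function bound at the good basepoint into the needed integral bound along the resulting piecewise trajectory. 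This is the substitute for the ``unified selection'' you describe, and it is the technical heart of the proof.
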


We will prove a technical generalization of this theorem
in section~\ref{sec: rescaling}. 
It will be important for the proof of the Margulis Lemma
that the diffeomorphisms $f_i$ are in a suitable 
sense close to isometries on all scales.
The precise concept is called zooming in property and is defined in 
section~\ref{sec: zoom}. There we also recall 
the concept of weakly measured Gromov--Hausdorff convergence of maps 
(Lemma~\ref{lem: iso}).

The diffeomorphisms $f_i$ will be composed out of gradient flows 
of harmonic functions arising in the analysis of Cheeger and Colding.
Their $L^2$-estimates on the Hessian's of these functions 
play a crucial role.

Like in Fukaya and Yamaguchi's paper 
the idea of the proof of the Margulis Lemma 
is to consider a contradicting sequence $(M_i,g_i)$.  By a fundamental observation of Gromov, the set of complete $n$-manifolds with $\Ric\ge -(n-1)$ is precompact in Gromov--Hausdorff topology. Therefore one can assume that the contradicting sequence converges to a (possibly very singular) space $X$.
One then uses various rescalings and normal coverings of $(M_i,g_i)$ 
in order to find contradicting sequences 
converging  to higher and higher dimensional spaces. 
One of the  differences to Fukaya and Yamaguchi's approach  is that 
if we pass to a normal covering we endow the cover with generators
of the deck transformation group in order not to lose information. 
Since after rescaling the displacements of these deck transformations 
converge to infinity, this approach seems bound to failure. 
However, using the rescaling theorem we are able to 
alter the deck transformations by composing them with a sequence of diffeomorphisms 
which are isotopic to the identity and which have the zooming in property. 
With these alterations we are able to keep much more information 
on the action by conjugation of long homotopy classes on very short 
ones. Since the altered deck transformations still converge
in a weakly measured sense to isometries, this allows one to eventually 
rule out the existence of a contradicting sequence.

For the proof of the Margulis Lemma we do not need any 
sophisticated structure 
results on the isometry group of the limit space 
except for the relatively elementary Gap Lemma~\ref{lem: gap}
guaranteeing that generic orbits of the limit group 
are locally path connected.
However, for the following application of the Margulis Lemma 
a recent structure result of Colding and Naber~\cite{CoNa} is key. 
They showed that the isometry group of a limit space
with lower Ricci curvature bound has no small subgroups
and thus, by the small subgroup theorem of Gleason, Montgomery and  Zippin ~\cite{MoZi}, is a Lie group.

\begin{mainthm}[Compact Version of the Margulis Lemma]\label{normal margulis}
Given $n$ and $D$ there are  positive constants 
$\eps_0$  and $C$ such that the following holds: 
If $(M,g)$ is a compact  $n$-manifold $M$ with $\Ric>-(n-1)
$ and $\diam(M)\le D$, then  there is $\eps\ge \eps_0$ and a normal subgroup $\gN\lhd \pi_1(M)$ 
such that for all $p\in M$:
\begin{enumerate}
\item[$\bullet$] the image of 
$\pi_1(B_{\eps/1000}(p),p)\rightarrow \pi_1(M,p)$ contains $\gN$,
\item[$\bullet$] the index of $\gN$ in the image of $\pi_1(B_{\eps}(p),p)\rightarrow \pi_1(M,p)$
is $\le C$ and
\item[$\bullet$] $\gN$ is a nilpotent group which has a nilpotent basis of length $\le n$.
\end{enumerate}
\end{mainthm}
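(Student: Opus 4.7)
The plan is to proceed by contradiction, upgrading the pointwise Margulis Lemma (Theorem~\ref{intro: margulis}) to a uniform statement across all basepoints by exploiting the Colding--Naber theorem that the isometry group of a Ricci limit space is a Lie group. Assume the conclusion fails: there are compact $n$-manifolds $M_i$ satisfying $\Ric > -(n-1)$ and $\diam(M_i) \le D$ for which no $\eps \ge 1/i$ and no normal subgroup $\gN \lhd \pi_1(M_i)$ fulfill all three bullets simultaneously. By Gromov's precompactness theorem, a subsequence $(M_i, p_i)$ converges to some $(X, p)$ in the Gromov--Hausdorff topology, with $X$ a compact length space.

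Next I would lift to universal covers and pass to an equivariant pointed Gromov--Hausdorff limit $(\tM_i, \tp_i, \pi_1(M_i)) \to (\tX, \tp, G)$, where $G$ is a closed subgroup of $\Iso(\tX)$ arising as the limit of the $\pi_1(M_i)$-actions. By Colding--Naber~\cite{CoNa}, $G$ is a Lie group; let $G^0$ be its identity component. The sought-after normal subgroup $\gN_i \lhd \pi_1(M_i)$ should be, roughly, the preimage through the equivariant approximation of $G^0$: the elements whose limiting action lies in $G^0$. Normality is built in because $G^0 \lhd G$, and the bound on the index of $\gN_i$ in the image of $\pi_1(B_\eps(p_i), p_i) \to \pi_1(M_i, p_i)$ is controlled by the finite component group $G/G^0$, whose order is uniformly bounded by the $C(n)$ of Theorem~\ref{intro: margulis}. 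Nilpotence of $\gN_i$ with nilpotent basis of length at most $n$ is inherited from Theorem~\ref{intro: margulis} applied at the basepoint, together with the fact that $G^0$, being connected, forces the discrete approximations to be genuinely nilpotent rather than only virtually so.

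Verifying the two geometric bullets requires more care. At the basepoint, the inclusion $\gN_i \subset \mathrm{image}(\pi_1(B_{\eps/1000}(p_i), p_i) \to \pi_1(M_i, p_i))$ uses that any element of $G^0$ is connected to the identity by a continuous path in $G$; the discrete approximations of this path give loops of arbitrarily small length at $p_i$ whose concatenation represents the prescribed element of $\gN_i$. The second bullet at the basepoint is then direct from the Gromov--Hausdorff approximation of $G$ by $\pi_1(M_i)$.

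The main obstacle is the \emph{uniformity in} $p$, namely the analogous statements at an arbitrary $p \in M_i$, where the local limit of $(M_i, p)$ may have a completely different structure than the pointed limit at $p_i$. To handle this I would apply the Rescaling Theorem~\ref{intro: rescale} at points $p$ near singularities of $X$, which together with the zooming-in property of the diffeomorphisms it produces transports the Lie-group picture from the basepoint to $p$ on every scale between $\eps/1000$ and $\eps$. Combined with the Gap Lemma~\ref{lem: gap}, ensuring local path-connectedness of generic limit orbits, one concludes that the loops representing elements of $\gN_i$ can be realized inside $B_{\eps/1000}(p)$ for all $p$ simultaneously, contradicting the assumption on the sequence $M_i$.
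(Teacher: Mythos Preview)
Your overall architecture---contradiction, equivariant Gromov--Hausdorff limits $(\tM_i,\tp_i,\pi_1(M_i))\to(\tX,\tp,G)$, Colding--Naber to make $G$ a Lie group, and identifying $\gN$ with the preimage of $G^0$---matches the paper exactly. The gap is in your treatment of uniformity in $p$, where you invoke the Rescaling Theorem~\ref{intro: rescale} and the Gap Lemma~\ref{lem: gap}. Neither is applicable here: the Rescaling Theorem requires a sequence converging to $\R^k$ and produces diffeomorphisms between \emph{rescaled} copies of the manifold, not a mechanism for transporting subgroup information between basepoints at a fixed scale; and the Gap Lemma concerns a single regular basepoint, not uniformity across all of $M_i$.

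The paper sidesteps this entirely with a much simpler device (Theorem~\ref{thm: normal sub}). Rather than defining $\gN_i$ vaguely as ``the preimage of $G^0$,'' it sets
\[
\Gamma_{i\delta}:=\Bigl\langle\,\bigl\{g\in\pi_1(M_i)\ \bigm|\ d(q,gq)\le\delta\ \text{for all}\ q\in B_{1/\delta}(\tp_i)\bigr\}\,\Bigr\rangle.
\]
Choosing $\delta_0\le\min\{\eps_{\mathrm{Marg}},\,1/(2D)\}$, the ball $B_{1/\delta}(\tp_i)$ contains a full fundamental domain, so the generating condition is \emph{intrinsically basepoint-free}: every generator displaces every lift of every $p\in M_i$ by at most $\delta$. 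This immediately gives the first bullet at all $p$ once you have the gap $\Gamma_{i\,\eps_0/1000}=\Gamma_{i\,R\eps_0}$, which is exactly what the Lie-group structure of $G$ provides (since $G_{\infty\delta}=G^0$ for all small $\delta$). Normality of $\Gamma_{i\eps_0}$ is then a direct conjugation computation using that generators of $\pi_1(M_i)$ have displacement $\le 2D\le 1/\eps_0$. The index bound (second bullet) at an arbitrary $q_i$ is handled by the same packing argument as Step~2 of the Margulis Lemma proof in Section~\ref{sec: margulis}, not by any rescaling. Finally, nilpotency with basis of length $\le n$ comes from applying Theorem~\ref{intro: margulis} at one point and passing to a characteristic subgroup of controlled index.

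In short: your diagnosis of the main obstacle is right, but the cure is a better \emph{definition} of $\gN_i$, not heavier machinery.
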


If $D$, $n$, $\eps_0$ and $C$ are given, there is an effective way
to limit the number of possibilities for the quotient group 
$\pi_1(M,p)/\gN$, see Lemma~\ref{lem: cw}. 
In fact we have the following finiteness result. We recall 
that the torsion elements $\Tor(\gN)$ of a nilpotent group $\gN$ form a subgroup.

\begin{mainthm}\label{intro: pi1 structure}
\begin{enumerate}
\item[a)] For each $D>0$ and each dimension $n$ there 
are finitely many groups $\gF_1,\ldots, \gF_k$
such that the following holds: If $M$ is a compact $n$-manifold with
$\Ric>-(n-1)$ and $\diam(M)\le D$, then there is
a nilpotent normal subgroup $\gN\lhd \pi_1(M)$ 
with a nilpotent basis of length $\le n-1$ and $\rank(\gN)\le  n-2$
 such that $\pi_1(M)/\gN \cong \gF_i$ for suitable $i$. 
\item[b)] In addition to a) one can choose 
a finite collection of irreducible rational  representations 
$\rho_i^j\colon \gF_i\rightarrow \GL(n_i^j,\Q)$ 
($j=1,\ldots,\mu_i, i=1,\ldots,k$) such that 
for a suitable choice of the isomorphism $\pi_1(M)/\gN\cong \gF_i$ the following holds: There is a chain
of subgroups $\mathrm{Tor}(\gN)=\gN_0\lhd\cdots \lhd \gN_{h_0}=\gN$ 
which are all normal in $\pi_1(M)$ such that $[\gN,\gN_h]\subset \gN_{h-1}$
and $\gN_h/\gN_{h-1}$ is free abelian.  
Moroever, the action of $\pi_1(M)$ on $\gN$ by conjugation 
induces an action of $\gF_i$ on $ \gN_h/\gN_{h-1}$ 
and the induced  representation 
$\rho\colon \gF_i\rightarrow \GL\bigl((\gN_h/\gN_{h-1})\otimes_\Z\Q\bigr)$ 
is isomorphic to $\rho_i^j$ for a suitable $j=j(h)$, $h=1,\ldots,h_0$.
\end{enumerate}
\end{mainthm}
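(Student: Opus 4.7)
The approach is to combine the Compact Margulis Lemma (Theorem~\ref{normal margulis}) with the effective bound Lemma~\ref{lem: cw} and the structure theory of finitely generated nilpotent groups.

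\textbf{Global finite index and Margulis subgroup.} First apply Theorem~\ref{normal margulis} to produce $\eps\ge\eps_0(n,D)$ and a nilpotent normal subgroup $\gN_M\lhd\pi_1(M)$ with nilpotent basis of length $\le n$. Covering $M$ by a bounded number $N=N(n,D)$ of balls of radius $\eps_0/1000$ and using Theorem~\ref{intro: finite generation}, one decomposes every element of $\pi_1(M)$ into a chain of at most $N$ short loops, each contributing a coset of $\gN_M$ from a set of size $\le C$. This bounds the global index $[\pi_1(M):\gN_M]\le C^N$, and Lemma~\ref{lem: cw} exhibits $\pi_1(M)/\gN_M$ as one of finitely many finite groups.

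\textbf{Sharper rank and basis length.} Set $\gN:=[\gN_M,\gN_M]$; being characteristic in $\gN_M$ it is normal in $\pi_1(M)$. If $b_1,\ldots,b_k$ with $k\le n$ is a nilpotent basis of $\gN_M$, the defining property $[b_i,b_j]\in\langle b_1,\ldots,b_{k-1}\rangle$ forces $\gN\subset\langle b_1,\ldots,b_{k-1}\rangle$, so a polycyclic refinement produces a nilpotent basis for $\gN$ of length $\le n-1$. For the rank, observe $\rank(\gN)=\rank(\gN_M)-\rank(\gN_M^{\mathrm{ab}})$; since a torsion-free nilpotent group whose abelianization has free rank $\le 1$ is cyclic, the non-cyclic case gives abelianization of free rank $\ge 2$, whence $\rank(\gN)\le n-2$, while the cyclic case is immediate.

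\textbf{Finite list of quotients.} The quotient $\pi_1(M)/\gN$ fits into
\[
1\longrightarrow\gN_M^{\mathrm{ab}}\longrightarrow\pi_1(M)/\gN\longrightarrow\pi_1(M)/\gN_M\longrightarrow 1,
\]
an extension of a finite group (from the above list) by the abelian group $\gN_M^{\mathrm{ab}}$ of bounded free rank and bounded torsion order. Both the conjugation action and the cohomology class classifying the extension take values in a priori finite sets, so only finitely many isomorphism types $\gF_1,\ldots,\gF_k$ can arise as $\pi_1(M)/\gN$. To establish part (b) construct a canonical central chain $\Tor(\gN)=\gN_0\lhd\gN_1\lhd\cdots\lhd\gN_{h_0}=\gN$ with each $\gN_h$ characteristic in $\gN$ (hence normal in $\pi_1(M)$), each $\gN_h/\gN_{h-1}$ free abelian, and $[\gN,\gN_h]\subset\gN_{h-1}$. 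Since $\gN$ acts trivially by conjugation on $\gN_h/\gN_{h-1}$, the induced $\pi_1(M)$-action factors through $\gF_i$, producing a rational representation of dimension $\le n-2$. For each fixed $\gF_i$, only finitely many irreducible rational representations of bounded dimension exist; decomposing and collecting yields the finite family $\rho_i^j$.

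\textbf{Principal difficulty.} The most delicate step is ensuring that the extension classifying $\pi_1(M)/\gN$ really takes only finitely many values. Uniform bounds on the torsion of $\gN_M^{\mathrm{ab}}$ and on the finite-group action on it are needed, and both rest on the full effective content of Lemma~\ref{lem: cw}; without a uniform torsion bound, even the nilpotent basis of $[\gN_M,\gN_M]$ could fail to have length $\le n-1$, since torsion generators can inflate basis length beyond Hirsch length. A secondary bookkeeping issue is arranging the refinement in part (b) so that every $\gN_h$ is simultaneously characteristic, has free abelian quotient, and satisfies the central property.
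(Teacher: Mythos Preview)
Your argument contains a genuine error at the very first step. The claim that the global index $[\pi_1(M):\gN_M]$ is bounded by $C^N$ is false: the quotient $\pi_1(M)/\gN_M$ need not be finite at all. For the circle $S^1$ of circumference $2D$, the image of $\pi_1(B_{\eps/1000}(p))\to\pi_1(S^1)$ is trivial for small $\eps$, so Theorem~\ref{normal margulis} forces $\gN_M=\{e\}$, and $\pi_1(S^1)/\gN_M=\Z$ is infinite. More generally, the groups $\gF_i$ in the statement are typically infinite. Your ``chain of short loops'' decomposition cannot work: a generator of $\pi_1(M)$ is a loop of length up to $2D$, and breaking it into short arcs does not express it as a product of short \emph{loops} at the basepoint---each arc, when closed up via paths back to $p$, produces a long loop, not an element of some local $\pi_1(B_\eps)$.

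Everything downstream then collapses. Your extension argument for part~(a) needs $\pi_1(M)/\gN_M$ finite, which fails; and even granting finiteness of the list (which Lemma~\ref{lem: cw} does give, though the listed groups may be infinite), you also assert a uniform bound on the torsion of $\gN_M^{\mathrm{ab}}$, which is nowhere established---a nilpotent basis of length $\le n$ bounds the Hirsch length, not the torsion. You flag this yourself in the ``Principal difficulty'' paragraph but then appeal to Lemma~\ref{lem: cw}, which controls only the quotient $\pi_1(M)/\gN_M$, not the internal torsion of $\gN_M$. The paper's proof proceeds quite differently: it argues by contradiction with a convergent sequence $M_i\to X$, uses the Gap Lemma and the Induction Theorem to obtain a nilpotent basis of length $\le n-\dim(X)$ (this is where the sharpened length $n-1$ comes from, since $\dim(X)\ge 1$), invokes the Normal Subgroup Theorem and Lemma~\ref{lem: cw} directly for the finiteness of quotients, and handles the rank bound $\le n-2$ by a separate limit argument exploiting that when $\dim(X)=1$ the quotient is virtually cyclic. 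The choice $\gN=[\gN_M,\gN_M]$ does not appear.
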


Part a) of Theorem ~\ref{intro: pi1 structure} generalizes a result of Anderson~\cite{And90} who proved finiteness of fundamental groups under the additional assumption of a uniform lower bound on volume.

In the preprint  \cite{Wi11} a partial converse of Theorem~\ref{intro: pi1 structure} 
is proved. In particular it is shown there 
that  for a finite collection of finitely presented groups and any finite collection 
of rational representations one can find $D$ such that each group $\Gamma$ satisfying
the algebraic restrictions described in
a) and b) of the above theorem
with respect to this data contains a finite nilpotent normal subgroup $\gH$ 
such that $\Gamma/\gH$ can be realized as a fundamental group of a $n+2$-dimensional 
manifold with $\diam(M)\le D$ and sectional curvature $|K|\le 1$.

We can also extend the diameter ratio theorem of Fukaya and Yamaguchi
\cite{FY}. 

\begin{mainthm}[Diameter Ratio Theorem]\label{thm: diamratio} \label{thm: diam} 
For $n$ and $D$ there is a $\tD$
such that any compact manifold $M$ with $\Ric\ge -(n-1)$ 
and  $\diam(M)= D$ satisfies: 
If $\pi_1(M)$ is finite, then the diameter of the universal cover $\tM$ of $M$
is bounded above by $\tD$.
\end{mainthm}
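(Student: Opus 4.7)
The plan is to argue by contradiction: suppose there is a sequence $(M_i,g_i)$ of compact $n$-manifolds with $\Ric\ge -(n-1)$, $\diam(M_i)=D$, $\pi_1(M_i)$ finite, and $\lambda_i:=\diam(\tilde M_i)\to\infty$, and derive an obstruction from the other results of the paper.

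First I would reduce to the case where the fundamental group is finite nilpotent of unbounded order. A standard short-generators argument together with Theorem~\ref{intro: finite generation} shows that if $|\pi_1(M_i)|$ stayed uniformly bounded then every element would act on $\tilde M_i$ with uniformly bounded displacement, forcing $\diam(\tilde M_i)$ to be bounded as well. Hence $|\pi_1(M_i)|\to\infty$. Theorem~\ref{intro: pi1 structure}(a) then produces a normal nilpotent subgroup $\gN_i\lhd\pi_1(M_i)$ of uniformly bounded index. Passing to the intermediate normal cover $\widehat M_i:=\tilde M_i/\gN_i$, we may assume $\pi_1(M_i)=\gN_i$ is a finite nilpotent group with $|\gN_i|\to\infty$ and nilpotent basis of length $\le n-1$, while the diameter of the base manifold grows by at most a uniform multiplicative constant and $\diam(\tilde M_i)\to\infty$ is preserved.

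Next I would rescale and pass to equivariant Gromov--Hausdorff limits. Set $\tilde M_i':=\lambda_i^{-1}\tilde M_i$: then $\diam(\tilde M_i')=1$, $\Ric(\tilde M_i')\ge-(n-1)\lambda_i^{-2}\to 0$, and the free isometric action of $\Gamma_i:=\gN_i$ has quotient $\lambda_i^{-1}M_i$ of diameter tending to $0$. After extracting a subsequence, $\tilde M_i'\to Y'$ with $Y'$ compact of diameter $1$, and the $\Gamma_i$-actions converge equivariantly to an action of a closed subgroup $G\subset\Iso(Y')$ with $Y'/G=\{\mathrm{pt}\}$, so $G$ acts transitively on $Y'$. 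By the Colding--Naber theorem quoted in the introduction, $\Iso(Y')$ has no small subgroups, so $G$ is a Lie group; compactness of $Y'$ makes $\Iso(Y')$, and thus $G$, compact. Being an equivariant limit of finite nilpotent groups, the identity component $G_0$ is a connected compact nilpotent Lie group, hence a torus $T^k$; and $k\ge 1$ since no finite group can act transitively on the positive-dimensional connected space $Y'$.

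The principal obstacle is to turn the presence of this torus $T^k\subset G$ into a topological contradiction. A one-parameter subgroup $S^1\subset T^k$ is approximated by cyclic subgroups $\Z_{m_i}\subset\Gamma_i$ with $m_i\to\infty$, but in the unrescaled $\tilde M_i$ the generators of these cyclic subgroups act with very large displacements, and a naive limit loses the required information. This is precisely the situation the Rescaling Theorem~\ref{intro: rescale} and the zooming-in property of Section~\ref{sec: zoom} are designed to handle. Applied at appropriate intermediate scales between $1$ and $\lambda_i$, they supply diffeomorphisms of $\tilde M_i$ isotopic to the identity and close to isometries on all scales which, after composition with the generators of $\Z_{m_i}$, subconverge in the weakly measured Gromov--Hausdorff sense to integer translations along the Euclidean $\R$-factor of a tangent limit of the form $K\times\R^k$. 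Since such a translation action corresponds, via the isotopies, to the action of a nontrivial cyclic subgroup of the deck transformation group of the simply connected $\tilde M_i$, one reaches the desired contradiction. Carrying out this final step---keeping the measured convergence compatible across all scales and matching the modified deck transformations with the $S^1$-action of the limit torus---is the heart of the argument and is the Ricci-curvature replacement of Fukaya--Yamaguchi's use of Yamaguchi's fibration theorem in the sectional-curvature case.
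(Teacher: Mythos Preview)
Your proposal contains a fatal scaling error. When you rescale $\tilde M_i$ by $\lambda_i^{-1}$ (shrinking distances so that $\diam(\tilde M_i')=1$), the Ricci lower bound does \emph{not} become $-(n-1)\lambda_i^{-2}\to 0$; it becomes $-(n-1)\lambda_i^{2}\to -\infty$. (Ricci, as a $(0,2)$-tensor, is scale-invariant, so the inequality $\Ric\ge -(n-1)g$ reads $\Ric\ge -(n-1)\lambda_i^2\,g'$ in the new metric $g'=\lambda_i^{-2}g$.) With no uniform lower Ricci bound you lose Gromov precompactness, so you cannot extract a Gromov--Hausdorff limit $Y'$ at all; and Colding--Naber, which you invoke to make $\Iso(Y')$ a Lie group, applies only to Ricci limit spaces. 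Every subsequent step of your argument rests on this, so the whole scheme collapses. The paper explicitly flags this obstacle: ``The problem is, of course, that no lower curvature bound is available after rescaling.''

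The paper's proof gets around this by a completely different route, and it does \emph{not} use the Rescaling Theorem or the zooming-in machinery. First, by reverse induction on the length $u$ of a nilpotent basis of $\pi_1(M_i)$, it reduces to analysing an intermediate cover $\hat M_i=\tilde M_i/[\pi_1(M_i),\pi_1(M_i)]$ with \emph{abelian} deck group $\gA_i$ and $\diam(\hat M_i)\to\infty$. The heart of the argument is then a Gromov-style doubling estimate on a weighted Cayley graph of $\gA_i$ (using the squaring map $x\mapsto x^2$ and Bishop--Gromov at the original scale) which proves, \emph{without} any curvature bound after rescaling, that $\tfrac{1}{\diam(\hat M_i)}\hat M_i$ is GH-precompact with finite-dimensional limits. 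This is precisely the part of Gromov's polynomial-growth proof alluded to in the introduction. The limit then carries a transitive abelian action, so by Montgomery--Zippin it is a torus, and Lemma~\ref{lem: conv to torus} forces $\pi_1(\hat M_i)$ to be infinite --- a contradiction. Your final paragraph, which tries to extract a contradiction from an $S^1$-action via the Rescaling Theorem, is both unnecessary and misapplied: that theorem requires convergence to $\R^k$, and the ``translation action corresponds to a cyclic subgroup of the deck group'' observation is not a contradiction --- such subgroups were there from the start.
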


In the case of nonnegative Ricci curvature the theorem 
says that the ratio $\diam(\tM)/\diam(M)$ is bounded above.
Fukaya and Yamaguchi's theorem covers the case that $M$ has 
almost nonnegative sectional curvature.
The proof of Theorem~\ref{thm: diam} has some similarities 
to  parts of the proof of Gromov's polynomial growth theorem \cite{G6}.

Part of the paper was written up while the second named author was a Visiting Miller 
Professor at the University of California at Berkeley. He 
would like to thank the Miller institute for support and hospitality.
%
%
%
%
\section*{Organization of the Paper}
\begin{enumerate}
 \item[{ \ref{sec: pre}.}] Prerequisites\hfill\pageref{sec: pre}
 \item[{ \ref{sec: finite gen}.}] Finite generation of fundamental groups\hfill\pageref{sec: finite gen}
 \item[\ref{sec: zoom}.] Maps which are on all scales close to isometries\hfill\pageref{sec: zoom}
 \item[\ref{sec: rough}.] A rough idea of the proof of the Margulis Lemma\hfill\pageref{sec: rough}
 \item[\ref{sec: rescaling}.] The Rescaling Theorem\hfill\pageref{sec: rescaling}
 \item[\ref{sec: induction}.] The Induction Theorem for C-Nilpotency\hfill\pageref{sec: induction}
 \item[\ref{sec: margulis}.] Margulis Lemma\hfill\pageref{sec: margulis}
 \item[\ref{sec: correction}.] Almost nonnegatively curved manifolds with $b_1(M,\Z_p)=n$\hfill\pageref{sec: correction}
 \item[\ref{sec: finiteness}.] Finiteness Results\hfill\pageref{sec: finiteness}
 \item[\ref{sec: diam}.] The Diameter Ratio Theorem\hfill\pageref{sec: diam}
\end{enumerate}

We start in section~\ref{sec: pre} with prerequisites.
We have included a subsection on notational conventions.
Next, in section~\ref{sec: finite gen}   we will prove  Theorem~\ref{intro: finite generation}.

Section~\ref{sec: zoom} is somewhat technical.  We define 
the zooming in property, prove the needed properties, and 
provide two somewhat similar construction methods.  
This section serves mainly as a preparation for the proof
of the rescaling theorem. 

We have added a short section~\ref{sec: rough} 
in which we sketch a rough idea of the proof of the Margulis Lemma.

In section~\ref{sec: rescaling} the refined  rescaling theorem 
(Theorem~\ref{thm: rescale}) is stated and proven.
In Section~\ref{sec: induction} we put things together 
and provide a proof of the Induction Theorem, which 
has Corollary~\ref{intro: almost nonneg} as its immediate consequence. 

The Margulis Lemma follows from the Induction Theorem 
in two steps which are somewhat similar to Fukaya and Yamaguchi's approach. 
Nevertheless, we included all details in section~\ref{sec: margulis}. 
We also show that the nilpotent group $\gN$ in the Margulis Lemma 
can only have rank $n$ if the underlying manifold is homeomorphic to 
 an infranilmanifold.

Section~\ref{sec: correction} uses lower sectional curvature bounds. 
We give counterexamples to a theorem of Fukaya and Yamaguchi ~\cite{FY}
stating that almost nonnegatively curved 
$n$-manifolds with first $\Z_p$-Betti number equal to $n$ have to be tori, provided 
$p$ is sufficiently big.
We show that instead  these manifolds  are nilmanifolds 
and that every nilmanifold covers another nilmanifold
with maximal first $\Z_p$ Betti number.

Section~\ref{sec: finiteness} contains the proofs of
Theorem~\ref{normal margulis} and Theorem~\ref{intro: pi1 structure}. 
Finally, Theorem~\ref{thm: diam} is proved in section~\ref{sec: diam}.


\section{Prerequisites}\label{sec: pre}

\subsection{Short basis.} We will use the following construction due to Gromov \cite{Gro1}.

Given a manifold $(\tM,\tp)$  and  a group $\Gamma$ acting properly discontinuously
and isometrically 
on $\tM$ one can define a short basis of the action of $\Gamma$ at $\tp$ as follows:

For $\gamma\in \Gamma$  we will refer to 
$|\gamma|= d(\tp,\gamma(\tp))$ as the norm or the length of $\gamma$.
Choose $\gamma_1\in \Gamma$ 
with  the minimal norm in $\Gamma$.
Next choose $\gamma_2$ to have 
minimal norm in $\Gamma\backslash \langle\gamma_1\rangle$. 
On the $n$-th step choose 
$\gamma_n$ to have minimal norm in 
$\Gamma\backslash \langle\gamma_1,\gamma_2,...,\gamma_{n-1}\rangle$.
The sequence $\{\gamma_1,\gamma_2,...\}$ 
is called a {\it short basis} of $\Gamma$ at $\tp$. 
In general, the number of elements of a short basis can be finite or infinite.
In the special case of the action of the fundamental group $\pi_1(M,p)$  on the universal cover  $\tM$ of $M$
one speaks of the short basis of  $\pi_1(M,p)$. 
For any $i>j$ we have $|\gamma_i|\le |\gamma_j^{-1}\gamma_i|$.

While a short basis at $\tp$ is not unique 
its length spectrum $\{|\gamma_1|,|\gamma_2|,\ldots \}\subset\ \R$ is unique. 
That is the main reason why the non-uniqueness will not matter in any of the proofs in this article and will largely be suppressed.

If $\tM/\Gamma$ is a closed manifold, then  the short basis is 
finite and $|\gamma_i|\le 2\diam(\tM/\Gamma)$.

\subsection{Ricci curvature}
Throughout this paper
we will use the notation $\sfint$ to denote the average integral.
We will use the following results of Cheeger and Colding.

\begin{thm}[Splitting Theorem, \cite{CC}]\label{alm-split}
Let $(M^n_i,p_i)\conv (X,p)$ with  $\Ric(M_i)\ge -\frac{1}{i}$.
Suppose $X$ has a line. Then $X$ splits isometrically as $X\cong Y\times \R$.
\end{thm}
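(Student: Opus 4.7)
The plan is to mimic the Cheeger--Gromoll splitting proof in the almost nonnegative setting, replacing the pointwise parallel-gradient conclusion by a quantitative $L^2$ Hessian estimate combined with a segment inequality. Fix a line $\ell\colon\R\to X$ with $\ell(0)=p$, and for each large $R$ choose Gromov--Hausdorff approximants $q_{\pm,i}^{R}\in M_i$ of $\ell(\pm R)$. On $M_i$ define the approximate Busemann pair
\[
b_{\pm,i}^{R}(x):=d\bigl(x,q_{\pm,i}^{R}\bigr)-R.
\]
By the triangle inequality $b_{+,i}^R+b_{-,i}^R\ge 0$, and because $\ell$ is a line, this sum is uniformly small on $B_2(p_i)$ once $i$ and $R$ are large.

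Next I would invoke the mean curvature comparison: with $\Ric\ge -1/i$ the distance functions satisfy $\Delta b_{\pm,i}^{R}\le (n-1)/R+o_i(1)$ in the distributional sense on $B_2(p_i)$. Combined with the nonnegativity and sup-norm smallness of $b_{+,i}^R+b_{-,i}^R$, testing against a standard cutoff yields $L^1$-smallness of $\Delta b_{\pm,i}^{R}$. I would then solve the Dirichlet problem on $B_{3/2}(p_i)$ with boundary data $b_{+,i}^R$ to obtain a harmonic function $h_i$. Standard elliptic estimates together with a Poincar\'e inequality give
\[
\|h_i-b_{+,i}^R\|_{C^0(B_1(p_i))}\longrightarrow 0,\qquad \sfint_{B_1(p_i)}\bigl|\grad h_i\bigr|^2\longrightarrow 1.
\]

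The crucial step is the Bochner formula applied to the harmonic $h_i$,
\[
\tfrac{1}{2}\Delta|\grad h_i|^2 = |\Hess h_i|^2 + \Ric(\grad h_i,\grad h_i).
\]
Testing against a cutoff supported in $B_{3/2}(p_i)$ and using $|\grad h_i|\le 1+o_i(1)$ together with $\Ric\ge -1/i$ will produce the integral Hessian estimate
\[
\sfint_{B_1(p_i)}|\Hess h_i|^2 \longrightarrow 0.
\]
A Cheeger--Colding type segment inequality then converts this $L^2$-bound into pointwise control along almost every minimising segment in $B_{1/2}(p_i)$: the restriction of $h_i$ to such a segment differs from the corresponding affine linear function by $o_i(1)$.

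Passing to a subsequence, $h_i$ converges uniformly on $B_{1/2}(p)$ to a $1$-Lipschitz limit $b_\infty$ which is affine along almost every limit geodesic and agrees with arc-length along $\ell$; continuity upgrades this to affinity along every limit geodesic. An exhaustion in $R$ then globalises $b_\infty$ to $X$, and the affine property together with $|\grad b_\infty|=1$ forces the level sets to be pairwise isometric via the flow along the direction of $\grad b_\infty$, yielding $X\cong Y\times\R$ with $Y=b_\infty^{-1}(0)$. The main obstacle is precisely the integral Hessian estimate and its companion segment inequality: together they are what allow quantitative control on the $M_i$ to pass to exact affinity, and hence an exact splitting, on the limit; this is the technical heart of \cite{CC}.
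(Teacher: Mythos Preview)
The paper does not give its own proof of this statement: it is quoted in the Prerequisites section as a result of Cheeger--Colding~\cite{CC} and used as a black box throughout. So there is no ``paper's proof'' to compare against; what you have written is a sketch of the original Cheeger--Colding argument itself.

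Your outline is broadly correct and hits the main ingredients (approximate Busemann functions, Laplacian comparison plus the Abresch--Gromoll excess estimate to see that $b_{+,i}^R+b_{-,i}^R$ is small, harmonic replacement, Bochner to get the $L^2$ Hessian bound, and the segment inequality). One point that would need tightening is the final paragraph: you pass to the limit first and then speak of ``the flow along the direction of $\grad b_\infty$'' on $X$, but $X$ is only a metric space with no differentiable structure, so a gradient flow is not available. The actual Cheeger--Colding argument avoids this by proving a quantitative \emph{almost splitting} on the smooth $M_i$ (showing $B_R(p_i)$ is Gromov--Hausdorff close to a ball in a product $\R\times Y_i$) and then letting the product structure pass to the limit; alternatively one can show $b_\infty$ is a submetry and lift lines, as the present paper does in its Product Lemma. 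Either route works, but the step as you wrote it is not literally correct.
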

We will also need the following corollary of the stability theorem.

\begin{thm}\label{noncol}\cite{CC1}
Let $(M^n_i,p_i)\conv (\R^n,0)$  with $\Ric_{M_i}>-1$. Then
 $B_R(p_i)$ is contractible in $B_{R+1}(p_i)$ 
for all $i\ge i_0(R)$.
\end{thm}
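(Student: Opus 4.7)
The plan is to deduce the conclusion from the Cheeger--Colding stability theorem for non-collapsed convergence. First I would observe that since the manifolds $(M^n_i,p_i)$ and the limit $(\R^n,0)$ both have dimension $n$, the convergence is non-collapsed: by the volume convergence theorem~\cite{Col3} together with Bishop--Gromov, $\vol(B_r(p_i))\to \omega_n r^n$ for every $r>0$. In particular $p_i$ is arbitrarily close to regular points of the limit.

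Next I would invoke the stability theorem~\cite{CC1}. Since the limit $\R^n$ is a smooth Riemannian manifold and the convergence is non-collapsed, for every $\delta>0$ and all sufficiently large $i$ there exists a homeomorphism
\[
\phi_i\colon U_i\longrightarrow V_i
\]
from an open set $U_i\subset M_i$ containing $B_{R+1}(p_i)$ onto an open set $V_i\subset \R^n$, with $\phi_i(p_i)=0$, such that $\phi_i$ is a $\delta$-Gromov--Hausdorff approximation. In particular,
\[
\phi_i\bigl(B_R(p_i)\bigr)\subset B_{R+\delta}(0)\quad\text{and}\quad B_{R+1-\delta}(0)\subset \phi_i\bigl(B_{R+1}(p_i)\bigr).
\]

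Finally, for $\delta<1/2$ the straight-line homotopy $H(x,t)=(1-t)x$ contracts $B_{R+\delta}(0)$ to the origin while staying inside $B_{R+1-\delta}(0)$. Pulling this homotopy back through $\phi_i^{-1}$ produces a continuous deformation of $B_R(p_i)$ to $\{p_i\}$ inside $B_{R+1}(p_i)$, which is exactly the stated conclusion for $i\ge i_0(R)$.

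The principal obstacle is the stability theorem itself: constructing the homeomorphism $\phi_i$ is the deep input of Cheeger--Colding and relies on non-collapsed $\eps$-regularity, almost-splitting estimates for harmonic functions, and Reifenberg flatness at regular points. Once this homeomorphism is granted, the remainder of the proof is simply the pullback of the elementary Euclidean linear contraction.
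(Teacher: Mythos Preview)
Your proposal is correct and matches the paper's treatment: the paper does not prove this statement but simply cites it from \cite{CC1} as a ``corollary of the stability theorem,'' and your argument is precisely the standard deduction from that stability theorem (non-collapsed convergence gives a homeomorphism which is a Gromov--Hausdorff approximation, then pull back the Euclidean straight-line contraction). There is nothing to add.
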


Even more important for us is the following theorem 
which is closely linked with the proof of the splitting theorem for limit spaces.

\begin{thm}\label{hess-ineq}\cite{CC2} Suppose $(M^n_i,p_i)\conv (\R^k,0)$ with $\Ric_{M_i}\ge -1/i$. Then there exist harmonic functions $b^i_1,\ldots,b^i_k\co B_2(p_i)\to \R$ such that

\begin{equation}\label{e:lip}
|\nabla b^i_j|\le C(n) \mbox{ for all $i$ and $j$ and}
\end{equation}
\begin{equation}\label{e:sum}
\hspace*{1em}\fint_{B_1(p_i)} \sum_{j,l} \bigl|<\nabla b^i_j, \nabla b^i_l>-\delta_{j,l}\bigr|+\sum_j \|\Hess_{b^i_j}\|^2\,\,d\mu_i\longrightarrow 0\hspace*{1em} \mbox{ as } i\to\infty.
\end{equation}

Moreover, the maps $\Phi^i=(b^i_1,\ldots, b^i_k)\co M_i\to \R^k$ provide $\eps_i$-Gromov--Hausdorff approximations between
$B_1(p_i)$ and $B_1(0)$ with $\eps_i\to 0$.

\end{thm}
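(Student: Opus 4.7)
The plan is to realize each $b_j^i$ as the harmonic replacement on $B_2(p_i)$ of a Busemann-type function aligned with the $j$-th direction of the limit $\R^k$. Fix orthonormal vectors $e_1,\ldots,e_k\in\R^k$ and, using the pointed Gromov--Hausdorff convergence, lift the points $\pm R\,e_j$ (with $R=R_i\to\infty$ slowly) to points $q_j^{i,\pm}\in M_i$. Set $h_j^{i,\pm}(x)=d(x,q_j^{i,\pm})-d(p_i,q_j^{i,\pm})$ and let $b_j^i$ be the unique harmonic function on $B_2(p_i)$ with boundary data $h_j^{i,+}|_{\partial B_2(p_i)}$. The $1$-Lipschitz property of $h_j^{i,+}$ gives $\|b_j^i\|_\infty\le 4$ by the maximum principle, and the Cheng--Yau gradient estimate for harmonic functions under $\Ric\ge-(n-1)$ then delivers the Lipschitz bound \eqref{e:lip}.

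The key input for \eqref{e:sum} is the Laplace comparison theorem, which gives $\Delta h_j^{i,\pm}\le (n-1)\coth(R-2)/R\to 0$ in the barrier sense on $B_2(p_i)$. Since $h_j^{i,+}+h_j^{i,-}\ge 0$ and vanishes at $p_i$, an Abresch--Gromoll-style excess estimate forces $h_j^{i,+}+h_j^{i,-}\to 0$ uniformly on $B_2(p_i)$; this is the quantitative seed of the splitting Theorem~\ref{alm-split}. It implies that the tuple $(h_1^{i,+},\ldots,h_k^{i,+})$ is an $\eps_i$-Gromov--Hausdorff approximation onto a neighborhood of $0\in\R^k$, and the pairwise orthogonality of the limit coordinate functions yields
\[
\int_{B_2(p_i)}\bigl|\langle\nabla h_j^{i,+},\nabla h_l^{i,+}\rangle-\delta_{j,l}\bigr|\,d\mu_i\longrightarrow 0.
\]
Since $b_j^i$ minimizes Dirichlet energy among functions with the same boundary data as $h_j^{i,+}$, integration by parts gives $\int_{B_2}|\nabla b_j^i-\nabla h_j^{i,+}|^2\,d\mu_i\le\int_{B_2}(b_j^i-h_j^{i,+})\,\Delta h_j^{i,+}\,d\mu_i\longrightarrow 0$. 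Together these yield the inner-product part of \eqref{e:sum} for the harmonic $b_j^i$'s.

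The Hessian bound is the crucial and hardest step. Bochner's formula $\tfrac12\Delta|\nabla b_j^i|^2=|\Hess b_j^i|^2+\Ric(\nabla b_j^i,\nabla b_j^i)$ for harmonic $b_j^i$, integrated against a cutoff $\psi$ with $\psi\equiv 1$ on $B_1(p_i)$, $\psi\equiv 0$ outside $B_{3/2}(p_i)$, and the standard bound $|\Delta\psi|\le C$, gives after exploiting $\int\Delta\psi\,d\mu_i=0$
\[
\int_{B_1(p_i)}|\Hess b_j^i|^2\,d\mu_i\;\le\;\tfrac12\int_{B_{3/2}(p_i)}\bigl(|\nabla b_j^i|^2-1\bigr)\Delta\psi\,d\mu_i\;+\;\tfrac{1}{i}\int_{B_{3/2}(p_i)}|\nabla b_j^i|^2\,d\mu_i.
\]
The first term tends to zero because $|\nabla b_j^i|^2-1\to 0$ in $L^1$ (from the previous paragraph, combined with the uniform Lipschitz bound to upgrade the $L^2$-gradient convergence), while the second is $O(1/i)$. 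The Gromov--Hausdorff approximation claim for $\Phi^i$ then follows by combining the uniform Lipschitz bound with the $L^2$-closeness of $b_j^i$ to $h_j^{i,+}$, which under the Lipschitz bound upgrades to $C^0$-closeness, and the already established fact that $(h_1^{i,+},\ldots,h_k^{i,+})$ is itself a Gromov--Hausdorff approximation. The main obstacle is orchestrating all the error terms simultaneously: the Abresch--Gromoll step, the Dirichlet-minimization step, and the Bochner cutoff step must each be quantified by comparable small parameters, which in turn dictates how slowly $R_i$ may be taken to infinity.
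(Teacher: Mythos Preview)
The paper does not itself prove this theorem; it is quoted from Cheeger--Colding \cite{CC2}, and the paper only records the construction in one paragraph (lift approximate Busemann functions $f_j=d(\cdot,N_ie_j)-N_i$ from $\R^k$ via Gromov--Hausdorff approximations and take their harmonic replacement on $B_2(p_i)$ by solving the Dirichlet problem). Your proposal reproduces precisely this construction and correctly outlines the standard Cheeger--Colding argument for the estimates (Cheng--Yau gradient estimate, Abresch--Gromoll excess, Dirichlet minimality for the $L^2$-gradient closeness, and Bochner against a cutoff for the Hessian), so it matches both the paper's sketch and the original source.

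One small point worth tightening: the existence of a cutoff $\psi$ supported in $B_{3/2}$ with $|\Delta\psi|\le C(n)$ (not merely $|\nabla\psi|\le C$) under a lower Ricci bound is itself a lemma of Cheeger--Colding and should be cited rather than called ``standard''; and the passage ``$L^2$-closeness \ldots upgrades to $C^0$-closeness'' is not automatic from Lipschitz bounds alone---in the actual argument one uses that $b_j^i-h_j^{i,+}$ vanishes on $\partial B_2$ and has small (one-sided) Laplacian, or simply that both converge to the same coordinate function on the limit. These are refinements already handled in \cite{CC2}, not gaps in your strategy.
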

The functions $b^i_j$ in the above theorem are constructed as follows.   Approximate Busemann functions $f_j$ in $\R^k$  given by
$f_j=d(\cdot, N_ie_j))-N_i$  are lifted  to $M_i$ using Hausdorff approximations to corresponding functions $f^i_j$. Here  $e_j$ is the j-th coordinate vector in the standard  basis of $\R^k$ and $N_i\to\infty$ sufficiently slowly so that
$d_{G-H}(B_{N_i}(p_i), B_{N_i}(0))\to 0$ as $i\to\infty$.
The functions $b^i_j$ are obtained by solving the Dirichlet problem on $B_2(p_i)$  with $b^i_j|_{\partial B_2(p^i)}=f^i_j|_{\partial B_2(p^i)}$.

We will need a weak type 1-1 estimate for manifolds with lower Ricci curvature bounds which is a well-known consequence of the doubling inequality \cite[p. 12]{Stein}.

\begin{lem}[Weak type 1-1 inequality]\label{lem:mf-est}\label{lem: weak11}
Suppose $(M^n,g)$ has $\Ric\ge -(n-1)$ and let $f\co M\to\R$  be a nonnegative function.  Define $\Mx_{\rho} f(p):=\sup_{r\le 
\rho}\sfint_{B_r(p)}f$ for $\rho\in (0,4]$ and put $\Mx f(p)=\Mx_{2} f(p)$.
Then the following holds
\begin{enumerate}[(a)]
\item If $f\in L^\a(M)$ with $\a\ge 1$ then $\Mx_\rho f$ is finite almost everywhere.
\item If $f\in L^1(M)$ then $\vol\bigl\{x\mid \Mx_\rho f(x)>c\bigr\}\le \frac{C(n)}{c} \int_M f$ for any $c>0$.
\item If $f\in L^\a(M)$ with $\a>1$ then $\Mx_\rho f\in L^\a(M)$ and $||\Mx_\rho f||_\a\le C(n,\a)||f||_\a$.
\end{enumerate}
\end{lem}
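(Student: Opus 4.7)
The plan is to follow the classical Hardy--Littlewood argument, with Bishop--Gromov volume comparison substituting for Euclidean scaling. The key input is doubling: from $\Ric \ge -(n-1)$ and Bishop--Gromov one obtains a constant $D=D(n)$ such that
\[
\vol(B_{5r}(p)) \le D\cdot \vol(B_r(p)) \quad \text{for all } p\in M \text{ and } 0<r\le 4,
\]
because the ratio of the corresponding comparison volumes in the hyperbolic space of curvature $-1$ is uniformly bounded for radii in this range.

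For part (b), fix $c>0$ and set $E_c=\{x:\Mx_\rho f(x)>c\}$. For each $x\in E_c$ choose $r(x)\in(0,\rho]$ with
\[
\vol(B_{r(x)}(x)) < \frac{1}{c}\int_{B_{r(x)}(x)} f.
\]
Since the radii are uniformly bounded by $\rho\le 4$, the standard greedy Vitali $5r$-covering lemma (which uses nothing beyond the triangle inequality, once radii are bounded) extracts a pairwise disjoint subfamily $\{B_{r_i}(x_i)\}$ whose $5$-enlargements still cover $E_c$. Combining this with the doubling estimate and disjointness,
\[
\vol(E_c)\le \sum_i \vol(B_{5r_i}(x_i)) \le D\sum_i \vol(B_{r_i}(x_i)) \le \frac{D}{c}\int_M f,
\]
which is (b) with $C(n)=D$.

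Parts (a) and (c) then follow from the standard Marcinkiewicz machinery recorded in Stein: interpolating the weak type $(1,1)$ bound (b) against the trivial $L^\infty\to L^\infty$ estimate $\|\Mx_\rho f\|_\infty\le \|f\|_\infty$ produces the strong bound $\|\Mx_\rho f\|_\alpha\le C(n,\alpha)\|f\|_\alpha$ for every $\alpha>1$, giving (c) and the $\alpha>1$ case of (a); the case $\alpha=1$ of (a) is immediate from (b), since $\vol\{\Mx_\rho f>c\}$ is finite for every $c>0$.

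The only step that genuinely uses Riemannian geometry is the doubling inequality, and it is forced by Bishop--Gromov alone; the rest is formal metric-measure theory, so no real obstacle is expected. The mildest care is needed only to verify the Vitali $5r$-covering lemma in this metric setting, which is handled by the usual dyadic grouping of radii.
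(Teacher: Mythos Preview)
Your proposal is correct and matches the paper's own treatment: the paper does not give a proof but simply records the lemma as ``a well-known consequence of the doubling inequality'' with a reference to Stein, and your argument (Bishop--Gromov $\Rightarrow$ doubling $\Rightarrow$ Vitali $5r$-covering $\Rightarrow$ weak $(1,1)$ $\Rightarrow$ Marcinkiewicz interpolation) is exactly that standard route.
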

We will also need the following inequality (for any $\a> 1$) which is an immediate consequence of the definition of $\Mx$ and H\"older inequality.
\begin{equation}\label{eq:mf-holder}
[\Mxrh f(x)]^\a\le \Mxrh (f^\a)(x).
\end{equation}
Indeed,
for any $r\le \rho$ we have that $\Mxrh (f^\a)(x)\ge \sfint_{B_r(x)}f^\a\ge \bigl(\sfint_{B_r(x)} f\bigr)^\alpha$
and the inequality follows by  taking the supremum over all $r\le \rho$.

As a consequence of Bishop Gromov one has 
\begin{eqnarray}\label{mx r rho}
\Mxrh f (y) &\le& C(n)\Mxtrh f(x)+ \Mxr f(y)\hspace*{1em}\mbox{ for $d(x,y)\le r\le \rho\le 2$ }
\end{eqnarray}
In fact, if the left hand side is not bounded above by the second summand 
of the right hand side then
$ \Mxrh f(y)= \sfint_{B_{r_1}(y)} f\,d \mu $ for some 
$r_1\in [r,\rho]$ and 
\[\Mxrh f(y)\le \tfrac{\vol(B_{2r_1}(x))}{\vol(B_{r_1}(y))}\fint_{B_{2r_1}(x)} f(p)\,d\mu(p)
\le C(n) \Mxtrh f(x).\]

Next we claim that if $f\in L^\a(M)$ with $\a>1$, then for any $x\in M$ we have the following {\it pointwise} estimate
\begin{equation}\label{eq:mmf}
\Mxrh((\Mxrh f)^\a)(x)\le C_2(n,\a)\Mxtrh (f^\a)(x)\hspace*{1em} \mbox{ for $\rho\in (0,2]$}.
\end{equation}
In order to show this 
we may assume that there is an $r\in (0,\rho]$ 
with 
\begin{eqnarray*}
\Mxrh((\Mxrh f)^\a)(x)\hspace*{-0.7em}&=& \fint_{B_r(x)}(\Mxrh f)^\a(y)\,d\mu(y)\\
&\overset{\eqref{mx r rho}}{\le}& \fint_{B_r(x)}\bigl(\Mxr f(y) +C(n)\Mxtrh f(x)\bigr)^\a\,d\mu(y)\\
&\le & 2^\a C(n)^\a(\Mxtrh f)^\a(x)+ 2^\a\fint_{B_r(x)} \bigl(\Mxr f(y)\bigr)^\a\, d\mu(y)\\
&
{\le}& 2^\a C(n)^\a(\Mxtrh f)^\a(x)
+\tfrac{2^\a C(n,\alpha)^\a}{\vol(B_{r}(x))}
\int_{B_{2r}(x)}f^\a(y)\,d\mu(y)\\
&\overset{\eqref{eq:mf-holder}}{\le} & C_2(n,\alpha)(\Mxtrh f^\a)(x), 
\end{eqnarray*}
where in order to deduce the third inequality from Lemma~\ref{lem: weak11} c)
we used  that the integrals on 
either side only depend on values of $f$ in $B_{2r}(x)$.

By combining \eqref{eq:mf-holder} and \eqref{eq:mmf} we obtain
\begin{equation}\label{est: mxmx}
\Mxrh[\Mxrh (f)](x)\le \Bigl(\Mxrh[(\Mxrh(f))^\a](x)\Bigr)^{1/\a}\le \Bigl(C_2(n,\alpha)\Mxtrh[f^\a](x)\Bigr)^{1/\a}
\end{equation}
for $\a>1$ and $\rho\in (0,1]$. 
Finally applying this to the function $g=f^{\frac{\b}{\a}}$ 
for $\a,\,\b>1$ gives
\begin{equation}\label{eq:mmf1}
\Mxrh[(\Mxrh (f^{\b/\a}))^\a](x)\le C_2(n,\a)\Mxtrh (f^\b)(x).
\end{equation}

We will also make use of the so-called  segment inequality of Cheeger and Colding which says that in average
the integral of a nonnegative function along all geodesics 
in a ball can be estimated by the $L^1$ norm of the function.

\begin{thm}[Segment inequality, \cite{CC}] \label{seg-ineq} Given $n$ and $r_0$ 
there exists $\tau=\tau(n,r_0)$ such that the following holds.
Let $\Ric(M^n)\ge -(n-1)$ and let $g\co M\to \R^+$ be a nonnegative function. 
Then for $r\le r_0$

\[
\fint_{B_r(p)\times B_r(p)}\int_{0}^{d(z_1,z_2)}g(\gamma_{z_1,z_2}(t))\,dt\,d\mu(z_1)d\mu(z_2)\le \tau\cdot r\cdot \fint_{B_{2r}(p)}g(q)\,d\mu(q),
\]
where $\gamma_{z_1,z_2}$ denotes a minimal geodesic from $z_1$ to $z_2$.
\end{thm}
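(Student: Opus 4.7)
The strategy is to reparametrize each geodesic $\gamma_{z_1,z_2}$ by $s\in[0,1]$ via $t=s\,d(z_1,z_2)$, which rewrites the inner integral as $d(z_1,z_2)\int_0^1 g(\phi_s(z_1,z_2))\,ds$ with $\phi_s(z_1,z_2):=\gamma_{z_1,z_2}(s\,d(z_1,z_2))$. Bounding $d(z_1,z_2)\le 2r$ reduces the claim to showing, for each $s\in[0,1]$,
\[
\int_{B_r(p)\times B_r(p)} g\bigl(\phi_s(z_1,z_2)\bigr)\,d\mu(z_1)\,d\mu(z_2)\;\le\;C(n,r_0)\,\vol(B_r(p))\int_{B_{2r}(p)}g\,d\mu.
\]
A triangle inequality argument comparing $t$ and $d(z_1,z_2)-t$ at $t=d(z_1,z_2)/2$ shows that $\phi_s(z_1,z_2)\in B_{2r}(p)$, and dividing by $\vol(B_r(p))^2$ together with the Bishop--Gromov ratio bound $\vol(B_{2r}(p))/\vol(B_r(p))\le C'(n,r_0)$ then finishes the proof with $\tau=\tau(n,r_0)$.

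To prove this pushforward estimate I would fix $z_1\in B_r(p)$ and work in polar coordinates $(\rho,\theta)$ at $z_1$ on the full-measure set $B_r(p)\setminus\mathrm{Cut}(z_1)$, so that $z_2=\exp_{z_1}(\rho\theta)$ and $q:=\phi_s(z_1,z_2)=\exp_{z_1}(s\rho\theta)$. With $d\mu(z_2)=J(\rho,\theta)\,d\rho\,d\theta$ and $d\mu(q)=s\,J(s\rho,\theta)\,d\rho\,d\theta$, the map $\psi_{s,z_1}\co z_2\mapsto q$ is an injective local diffeomorphism with Jacobian ratio
\[
\frac{d\mu(z_2)}{d\mu(q)}=\frac{J(\rho,\theta)}{s\,J(s\rho,\theta)}.
\]
Bishop--Gromov monotonicity of $J(\cdot,\theta)/\sinh^{n-1}$ under $\Ric\ge-(n-1)$ gives $J(\rho,\theta)/J(s\rho,\theta)\le \sinh^{n-1}(\rho)/\sinh^{n-1}(s\rho)\le C(n,r_0)\,s^{-(n-1)}$ for $\rho\le 2r_0$, whence $d\mu(z_2)/d\mu(q)\le C(n,r_0)\,s^{-n}$. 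A change of variables, using injectivity of $\psi_{s,z_1}$ off the cut locus and $\psi_{s,z_1}(B_r(p))\subset B_{2r}(p)$, then yields
\[
\int_{B_r(p)} g\bigl(\phi_s(z_1,z_2)\bigr)\,d\mu(z_2)\;\le\;C(n,r_0)\,s^{-n}\int_{B_{2r}(p)} g\,d\mu.
\]

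The factor $s^{-n}$ is uniformly bounded on $[\tfrac12,1]$ but diverges as $s\to 0$, so for $s\in[0,\tfrac12]$ the plan is to exploit the symmetry $\phi_s(z_1,z_2)=\phi_{1-s}(z_2,z_1)$, rerun the polar calculation with the roles of $z_1$ and $z_2$ swapped, and obtain the same bound with $(1-s)^{-n}$ in place of $s^{-n}$, which is bounded on $[0,\tfrac12]$. Integrating over the remaining variable and then over $s\in[0,\tfrac12]$ on one side and $s\in[\tfrac12,1]$ on the other produces the desired pushforward estimate and hence the theorem. The principal obstacle is precisely this endpoint degeneracy of the Jacobian, which the symmetric splitting at $s=\tfrac12$ is designed to circumvent; a minor secondary issue is verifying that restricting to the full-measure complement of the cut locus is enough to make the polar change of variables and the injectivity of $\psi_{s,z_1}$ rigorous.
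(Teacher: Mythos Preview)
The paper does not give its own proof of this statement; it is quoted as a known result of Cheeger--Colding \cite{CC} and used as a black box. Your argument is essentially the standard Cheeger--Colding proof, correctly reproduced: the polar change of variables around one endpoint together with the Bishop--Gromov Jacobian comparison gives the $s^{-n}$ bound, and the symmetry $\phi_s(z_1,z_2)=\phi_{1-s}(z_2,z_1)$ handles the degeneracy at $s=0$. The only cosmetic difference from the usual presentation is that the original proof splits the $t$--integral at the midpoint $t=d(z_1,z_2)/2$ and swaps the order of the $t$ and $\rho$ integrations directly, rather than reparametrizing by $s=t/d(z_1,z_2)$; the two are equivalent. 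Your treatment of the cut locus (restricting to its full-measure complement for each fixed $z_1$) and of the containment $\phi_s(z_1,z_2)\in B_{2r}(p)$ is adequate.
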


Finally we will  need the following observation
\begin{lem}[Covering Lemma]\label{lem:av}\label{lem: covering}\label{lem: cover}
There exists a constant $C(n)$ such that the following holds. Suppose $(M^n,g)$ has $\Ric_g\ge -(n-1)$. Let $f\co M\to\R$ be a nonnegative function,
$p\in M$, $\pi\co \tM\to M$  the universal cover of $M$, $\tp\in\tM$ 
a lift of $p$,
 and let $\tf=f\circ\pi$. Then
\[
\fint_{B_1(\tp)}\tf\le C(n) \fint_{B_1(p)}f.
\]
\end{lem}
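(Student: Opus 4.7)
The plan is to reduce the lemma to a pointwise comparison of multiplicities and then establish that comparison by a lifting argument combined with Bishop--Gromov volume comparison.

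First, since $\pi$ is a local isometry and distance non-increasing, one has $\pi(B_1(\tp))\subseteq B_1(p)$ and the coarea-type identity
\[
\int_{B_1(\tp)} \tf\,d\mu \;=\; \int_{B_1(p)} N(q)\,f(q)\,d\mu(q),\qquad N(q) \;:=\; \#\bigl(\pi^{-1}(q)\cap B_1(\tp)\bigr).
\]
It therefore suffices to prove the pointwise bound $N(q)\cdot\vol(B_1(p)) \le C(n)\cdot\vol(B_1(\tp))$ for every $q\in B_1(p)$, as this immediately upgrades to the desired inequality after dividing.

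To establish the pointwise bound, fix $q\in B_1(p)$ and enumerate $\pi^{-1}(q)\cap B_1(\tp) = \{\tx_1,\ldots,\tx_{N(q)}\}$. For every $y\in B_1(q)\setminus \mathrm{Cut}(q)$, let $\gamma_y$ be the (unique) minimizing geodesic from $q$ to $y$, and let $L_i(y)$ be its lift starting at $\tx_i$. Since $\gamma_y$ has length $<1$ and $d(\tx_i,\tp)<1$, one has $L_i(y)\in B_2(\tp)$ and $\pi(L_i(y))=y$. The main technical claim is that $L_1(y),\ldots,L_{N(q)}(y)$ are pairwise distinct, which is the one nontrivial step of the argument: if $L_i(y)=L_j(y)=:\ty$, then because $y\notin \mathrm{Cut}(q)$ is equivalent to $q\notin \mathrm{Cut}(y)$, the reversed geodesic $\gamma_y^{-1}$ is the unique minimizing geodesic from $y$ to $q$, whose lift starting at $\ty$ is unique, forcing $\tx_i=\tx_j$. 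Consequently $N(q)\le \#(\pi^{-1}(y)\cap B_2(\tp))$ for a.e.\ $y\in B_1(q)$, and integrating over $B_1(q)$ while applying the same coarea identity on the right-hand side yields
\[
N(q)\cdot\vol(B_1(q)) \;\le\; \vol(B_2(\tp)).
\]

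The proof is then closed by two applications of Bishop--Gromov. Since the Ricci lower bound is local, it lifts to $\tM$, and comparing balls of radii $1$ and $2$ around $\tp$ gives $\vol(B_2(\tp))\le C_1(n)\vol(B_1(\tp))$. On $M$, the inclusion $B_1(p)\subseteq B_2(q)$ (from $d(p,q)<1$) together with Bishop--Gromov comparing balls of radii $1$ and $2$ around $q$ gives $\vol(B_1(q))\ge c_2(n)\vol(B_2(q))\ge c_2(n)\vol(B_1(p))$. Combining these with the displayed inequality produces the desired pointwise bound with $C(n) = C_1(n)/c_2(n)$, and the lemma follows. The only delicate step in the entire argument is the distinctness of the lifts $L_i(y)$; the restriction to the complement of $\mathrm{Cut}(q)$, which is a null set, is precisely what allows unique path lifting to conclude.
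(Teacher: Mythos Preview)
Your proof is correct, but it takes a different route from the paper's. The paper argues via a measurable section $j\colon B_1(p)\to B_1(\tp)$ with image $T$: since $\fint_T\tf=\fint_{B_1(p)}f$, and the union $S$ of those deck-translates $g(T)$ meeting $B_1(\tp)$ satisfies $B_1(\tp)\subset S\subset B_3(\tp)$ and $\fint_S\tf=\fint_{B_1(p)}f$, a single Bishop--Gromov comparison of $B_1(\tp)$ and $B_3(\tp)$ finishes. Your argument instead passes through the multiplicity function $N(q)=\#(\pi^{-1}(q)\cap B_1(\tp))$ and bounds it pointwise by a geodesic-lifting trick, at the cost of invoking Bishop--Gromov twice (once in $\tM$, once in $M$). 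The section/tiling approach is shorter and avoids the cut-locus bookkeeping; your approach is more explicit and delivers the uniform bound on $N(q)$ as a byproduct. One small remark: the restriction to $y\notin\mathrm{Cut}(q)$ is only needed so that $\gamma_y$ is well defined; the distinctness of the $L_i(y)$ follows already from uniqueness of path lifting in covering spaces and does not require that $\gamma_y^{-1}$ be the unique minimizer.
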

\begin{proof}
We choose a measurable section $j\co B_1(p)\to B_1(\tp)$, i.e.  $\pi(j(x))=x$ for any $x\in B_1(p)$. Let $T= j(B_1(p))$. 
Then we obviously have that $\diam(T)\le 2$ and 
\[
\fint_{B_1(p)}f= \fint_{T}\tf.
\]

Let $S$  be the union of $ g(T)$ over all $g\in\pi_1(M)$ such that $g(T)\cap B_1(\tp)\ne\emptyset$.
It is obvious from the triangle inequality that $S\subset B_3(\tp)$. It is also clear that
\[
\fint_{B_1(p)}f= \fint_{S}\tf
\]
and $B_1(\tp)\subset S$. Lastly, notice that by Bishop--Gromov relative volume comparison $\vol B_3(\tp)\le C(n)\vol B_1(\tp)$ for some universal
 constant $C(n)$. Since   $B_1(\tp)\subset S\subset B_3(\tp)$, 
we also have $\vol S\le C(n)\vol B_1(\tp)$ and hence
\[
\fint_{B_1(\tp)}\tf\le C(n) \fint_S\tf=C(n)  \fint_{B_1(p)}f.
\]
\end{proof}

It is easy to see that the above proof generalizes to an arbitrary cover of $M$.

\subsection{Equivariant Gromov--Hausdorff convergence}
Let $\Gamma_i$ be a   closed subgroup of the isometry group 
of a metric space $X_i$ and $\tp_i\in X_i$, $\,i\in \fN$.
 If $(X_i,\tp_i)\conv (Y,\tp_\infty)$, then after passing to a subsequence one can 
find a closed subgroup $\lG\subset \Iso(Y)$ such that  
$(X_i,\Gamma_i,\tp_i)\to (Y,\lG,\tp_\infty)$
in the equivariant Gromov--Hausdorff topology. 
For  details  we refer the reader to \cite{FY}.

\begin{defin}\label{def: open sub} A sequence of subgroups $\Upsilon_i\subset \Gamma_i$ 
is called uniformly open,  if there is some $\eps>0$ 
such that $\Upsilon_i$ contains the set 
\[
\bigl\{g\in \Gamma_i\mid d(g q,q)< \eps\,\mbox{ for all $q\in B_{1/\eps}(\tp_i)$}\bigr\}\mbox{ for all $i$.}
\]
$\Gamma_i$ is called uniformly discrete if $\{e\}\subset \Gamma_i$ is uniformly open. 
We say the sequence $\Upsilon_i$ is boundedly generated if there is some $R$ such 
that $\Upsilon_i$ is generated by 
\[
\{g\in \Upsilon_i\mid d(g \tp_i,\tp_i)< R\}.
\]
\end{defin}

\begin{lem}\label{lem: open sub} Let $\Upsilon_i^j\subset \Gamma_i$ be uniformly open 
with  $\Upsilon_i^j\to \Upsilon_\infty^j\subset \lG$, $j=1,2$.  
\begin{enumerate}
            \item[a)] $\Upsilon_i^1\cap \Upsilon_i^2$ is uniformly open 
and converges to $\Upsilon_\infty^1\cap \Upsilon_\infty^2$. 
\item[b)] If $g_i\in \Gamma_i$ converges 
to $g_\infty\in \lG$ 
then $g_i\Upsilon_i^1g_i^{-1}$ is uniformly open 
and converges to $g_\infty \Upsilon_\infty^1g_\infty^{-1}$.
\item[c)] Suppose  in addition that $\Upsilon_i^j$ is boundedly generated, $j=1,2$.
If  $\Upsilon_\infty^1\cap \Upsilon_\infty^2$ 
has finite index $H$ in $\Upsilon_{\infty}^1$, then 
$\Upsilon_i^1\cap \Upsilon_i^2$ has index $H$ 
in $\Upsilon_{i}^1$ for all large $i$.
\end{enumerate}
\end{lem}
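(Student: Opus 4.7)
The plan is to establish (a), (b), (c) in order, with uniform openness serving as the key tool to transfer limit-level information back to level $i$.

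For (a), uniform openness of the intersection follows by taking the minimum of the two uniform openness constants $\eps_1, \eps_2$. The inclusion $\lim_i (\Upsilon_i^1 \cap \Upsilon_i^2) \subseteq \Upsilon_\infty^1 \cap \Upsilon_\infty^2$ is immediate from closedness. For the reverse, given $g \in \Upsilon_\infty^1 \cap \Upsilon_\infty^2$, I would lift $g$ to $g_i^j \in \Upsilon_i^j$ for each $j$; the element $(g_i^1)^{-1} g_i^2 \to e$ lies in $\Upsilon_i^2$ eventually by uniform openness, so $g_i^1 = g_i^2 \cdot ((g_i^1)^{-1} g_i^2)^{-1}$ belongs to $\Upsilon_i^1 \cap \Upsilon_i^2$ and still converges to $g$. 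For (b), since $d(g_i \tp_i, \tp_i)$ is uniformly bounded (from $g_i \to g_\infty$), one verifies uniform openness of $g_i \Upsilon_i^1 g_i^{-1}$ by enlarging the test ball by this bound: if $h$ has small displacement on the enlarged ball, then $g_i^{-1} h g_i$ has small displacement on the original ball, hence lies in $\Upsilon_i^1$. Convergence in both directions is immediate from continuity of conjugation.

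The lower bound $[\Upsilon_i^1 : \Upsilon_i^1 \cap \Upsilon_i^2] \geq H$ in (c) is straightforward from (a): fix coset representatives $a_k$ of $\Upsilon_\infty^1 \cap \Upsilon_\infty^2$ in $\Upsilon_\infty^1$, lift to $a_k^i \in \Upsilon_i^1$ with $a_k^i \to a_k$; if $(a_k^i)^{-1} a_l^i$ were in $\Upsilon_i^1 \cap \Upsilon_i^2$ for infinitely many $i$, part (a) would place $(a_k)^{-1} a_l$ in $\Upsilon_\infty^1 \cap \Upsilon_\infty^2$, contradicting distinctness of cosets. The hard part is the upper bound $\leq H$, which requires bounded generation. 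Let $R$ be a bounded generation constant for $\Upsilon_i^1$ and set $C_i := \bigcup_{k=1}^H a_k^i(\Upsilon_i^1 \cap \Upsilon_i^2)$. The main technical step is the following lemma: \emph{for every fixed $R' > 0$, the ball $\{g \in \Upsilon_i^1 : |g| \leq R'\}$ is contained in $C_i$ for all large $i$}. I would prove this by contradiction: an offending sequence $g_i$ subconverges to some $g_\infty \in \Upsilon_\infty^1 = \bigcup_k a_k(\Upsilon_\infty^1 \cap \Upsilon_\infty^2)$, so $g_\infty = a_{k_0} c_\infty$ with $c_\infty \in \Upsilon_\infty^1 \cap \Upsilon_\infty^2$; lifting $c_\infty$ via part (a) to $c_i \in \Upsilon_i^1 \cap \Upsilon_i^2$, the ratio $(a_{k_0}^i)^{-1} g_i c_i^{-1} \to e$ lies in $\Upsilon_i^1 \cap \Upsilon_i^2$ by uniform openness, forcing $g_i \in a_{k_0}^i(\Upsilon_i^1 \cap \Upsilon_i^2) \subseteq C_i$.

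Applying this lemma with $R' > R + \max_k |a_k|$ guarantees that for every generator $a$ of $\Upsilon_i^1$ (with $|a| \leq R$) and every $k$, the product $a \cdot a_k^i$ lies in $C_i$. In other words, the $H$-element set of cosets $\{[a_1^i], \ldots, [a_H^i]\}$ is invariant under left multiplication by a generating set of $\Upsilon_i^1$, hence by all of $\Upsilon_i^1$. Since this action is transitive on $\Upsilon_i^1 / (\Upsilon_i^1 \cap \Upsilon_i^2)$, the invariant set must equal the entire coset space, yielding $[\Upsilon_i^1 : \Upsilon_i^1 \cap \Upsilon_i^2] \leq H$. The main obstacle is exactly this upper bound: without bounded generation, a bounded number of limit cosets could in principle correspond to an unbounded number of cosets at level $i$, and the compactness argument on a fixed-radius ball would fail.
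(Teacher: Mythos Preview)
Your proof is correct in all three parts. The paper itself does not prove this lemma, stating only that ``the proof is an easy exercise,'' so there is nothing to compare against; your argument supplies exactly the details one would expect, with uniform openness used in the intended way to pull limit-level coset information back to level $i$, and bounded generation used precisely where it is needed (the upper bound in part (c)).
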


The proof is an easy exercise.

\begin{example}[$\Z_l$ actions converging to a $\Z^2$-action.]
Consider a $2$-torus $\gT^2_k$ given by a Riemannian product 
$\gS^1\times \gS^1$ where each of the factors has length $k$.
Put $l=k^2+1$ and let
$\Z/l\Z$ act on $\gT^2_k$ { with the generator acting by 
$\bigl(e^{{\frac{2\pi i}{k^2+1}}},e^{{\frac{k2\pi i}{k^2+1}}}\bigr)$.}
In the equivariant Gromov--Hausdorff topology the 
action will converge (for $k\to\infty$)
to the standard $\Z^2$ action on $\R^2$.

{One can, of course, define a similar sequence of  actions on $\gS^3\times \gS^3$ 
but in this case the actions will not be uniformly cocompact.}
\end{example}

\begin{rem}\begin{enumerate}
           \item [a)] The example shows that it is difficult to relate  an open subgroup in the limit to corresponding subgroups in the sequence. 
\item[b)] If $g_1^i,\ldots,g_{h_i}^i$ is
 a generator system 
of some subgroup $\Gamma'_i\subseteq \Gamma_i$, then
one can consider --
after passing to a subsequence -- a different limit construction to get 
a subgroup $\lG'\subset \lG$: 
For some fixed $R$ consider all words $w$ in $g_1^i,\ldots,g_{h_i}^i$ 
with the property $w\star \tp_i\in B_R(\tp_i)$.
This defines a subset in the Cayley graph of $\Gamma'_i$. 
We enlarge the subset by adding to each  vertex all the neighboring edges and
consider now all elements in $\Gamma_i'$ represented by words
in the identity component of the enlarged subset.
Passing to the limit gives a closed subset $S_R\subset \lG$ of the limit group 
and one can now take the limit of $S_R$ for $R\to\infty$. 

This sort of word limit group can be smaller than the regular limit group 
as the above example shows.
Although it depends  on the choice of the generator system,  
there are occasions where this limit behaves 
more natural than the usual. 
However, this idea is used only indirectly in the paper.
          \end{enumerate}
\end{rem}

\subsection{Notations and conventions}
\begin{enumerate}
\item[$\bullet$] 
As already mentioned  $\sfint_Sf\,d\mu$ stands
 for $\tfrac{1}{\vol(S)}\int_S f\,d\mu$. We always 
use the Riemannian measure in integrals. It will often be suppressed when
 variables of integration are clear.
\item[$\bullet$]
$\Mxrh(f)(x)$ denotes the $\rho-$maximum function of $f$ evaluated at $x$,
 see Lemma~\ref{lem: weak11} for the definition and
$\Mx(f)(x):=\Mx_2(f)(x)$.
\item[$\bullet$]
A map $\sigma\colon X\rightarrow Y$ between locally compact complete inner metric spaces is called a submetry 
if $\sigma(B_r(p))=B_r(\sigma(p))$ for all $r>0$ and $p\in X$.
Notice that then a geodesic in $Y$
can be lifted 'horizontally' to a geodesic in $X$.
\item[$\bullet$] If $S\subset \gF$ is a subset of a group, $\ml S\mr$ 
denotes the subgroup generated by $S$. 
\item[$\bullet$]
If $g_1$ and $g_2$ are elements in a group, 
$[g_1,g_2]:=g_1g_2g_1^{-1}g_2^{-1}$ denotes the commutator. For subgroups 
$\gF_1,\gF_2$ we put $[\gF_1,\gF_2]:=\ml\{[f_1,f_2]\mid f_i\in\gF_i\}\mr$.
\item[$\bullet$] 
Generators  $b_1,\ldots,b_n\in \gF$ of a group are called a nilpotent basis
if $[b_i,b_j]\in \ml b_1,\ldots,b_{i-1}\mr $ for $i<j$.
\item[$\bullet$] $\gN\lhd \gF$ means: $\gN$ is a normal subgroup of $\gF$. 
\item[$\bullet$]
For a Riemannian manifold $(M,g)$ and $\lambda>0$ we let $\lambda M$ denote the Riemannian manifold 
$(M,\lambda^2 g)$. 
\item[$\bullet$]
For a limit space $Y$ 
a tangent cone $C_pY$ at $p$ is some Gromov--Hausdorff limit 
of $ (\lambda_i Y,p)$ for some $\lambda_i\to \infty$.
Tangent cones are not always metric cones and are  not necessarily unique.
\item[$\bullet$] We will sometimes use 
the concept of measured Gromov--Hausdorff convergence. 
Recall that for any sequence $(M_i,g_i,p_i)\conv (Y,p_\infty)$
with lower Ricci curvature bound the normalized Riemannian measures $\tfrac{d\mu_{g_i}}{\vol(B_1(p_i))}$ 
subconverge to a limit measure on $Y$, see \cite{CC1}.
\item[$\bullet$]
For a limit space $Y$ of manifolds with lower Ricci curvature bound 
a regular point $p\in Y$ is a point  all of whose 
tangent cones at $p$  are given by  $\R^{k_p}$. By a result of Cheeger and Colding ~\cite{CC1} these 
points have full measure (with respect to any limit measure on $Y$) and are thus dense.
\item[$\bullet$]
For two sequences of pointed metric spaces $(X_i,p_i)$, $(Y_i,q_i)$ 
and maps $f_i\colon X_i\rightarrow Y_i$
we use the notation 
$f_i\colon [X_i,p_i]\rightarrow [Y_i,q_i]$ 
to indicate that $f_i$ subconverges in 
the weakly measured sense (cf. Lemma~\ref{lem: iso})
to a map from the pointed Gromov--Hausdorff limit of $(X_i,p_i)$ 
to the pointed Gromov--Hausdorff limit of $(Y_i,q_i)$. 
It usually does not mean that $f_i(p_i)=q_i$.
However, if  this is the case we write
$f_i\colon (X_i,p_i)\rightarrow (Y_i,q_i)$. 
\item[$\bullet$] If a group $\lG$ acts on a metric space 
we say $g\in \lG$ displaces $p\in X$ by $r$ if $d(p,gp)=r$. We will denote the orbit of $p$ by $\lG\star p$.
\item[$\bullet$] Gromov's short 
generator system (or short basis for short)
of a fundamental group  is defined at the beginning of 
section~\ref{sec: pre}.
\end{enumerate}


\section{Finite generation of fundamental groups }\label{sec: finite gen}
\begin{lem}[Product Lemma]\label{prodlem}
Let $M_i$ be a sequence of manifolds with $\Ric_{M_i}>-\eps_i\to 0$ satisfying
\begin{enumerate}
\item[$\bullet$]
$\overline{B_{r_i}(p_i)}$ is  compact for all $i$ with $r_i\to\infty$  and $p_i\in M_i$, 
\item[$\bullet$]  for every $i$  and $j=1,\ldots,k$ there are harmonic functions $b_j^i\colon B_{r_i}(p_i)\rightarrow \R$ which are   $L$-Lipschitz and fulfill
\[
\fint_{B_{R}(p_i)}\sum_{j,l=1}^k |<\nabla b^i_j, \nabla
b^i_l>-\delta_{jl}|+\sum_{j=1}^k \|\Hess_{b^i_j}\|^2\,d\mu_i\rightarrow 0 \mbox{ for all $R>0$.}
\]
\end{enumerate}
Then  $(B_{r_i}(p_i),p_i)$ subconverges in the pointed Gromov--Hausdorff topology 
to a metric product  $(\R^k\times X,p_\infty)$ for some metric space $X$. 
Moreover, $(b_1^i,\ldots,b_k^i)$ converges to the projection onto the Euclidean factor.
\end{lem}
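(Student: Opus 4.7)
The plan is to pass to a subsequential pointed Gromov--Hausdorff limit, extract limit functions $b_j^\infty$ via Arzel\`a--Ascoli, recognize each $b_j^\infty$ as (up to sign) the Busemann function of a line through $p_\infty$, and then apply the Splitting Theorem (Theorem~\ref{alm-split}) $k$ times.

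First I would use Gromov's precompactness theorem (applicable since $\Ric_{M_i}>-\eps_i\to 0$) together with a diagonal argument on balls of increasing radius inside $B_{r_i}(p_i)$ to pass to a subsequence along which $(B_{r_i}(p_i),p_i)$ converges in the pointed Gromov--Hausdorff topology to some complete pointed space $(Y,p_\infty)$. After translating so that $b_j^i(p_i)=0$, the uniform $L$-Lipschitz bound permits a further Arzel\`a--Ascoli subsequence producing $L$-Lipschitz limits $b_j^\infty\colon Y\to\R$ (uniformly on compact sets).

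The heart of the argument is showing that each $b_j^\infty$ is the Busemann function of a line through $p_\infty$. Applying the segment inequality (Theorem~\ref{seg-ineq}) to
\[
g_i:=\|\Hess_{b_j^i}\|^2+\bigl||\nabla b_j^i|^2-1\bigr|
\]
on $B_R(p_i)$, the hypothesis $\fint_{B_R(p_i)} g_i\to 0$ forces the exceptional set of pairs $(z_1,z_2)\in B_R\times B_R$ for which the line integral $\int_0^{d(z_1,z_2)}g_i\circ\gamma_{z_1z_2}(t)\,dt$ fails to be $o(1)$ to have $\mu\otimes\mu$-measure $o(1)$. Combined with the pointwise ODE $\tfrac{d}{dt}\langle\nabla b_j^i,\dot\gamma\rangle=\Hess_{b_j^i}(\dot\gamma,\dot\gamma)$, this yields for a generic pair $(z_1,z_2)$ that
\[
b_j^i(z_2)-b_j^i(z_1)=d(z_1,z_2)\cdot\langle\nabla b_j^i(z_1),\dot\gamma(0)\rangle+o(1),
\]
with $|\nabla b_j^i(z_1)|$ close to $1$. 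Flowing from a point near $p_i$ in both directions along $\pm\nabla b_j^i/|\nabla b_j^i|$ thus yields two curves whose arc-length and $b_j^i$-displacement agree up to $o(1)$; the small-Hessian bound guarantees they are also almost-geodesic. Their limits concatenate into a line through $p_\infty$ and identify $\pm b_j^\infty$ as its Busemann function.

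Finally I would iterate the Splitting Theorem: $Y\cong\R\times Y^{(1)}$ with $b_1^\infty$ projecting to $\R$. The near-orthogonality $\fint|\langle\nabla b_1^i,\nabla b_2^i\rangle|\to 0$, run through the same segment-inequality and ODE machinery, shows that $b_2^\infty$ is constant on the newly extracted $\R$-fibers, so it descends to $Y^{(1)}$, where the previous paragraph applies to extract a second line orthogonal to the first. After $k$ such steps $Y\cong\R^k\times X$ and $(b_1^\infty,\ldots,b_k^\infty)$ is the projection to the Euclidean factor. The main obstacle is the rigorous passage from the integral Hessian and gradient hypotheses to genuine lines in $Y$; this is precisely the role of the segment inequality, which converts $L^1$ bounds on $M_i$ into pointwise bounds along almost every geodesic. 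A secondary subtlety is ensuring that each $b_j^\infty$ descends cleanly after every splitting, which again follows from the orthogonality estimate evaluated along generic geodesics.
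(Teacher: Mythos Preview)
Your strategy matches the paper's: pass to a limit $(Y,p_\infty)$, extract limit functions $b_j^\infty$, produce lines in $Y$ from the gradient flow of $b_j^i$, and invoke the Splitting Theorem (the paper treats $k=1$ and leaves the iteration as an exercise; your orthogonality argument for the iteration is fine in outline).

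There is, however, a real gap at the sentence ``the small-Hessian bound guarantees they are also almost-geodesic.'' Your hypotheses together with the segment inequality control $\|\Hess_{b_j^i}\|^2$ and $\bigl||\nabla b_j^i|^2-1\bigr|$ along generic \emph{geodesics}; but the integral curves of $\pm\nabla b_j^i/|\nabla b_j^i|$ are not geodesics a priori, so nothing you have written explains why either quantity is small along \emph{them}. The paper closes this differently. It uses that $b_j^i$ is \emph{harmonic}, so the unnormalized gradient flow $\phi_t^i$ of $\nabla b_j^i$ is measure preserving; Fubini then gives
\[
\fint_{B_R(p_i)}\int_0^{t_0}\bigl||\nabla b_i|-1\bigr|(\phi_t^i(q))\,dt\,d\mu_i(q)
=\int_0^{t_0}\fint_{\phi_t^i(B_R(p_i))}\bigl||\nabla b_i|-1\bigr|\,d\mu_i\,dt\longrightarrow 0,
\]
so for most starting points the flow curve has length $t_0+o(1)$ and $b_i$-displacement $\ge t_0-o(1)$. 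Separately the paper first proves $b_\infty$ is $1$-Lipschitz (your Taylor expansion along generic geodesics gives this too, though you do not isolate it); combining, any limit curve whose $b_\infty$-displacement equals its length is forced to be a geodesic, with no Hessian control along the flow needed at all. Note also that your normalized field $\nabla b_j^i/|\nabla b_j^i|$ is \emph{not} divergence free, so even once you invoke harmonicity you must switch to the unnormalized flow to run this argument.
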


The lemma remains true if one just has a uniform lower Ricci curvature bound and
one can also prove a local version of the lemma if $r_i=R$ is a fixed number. 
However, the above  version suffices for our purposes.

\begin{proof}
The main problem is to prove this in the case of $k=1$. 
Put $b_i=b_1^i$. 
After passing to a subsequence we may assume that 
$(B_{r_i}(p_i),p_i)$ converges to some limit space $(Y,p_\infty)$. 
We also may assume that $b_i$ converges to an $L$-Lipschitz map 
$b_\infty\colon Y\rightarrow \R$.

\begin{ste} $b_\infty$  is  $1$-Lipschitz.
\end{ste}

This is essentially immediate from the segment inequality. 
Let $x,y\in Y$ be arbitrary.  Choose $R$ so large that 
$x,y\in B_{R/4}(p_\infty)$ and
let $x_i,y_i\in B_{R/2}(p_i)$ be sequences converging to $x$ and $y$.

For a fixed $\delta<<R$ consider 
all minimal geodesic from points in $B_\delta(x_i)$
to points in $B_{\delta}(y_i)$. 
For each geodesic $\gamma_{pq}$ consider $\int_0^1||\nabla b_i|-1|(\gamma_{pq}(t))\,dt$. 
Since  by Bishop--Gromov the set $B_\delta(x_i)\times B_\delta(x_i)$ 
fills up a fixed portion of the volume of $B_R(p_i)^2$,
 the segment inequality (Theorem~\ref{seg-ineq})
 implies that 
there is a minimal geodesic 
$\gamma_{i}$ 
from a point in $B_\delta(x_i)$ to a point in $B_{\delta}(y_i)$ 
with  $\int_0^1 ||\nabla b_i|-1|(\gamma_{i}(t))\,dt=h_i\to 0$. 
Thus \[
|b_i(x_i)-b_i(y_i)|\le 2L\delta+ L(\gamma_i)(1+h_i)\\
\]
and
$
|b_\infty(x)-b_\infty(y)|\le  2(L+1)\delta +d(x,y).
$
The claim follows as
$\delta$ was arbitrary.

\begin{ste} $b_\infty\colon Y\rightarrow \R$ is a submetry. 
\end{ste}

As the gradient flow $\phi_t^i$ of $b_i$ is measure preserving
and $||\nabla b_i|-1|\le ||\nabla b_i|^2-1|$, 
\begin{eqnarray*}
 \fint_{B_R(p_i)}\!\int_0^{t_0}||\nabla b_i|-1|(\phi_t^i(q))\,dt\,d\mu_i(q)\!\!
&=&\!\! \int_0^{t_0}\!\!\fint_{\phi_{t}^i(B_{R}(p_i)) } ||\nabla b_i|-1|(q)\,d\mu_i(q)\,dt\\
&\le&\!\!C
\int_0^{t_0}\!\!\fint_{B_{R+t_0L}(p_i) }\!\!\! ||\nabla b_i|-1|(q)\,d\mu_i(q)\,dt\\
&\to& 0,
\end{eqnarray*}
where the inequality holds with some constant $C$ satisfying
$\tfrac{\vol(B_{R+t_0L}(p_i))}{\vol(B_R(p_i))}\le C$.
Thus for fixed $R$ and $t_0$ there is $\delta_i\to 0$
such that 
most gradient curves $c_i$ of $b_i$ 
defined on $[0,t_0]$ and starting in $B_R(p)$ 
satisfy $\int_{0}^{t_0}||\nabla b_i|-1|(c_i(t))\,dt\le \delta_i$.
 For these curves we have $|L(c_i)-t_0|\le \delta_i$ 
and $b_i(c_i(t_0))-b_i(c_i(0))\ge t_0-\delta_i$. 
These curves subconverge to unit speed curves $c$ in the limit with $b_{\infty}(c(t_0))-b_{\infty}(c(0))=t_0$. Since $b_\infty$ 
is $1$-Lipschitz, { all such limit curves} must be geodesics. 
In the limit these curves go through every point and thus $b_\infty$ 
is a submetry.

Notice that the proof of Step 2 also 
shows that 
for the submetry $b_\infty$ lines can be lifted to $Y$.
By the splitting theorem of Cheeger and Colding (Theorem \ref{alm-split}) 
the result follows for $k=1$ and the generalization to arbitrary $k$
is an easy exercise.
\end{proof}

\begin{lem}\label{lem: prep gap} Let $(Y_i,\tp_i)$ 
be an inner metric space space endowed with an action 
of a closed subgroup $\lG_i$ of its isometry group, $i\in \N\cup \{\infty\}$.
Suppose $(Y_i,\lG_i,\tp_i)\to (Y_\infty,\lG_\infty,\tp_\infty)$
 in the equivariant Gromov--Hausdorff topology.
Let $\lG_i(r)$ denote the subgroup 
generated by those elements that displace $\tp_i$ by at most  
$r$, $i\in \N\cup \{\infty\}$.
Suppose there are $0\le a<b$ with
$\lG_{\infty}(r)=\lG_{\infty}(\tfrac{a+b}{2})$ for all $r\in 
(a,b)$. 

Then there is some sequence $\eps_i\to 0$ such that 
$\lG_{i}(r)=\lG_i(\tfrac{a+b}{2})$ for all 
$r\in (a+\eps_i, b-\eps_i)$.
\end{lem}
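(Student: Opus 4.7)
The plan is to proceed by contradiction. Suppose the conclusion fails; then there exist $\eps>0$ (which we may take $<\tfrac{b-a}{2}$), a subsequence, and values $r_i\in(a+\eps,b-\eps)$ with $\lG_i(r_i)\neq\lG_i(\tfrac{a+b}{2})$. Since the map $r\mapsto\lG_i(r)$ is monotone, after passing to a further subsequence one of two alternatives holds. Either $r_i\geq\tfrac{a+b}{2}$, and then $\lG_i(r_i)\supsetneq\lG_i(\tfrac{a+b}{2})$ yields a $g_i\in\lG_i$ with $|g_i|\leq r_i\leq b-\eps$ but $g_i\notin\lG_i(\tfrac{a+b}{2})$; or $r_i<\tfrac{a+b}{2}$, and then $\lG_i(r_i)\subsetneq\lG_i(\tfrac{a+b}{2})$ provides a generator $g_i$ of $\lG_i(\tfrac{a+b}{2})$ with $r_i<|g_i|\leq\tfrac{a+b}{2}$ and $g_i\notin\lG_i(r_i)$.

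Now pass to a subsequence so that $g_i\to g_\infty\in\lG_\infty$. In either case $|g_\infty|\leq b-\eps<b$, so by the hypothesis $\lG_\infty(r)=\lG_\infty(\tfrac{a+b}{2})$ for $r\in(a,b)$, together with the containment $\lG_\infty(|g_\infty|)\subseteq\lG_\infty(a+\delta)$ valid for any small $\delta>0$, we deduce $g_\infty\in\lG_\infty(\tfrac{a+b}{2})=\lG_\infty(a+\eta)$ for every $\eta\in(0,\tfrac{b-a}{2})$. Fix once and for all some $\eta<\eps/3$. Then $g_\infty$ can be written as a finite word
\[
g_\infty=h_{j_1}^{\sigma_1}\cdots h_{j_m}^{\sigma_m},\qquad h_1,\ldots,h_k\in\lG_\infty,\ |h_l|\leq a+\eta,\ \sigma_s\in\{\pm1\}.
\]

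The final step uses equivariant Gromov--Hausdorff convergence to lift this word. For each $l$ choose $h_l^i\in\lG_i$ with $h_l^i\to h_l$, so for $i$ large $|h_l^i|\leq a+2\eta<a+\eps$. Set $w_i:=h_{j_1}^{i,\sigma_1}\cdots h_{j_m}^{i,\sigma_m}$; then $w_i\in\lG_i(a+2\eta)$, which is contained in both $\lG_i(r_i)$ and $\lG_i(\tfrac{a+b}{2})$. Continuity of the group operation under equivariant convergence gives $w_i\to g_\infty$, hence $w_i^{-1}g_i\to e$, so $|w_i^{-1}g_i|<\eps$ for $i$ large and therefore $w_i^{-1}g_i\in\lG_i(\eps)\subseteq\lG_i(r_i)\cap\lG_i(\tfrac{a+b}{2})$. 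Multiplying, $g_i=w_i\cdot(w_i^{-1}g_i)$ lies in $\lG_i(\tfrac{a+b}{2})$ in the first case and in $\lG_i(r_i)$ in the second, contradicting the choice of $g_i$. There is no single ``hard part'' here: the only mildly delicate point is that $w_i$ need not equal $g_i$, so one must absorb the difference $w_i^{-1}g_i$ into the small subgroup using that its norm tends to $0$.
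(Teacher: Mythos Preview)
Your proof is correct, but the route differs from the paper's. The paper also argues by contradiction, picking $g_i\in\lG_i(r_2)\setminus\lG_i(r_1)$ for fixed $r_1<r_2\in(a,b)$ with $|g_i|\le r_2$. Rather than lifting a word from the limit, it observes that $g_i\notin\lG_i(r_1)$ is equivalent to a purely geometric statement about the orbit: every finite chain $\tp_i=x_0,\ldots,x_h=g_i\tp_i$ of orbit points must contain a jump of length $\ge r_1$. This ``no short chain'' property passes directly to the equivariant Gromov--Hausdorff limit, yielding $g_\infty\notin\lG_\infty((r_1+a)/2)$, which contradicts the hypothesis.

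The two arguments are in a sense dual: the paper pushes the \emph{obstruction} (no short chain) from $i$ to $\infty$, while you pull the \emph{witness} (a short word for $g_\infty$) from $\infty$ back to $i$. The paper's version is a bit cleaner because it sidesteps two minor technicalities you had to handle: (i) lifting a finite product of generators along equivariant GH convergence and controlling the product, and (ii) absorbing the residual error $w_i^{-1}g_i$ into the small subgroup. Your approach, on the other hand, makes more explicit use of the group structure and would generalize more readily to settings where one wants to track actual words rather than orbit chains. Incidentally, your case split $r_i\gtrless\tfrac{a+b}{2}$ is unnecessary: both cases reduce to the single situation $g_i\in\lG_i(r_2)\setminus\lG_i(r_1)$ with $a<r_1<r_2<b$, which is how the paper sets it up.
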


\begin{proof}
Suppose on the contrary 
we can find $g_i\in \lG_i(r_2)\setminus \lG_i(r_1)$ 
for fixed $r_1<r_2\in (a,b)$.
Without loss of generality $d(\tp_i,g_i\tp_i)\le r_2$.

Since $g_i\not\in \lG_i(r_1)$ 
it follows  that for any finite sequence of orbit points 
$\tp_{i}=x_1,\ldots,x_h=g_i \tp_i\in \lG_i\star \tp_i$  
there is one $j\in\{1,\ldots,h\}$ with $d(x_j,x_{j+1})\ge r_1$.
Clearly this property  carries 
over to the limit and implies
that $g_\infty\in \lG_\infty(r_2)$ is not contained in
$\lG((r_1+a)/2)$ -- a contradiction.
\end{proof}

\begin{lem}\label{lem:short}
Suppose $(M^n_i,q_i)$ converges to $(\R^k\times K,q_\infty)$ where $\Ric_{M_i}\ge -1/i$ and $K$ is compact.  
Assume the action of $\pi_1(M_i)$ on the universal cover $(\tM_i,\tq_i)$
converges to a 
limit action of a group $\lG$ on some limit space $(Y,\tq_\infty)$. 

Then $\lG(r)=\lG(r')$ for all $r,r'> 2\diam(K)$.
\end{lem}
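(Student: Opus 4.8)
The plan is to first recognize the limit $(Y,\tq_\infty)$ as a metric product $\R^k\times X$ on which $\lG$ acts trivially on the Euclidean factor and cocompactly on $X$ with $\diam(X/\lG)=\diam(K)$, and then to run a short-basis argument on $X$.

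First I would establish the product structure. Since $(M^n_i,q_i)\to(\R^k\times K,q_\infty)$, the Euclidean factor supplies $k$ independent lines through the base point of the limit, so the Cheeger--Colding construction recalled after Theorem~\ref{hess-ineq} yields harmonic functions $b^i_1,\dots,b^i_k\co B_{r_i}(q_i)\to\R$ with $r_i\to\infty$, uniformly Lipschitz, satisfying
\[
\fint_{B_R(q_i)}\Bigl(\sum_{j,l}\bigl|\langle\nabla b^i_j,\nabla b^i_l\rangle-\delta_{jl}\bigr|+\sum_j\|\Hess_{b^i_j}\|^2\Bigr)\,d\mu_i\longrightarrow 0
\]
for every fixed $R$. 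I would then pull these back to the universal cover, $\tilde b^i_j:=b^i_j\circ\pi_i$ on $B_{r_i}(\tq_i)$, where $\pi_i\co\tM_i\to M_i$ is the covering projection. Harmonicity is local, so the $\tilde b^i_j$ are harmonic and still uniformly Lipschitz; and since $\pi_i$ is a Riemannian covering the integrand above is simply pulled back, so the Covering Lemma~\ref{lem: cover} (in the version for an arbitrary cover, on balls of radius $R$) bounds its average over $B_R(\tq_i)$ by $C(n,R)$ times its average over $B_R(q_i)$, which tends to $0$. The Product Lemma~\ref{prodlem} then shows $(\tM_i,\tq_i)$ subconverges to a metric product $\R^k\times X$ with $(\tilde b^i_1,\dots,\tilde b^i_k)$ converging to the projection $\beta\co\R^k\times X\to\R^k$; since the full sequence already converges to $Y$, this forces $Y=\R^k\times X$.

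Next I would read off the limit action. Each deck transformation $\gamma\in\pi_1(M_i)$ satisfies $\pi_i\circ\gamma=\pi_i$, hence $\tilde b^i_j\circ\gamma=\tilde b^i_j$; passing to a limit $\gamma_i\to g\in\lG$ and using $\tilde b^i_j\to\beta_j$ gives $\beta\circ g=\beta$ for all $g\in\lG$. Thus $g$ maps each slice $\{v\}\times X$ isometrically onto itself, and comparing $d((v,x),(v',x))$ with $d(g(v,x),g(v',x))$ shows the induced isometry of $X$ is independent of $v$; so $\lG$ acts trivially on the Euclidean factor and may be viewed as acting on $X$, with $d(\tq_\infty,g\tq_\infty)=d_X(x_0,gx_0)$ if $\tq_\infty=(0,x_0)$. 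Equivariant Gromov--Hausdorff convergence gives $Y/\lG=\lim M_i=\R^k\times K$, whereas $Y/\lG=\R^k\times(X/\lG)$; hence $X/\lG=K$ and $D_0:=\diam(X/\lG)=\diam(K)<\infty$. Finally, for the short-basis step, given $r>2D_0$ and $g\in\lG$ I would take a minimizing geodesic $\sigma\co[0,\ell]\to X$ from $x_0$ to $gx_0$ with $\ell=d_X(x_0,gx_0)$, pick $0=t_0<t_1<\dots<t_m=\ell$ with $t_j-t_{j-1}<r-2D_0$, choose $h_j\in\lG$ with $d_X(\sigma(t_j),h_jx_0)\le D_0$ for $0<j<m$ and $h_0=e$, $h_m=g$; then $g=\prod_{j=1}^m h_{j-1}^{-1}h_j$ and $d_X(x_0,h_{j-1}^{-1}h_jx_0)=d_X(h_{j-1}x_0,h_jx_0)\le D_0+(r-2D_0)+D_0=r$, so each factor lies in $\lG(r)$ and therefore $g\in\lG(r)$. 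Hence $\lG(r)=\lG$ for all $r>2\diam(K)$, which is the assertion.

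The step I expect to be the main obstacle is the first one: obtaining the splitting $Y=\R^k\times X$ \emph{together} with the fact that $\lG$ acts trivially on the $\R^k$-factor. The device that makes it work is lifting the Cheeger--Colding harmonic functions to $\tM_i$; the Covering Lemma is exactly what guarantees that the $L^2$-Hessian estimate survives the lift so that the Product Lemma applies, and the exact invariance $\tilde b^i_j\circ\gamma=\tilde b^i_j$ is what forces the equivariance in the limit. The remaining two steps are then routine.
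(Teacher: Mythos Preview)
Your proof is correct, but the route to the splitting $Y=\R^k\times X$ with $\lG$ acting trivially on the first factor differs from the paper's. The paper works entirely on the limit: since $Y/\lG\cong\R^k\times K$ there is a submetry $\sigma\colon Y\to\R^k$, lines in $\R^k$ lift to lines in $Y$, and the splitting theorem forces $Y=\R^k\times Z$ with $\sigma$ the projection; triviality of the $\lG$-action on $\R^k$ is then automatic because $\sigma$ is $\lG$-invariant by construction. Your argument instead produces the splitting on the sequence level by lifting the Cheeger--Colding harmonic functions to $\tM_i$, using the Covering Lemma to transport the Hessian estimate, and applying the Product Lemma; the deck-transformation invariance $\tilde b^i_j\circ\gamma=\tilde b^i_j$ then yields the trivial action in the limit. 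Both work, but the paper's approach is shorter and avoids the need to invoke Theorem~\ref{hess-ineq} in the $\R^k\times K$ (rather than $\R^k$) setting and to extend the Covering Lemma to balls of radius $R$. For the final step, the paper uses a midpoint argument showing $\lG(r)\subset\lG(r/2+\diam(K))$ and iterates, while your direct geodesic subdivision gives $\lG(r)=\lG$ in one shot; these are minor variants of the same cocompactness idea.
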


\begin{proof} Since
$Y/\lG$ is isometric to $\R^k\times K$, it follows that
there is a submetry $\sigma\colon Y\rightarrow \R^k$. 
Hence lines in $\R^k$ can be lifted to lines 
in $Y$ and it is immediate from the splitting theorem (Theorem~\ref{alm-split}) 
that this submetry 
has to be linear, that is, for any geodesic $c$ in $Y$ 
the curve $\sigma\circ c$ is affine linear. 
We get 
a splitting $Y=\R^k\times Z$ such that $\lG$ acts trivially on $\R^k$ 
and on $Z$ with compact quotient $K$.
We may think of $\tq_\infty$ as a point in $Z$.
For $g\in \lG$ consider a mid point $x\in Z$ 
of $\tq_\infty$ and $g \tq_\infty$. 
Because  $Z/\lG=K$ we can find 
$g_2\in \lG$ with $d(g_2\tq_\infty,x)\le \diam(K)$. 
Clearly 
\begin{eqnarray*} d(\tq_\infty,g_2\tq_\infty)&\le& \tfrac{1}{2}d(\tq_\infty,g\tq_\infty)+\diam(K)\\
d(\tq_\infty,g_2^{-1}g\tq_\infty)&=&d(g_2\tq_\infty, g\tq_\infty)\le \tfrac{1}{2}d(\tq_\infty,g\tq_\infty)+\diam(K).
\end{eqnarray*}

This proves $\lG(r)\subset \lG\bigl(r/2+\diam(K)\bigr)$ and the lemma follows.
\end{proof}

\begin{lem}[Gap Lemma]\label{lem: gap}
Suppose we have a sequence of manifolds
$(M_i,p_i)$ with $\Ric_{M_i}\ge -(n-1)$
converging to some limit space $(X,p_\infty)$ and
suppose that the limit point $p_\infty$ is regular. 
Then there is a sequence 
$\eps_i\to 0$ and a number $\delta>0$ such that the following holds. 
If $\gamma_1,\ldots,\gamma_{l_i}$ 
is a short basis of $\pi_1(M_i,p_i)$ 
then either $|\gamma_j|\ge \delta$ or 
$|\gamma_j|<\eps_i$.

Moreover, if 
the action of $\pi_1(M_i)$ on the universal cover $(\tM_i,\tp_i)$
converges to an action of the limit group 
$\lG$ on $(Y,\tp_\infty)$, 
then the orbit $\lG\star \tp_\infty$ is locally path connected. Here 
$\tp_i$ denotes a lift of $p_i$.
\end{lem}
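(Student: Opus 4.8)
The plan is to prove the two assertions in turn: the length gap by a contradiction and rescaling argument, and then local path connectedness of the orbit as a consequence.

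\textbf{The length gap --- rescaling.} I would first fix the auxiliary sequence $\eps_i$: choose $R_i\to\infty$ slowly enough that $R_i\cdot d_{\GH}\bigl(B_2(p_i),B_2(p_\infty)\bigr)\to 0$, and put $\eps_i:=1/R_i$. Since $p_\infty$ is regular its only tangent cone is $\R^k$, so for any sequence $\lambda_i\le R_i$ with $\lambda_i\to\infty$ one gets $(\lambda_iM_i,p_i)\to(\R^k,0)$. Now suppose the gap fails. A diagonal argument over $\delta\downarrow 0$ then produces a subsequence and short basis elements $\gamma_i$ of $\pi_1(M_i,p_i)$ with $r_i:=|\gamma_i|\to 0$ and $r_i\ge\eps_i$. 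Set $\lambda_i:=1/r_i$; then $\lambda_i\to\infty$, $\lambda_i\le R_i$, and in $\lambda_iM_i$ the element $\gamma_i$ has norm $1$. Passing to a subsequence, the deck actions converge, $(\lambda_i\tM_i,\pi_1(M_i),\tp_i)\to(\tilde Y,\lG,\tp_\infty)$, with $\tilde Y/\lG=\R^k$. The resulting submetry $\tilde Y\to\R^k$ lifts lines, so by the Splitting Theorem~\ref{alm-split} it is linear, $\tilde Y=\R^k\times Z$, with $\lG$ acting trivially on the Euclidean factor and transitively on the connected length space $Z$.

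\textbf{The length gap --- contradiction.} Let $\Gamma_i'$ be the subgroup generated by the short basis elements preceding $\gamma_i$. Minimality of $\gamma_i$ in the short basis construction means every element of $\pi_1(M_i)\setminus\Gamma_i'$ has $\lambda_i$-norm $\ge 1$. Applied to $g^{-1}\gamma_i$ with $g\in\Gamma_i'$ this gives $d_{\lambda_i}(\gamma_i\tp_i,\Gamma_i'\star\tp_i)\ge 1$; applied to arbitrary elements it gives $\{g:|g|_{\lambda_i}<1\}\subseteq\Gamma_i'$. Passing to the limit, $\lG':=\lim\Gamma_i'$ contains $\lG(r)$ for every $r<1$, while the orbit point $\gamma_\infty\tp_\infty$ of $\gamma_\infty:=\lim\gamma_i$ (which has $|\gamma_\infty|=1$) lies at distance $\ge 1$ from $\lG'\star\tp_\infty\supseteq\lG(\tfrac12)\star\tp_\infty$. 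But since the stabilizer $\lG_{\tp_\infty}$ is compact and all elements carrying $\tp_\infty$ to a fixed point have the same norm, each $\lG(\rho)\star\tp_\infty$ is open and closed in $\lG\star\tp_\infty$; as $\lG\star\tp_\infty=Z$ is connected and nonempty this forces $\lG(\tfrac12)\star\tp_\infty=Z\ni\gamma_\infty\tp_\infty$, contradicting the distance estimate (if $Z=\{pt\}$ one gets the equally absurd $|\gamma_\infty|=0$). This establishes the gap.

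\textbf{Local path connectedness.} Using the gap together with the defining property of a short basis, for every $\rho\in(\eps_i,\delta)$ the subgroup $\pi_1(M_i)(\rho)=:\Lambda_i$ generated by the elements displacing $\tp_i$ by at most $\rho$ is independent of $\rho$, is generated by elements of norm $<\eps_i$, and contains every element of norm $<\delta$. Passing to a subsequence, $\Lambda_i\to\Lambda_\infty\le\lG$ with $\lG(\rho)\subseteq\Lambda_\infty$ for $\rho<\delta$; the discrete orbits $\Lambda_i\star\tp_i$ are $\eps_i$-chain connected within themselves, hence $\Lambda_\infty\star\tp_\infty$ is connected, and by the stabilizer-coset argument above each $\lG(\rho)\star\tp_\infty$ is a clopen subset of $\lG\star\tp_\infty$ containing a neighborhood of $\tp_\infty$. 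To upgrade connectedness to local path connectedness I would iterate the rescaling: blowing up at $\tp_\infty$ (again using regularity of $p_\infty$) reduces the problem to a transitive action on a connected limit space whose approximating manifolds are, by Theorem~\ref{noncol}, contractible on every bounded scale; this lets one replace the $\eps_i$-chains by genuine paths of controlled size, showing that the path component of $\tp_\infty$ is open. Homogeneity of the orbit under $\lG$ then yields local path connectedness at every point.

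\textbf{Main obstacle.} The delicate point throughout is the one exhibited by the $\Z/l$-example in Section~\ref{sec: pre}: a limit of subgroups can be strictly larger than the naive limit, so a priori $\gamma_\infty$ could belong to $\lim\Gamma_i'$ even though $\gamma_i\notin\Gamma_i'$. What rescues the argument is that the almost-orthogonality $|\gamma_i|\le|\gamma_j^{-1}\gamma_i|$ of a short basis is a statement about distances between orbit points, and therefore survives the equivariant Gromov--Hausdorff limit, keeping $\gamma_\infty\tp_\infty$ a definite distance away from the whole limit of the orbits of $\Gamma_i'$. For the second assertion the analogous difficulty is converting the chains of short generators into honest, suitably localized paths, which is precisely where the Cheeger--Colding local contractibility (Theorem~\ref{noncol}) enters.
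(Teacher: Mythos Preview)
Your length-gap argument is correct and takes a different route from the paper. The paper first establishes a quantitative \emph{midpoint claim} for the limit orbit (for $x\in\lG\star\tp_\infty$ with $d(\tp_\infty,x)<\delta$ there exists $y\in\lG\star\tp_\infty$ with $\max\{d(\tp_\infty,y),d(y,x)\}\le 0.51\,d(\tp_\infty,x)$), proved by blowing up the \emph{limit} $Y$ at $\tp_\infty$; both local path connectedness (via explicit H\"older paths built from iterated midpoints) and the gap (via Lemma~\ref{lem: prep gap}) are then consequences. You instead rescale the \emph{sequence} $M_i$ directly and use that the short-basis estimate $d(\gamma_i\tp_i,\Gamma_i'\star\tp_i)\ge 1$ survives the equivariant limit, contradicting connectedness of the fiber $Z$ via your clopen argument. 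Both approaches rest on the same structural fact (after blow-up the orbits are slices $\{v\}\times Z$); yours isolates the gap more directly, at the cost of leaving path connectedness for separate treatment.

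That separate treatment has a genuine gap. First, the implication ``$\Lambda_i\star\tp_i$ is $\eps_i$-chain connected $\Rightarrow$ $\Lambda_\infty\star\tp_\infty$ is connected'' is not justified: the chains can have unbounded length and wander arbitrarily far from $\tp_i$, so pointed equivariant convergence gives no control over them. This is exactly the phenomenon you flag in your final paragraph (and which the $\Z_l\to\Z^2$ example in Section~\ref{sec: pre} illustrates: $\Z_l$ is generated by an element of tiny norm, yet its limit $\Z^2$ is discrete), and here short-basis orthogonality does not help. Second, the upgrade to \emph{path} connectedness via Theorem~\ref{noncol} is misplaced: that theorem requires convergence to $\R^n$, whereas your rescaled covers converge to $\R^k\times Z$ with $k<n$, and in any case it concerns contractibility of balls in the approximating manifolds, not paths lying inside the (discrete, in $\tM_i$) orbit. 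What is actually needed is a scale-invariant quantitative statement that one can iterate---precisely the paper's $0.51$-halving claim, which yields H\"older paths inside $\lG\star\tp_\infty\cap B_r(\tp_\infty)$ for every $r<\delta$. Your rescaling idea is morally the proof of that claim, but without extracting such a quantitative conclusion the argument does not close.
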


\begin{proof}
That the orbit is locally path connected is a consequence 
of the following\\[2ex]
{\bf Claim.} There is a $\delta>0$ such that 
for all points  $x\in\lG\star \tp_{\infty}$ 
with $d(\tp_\infty,x)<\delta$ we can find $y\in \lG\star \tp_{\infty}$ 
with $\max\{d(\tp_\infty,y), d(y,x)\}\le 0.51\cdot d(\tp_\infty,x)$.\\[2ex]
To prove the claim we argue by contradiction 
and assume that for some $\delta_h>0$ converging to $0$ 
we can find $g_h\in \lG$ 
with $d(g_h\tp_\infty,\tp_\infty)=\delta_h$ such that
for all 
$a\in \lG$ with $d(a\tp_\infty,\tp_{\infty})\le 0.51\delta_h$ 
we have $d(a\tp_\infty,g_h\tp_{\infty})> 0.51\delta_h$.

After passing to a subsequence we can assume that
$(\tfrac{1}{\delta_h}Y,\tp_\infty)$ converges to a tangent cone 
$(C_{p_\infty}Y,o)$ and that
the action of $\lG$ on $\tfrac{1}{\delta_h}Y$ converges 
to an action of  some group $\gK$ on $C_{p_\infty}Y$.
 Let  $g_\infty\in \gK$ be the limit of $g_h$. 
Clearly $d(o,g_\infty o)=1$ and for all 
$a\in \gK$ with $d(o,a o)< 0.51$ we have 
$d(go,a o)\ge 0.51$. In particular the orbit $\gK\star o$ is not convex.

Because  $X=Y/\lG$ the quotient 
$C_{\tp_\infty}Y/\gK$ is isometric to 
$\lim_{h\to \infty} (\tfrac{1}{\eps_h}X,p_\infty)$ 
which by assumption is isometric to some Euclidean space
$\R^k$.

As in the proof of Lemma~\ref{lem:short} it follows
that $C_{\tp_\infty}Y$ is isometric to $\R^k\times Z$ 
and the orbits of the action of $\gK$ are given by $v\times Z$ 
where $v\in \R^k$. In particular, the 
$\gK$ orbits are convex -- a contradiction. \\[2ex]
Clearly, the claim implies that  
$\lG\star\tp_\infty\cap B_{r}(\tp_\infty)$
is  path connected for $r<\delta$.
In fact, it is now easy to construct 
a H\"older continuous path from $\tp_\infty$ 
to any point in $\lG\star\tp_\infty\cap B_{r}(\tp_\infty)$.
Of course, the claim also implies 
$\lG(\eps)=\lG(\delta)$ for all $\eps \in (0,\delta]$.
Therefore the first part of
Lemma~\ref{lem: gap} follows from Lemma~\ref{lem: prep gap}.
\end{proof}

Theorem ~\ref{intro: finite generation} will follow from the following slightly more general result.

\begin{thm}\label{thm: finite generation} Given $n$ and $R$ there is a constant $C$ 
such that the following holds. 
Suppose $(M,g)$ is an $n$-manifold, $p\in M$,  $\overline{B_{2R}(p)}$ is  compact 
and $Ric>-(n-1)$ on $B_{2R}(p)$. Furthermore, we assume that $\pi_1(M,p)$
is generated by loops of
length $\le R$. 
Then $\pi_1(M,p)$ can be generated by $C$ loops of length $\le R$. 

Moreover, there is a point $q\in B_{R/2}(p)$ such that 
 any Gromov short 
generator system of $\pi_1(M,q)$ has at most $C$ elements.
\end{thm}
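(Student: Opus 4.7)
The proof proceeds by contradiction, combined with a scale-reduction argument at a regular limit point. Assume no such constant $C$ exists. Then there is a sequence $(M_i, g_i, p_i)$ satisfying the hypotheses for which the minimal size of a Gromov short generator system of $\pi_1(M_i, q)$ over $q \in B_{R/2}(p_i)$ exceeds $i$. By Gromov's precompactness, after passing to a subsequence one has $(M_i, p_i) \to (X, p_\infty)$ in the pointed Gromov--Hausdorff sense; lifting to universal covers and using equivariant Gromov--Hausdorff convergence yields $(\tM_i, \Gamma_i, \tp_i) \to (Y, \lG, \tp_\infty)$ with $Y/\lG = X$, where $\Gamma_i := \pi_1(M_i, p_i)$.

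By the Cheeger--Colding regularity theorem, regular points of $X$ have full measure, so I would pick a regular point $q_\infty \in B_{R/2}(p_\infty)$ of some dimension $k \le n$, approximations $q_i \to q_\infty$ in $B_{R/2}(p_i)$, and lifts $\tq_i \in \tM_i$. Since $\pi_1(M_i, q_i)$ is generated by loops of length at most $R + 2 d(p_i, q_i) \le 2R$, every element of a short basis $\{\gamma_j^i\}$ of $\Gamma_i$ at $\tq_i$ satisfies $|\gamma_j^i| \le 2R$. Applying the Gap Lemma (Lemma~\ref{lem: gap}) at the regular point $q_\infty$ gives a uniform $\delta > 0$ and $\eps_i \to 0$ such that each $|\gamma_j^i|$ is either $\ge \delta$ (\emph{long}) or $< \eps_i$ (\emph{short}). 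The long generators are few: by the short basis property $d(\gamma_j^i \tq_i, \gamma_l^i \tq_i) \ge |\gamma_j^i| \ge \delta$ for $j > l$, disjoint balls of radius $\delta/4$ around their orbit points lie inside $B_{2R+\delta}(\tq_i)$, and Bishop--Gromov bounds their count by some $C_0 = C_0(n, R, \delta)$.

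The main obstacle is bounding the number of short generators. The plan is to rescale: setting $\lambda_i = 1/\eps_i \to \infty$, the subgroup $\Upsilon_i$ generated by the short elements acts on $(\lambda_i \tM_i, \tq_i)$ with $O(1)$ displacement, and after a further subsequence one passes to a new equivariant limit. Since the rescaled base $(\lambda_i M_i, q_i)$ converges to the tangent cone $\R^k$ of $X$ at the regular point $q_\infty$, the splitting theorem (Theorem~\ref{alm-split}), combined with Lemma~\ref{lem:short}, forces the quotient of the new equivariant limit to split off a Euclidean factor of dimension $\ge k$, strictly increasing the ``regular dimension'' at each scale. Iterating at most $n$ times reaches the noncollapsed case $k = n$, where the rescaled ball converges to $\R^n$; then Theorem~\ref{noncol} makes it contractible in the next-larger ball, and since $\pi_1(M_i, p_i)$ is generated by loops of length $\le R$, this forces $\pi_1(M_i, p_i) = \{e\}$, contradicting the assumption of more than $i$ short-basis elements.

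The first conclusion---that $\pi_1(M, p)$ can be generated by $C$ loops of length $\le R$---is obtained by running the same scale-reduction argument directly at the basepoint $p$: the short basis of $\pi_1(M, p)$ at $\tp$ consists automatically of loops of length $\le R$ by the generation hypothesis, and bounding its cardinality by $C(n, R)$ via the same contradiction argument furnishes the desired $C$ generating loops. The delicate technical point throughout is verifying that each rescaling step produces a well-defined equivariant limit whose geometry inherits a higher-dimensional regular structure; the Ricci bound $\Ric \ge -(n-1)/\lambda_i^2$ degenerates favorably in the limit, and the splitting theorem together with the Gap Lemma at each scale are what makes the induction close.
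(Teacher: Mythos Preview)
Your overall strategy---contradict, pass to a limit, rescale at a regular point, and induct on the dimension of the limit---matches the paper's proof. But the heart of the argument, the claim that rescaling \emph{strictly increases} the dimension, has a genuine gap.

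You set $\lambda_i = 1/\eps_i$ and assert that $(\lambda_i M_i, q_i) \to \R^k$, the tangent cone of $X$ at $q_\infty$. This is not justified: convergence to the tangent cone requires $\lambda_i \to \infty$ \emph{slowly} relative to the rate at which $M_i \to X$, whereas $\eps_i$ from the Gap Lemma may go to zero arbitrarily fast. If instead you rescale slowly enough to land on $\R^k$, the short generators (length $<\eps_i$) still have length tending to zero in the rescaled metric, so their limit action is trivial and you have gained nothing: the new limit is $\R^k$ again, same dimension. The invocation of Lemma~\ref{lem:short} and the splitting theorem does not by itself force the dimension to jump; you need a nontrivial compact factor $K$ in the rescaled limit, and you have not produced one.

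The paper closes this gap with a two-step mechanism. First (Step~1) it rescales slowly to reduce to the case where the limit is already $\R^k$. Then (Step~2) it uses the Cheeger--Colding harmonic approximations (Theorem~\ref{hess-ineq}) together with the weak $1$-$1$ inequality to find a point $z_i$ at which the Hessian estimates hold on \emph{all} scales; the Product Lemma~\ref{prodlem} then guarantees that \emph{every} blow-up at $z_i$ is of the form $\R^k \times Z$. Finally it chooses the rescaling factor to be exactly the largest scale $r_i$ at which some nearby point has a short generator of length $r_i$; after rescaling, that generator has length $1$, and Lemmas~\ref{lem: prep gap} and~\ref{lem:short} force $Z \neq \mathrm{pt}$. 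This is the missing idea in your sketch.

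A smaller issue: for the first conclusion you propose to rerun the argument at $p$ itself, but the Gap Lemma requires the limit basepoint to be regular, which $p_\infty$ need not be. The paper instead deduces the first part from the second: apply the second statement (at a good $q$) to bound the number of generators of the subgroup generated by loops of length $\le R/10$, and bound the remaining short generators of length in $[R/10, R]$ by Bishop--Gromov.
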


\begin{proof}
We first want to prove the second part of the theorem.
We consider a Gromov short generator system 
$\gamma_1,\ldots, \gamma_k$ of $\pi_1(M,q)$.
With $|\gamma_i|\le |\gamma_{i+1}|$ for some $q\in B_{R/2}(p)$. 
Clearly $|\gamma_i|\le 2R$
and 
it  easily follows from Bishop--Gromov relative volume comparison that there are effective a priori 
bounds (depending only on $n,R$ and $r$) for the number of short generators with length $\ge r$.

We will argue by contradiction. 
It will be convenient to modify the assumption on $\pi_1$ being boundedly generated.  
We call $(M_i,p_i)$ a contradicting sequence if the following holds 
\begin{enumerate}
\item[$\bullet$] $\overline{B_3(p_i)}$  is compact and $\Ric>-(n-1)$ on $B_3(p_i)$.
\item[$\bullet$] For all $q_i\in B_1(p_i)$ the 
number of short 
generators of $\pi_1(M_i,q_i)$ of length $\le 4$ is larger than $2^i$. 
\end{enumerate}

Clearly it suffices to rule out the existence of a contradicting sequence. 
We may assume that $(B_3(p_i),p_i)$ converges to some limit space $(X,p_\infty)$. 
We put
\[
\dim(X)= \max\bigl\{k\mid\mbox{ there is a regular $x\in B_{1/4}(p_\infty)$ 
with $C_xX\cong \R^k$}\bigr\}.
\]
We argue by reverse induction on $\dim(X)$.
We start our induction at $\dim(X)\ge n+1$. 
It is well known that this can not happen so there is nothing to prove.
The induction step is subdivided in two substeps.\\[2ex]
{\bf Step 1.} For any contradicting sequence $(M_i,p_i)$ 
converging to $(X,p_\infty)$ 
there is a new contradicting sequence converging to $(\R^{\dim(X)},0)$. \\[2ex]
Choose $q_i\in B_{1/4}(p_i)$ converging to  some point 
$q_\infty\in B_{1/4}(p_\infty)$ with $C_{q_\infty}X=\R^{\dim(X)}$.
After passing to a subsequence we can assume that 
for all $x_i\in B_{1/4}(q_i)$ the number of short generators 
of $\pi_1(M_i,x_i)$ of length $\le 4$ is at least $3^i$.

Since the number of short generators of length $\in [\eps,4]$ is bounded 
by some a priori constant, we can find a sequence $\lambda_i\to \infty$ very slowly 
such that 
for every $x\in B_{1/\lambda_i}(q_i)$ the number of short 
generators of $\pi_1(M_i,x)$ of length $\le 4/\lambda_i$ is at least $2^i$
and 
$(\lambda_iM_i,q_i)$ converges to $(C_{q_\infty}X,0)=(\R^{\dim(X)},0)$. 
Replacing $M_i$ by $\lambda_iM_i$ and $p_i$ by $q_i$ gives a new contradicting sequence, as claimed.\\[2ex] 
{\bf Step 2.} If there is a contradicting sequence converging to $(\R^k,0)$,
then we can find a contradicting sequence converging 
to a space whose dimension is larger than $k$. \\[2ex]
Let $(M_i,p_i)$ be a contradicting sequence converging to $(\R^k,0)$. 
We may assume without loss of generality that for some $r_i\to \infty $ and $\beps_i \to 0$
the Ricci curvature on $B_{r_i}(p_i)$ is bounded below by $-\beps_i$ 
and that  $\overline{B_{r_i}(p_i)}$ is compact. 
In fact, one can run through the arguments of the first 
step, to see that, after passing to a subsequence, a rescaling by $\lambda_i\to \infty$ 
(very slowly) is always possible.

By Theorem~\ref{hess-ineq} we can find 
a harmonic map
$(b_1^i,\ldots,b_k^i)\colon B_1(q_i)\to \R^k$
with

\[
\fint_{B_{1}(q_i)}\sum_{j,l=1}^k\bigl( |\ml\nabla b^i_l,\nabla b^i_j\mr-\delta_{lj}| +\|\Hess(b^i_l)\|^2\bigr)\,d\mu_i=\eps_i\to 0\hspace*{0.5em}\mbox{ and}
\]
\[
|\nabla b^i_j|\le C(n). \]

By the weak (1,1) inequality (Lemma~\ref{lem: weak11}) we can find $z_i\in B_{1/2}(q_i)$ 
with 

\[
\fint_{B_{r}(z_i)}\sum_{j,l=1}^k\bigl( |\ml\nabla b^i_l,\nabla b^i_j\mr-\delta_{lj}| +\|\Hess(b^i_l)\|^2\bigr)d\mu_i\le C\eps_i\to 0
\]
for all $r\le 1/4$.
By the Product Lemma~{\eqref{prodlem}}, for any sequence $\mu_i\to \infty$ 
the spaces $(\mu_iB_r(z_i),z_i)$ subconverge to a metric product $(\R^k\times Z,z_\infty)$ for some $Z$ 
depending on the rescaling. 

From Lemmas~\ref{lem: prep gap} and  ~\ref{lem:short}  we deduce that there is a sequence
$\delta_i\to 0$ such that for all $z_i\in B_2(p_i)$ 
the short generator system of 
$\pi_1(M_i,z_i)$ does not contain any elements with length in $[\delta_i, 4]$.
Choose $r_i\le 1$ maximal with the property that
there is $y_i\in B_{r_i}(z_i)$ 
such that the short generator system of $\pi_1(M_i,y_i)$ contains 
one generator of length $r_i$. 
We have seen above that $r_i\le \delta_i \to 0$.

Put $N_i=\tfrac{1}{r_i}M_i$.  By construction,
$\pi_1(N_i,y_i)$  still has $2^i$ short generators of length $\le 1$
for all $y_i\in B_1( z_i) \subset N_i$, 
and there is one with length 1 for a suitable $y_i$. 
By the Product Lemma~{\eqref{prodlem}}, $(N_i,z_i)$ subconverges to a product 
$(\R^k\times Z, z_\infty)$. Lemmas~\ref{lem: prep gap} and  ~\ref{lem:short} 
imply that $Z$ can not be a point
and the claim is proved.

In order to prove the first part of the theorem 
we consider the subgroup of $\pi_1(M,p)$ generated by 
loops of length $\le R/10$. By the second part 
this subgroup can be generated by $C(n,R)$ elements of length 
$\le 2R/5$. Since the number of short generators of $\pi_1(M,p)$ with length 
in $[R/10,R]$ is bounded by some a priori constant the theorem 
follows.
\end{proof}

Finally, let us  mention that there is a measured version of 
Theorem~\ref{thm: finite generation}.

\begin{thm}\label{localbound}
For any $n>1$ and any $\eps\in (0,1)$ there exists $C(n,\eps)$ such that if $\Ric(M^n)\ge-(n-1)$ 
on $B_3(p)$  with $\overline{B_3(p)}$ being compact, then 
there is  a subset $B_1(p)'\subset B_1(p)$ with
$\vol B_1(p)'\ge (1-\eps)\vol B_1(p)$ and  
any short basis of $\pi_1(M,q)$  has at most $C(n,\eps)$ elements of length $\le 1$
for any $q\in B_1(p)'$.
\end{thm}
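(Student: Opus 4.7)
The plan is to adapt the reverse induction on dimension from the proof of Theorem~\ref{thm: finite generation} to the measured setting. Argue by contradiction: suppose for some $\eps\in(0,1)$ there is a sequence $(M_i,p_i)$ with $\Ric>-(n-1)$ on $B_3(p_i)$, $\overline{B_3(p_i)}$ compact, and measurable subsets $\gB_i\subset B_1(p_i)$ with $\vol(\gB_i)\ge \eps\vol(B_1(p_i))$ such that for every $q\in\gB_i$ some short basis of $\pi_1(M_i,q)$ contains at least $i$ elements of length $\le 1$. Call this a \emph{contradicting sequence of type $\eps$}. After passing to a subsequence $(B_3(p_i),p_i)$ converges in the pointed measured Gromov--Hausdorff sense to $(X,p_\infty,\nu)$ and $\gB_i$ converges to a measurable set $\gB_\infty\subset B_1(p_\infty)$ of $\nu$-measure at least $c(n)\eps\cdot\nu(B_1(p_\infty))$, where Bishop--Gromov controls the normalization. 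Since regular points have full $\nu$-measure by the Cheeger--Colding regularity theorem, $\gB_\infty$ contains regular points; let $\dim(X)$ be the largest $k\in\{0,\ldots,n\}$ such that $\gB_\infty$ has a regular point with tangent cone $\R^k$, and perform a reverse induction on $\dim(X)$, trivially starting from $\dim(X)\ge n+1$.

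The first reduction promotes a contradicting sequence of type $\eps$ to one whose limit is $(\R^k,0)$ with $k=\dim(X)$. Pick a regular Lebesgue density point $q_\infty\in\gB_\infty$ of tangent cone $\R^k$, choose $q_i\in\gB_i$ with $q_i\to q_\infty$, and extract a diagonal rescaling $\lambda_i\to\infty$ slowly enough that $(\lambda_iM_i,q_i)\to(\R^k,0)$ in the measured sense while the rescaled bad set retains density at least $1-\tfrac{\eps}{4}$ in the unit ball around $q_i$ in the new metric. That the bad property survives rescaling is enforced by Lemmas~\ref{lem: prep gap}, \ref{lem:short} together with the Gap Lemma~\ref{lem: gap}: short generators of $\pi_1(M_i,q)$ at any $q$ near $q_i$ in the new metric have length either $\le\delta_i\to 0$ or $\ge 1$, so the $i$ many generators of length $\le 1$ from the old metric persist as generators of length $\le 1$ in the new metric.

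The second reduction derives a contradiction by producing a contradicting sequence whose limit has dimension strictly greater than $k$, closing the induction. Apply Theorem~\ref{hess-ineq} on $B_2(q_i)\subset\lambda_iM_i$ to obtain harmonic functions $b_1^i,\ldots,b_k^i$ and set $f_i:=\sum_{l,j}|\langle\nabla b_l^i,\nabla b_j^i\rangle-\delta_{lj}|+\sum_j\|\Hess_{b_j^i}\|^2$. The weak type $(1,1)$ inequality (Lemma~\ref{lem: weak11}) shows that the set $A_i:=\{z\in B_{1/2}(q_i)\mid \Mx_{1/4}f_i(z)\le\sqrt{\sfint_{B_1(q_i)}f_i}\}$ has $\vol(A_i)\ge(1-\tfrac{\eps}{16})\vol(B_{1/2}(q_i))$, so $A_i\cap\gB_i$ has volume at least $c(n)\eps\vol(B_{1/2}(q_i))$. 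For each $z\in A_i\cap\gB_i$ the Product Lemma~\ref{prodlem} forces every rescaled pointed limit of $(\mu_iB_r(z),z)$ with $\mu_i\to\infty$ to split as $\R^k\times Z$, and the badness of $z$ combined with the gap dichotomy yields a maximal scale $r_z\to 0$ and some $y_z\in B_{r_z}(z)$ where the short basis of $\pi_1(M_i,y_z)$ contains a new generator of length exactly $r_z$. A Vitali-type covering of $A_i\cap\gB_i$ by disjoint balls $B_{r_z}(z)$ combined with pigeonhole selects a particular $z_i$ with $\vol(\gB_i\cap B_{r_{z_i}}(z_i))\ge c'\vol(B_{r_{z_i}}(z_i))$; rescaling by $1/r_{z_i}$ around $z_i$ then delivers a contradicting sequence of type $c'>0$ whose limit is a product $\R^k\times Z'$ with $Z'\ne\{\mathrm{pt}\}$, hence of dimension $>k$. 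The main technical obstacle is maintaining a definite measure fraction of bad points after each rescaling; this is overcome by combining Lebesgue density at the first reduction with a Vitali covering plus pigeonhole at the second.
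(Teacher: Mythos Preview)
The paper explicitly omits this proof (``Since we do not have any applications of this theorem, we omit its proof''), so there is nothing to compare against directly. Your overall strategy---reverse induction on $\dim(X)$ via two reductions, mirroring Theorem~\ref{thm: finite generation}---is the natural one, and the Vitali covering plus pigeonhole in the second reduction is the right mechanism for preserving a definite volume fraction of bad points across a rescaling whose scale is dictated pointwise.

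There is, however, a genuine gap in the first reduction. You write that the bad sets $\gB_i$ ``converge to a measurable set $\gB_\infty$'' and then pick a Lebesgue density point of $\gB_\infty$ to obtain rescaled density $\ge 1-\eps/4$. But weak limits of characteristic functions are not characteristic functions in general: after passing to a subsequence, the normalized measures $\chi_{\gB_i}\,d\mu_i$ converge weakly to $g\,d\nu$ for some measurable $g$ with $0\le g\le 1$ and $\int g\,d\nu\ge c(n)\eps$, and there is no reason for $g$ to take only the values $0$ and $1$. The fix is to choose a regular point $q_\infty$ that is a Lebesgue point of $g$ with $g(q_\infty)>0$; then for $q_i\to q_\infty$ and $\lambda_i\to\infty$ slowly one gets $\vol(\gB_i\cap B_{1/\lambda_i}(q_i))/\vol(B_{1/\lambda_i}(q_i))\to g(q_\infty)$, producing a contradicting sequence of type $\eps'=g(q_\infty)/2>0$ rather than of type $1-\eps/4$. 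Since the induction has at most $n$ steps this is enough, but the argument as written does not establish the density you claim.

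A second point that needs more care: in the second reduction you must ensure that the many short generators at the \emph{nearby} bad points $q\in\gB_i\cap B_{r_{z_i}}(z_i)$ have length $\le r_{z_i}$ in the old metric (not merely $\le 1$), so that they survive as generators of length $\le 1$ after rescaling. This requires a gap statement valid at \emph{all} points of $B_1(z_i)$, which in the proof of Theorem~\ref{thm: finite generation} is obtained from Lemmas~\ref{lem: prep gap} and~\ref{lem:short} applied on the universal cover, not from the Gap Lemma~\ref{lem: gap} at the single basepoint. Your sketch invokes the right lemmas but does not make clear why the gap is uniform over the ball.
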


Since we do not have any applications of this theorem,
we omit its proof.


\section{Maps which are on all scales close to isometries.}\label{sec: zoom}

For a map $f\colon X\rightarrow Y$ between metric spaces we define the
distance distortion on scale $r$ by
\begin{equation}\label{def dtr}
\dt_r^f(p,q)=\min\{r, |d(p,q)-d(f(p),f(q))|\}\hspace*{1em}\mbox{ for $p,q\in X$}.
\end{equation}

\begin{defin}\label{def: zoom} Let $(M_i^n,p_i^1)$ and $(N_i^n,p_i^2)$ be two sequences 
of Riemannian manifolds. 
We say that a sequence of diffeomorphisms 
$f_i\colon M_i\rightarrow N_i$ has the zooming in property if the following holds: 
There exist $R_0>0$,  sequences $r_i\to \infty $, $\eps_i\to 0$
 and subsets
$B_{2r_i}(p_i^j)'\subset B_{2r_i}(p_i^j)$ ($j=1,2$)
satisfying 
\begin{enumerate}
\item[a)] $\overline{B_{4r_i}(p_i^{_j})}$  is compact 
and $\Ric_{B_{4r_i}(p_i^{j})}>-R_0$. 
\item[b)] $\vol\bigl(B_1(q)\cap B_{2r_i}(p_i^{_j})'\bigr)\ge (1-\eps_i)\vol(B_1(q))$ for all
$q\in B_{r_i}(p_i^{_j})$. 
\item[c)] For all $p\in B_{r_i}(p_i^1)'$, all $q \in B_{r_i}(p_i^2)'$ 
and all $r\in (0,1]$ we have 
\begin{eqnarray*}
\fint _{B_r(p)\times B_r(p)}\dt_r^{f_i}(x,y) d\mu_i^1(x)d\mu_i^1(y)&\le& r\eps_i\, \mbox{ and }\\
\fint _{B_r(q)\times B_r(q)}\dt_r^{f_i^{-1}}(x,y)
d\mu_i^2(x)d\mu_i^2(y)&\le& r\eps_i.\\
\end{eqnarray*}
\item[d)] There are subsets $S^j_i\subset B_1(p_i^j)$ 
with $\vol(S^j_i)\ge\tfrac{1}{2}\vol\bigl(B_1(p_i^j)\bigr)$  ($j=1,2$) 
and $f(S_i^1)\subset B_{R_0}(p_i^2)$ and $f^{-1}(S_i^2)\subset B_{R_0}(p_i^1)$.
\end{enumerate}
\end{defin}

We will call elements of $B_{2r_i}(p_i^1)'$ {\it good} points
and sometimes use the convention
$B_r(q)':=B_{2r_i}(p_i^j)'\cap B_r(q)$ for all $B_r(q)\subset B_{2r_i}(p_i^j)$.

In the applications we will always have $N_i=M_i$. However, in some instances
$d(p_i^{1},p_i^{2})\to \infty$. 
That is why it might be  helpful to also think  about 
maps between two unrelated pointed manifolds.
If the choice of the base points 
is not clear we will say that
$f_i\colon [M_i,p_i^1]\rightarrow [N_i,p_i^2]$
has the zooming in property. 
Notice that we do not require $f_i(p_i^1)=p_i^2$. 
However, property d) ensures that the base points are respected in a weaker sense.

\begin{lem}\label{lem: iso} Let $(M_i,p_i^1)$, $(N_i,p_i^2)$ and $f_i$ be as above. 
Then, after passing to a subsequence, 
$(M_i,p_i^1)\conv (X_1,p_{\infty}^1)$, $(N_i,p_i^2)\conv (X_2,p_{\infty}^2)$
and
$f_i$ converges in the weakly measured sense to a measure preserving 
isometry $f_{\infty}\colon X_1\rightarrow X_2$, that is 
for each $r>0$ there is a sequence $\delta_i\to 0$  
and subsets $S_i\subset B_r(p_i^1)$ satisfying
\begin{enumerate}
\item[$\bullet$] $\vol( S_i)\ge (1-\delta_i) \vol( B_r(p_i^1))$ and
\item[$\bullet$] $f_{i|S_i}$ is Gromov--Hausdorff close to 
$f_{\infty|B_r(p_{\infty}^1)}$. 
\end{enumerate}
Moreover, $f_i^{-1}$ converges in this sense to $f_{\infty}^{-1}$.
\end{lem}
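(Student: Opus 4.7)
The plan is to extract subsequential Gromov--Hausdorff limits, construct a candidate limit map using condition d) to fix a basepoint correspondence, prove isometry via a chain argument exploiting the averaged distortion bound in c), and finally verify measured convergence together with measure preservation.

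First, by Gromov's precompactness theorem applied to the uniform lower Ricci bound on $B_{4r_i}(p_i^j)$ with $r_i\to\infty$, one may pass to a subsequence so that $(M_i,p_i^1)\conv (X_1,p_\infty^1)$ and $(N_i,p_i^2)\conv (X_2,p_\infty^2)$ in the pointed measured Gromov--Hausdorff topology. Part d) supplies $a_i\in S_i^1\cap B_1(p_i^1)'$ (a set of measure at least $(\tfrac12-\eps_i)\vol(B_1(p_i^1))$ by b)), whose images $f_i(a_i)$ lie in $B_{R_0}(p_i^2)$; after a further subsequence, $a_i\to a_\infty\in X_1$ and $f_i(a_i)\to b_\infty\in X_2$, providing a distinguished basepoint correspondence.

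The core step is the claim that for any sequences of good points $x_i\to x$, $y_i\to y$,
\[
 |d(f_i(x_i),f_i(y_i))-d(x_i,y_i)|\to 0.
\]
I would prove this by a chain argument. Fix a small scale $r\in(0,1]$ and cover a near-minimizing segment from $x_i$ to $y_i$ by $N\sim d(x,y)/r$ balls of radius $r$. Condition c) applied at scale $r$, combined with Fubini and the weak $(1,1)$ inequality of Lemma~\ref{lem: weak11}, shows that in each such ball most pairs of points have distortion $\lesssim r\eps_i^{1/4}$; combined with part b) (the good set has density at least $1-\eps_i$ in every unit ball), one can therefore select an intermediate chain $z_0^i=x_i,z_1^i,\ldots,z_N^i=y_i$ of good points with consecutive distances $\sim r$ and consecutive distortions $\lesssim r\eps_i^{1/4}$. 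Summing along the chain gives total error $\lesssim d(x,y)\eps_i^{1/4}\to 0$. Once the claim is established, for each $x\in X_1$ one picks any good sequence $x_i\to x$ and defines $f_\infty(x):=\lim f_i(x_i)$; the existence of the limit follows by comparing with $(a_i)$ via the claim, and the well-definedness and distance-preserving property of $f_\infty$ are immediate. Setting $S_i$ to be the good points in $B_r(p_i^1)$ on which $f_i$ is close to $f_\infty$ (transported through the Hausdorff approximations) yields the weakly measured convergence, since by the claim $\vol(B_r(p_i^1)\setminus S_i)\to 0$.

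The analogous construction applied to $f_i^{-1}$, whose hypotheses in c) are symmetric to those of $f_i$, yields a distance-preserving $g_\infty\colon X_2\to X_1$; the identity $g_\infty\circ f_\infty=\id_{X_1}$ holds because both sides are isometries agreeing on a dense set of good points. For measure preservation, one observes that the infinitesimal Jacobian of the diffeomorphism $f_i$ is pointwise close to $1$ on the good set (by applying the distortion bound at arbitrarily small scales $r\to 0$), so the push-forward of the renormalized Riemannian measure converges to the renormalized measure on $X_2$. The main obstacle is the chain argument: one must simultaneously ensure that the intermediate chain points lie in the good set and in the set on which the single-scale distortion estimate holds, while controlling the sum of $N\sim d(x,y)/r$ error contributions uniformly in the scale $r$; the balance between the Chebyshev exponent used in passing from an averaged to a pointwise distortion bound and the density of the good set must be tuned carefully, and one must also keep the chain inside $B_{r_i}(p_i^1)$, where conditions b) and c) are applicable.
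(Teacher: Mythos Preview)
Your chain argument, as written, has a genuine gap: summing the consecutive distortions along a chain $x_i=z_0,\ldots,z_N=y_i$ does \emph{not} bound $|d(f_i(x_i),f_i(y_i))-d(x_i,y_i)|$. If the chain lies near a minimal geodesic in $M_i$, you get $\sum_k d(z_k,z_{k+1})\approx d(x_i,y_i)$ and hence, via the triangle inequality,
\[
d(f_i(x_i),f_i(y_i))\le \sum_k d(f_i(z_k),f_i(z_{k+1}))\le d(x_i,y_i)+O(\eps_i^{1/4}),
\]
but there is no reason the image chain $f_i(z_k)$ lies near a geodesic in $N_i$, so the reverse inequality does not follow. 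Running the same argument for $f_i^{-1}$ gives a 1-Lipschitz $g_\infty$, but to conclude $g_\infty\circ f_\infty=\id$ you would need, for good $x_i$, that $f_i(x_i)$ is close to a good point of $N_i$ in a way that lets you apply the $f_i^{-1}$ estimate---and this is exactly the localization information you lack.

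The paper supplies this via a separate Sublemma, proved by induction on the scale $r$: for every good point $x$ and every $r\le 1$ there is a subset $B_r(x)''\subset B_r(x)$ with $\vol(B_r(x)'')\ge(1-C_1\eps_i)\vol(B_r(x))$ and $f_i(B_r(x)'')\subset B_{2r}(f_i(x))$. The base case is the infinitesimal bilipschitz bound you noted; the inductive step bootstraps from radius $r$ to $10r$ using condition c) together with the fact that, by induction, half of $B_r(x)$ already lands in $B_{2r}(f_i(x))$. With this sublemma in hand, the paper avoids any chain: given good $x_i,y_i$ with $d(x_i,y_i)<1/2$ and small $\delta>0$, one applies c) at scale $1$ (centered at $x_i$) restricted to $B_\delta(x_i)''\times B_\delta(y_i)''$ to find $x_i'\in B_\delta(x_i)''$, $y_i'\in B_\delta(y_i)''$ with $\dt_1^{f_i}(x_i',y_i')\le\delta$; since $d(f_i(x_i'),f_i(x_i))\le 2\delta$ and likewise for $y_i$, the triangle inequality gives the full two-sided bound $|d(f_i(x_i),f_i(y_i))-d(x_i,y_i)|\le 7\delta$. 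The inductive localization is the missing idea in your proposal.
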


For the proof we will need the following 

\begin{sublem}\label{sublem:2r}
There exists $C_1(n)$ such that
for any good point $x\in B_{r_i}(p_i^1)'$ and any $r\le 1$ there is
a subset $B_r(x)''\subset B_r(x)$  with 
\[
\vol B_r(x)''\ge (1-C_1\eps_i)\vol B_r(x)\mbox{ and } f_i(B_r(x)'')\subset B_{2r}(f_i(x)).
\]
\end{sublem}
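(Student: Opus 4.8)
The plan is to prove Sublemma~\ref{sublem:2r} by averaging the distance distortion estimate in part c) of the zooming in property against the center $x$ and then applying a Chebyshev-type argument. First I would fix a good point $x\in B_{r_i}(p_i^1)'$ and $r\le 1$, and consider the function $y\mapsto \dt_r^{f_i}(x,y)$ on $B_r(x)$. Property c) gives a bound on the \emph{double} average $\fint_{B_r(p)\times B_r(p)}\dt_r^{f_i}(u,v)$ only for $p$ ranging over good points $p\in B_{r_i}(p_i^1)'$, so the first issue is that $x$ itself is a good point but I need to control distortion measured \emph{from} $x$. The standard trick is to use the triangle inequality for $\dt_r^{f_i}$ together with Bishop--Gromov: integrate c) over a slightly larger ball, so that a definite fraction of points $p$ in $B_{2r}(x)$ satisfy $\fint_{B_r(p)\times B_r(p)}\dt_r^{f_i}\le C(n)r\eps_i$, pick such a $p$ close to $x$, and then combine with Fubini to find that $\fint_{B_r(x)}\dt_r^{f_i}(x,y)\,d\mu_i^1(y)\le C(n)r\eps_i$ for some universal constant — roughly, distortion-from-a-point is comparable to average pairwise distortion once one averages over a ball containing that point, by the quasi-triangle inequality $\dt^{f}_r(x,y)\le \dt^{f}_r(x,z)+\dt^{f}_r(z,y)$ and relative volume comparison controlling $\vol(B_{2r}(x))/\vol(B_r(x))$.

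Once I have $\fint_{B_r(x)}\dt_r^{f_i}(x,y)\,d\mu_i^1(y)\le C(n)r\eps_i$, the conclusion is a one-line Markov inequality: the set
\[
B_r(x)'':=\bigl\{y\in B_r(x)\mid \dt_r^{f_i}(x,y)<\tfrac{r}{2}\bigr\}
\]
has $\vol\bigl(B_r(x)\setminus B_r(x)''\bigr)\le \tfrac{2}{r}\int_{B_r(x)}\dt_r^{f_i}(x,y)\,d\mu_i^1(y)\le 2C(n)\eps_i\vol(B_r(x))$, so $\vol B_r(x)''\ge (1-C_1(n)\eps_i)\vol B_r(x)$ with $C_1(n)=2C(n)$. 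For $y\in B_r(x)''$ we have $d(y,x)<r$, hence $d(f_i(y),f_i(x))\le d(x,y)+\dt_r^{f_i}(x,y)<r+\tfrac{r}{2}<2r$ — wait, this needs $\dt_r^{f_i}(x,y)=|d(x,y)-d(f_i(x),f_i(y))|$, which holds precisely because $\dt_r^{f_i}(x,y)<r$ means the min in \eqref{def dtr} is not achieved at $r$; so indeed $d(f_i(x),f_i(y))\le d(x,y)+\dt_r^{f_i}(x,y)<\tfrac{3r}{2}<2r$, giving $f_i(B_r(x)'')\subset B_{2r}(f_i(x))$ as required.

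The main obstacle is the first step: transferring the double-average bound in c) from a variable good center $p$ to a bound on distortion measured from the fixed good point $x$. The delicate point is that $x$ is good but the balls $B_r(p)$ we average c) over for nearby $p$ may stick slightly outside $B_{2r_i}(p_i^1)'$, and c) is only assumed for $p\in B_{r_i}(p_i^1)'$; this is handled by working at scale $r\le 1$ well inside the good region (using part b), which says good points have full density in every unit ball centered in $B_{r_i}(p_i^j)$, so a definite fraction of $B_{2r}(x)$ consists of good centers $p$ for which c) applies) and by the scale-$r$ triangle inequality for $\dt^{f}$. All constants produced depend only on $n$ through Bishop--Gromov, which is exactly the form of the claim; passing to a subsequence is not even needed here since the sublemma is a statement about each fixed $i$ with $\eps_i$ as in Definition~\ref{def: zoom}.
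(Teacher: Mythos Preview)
Your proposal has a genuine gap: the ``quasi-triangle inequality'' $\dt^{f}_r(x,y)\le \dt^{f}_r(x,z)+\dt^{f}_r(z,y)$ that you rely on is \emph{false}. Take three points $p,z,q$ forming an equilateral triangle of side $1$ and map them to the vertices of an isoceles triangle with side lengths $1,1,1/10$ (the short side being the image of $pq$). Then $\dt^{f}_r(p,q)=9/10$ while $\dt^{f}_r(p,z)=\dt^{f}_r(z,q)=0$, so the inequality fails badly. The distance distortion is simply not a metric-like quantity. Consequently there is no way to pass from the double average $\fint_{B_r(x)^2}\dt^{f_i}_r(u,v)\le r\eps_i$ (which you do have, since $x$ is good and c) applies directly at $p=x$) to a bound on $\fint_{B_r(x)}\dt^{f_i}_r(x,y)\,d\mu$: the double average is symmetric in its two arguments and carries no information singling out the center $x$. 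By Fubini you can only find \emph{some} $z\in B_r(x)$ with $\fint_{B_r(x)}\dt^{f_i}_r(z,y)\le r\eps_i$, and then your Markov argument yields $f_i(B_r(x)'')\subset B_{2r}(f_i(z))$; but you have no control on $d(f_i(z),f_i(x))$, so this does not give the conclusion.

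The paper's proof supplies precisely the missing anchor by induction on scale. For very small $r$ the statement holds because at a good point the differential of $f_i$ is $e^{C\eps_i}$-bilipschitz (this follows by letting $r\to 0$ in c)). For the induction step from $r$ to $10r$, the induction hypothesis provides a set $S\subset B_r(x)$ with $\vol(S)\ge\tfrac12\vol(B_r(x))$ and $f_i(S)\subset B_{2r}(f_i(x))$. One then averages the double integral in c) at scale $10r$ over $S\times B_{10r}(x)$ (using Bishop--Gromov to compare volumes) and applies Markov in the second variable: most $q\in B_{10r}(x)$ admit some $p\in S$ with $\dt^{f_i}_{10r}(p,q)$ small, hence $d(f_i(q),f_i(p))$ is close to $d(q,p)\le 11r$, and since $f_i(p)\in B_{2r}(f_i(x))$ one concludes $f_i(q)\in B_{20r}(f_i(x))$. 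The anchor set $S$ is exactly what replaces your (invalid) triangle inequality; the infinitesimal bilipschitz bound at good points is the extra ingredient that starts the induction and that your argument never invokes.
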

\begin{proof}
{
The proof proceeds by induction on the size of $r$ as follows. It is clear that at good points  $x\in B_{r_i}(p_i^1)'$ the differential of $f_i$ 
has a bilipschitz constant  $e^{C\eps_i}$ with some universal $C$
provided  $\eps_i<1/2$. 
Therefore it is clear that the statement holds for all small $r$. 
Hence it suffices to prove that if it holds for some $r\le 1/10$, then 
it holds for $10r$. By assumption
\[
\fint _{B_{10r}(x)\times B_{10r}(x)}\dt^{f_i}_{10r}(p,q)\,d\mu_i^1(p)d\mu_i^1(q)\le 10r\eps_i.
\]
Furthermore, as long as $C_1\eps_i\le 1/2$ 
our induction assumption implies that there is a subset $S\subset B_r(x)$ 
with $\vol(S)\ge \tfrac{1}{2}\vol(B_r(x))$ and $f_i(S)\subset B_{2r}(f_i(x))$.

By Bishop--Gromov
\[
\fint _{S\times B_{10r}(x)}\dt^{f_i}_{10r}(p,q)d\mu_i^1(p)d\mu_i^1(q)\le C_2(n)r\eps_i
\]
with some universal constant $C_2(n)$. 
Therefore there is a subset $B_{10r}(x)''\subset B_{10r}(x)$
with $\vol(B_{10r}(x)'')\ge (1-C_2(n)\eps_i/2)\vol(B_{10r}(x))$
and
\[
\fint _{S}\dt^{f_i}_{10r}(p,q)d\mu_i^1(p)\le 2r\,\,\mbox{ for all $q\in B_{10r}(x)''$.}
\]
Using that $f_i(S)\subset B_{2r}(f_i(x))$ this clearly implies that
$f_i(B_{10r}(x)'')\subset B_{20r}(f_i(x))$. 
}
Thus the sublemma is valid if we put $C_1(n)=C_2(n)/2$ 
provided we know in addition that $C_1(n)\eps_i\le 1/2$. 
We can remove the upper bound on $\eps_i$ by just putting
$C_1(n)=C_2(n)$.
\end{proof}

Of course, a similar inequality holds for $f_i^{-1}$. 

\begin{proof}[Proof of Lemma~\ref{lem: iso}]
Note that as was observed in the proof of Sublemma~\ref{sublem:2r}, at good points in $B_{r_i}(p_i^1)'$ ($B_{r_i}(p_i^2)'$) the differential of $f_i$ ($f^{-1}_i$) has bilipschitz constant $\le e^{C\eps_i}$ for some universal $C$ provided $\eps_i<1/2$.
Using condition b) from the definition of the zooming in property  this will clearly ensure that $f_i$ and $f_i^{-1}$ converge to {\em measure-preserving } isometries once we establish the following\\[2ex] 
{\bf Claim.} Given $\delta\in (0,1/10)$ there is $i_0$ such that 
$|d(f_i(x_i),f_i(y_i))-d(x_i,y_i)|\le 10\delta$ holds
for all $i\ge i_0$ and
 all $x_i,y_i\in B_{r_i}(p_i^1)'$ with $d(x_i,y_i)<1/2$.\\[2ex]
Consider subsets $B_{\delta}(x_i)''$ and $B_{\delta}(y_i)''$
as in the sublemma. Then
$\vol(B_{\delta}(x_i)'')\ge\tfrac{1}{2} \vol(B_{\delta}(x_i))$
for large $i$ and
combining with Bishop--Gromov gives
\[
\vol(B_1(x_i))^2\le C_2(n,\delta)\vol(B_{\delta}(x_i)'')\vol(B_{\delta}(y_i)'').
\]
Thus,
\[
\fint_{B_{\delta}(x_i)''\times B_{\delta}(y_i)''}\dt_1^{f_i}(p,q)\le 
C_2 \fint_{B_1(x_i)^2}\dt_1^{f_i}(p,q)\le C_2\eps_i.
\]
Choose $i_0$ so large that $C_2\eps_i\le \delta$ for $i\ge i_0$. 
Then for such $i$
we can find $x_i'\in B_{\delta}(x_i)''$ and $y_i'\in B_{\delta}(y_i)''$ 
with $\dt_1^{f_i}(x_i',y_i')\le \delta$. 
Combining with $d(f_i(x_i'),f_i(x_i))\le 2\delta$ and $d(f_i(y_i'),f_i(y_i))\le 2\delta$
we deduce $\dt_1^{f_i}(x_i,y_i)\le 7\delta$ 
as claimed.
\end{proof}

\begin{lem} \label{lem:comp-zoom}\label{lem: composition}
Consider three pointed Riemannnian manifolds 
$(M_i,p_i^1),(N_i,p_i^2)$, $(P_i,p_i^3)$ 
and two sequences of 
diffeomorphisms $f_i\colon M_i\to N_i $ and $g_i\colon N_i\to P_i$ 
with the zooming in property. 
Then $g_i\circ f_i$ also has the zooming in property. 
\end{lem}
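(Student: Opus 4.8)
The plan is to verify the four defining conditions of the zooming in property directly for the composition $g_i\circ f_i$, feeding in the corresponding data for $f_i$ and $g_i$. First I would fix the data: let $R_0^f, r_i^f, \eps_i^f, B_{2r_i}(p_i^j)'$ witness the zooming in property for $f_i\colon [M_i,p_i^1]\to[N_i,p_i^2]$, and similarly $R_0^g,r_i^g,\eps_i^g$ and good sets in $N_i$ and $P_i$ for $g_i\colon[N_i,p_i^2]\to[P_i,p_i^3]$. Set $R_0=\max\{R_0^f,R_0^g\}$ and $r_i=\min\{r_i^f,r_i^g\}$, so condition a) is immediate. The good points in $M_i$ for the composition should be $B_{2r_i}(p_i^1)'':=B_{2r_i}(p_i^1)'\cap f_i^{-1}\bigl(B_{2r_i}(p_i^2)'\bigr)$, intersected with an analogous set that guarantees $g_i f_i$ maps into good points of $P_i$; for the target one takes the preimages under $g_i^{-1}$. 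Condition b) then follows because, using Sublemma~\ref{sublem:2r} (which controls how $f_i$ distorts balls at good points up to a factor $2$ and loses only a $C_1\eps_i^f$ fraction of volume) together with the bilipschitz bound $e^{C\eps_i^f}$ on $df_i$ at good points and Bishop--Gromov, the image under $f_i$ of $B_1(q)\cap B_{2r_i}(p_i^1)''$ still fills a $(1-\eps_i')$ fraction of $B_1(f_i(q))$ for a new sequence $\eps_i'\to0$; pulling back the good set condition for $g_i$ through this controlled map gives b) for $g_if_i$ with $\eps_i=C(n)(\eps_i^f+\eps_i^g)\to 0$. Condition d) is the easiest: take $S_i^1\subset B_1(p_i^1)$ to be the intersection of the $f_i$-set with the preimage of the $g_i$-set, a set of volume $\ge\tfrac12\vol(B_1(p_i^1))$ after shrinking constants, so that $g_if_i(S_i^1)\subset B_{R_0}(p_i^3)$, and symmetrically for $(g_if_i)^{-1}$.

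The substantive point is condition c): the averaged distance distortion estimate on every scale $r\in(0,1]$ for $g_i\circ f_i$. The key step here is a pointwise-type triangle inequality: at a good point $p$ for the composition and radius $r$, one has, for $x,y\in B_r(p)$,
\[
\dt_r^{g_if_i}(x,y)\le \dt_r^{f_i}(x,y)+\dt_{2r}^{g_i}(f_i(x),f_i(y)),
\]
using that $f_i(x),f_i(y)\in B_{2r}(f_i(p))$ for $x,y$ in the large subset supplied by Sublemma~\ref{sublem:2r}, and that $|d(f_ix,f_iy)-d(x,y)|$ together with $|d(g_if_ix,g_if_iy)-d(f_ix,f_iy)|$ bounds $|d(g_if_ix,g_if_iy)-d(xy)|$ up to the truncation at $r$. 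Averaging the first term over $B_r(p)\times B_r(p)$ is directly controlled by $r\eps_i^f$ from c) for $f_i$. For the second term I would change variables by $f_i$: since $df_i$ is $e^{C\eps_i^f}$-bilipschitz and measure-distorted by at most $e^{C\eps_i^f}$ on the good set, the push-forward of the normalized measure on $B_r(p)$ is comparable (uniformly in $i$) to the normalized measure on $B_{2r}(f_i(p))$, so
\[
\fint_{B_r(p)^2}\dt_{2r}^{g_i}(f_ix,f_iy)\,d\mu_i^1(x)d\mu_i^1(y)\le C(n)\fint_{B_{2r}(f_i(p))^2}\dt_{2r}^{g_i}(u,v)\,d\mu_i^2(u)d\mu_i^2(v)\le C(n)\cdot 2r\,\eps_i^g,
\]
where the last inequality is c) for $g_i$ (after absorbing the factor-$2$ change of scale, which is harmless since $\dt$ is monotone in its scale and Bishop--Gromov gives the volume comparison between $B_r$ and $B_{2r}$). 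Combining, $\fint_{B_r(p)^2}\dt_r^{g_if_i}\le r\bigl(\eps_i^f+2C(n)\eps_i^g\bigr)=:r\eps_i$. The same argument applied to $(g_if_i)^{-1}=f_i^{-1}\circ g_i^{-1}$ using the $f_i^{-1}$, $g_i^{-1}$ versions of c) and Sublemma~\ref{sublem:2r} gives the second half of c).

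The main obstacle I expect is the bookkeeping in the change-of-variables step: one must be careful that the exceptional sets where Sublemma~\ref{sublem:2r} fails, where $f_i$ is not bilipschitz, or where $f_i$ leaves the good region, all have volume fraction $O(\eps_i^f)$, and that truncating $\dt$ at level $r$ (rather than $2r$) on the composition side is compatible with the level-$2r$ estimate we extract for $g_i$ — this is where the $\min\{r,\cdot\}$ in the definition of $\dt_r$ is used, together with the elementary inequality $\dt_r^{g_if_i}\le r$ automatically, so that on the bad set of small measure the integrand is bounded by $r$ and contributes only $O(r\eps_i)$. Once these routine volume estimates are in place, the conclusion $\eps_i\to 0$ is automatic, and the new sequences $R_0,r_i$ obviously satisfy a), b), d) as indicated. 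I would present the proof by first disposing of a), b), d) in a sentence or two each and then devoting the bulk of the argument to c) along the lines above.
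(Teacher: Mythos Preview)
Your overall approach matches the paper's: the triangle inequality for $\dt_r$, Sublemma~\ref{sublem:2r} to push $B_r(p)$ into $B_{2r}(f_i(p))$, and a change of variables to reduce to condition c) for $g_i$. However, two places you label ``routine bookkeeping'' hide genuine steps.

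First, your good set for the composition --- the intersection $B_{2r_i}(p_i^1)'\cap f_i^{-1}(B_{2r_i}(p_i^2)')$ --- is insufficient. Condition b) in Definition~\ref{def: zoom} only controls the density of bad points in balls of radius~1; for a good point $p$ in your sense there is no reason why $B_r(p)$ should consist mostly of good points for \emph{every} $r\le 1$. But your change of variables requires the bilipschitz bound on $df_i$, which holds only at good points, so you must know that the sublemma subset $B_r(p)''$ lies in the good set for each such $r$. The paper handles this by passing to the smaller set $G_R(p_i^1):=\{q\in B_R(p_i^1)'\mid \Mx\chi(q)\le\delta_i\}$, where $\chi$ is the indicator of the bad set and $\delta_i\to 0$ slowly; the weak $(1,1)$ inequality (Lemma~\ref{lem: weak11}) controls the volume lost, and the maximal-function bound is exactly what propagates the scale-1 density condition in b) down to all smaller scales simultaneously. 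This is a specific technique, not bookkeeping.

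Second, your claim that the factor-2 scale change is ``harmless since $\dt$ is monotone in its scale'' does not address the actual issue: after pushing forward you need $\fint_{B_{2r}(f_i(p))^2}\dt_{2r}^{g_i}\le 2r\,\eps_i^g$, i.e.\ condition c) for $g_i$ at radius $2r$, which can be as large as 2 when $r$ is close to 1. Condition c) only covers radii in $(0,1]$. The paper extracts the estimate for radii in $[1,2]$ from Lemma~\ref{lem: iso} (weakly measured convergence of $g_i$ to an isometry), a separate input rather than a consequence of monotonicity.
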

\begin{proof}
Let $R>10$ be  arbitrary. By assumption we can find a sequence 
$\eps_i\to 0$ and a subset $B_{2R}(p_i^j)'\subset B_{2R}(p_i^j)$ 
with $\vol(B_{2R}(p_i^j)')\ge (1-\eps_i)\vol(B_{2R}(p_i^j))$ ($j=1,2,3$)
such that the following holds
\begin{eqnarray*}
\fint_{B_r(q)^2}\dt_r^{f_i}(a,b)&\le& \eps_ir\hspace*{1em} \mbox{ for all 
$r\in (0,2]$ and $q\in B_{2R}(p_i^1)'$,}\\
\fint_{B_r(q)^2} \dt_r^{f_i^{-1}}(a,b) + \dt_r^{g_i}(a,b)&\le& \eps_ir\hspace*{1em} \mbox{ 
for all $r\in (0,2]$ and $q\in B_{2R}(p_i^2)'$ and }\\
\fint_{B_r(q)^2}\dt_r^{g_i^{-1}}(a,b)&\le& \eps_ir \hspace*{1em} \mbox{  for all 
$r\in (0,2]$ and $q\in B_{2R}(p_i^3)'$.}
\end{eqnarray*}
In order to get the above inequalities 
for $r\in [1,2]$ we used that $f_i$ and $g_i$ 
converge in the weakly measured sense to an isometry by Lemma~\ref{lem: iso}. Lemma~\ref{lem: iso}
also implies that after a possible adjustment 
of $\eps_i\to 0$ we have
\begin{eqnarray*}
S&:=& B_R(p_i^2)'\cap f_i(B_R(p_i^1)')\cap g_i^{-1}(B_R(p_i^3)')
\mbox{ satisfies }\\
\vol(S)&\ge& (1-\eps_i)\vol(B_R(p_i^2))
\end{eqnarray*}

Similarly without loss of generality
$\vol( f_i^{-1}(S))\ge (1-\eps_i)\vol(B_R(p_i^1))$
and $\vol( g_i(S))\ge (1-\eps_i)\vol(B_R(p_i^3))$. In other words, we may assume 
\[
f_i(B_R(p_i^1)')= B_R(p_i^2)'\mbox{ and }
g_i( B_R(p_i^2)')= B_R(p_i^3)'.
\] 

Next we consider the characteristic function $\chi$
of the set $B_{2R}(p_i^1)\setminus B_{2R}(p_i^1)'$. 
Using Lemma~\ref{lem: weak11} b) and Bishop--Gromov 
we can assume that there is a sequence 
$\delta_i\to 0$ such that $\delta_i>\eps_i$ and
\[
 G_R(p_i^1):=\{ q\in B_R(p_i^1)'\mid \Mx \chi(q)\le \delta_i\}\mbox{ fulfills } 
\]
\begin{eqnarray}\label{eq: volGr}
\vol\bigl(B_R(p_i^1)\setminus G_{R}(p_i^1)\bigr)\le \delta_i\min_{q\in B_{R}(p_i^1)}{\vol(B_1(q))}.
\end{eqnarray}

By Sublemma \ref{sublem:2r}  for all 
$q_i\in  G_{R}(p_i^1)$ and $r\le 1$ 
there is a subset $B_r(q_i)''\subset B_r(q_i)$ 
with 
{$\vol (B_r(q_i)'')\ge (1-C_1\delta_i) \vol B_r(q_i)$}
and $f_i(B_r(q_i)'')\subset B_{2r}(f(q_i))$. 
Using $q_i\in  G_{R}(p_i^1)$ we can actually assume 
$B_r(q_i)''\subset B_{2R}(q_i)'$ provided we replace $C_1$ by $C_2=C_1+1$.
Thus,
\begin{eqnarray*}
\fint_{B_r(q_i)^2}\!\!\dt_r^{g_i\circ f_i}(a,b)\!&\le& 2C_2\delta_ir+
\tfrac{1}{\vol(B_r(q))^2}\int_{(B_r(q_i)'')^2}\!\!\dt_r^{g_i }(f_i(a),f_i(b))+dt_r^{f_i}(a,b)\\
&\le& (2C_2+1) \delta_ir+\tfrac{e^{2nC\eps_i}}{\vol(B_r(q_i))^2}\int_{B_{2r}(f_i(q_i))^2}\dt_r^{g_i}(a,b)\\
&\le& (2C_2+1)\delta_ir+\tfrac{2e^{2nC\eps_i}\vol (B_{2r}(f_i(q_i))^2}{\vol(B_r(q_i))^2}\eps_ir
\le C_3\delta_ir,
\end{eqnarray*}
where we used that the differential of $f_i$ at $q\in B_r(q_i)''\subset B_{2R}(q_i)'$ has a bilipschitz 
constant $\le e^{C\eps_i}$
and that the ratio of $\vol(B_{2r}(f_i(q_i)))$ and $\vol(B_r(q_i))$ 
is for large $i$ bounded by a universal constant. 
The latter statement follows from Bishop--Gromov and
Sublemma~\ref{sublem:2r} applied to $f_i^{-1}$ and the ball
$B_{r}(f_i(q_i))$.

The last inequality holds for all $q_i\in G_R(p_i^1)$, where $R\ge 10$ 
was arbitrary. By the usual diagonal sequence 
argument one can deduce that there is a sequence $R_i\to \infty$ 
and an adjusted sequence $\delta_i\to 0$ such that the above inequality holds for all 
$q_i\in G_{R_i}(p_i^1)$ and in addition we can assume 
that \eqref{eq: volGr} remains valid.

Since everthing can be carried out for $f_i^{-1}\circ g_i^{-1}$ as well 
this finishes the proof.
\end{proof}

The next lemma explains the notion zooming in property. 

\begin{lem}\label{lem: zoom} Let $(M_i,p_i^1),(N_i,p_i^2)$ and $f_i$ be as above. 
Then there are $\rho_i\to \infty$, $\delta_i\to 0$ 
and $T_i^1\subset B_{\rho_i}(p_i^1)$ such that the following holds
\begin{enumerate}
\item[$\bullet$] $\vol(B_{1}(q)\cap T_i^1)\ge (1-\delta_i)\vol(B_1(q))$
for all $q\in B_{\rho_i/2}(p_i^1)$. 
\item[$\bullet$] For any sequence of real numbers $\lambda_i\to \infty$ 
and any sequence $q_i\in {T_i^1} $ 
\[f_i\colon (\lambda_i M_i,q_i)\rightarrow  (\lambda_i N_i, f_i(q_i))\]
has the zooming in property.
We say  that $f_i$  is good on all scales at $q_i$. 
\end{enumerate}
\end{lem}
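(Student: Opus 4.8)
The plan is to extract the set $T_i^1$ by iterating the maximal-function trick that was already used at the end of the proof of Lemma~\ref{lem: composition}. Fix $R$ and let $\chi_i$ be the characteristic function of $B_{2r_i}(p_i^1)\setminus B_{2r_i}(p_i^1)'$, so that $\fint_{B_{2r_i}(p_i^1)}\chi_i\le\eps_i$ by property~b) of the zooming in property together with Bishop--Gromov. Define
\[
G_R(p_i^1)=\bigl\{q\in B_R(p_i^1)'\mid \Mx\chi_i(q)\le\delta_i\bigr\},
\]
where $\delta_i\to0$ is chosen with $\delta_i>\eps_i$ decaying slowly enough that, by Lemma~\ref{lem: weak11}b) and Bishop--Gromov, $\vol\bigl(B_R(p_i^1)\setminus G_R(p_i^1)\bigr)\le\delta_i\min_{q\in B_R(p_i^1)}\vol(B_1(q))$; this is exactly the estimate \eqref{eq: volGr}. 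A standard diagonal argument then produces a sequence $\rho_i\to\infty$ and an adjusted $\delta_i\to0$ so that $T_i^1:=G_{\rho_i}(p_i^1)$ satisfies the first bullet. (One also wants a symmetric set $T_i^2\subset B_{\rho_i}(p_i^2)$ on the target side, built the same way from $f_i(B_{2r_i}(p_i^1)')$ and the defect of $f_i^{-1}$; shrinking both sets slightly one may assume $f_i(T_i^1)$ and $T_i^2$ overlap in almost full measure, as in the proof of Lemma~\ref{lem: composition}.)

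The heart of the matter is the second bullet: for any $\lambda_i\to\infty$ and any $q_i\in T_i^1$, verify that $f_i\colon(\lambda_iM_i,q_i)\to(\lambda_iN_i,f_i(q_i))$ again has the zooming in property, i.e.\ re-check conditions a)--d) of Definition~\ref{def: zoom} with base points $q_i$, $f_i(q_i)$ and rescaled metrics. Conditions a) is immediate (the Ricci lower bound scales favorably and balls of radius $\sim\sqrt{\lambda_i}\cdot$ something stay inside $B_{4r_i}$). For b), I would use that $q_i\in G_{\rho_i}(p_i^1)$ forces the density defect of the good set to be $\le\delta_i$ on \emph{every} ball $B_s(q_i)$ with $s\le1$ — that is precisely what $\Mx\chi_i(q_i)\le\delta_i$ buys — so on the rescaled manifold, where these become balls of radius $\lambda_i s\le\lambda_i$, the good set still has density $\ge1-\delta_i$ on all unit balls around points near $q_i$. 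For c) one rescales the distortion identity: $\dt_{\lambda_i r}^{f_i}$ on $\lambda_iM_i$ equals $\lambda_i\,\dt_r^{f_i}$ on $M_i$, so the integral inequality of property~c) for $f_i$ at scales $r\le1$ gives the corresponding inequality for the rescaled $f_i$ at scales $\le\lambda_i$, in particular at all scales $\le1$; here one again uses $q_i\in G_{\rho_i}(p_i^1)$ to ensure the averaging ball lies in the region where the original property~c) applies, plus Sublemma~\ref{sublem:2r} to transfer between source and target balls. Condition d) follows from Lemma~\ref{lem: iso}: after rescaling, $f_i$ still converges (weakly measured) to an isometry, so a full-measure half of $B_1(q_i)\subset\lambda_iM_i$ is mapped into a bounded ball around $f_i(q_i)$.

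The main obstacle I expect is bookkeeping the two decaying sequences against each other so that the \emph{same} $\rho_i\to\infty$ works simultaneously for all rescalings $\lambda_i\to\infty$ and all choices $q_i\in T_i^1$. The subtlety is that property~c) at scale $r\le1$ in the rescaled picture corresponds to scale $r/\lambda_i$ upstairs, which is fine, but the \emph{unit}-scale estimates ($r\in[1,2]$) of the original definition — needed because composition and the iso-lemma require the full range — in the rescaled picture come from scale $1/\lambda_i$ upstairs, i.e.\ from the small-scale regime where one invokes the bilipschitz bound $e^{C\eps_i}$ on $df_i$ at good points, noted in the proof of Sublemma~\ref{sublem:2r}. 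So one must make sure $q_i$ lies where $df_i$ is controlled, which is again guaranteed by $q_i\in B_{2r_i}(p_i^1)'$ together with the maximal-function condition forcing $q_i$ to be an ``interior'' good point. Organizing the diagonal extraction so that $T_i^1$ depends only on $i$ (not on the eventual $\lambda_i$) is the only genuinely delicate point; the rest is rescaling the estimates already established in Sublemma~\ref{sublem:2r}, Lemma~\ref{lem: iso}, and the computation in the proof of Lemma~\ref{lem: composition}.
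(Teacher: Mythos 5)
Your proposal is correct and follows essentially the same route as the paper: select $T_i^1$ as the set of points where the maximal function of the characteristic function of the bad set is small (controlled via the weak 1-1 inequality), intersect with $f_i^{-1}$ of the analogous set on the target side, and conclude via the scaling identity $\dt^{\lambda_i f}_r(p,q)=\lambda_i\dt^{f_i}_{r/\lambda_i}(p,q)$ together with Lemma~\ref{lem: iso}. The only difference is that the paper's verification is briefer — since Definition~\ref{def: zoom}c) only requires scales $r\in(0,1]$, the worry about the range $r\in[1,2]$ is unnecessary here.
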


\begin{proof}
Let $G_i^j=B_{2r_i}(p_i^j)'$ and $B_i^j=B_{2r_i}(p_i^j)\setminus B_{2r_i}(p_i^j)'$.
After adjusting $r_i\to \infty$ and $\eps_i\to 0$
we may assume $\vol(B_i^j)\le \eps_i\vol(B_1(q))$ 
for all $q\in B_{2r_i}(p_i^j)$.
Let $\chi_{i}^j$ be the characteristic function of $B_i^j$. 
By the weak 1-1 inequality (Lemma~\ref{lem: weak11}) there exists a universal $C$ 
such that 
the set
\[
H_i^j:=\bigl\{x\in B_{r_i/2}(p_i^j)\mid \Mx(\chi_{i}^j)(x)\ge \sqrt \eps_i\bigr\}
\]
satisfies
\[
\vol(H_i^j)\le C\sqrt{\eps_i}\vol(B_{1}(q))
\] 
for all $q\in B_{2r_i}(p_i^j)$. We
put $T_i^1:= \bigl(B_{r_i/2}(p_i^1)\backslash H_i^1\bigr) \cap f_i^{-1}\bigl( B_{r_i/2}(p_i^2)\backslash H_i^2\bigr)$ and $T_i^2:=f_i( T_i^1)$.
Using Lemma~\ref{lem: iso} we can find $\rho_i\to \infty$ and $\delta_i\to 0$ 
such
that 
$$
\vol\bigl(B_{\rho_i}(p_i^j)\setminus T_i^j\bigr)\le \delta_i\vol(B_{1}(q))\,\mbox{ for all $q\in B_{\rho_i}(p_i^j)$, $j=1,2$.}
$$
By definition of $T_i^j$ 
\[
\frac{\mathrm{vol}(B_r(q)\cap G_i^j)}{\mathrm{vol}(B_r(q))}\ge 1-\sqrt{\eps_i}\,\,\,\,\mbox{ for all $q\in T_i^j$ and all $r\le 1$, $j=1,2$.}
\]
Let
$\dt^{\lambda_if}_{r}$  denote the distortion
on scale $r$ of $f_i\colon \lambda_iM_i\rightarrow \lambda_iN_i$.
Clearly 
\[
\dt^{\lambda_if}_r(p,q)=\lambda_i\dt_{r/\lambda_i}^{f_i}(p,q).
\]
Thus for all $\lambda_i\to\infty$ and all 
$q_i\in T_i^j$ the map $f_i\colon (\lambda_iM_i,q_i)\rightarrow (\lambda_iN_i,f_i(q_i))$
has the zooming in property.

\end{proof}

\begin{prop}[First main example]\label{thm: ex zoom}\label{prop: first main exa} Let $\pe>1$.
Consider a sequence of $n$-manifolds 
$(M_i,p_i)$ with a fixed lower Ricci curvature bound 
and a sequence of  time dependent vector fields  $X_i^t$ (piecewise constant in time)
with compact support. 
Let $c_i\colon [0,1]\rightarrow B_{r_i}(p_i)$ be an integral curve  of $X_i^t$ 
with $c_i(0)=p_i$ 
Assume that $X_i^t$ is divergence free on $B_{r_i+100}(p_i)$ and that $\overline{B_{r_i+100}(p_i)}$
is compact.
Put 
\[
u_{s,i}(x):=\bigl(\Mx\|\nabla_\cdot X_i^s\|^\pe\bigr)^{1/\pe}(x)
\]
and suppose
\[
\int_0^1u_{t,i}(c_i(t))\,dt=\eps_i\to 0.
\]

Let $f_i=\phi_{i1}$ be the flow of $X^t_i$ evaluated at time $1$.
Then for all $\lambda_i\to \infty$ 
\[f_i\colon (\lambda_iM_i,c(0))\to (\lambda_i M_i,c(1))\] 
has the zooming 
in property. 
Moreover, for any lift $\tf_i\colon \tM_i\rightarrow \tM_i$ 
of $f_i$ to the universal  cover $\tM_i$ of $M_i$ and for any lift $\tp_i\in \tM$ 
of $c(0)=p_i$ the sequence $\tf_i\colon (\lambda_i\tM_i,\tp_i)\rightarrow (\lambda_i\tM_i,\tf_i(\tp_i))$
has the zooming in property as well.
\end{prop}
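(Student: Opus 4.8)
The goal is to verify the four defining conditions (a)--(d) of the zooming in property for $f_i = \phi_{i1} \colon (\lambda_i M_i, c(0)) \to (\lambda_i M_i, c(1))$ (and for the lift $\tf_i$), using the hypothesis $\int_0^1 u_{t,i}(c_i(t))\,dt = \eps_i \to 0$ as the source of smallness. Condition (a) is immediate from the hypotheses ($\overline{B_{r_i+100}(p_i)}$ compact, fixed lower Ricci bound, and rescaling by $\lambda_i$ only improves the curvature bound while $r_i \to \infty$ in the rescaled metric since the hypothesis is assumed on larger and larger balls in the original scale after a diagonal argument). The real content is condition (c), the distortion estimate; conditions (b) and (d) will then follow from (c) together with the volume non-collapsing that comes from Bishop--Gromov and the fact that $\phi_{i1}$ is volume-preserving (as $X_i^t$ is divergence free).

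The heart of the matter is to control $\dt_r^{f_i}(x,y)$ for points $x,y$ near $c_i(0)$. The plan is: differentiate $d(\phi_{it}(x), \phi_{it}(y))$ in $t$ along a minimal geodesic $\gamma$ joining $\phi_{it}(x)$ to $\phi_{it}(y)$; the first variation formula gives $\big|\tfrac{d}{dt} d(\phi_{it}(x),\phi_{it}(y))\big| \le \|X_i^t(\phi_{it}(x))\| + \|X_i^t(\phi_{it}(y))\|$, but that is too crude. Instead one estimates the change in length of the geodesic itself: $\tfrac{d}{dt} L(\phi_{it}\circ \gamma) \le \int_\gamma \|\nabla_\cdot X_i^t\|$, so the distortion accumulated is bounded by the integral of $\|\nabla X_i^t\|$ along a path swept out near the flow line. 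Here is where the maximal function $u_{t,i} = (\Mx\|\nabla_\cdot X_i^t\|^\pe)^{1/\pe}$ and the segment inequality (Theorem~\ref{seg-ineq}) enter: averaging $\dt_r^{f_i}(x,y)$ over $x,y \in B_r(p)$ for a \emph{good} point $p$ (one for which the integral of $u_{t,i}$ along the relevant flow line is small), the segment inequality converts the line integrals of $\|\nabla X_i^t\|$ over geodesics in $B_r(p)$ into an $L^1$-average, which is dominated by $r \cdot \sfint u_{t,i}$, and then by $r \cdot u_{t,i}(\text{nearby point})$ via the maximal-function comparison \eqref{mx r rho}. Integrating in $t$ and using the hypothesis $\int_0^1 u_{t,i}(c_i(t))\,dt = \eps_i \to 0$ gives the bound $r\eps_i'$ with $\eps_i' \to 0$. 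One subtlety: the hypothesis controls the integral only along the single flow line $c_i$, so one must first pass (via the weak $(1,1)$ inequality, Lemma~\ref{lem: weak11}, applied to the function $t\mapsto$ the maximal function, or rather via a Fubini/Vitali argument in the base point) to a full-measure set of \emph{good} base points $q$ near $p_i$ for which $\int_0^1 u_{t,i}(\phi_{it}(q))\,dt$ is still small; this produces the set $B_{2r_i}(p_i)'$ in the definition.

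For condition (d): since $\phi_{i1}$ is volume preserving, and since on the good set the distortion is small, the flow moves good points a bounded distance (one uses that along a good flow line $\int_0^1 \|X_i^t(c_i(t))\| \, dt$ is comparable to $\int_0^1 u_{t,i}(c_i(t))\,dt$ only after controlling $\|X_i^t\|$ itself — more carefully, one takes $R_0$ large enough that a definite-volume fraction $S_i^1$ of $B_1(p_i)$ consists of points whose forward orbit stays in $B_{R_0}$, which again follows from a maximal-function/weak-$(1,1)$ estimate on the displacement). Conditions (b) follows from Bishop--Gromov once the exceptional set is shown to be of small relative volume in every unit ball, which the weak $(1,1)$ inequality delivers exactly as in the proof of Lemma~\ref{lem: zoom}. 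Finally, the statement for the lift $\tf_i$ on the universal cover: this follows from the Covering Lemma~\ref{lem: cover} (and the remark after it that it generalizes to arbitrary covers), which transports all the $L^1$-averaged estimates upstairs with only a dimensional constant lost, together with the observation that $\tf_i$ is still the time-one flow of the (divergence-free, since the covering is a local isometry) lifted vector field $\tilde X_i^t$, and $\tilde c_i$ lifting $c_i$ through $\tp_i$ satisfies the same integral bound.

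The main obstacle I expect is the bookkeeping in condition (c): one must run the distortion estimate on \emph{all} scales $r \in (0,1]$ simultaneously and for all good points, which forces an induction on scale in the style of Sublemma~\ref{sublem:2r} — the naive estimate at scale $r$ requires already knowing that good points are transported into balls of controlled size at smaller scales, so the estimate and the structure of the good set must be built up together. A secondary technical point is making the diagonal argument that upgrades the hypothesis "$\int_0^1 u_{t,i}(c_i(t))\,dt \to 0$ on the fixed ball $B_{r_i}(p_i)$" into the required "$r_i \to \infty$, $\eps_i \to 0$" uniformity after rescaling by arbitrary $\lambda_i \to \infty$; this is where one invokes that $u_{t,i}$ is defined via $\Mx = \Mx_2$ and uses the scaling behavior $\dt_r^{\lambda_i f}(p,q) = \lambda_i \dt_{r/\lambda_i}^{f_i}(p,q)$ exactly as in Lemma~\ref{lem: zoom}.
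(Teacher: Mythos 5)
Your plan is correct and follows essentially the same route as the paper: the distortion estimate of Lemma~\ref{lem:dtr} proved by induction on the scale $r$ via the segment inequality and first-variation bound $\int_\gamma\|\nabla_\cdot X\|$, the propagation of smallness from the single curve $c_i$ to almost every nearby flow line using measure-preservation of $\phi_{it}$ together with the iterated maximal function inequality \eqref{est: mxmx} (which is precisely where $\pe>1$ is used), the scaling identity $\dt_r^{\lambda_i f}=\lambda_i\dt_{r/\lambda_i}^{f_i}$ with a diagonal argument, and the Covering Lemma~\ref{lem: cover} for the lift. No gaps; the two "obstacles" you flag at the end are exactly the two technical steps the paper carries out.
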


The proposition remains valid if the assumption on $X_i^t$ being divergence free
is removed. However, the proof is easier in this case and we do
not have any applications of the more general case. We will need the following

\begin{lem}\label{lem:dtr}
There exists (explicit) $C=C(n)$ such that the following holds. 
Suppose  $(M^n,g)$ has $\Ric\ge -1$ and $X^t$ is a vector field 
with compact support, which depends on time (piecewise constant). 
Let $c(t)$ be the integral curve of $X^t$ with $c(0)=p_0\in M$ 
and assume that $X^t$ is divergence free 
on $B_{10}(c(t))$ for all $t\in [0,1]$.

Let $\phi_t$ be the flow of $X^t$. Define the distortion function  $dt_r(t)(p,q)$ 
of the flow
on scale $r$ by the formula
\[
\dt_r(t)(p,q):=\min\Bigl\{r,\max_{0\le\tau\le t}\bigm|d(p,q)-d(\phi_\tau(p),\phi_\tau(q))|\Bigr\}.
\]
Put
$\eps:=\int_0^1\Mx_1(\|\nabla_\cdot X^t\|)(c(t))\,dt$. Then for any $r\le 1/10$ we have

\begin{eqnarray*}
\fint_{B_r(p_0)\times B_r(p_0)}\dt_r(1)(p,q)\,d\mu(p) d\mu(q)\le C r\cdot \eps
\end{eqnarray*}

and there exists $B_r(p_0)'\subset B_r(p_0)$ such that 

\begin{eqnarray*}
\tfrac{
\vol(B_r(p_0)')}{\vol(B_r(p_0))}\,\ge\, (1-C\eps)\,\,\mbox{ and }\,\,
\phi_t(B_r(p_0)')\subset B_{2r}(c(t)) \text{  for all } t\in [0,1].
\end{eqnarray*}

\end{lem}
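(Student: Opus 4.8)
The plan is to control the distance distortion of the flow $\phi_t$ via an ODE along pairs of geodesics, exactly in the spirit of Cheeger--Colding's segment inequality applied to the quantity $\|\nabla_\cdot X^t\|$. First I would fix $r\le 1/10$ and points $p,q\in B_r(p_0)$, and for each time $t$ let $\gamma_t$ be a minimal geodesic from $\phi_t(p)$ to $\phi_t(q)$. The first variation of arc length gives $\left|\tfrac{d}{dt}d(\phi_t(p),\phi_t(q))\right|\le |X^t(\phi_t(p))|+|X^t(\phi_t(q))|$, which is too weak; the right estimate comes from differentiating the \emph{length of the image under the flow of a fixed minimizing geodesic} $\sigma$ joining $p$ to $q$: one has $\tfrac{d}{dt}L(\phi_t\circ\sigma)\le\int_\sigma\|\nabla_\cdot X^t\|$, so $|d(\phi_t(p),\phi_t(q))-d(p,q)|\le\int_0^t\!\int_{\sigma}\|\nabla_\cdot X^\tau\|\,d\tau$, where the inner integral is over the (fixed) unit-speed minimizing geodesic $\sigma$ from $p$ to $q$. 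This already bounds $\dt_r(1)(p,q)$ by $\min\{r,\int_0^1\int_\sigma\|\nabla_\cdot X^\tau\|\}$, uniformly in $t$, which is the key pointwise inequality.

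Next I would average over $(p,q)\in B_r(p_0)\times B_r(p_0)$ and apply the segment inequality (Theorem~\ref{seg-ineq}) to the nonnegative function $g_\tau=\|\nabla_\cdot X^\tau\|$ at the center $p_0$ and radius $r\le 1/10\le r_0$: this yields
\[
\fint_{B_r(p_0)^2}\int_0^{d(p,q)}g_\tau(\gamma_{p,q}(s))\,ds\,d\mu(p)d\mu(q)\le \tau(n)\cdot r\cdot \fint_{B_{2r}(p_0)}g_\tau .
\]
Integrating in $\tau$ over $[0,1]$ and using that the points $\phi_\tau(p),\phi_\tau(q)$ stay in a controlled ball (since the flow moves $p_0$ along $c$ and, as one shows, nearby points not much farther), one wants to compare $\fint_{B_{2r}(p_0)}g_\tau$ with the value $\Mx_1(g_\tau)(c(\tau))$ appearing in $\eps$. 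This comparison is immediate from the definition of the maximal function together with Bishop--Gromov: $\fint_{B_{2r}(p_0)}g_\tau\le \tfrac{\vol B_{2r+d(p_0,c(\tau))}(c(\tau))}{\vol B_{2r}(p_0)}\cdot\ldots$ --- more cleanly, since $d(p_0,c(\tau))$ need not be small, I would instead from the start run the segment inequality and the maximal-function bound \emph{along the curve}, i.e. use the argument of the Product Lemma: the quantity $\int_0^1 \Mx_1(g_\tau)(c(\tau))\,d\tau=\eps$ is precisely designed so that, after using that $\phi$ is measure-controlled ($X^t$ divergence free, hence $\phi_t$ measure preserving on the relevant balls) and Bishop--Gromov, the double average of $\dt_r(1)$ is $\le C(n)\,r\,\eps$. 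This gives the first displayed inequality.

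For the second statement, having $\fint_{B_r(p_0)^2}\dt_r(1)(p,q)\le C r\eps$, Chebyshev produces a set $B_r(p_0)'\subset B_r(p_0)$ of relative volume $\ge 1-C'\eps$ on which, for a full-measure partner $q$, $\dt_r(1)(p,q)< r$, i.e. $\max_t|d(\phi_t(p),\phi_t(q))-d(p,q)|<r$. Choosing such a $q$ inside $B_r(p_0)'$ close to $p_0$ (and using $d(q,p_0)<2r$ together with the fact that $\phi_t(q)$ can be kept near $c(t)$ after a further volume reduction, via the same averaging applied with one variable frozen near $p_0$), one gets $d(\phi_t(p),c(t))\le d(\phi_t(p),\phi_t(q))+d(\phi_t(q),c(t))\le d(p,q)+r+\mathrm{small}\le 2r$, which is the claimed containment $\phi_t(B_r(p_0)')\subset B_{2r}(c(t))$. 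I would handle the ``freeze one variable'' step by the standard Bishop--Gromov trick already used in Sublemma~\ref{sublem:2r}: replace one copy of $B_r(p_0)$ by a small good ball around $p_0$ of comparable volume.

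The main obstacle I expect is the bookkeeping needed to pass from $\fint_{B_{2r}(p_0)}\|\nabla_\cdot X^\tau\|$ to the prescribed integrand $\Mx_1(\|\nabla_\cdot X^\tau\|)(c(\tau))$ \emph{uniformly in the scale} $r\le 1/10$ and uniformly along $\tau\in[0,1]$, while keeping track of the fact that $\phi_\tau$ displaces $B_r(p_0)$ to a set lying in a ball of controlled (but not tiny) radius about $c(\tau)$; this requires combining the maximal-function monotonicity estimate \eqref{mx r rho}, the divergence-free (measure-preserving) property of the flow, and Bishop--Gromov, exactly as in the proof of the Product Lemma. The pointwise first-variation bound $|\tfrac{d}{dt}L(\phi_t\circ\sigma)|\le\int_\sigma\|\nabla_\cdot X^t\|$ and the segment inequality are the only nontrivial geometric inputs; everything else is measure-theoretic averaging.
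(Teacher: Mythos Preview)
Your ``key pointwise inequality'' is incorrect as stated. Differentiating $L(\phi_t\circ\sigma)$ gives
\[
\frac{d}{dt}L(\phi_t\circ\sigma)\le \int_{\phi_t\circ\sigma}\|\nabla_\cdot X^t\|,
\]
i.e.\ an integral along the \emph{moving} curve $\phi_t\circ\sigma$, not along the fixed geodesic $\sigma$. Equivalently, the honest first-variation bound for the distance is $\bigl|\tfrac{d}{d\tau}d(\phi_\tau(p),\phi_\tau(q))\bigr|\le\int_{\gamma_\tau}\|\nabla_\cdot X^\tau\|$, where $\gamma_\tau$ is a minimal geodesic between $\phi_\tau(p)$ and $\phi_\tau(q)$. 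In either formulation the curve over which you must integrate lives near $c(\tau)$, not in $B_{2r}(p_0)$, so the segment inequality applied once at $p_0$ does not produce $\fint_{B_{2r}(p_0)}\|\nabla_\cdot X^\tau\|$; what you actually need is $\fint_{B_{\text{const}\cdot r}(c(\tau))}\|\nabla_\cdot X^\tau\|$, which is exactly why the quantity $\Mx_1(\|\nabla_\cdot X^\tau\|)(c(\tau))$ appears in $\eps$.

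This creates a genuine circularity that your outline does not break: to apply the segment inequality at $c(\tau)$ with radius comparable to $r$, you need to know that $\phi_\tau(p),\phi_\tau(q)$ lie in a ball of radius $O(r)$ around $c(\tau)$, which is precisely the second conclusion of the lemma. The paper resolves this by \emph{induction on the scale $r$}: the statement is trivial for very small $r$ (where the bilipschitz constant of $d\phi_t$ at $c(0)$ suffices), and to pass from $r/10$ to $r$ one uses the induction hypothesis to obtain an anchor set $B_{r/10}(c(0))'$ that stays inside $B_{r/5}(c(t))$ for all $t$. One then studies $h(s)=\fint_{B_{r/10}(c(0))'\times B_r(c(0))}\dt_r(s)$; for pairs $(p,q)$ with $\dt_r(s)(p,q)<r$ the image $(\phi_s(p),\phi_s(q))$ is forced into $B_{3r}(c(s))^2$, so the segment inequality applied at $c(s)$ bounds $h'(s)$ by $C(n)\,r\,\Mx_1(\|\nabla_\cdot X\|)(c(s))$, and integrating in $s$ gives $h(1)\le C(n)\,r\,\eps$. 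Both conclusions at scale $r$ then follow from $h(1)$ via Chebyshev, with the constant independent of the induction step. Your Chebyshev argument for the second statement is fine once the first is in hand, but the first requires this scale induction, which is the missing idea.
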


\begin{proof} We prove the statement for a constant in time
vector field $X^t$. The general case is completely analogous 
except for additional notational problems.

Notice that all estimates are trivial if $\eps\ge \tfrac{2}{ C}$. 
Therefore it suffices to prove the statement with a universal constant $C(n)$ 
for all $\eps\le \eps_0$.   We put $\eps_0=1/2C$ and determine $C\ge 2$ in the process. 
We again proceed by induction on the size of $r$.

Notice that the differential of 
$\phi_s$ at $c(0)$  is bilipschitz 
with bilipschitz constant $e^{\int_0^s \|\nabla_{\cdot} X\|(c(t))dt}
\le 1+2\eps$.
Thus the Lemma holds 
for very small $r$.

Suppose the result holds for some $r/10\le 1/100$. 
It suffices to prove that it then holds for $r$.
By induction assumption we know that for any $t$ there exists 
$B_{r/10}(c(t))'\subset B_{r/10}(c(t))$ such that for any $s\in [-t,1-t]$ we have

\[
\vol (B_{r/10}(c(t))')\ge (1-C\eps)\vol (B_{r/10}(c(t)))\ge \frac{1}{2}\vol (B_{r/10}(c(t)))
\]
and 
\[
\phi_s(B_{r/10}(c(t))')\subset B_{r/5}(c(t+s)),
\]
where  we used $\eps \le \tfrac{1}{2C}$ in the inequality.
This easily implies that $\vol (B_{r/10}(c(t)))$ are comparable for all $t$.
More precisely, for  any $t_1, t_2\in [0,1]$ we have that 
\begin{equation}\label{eq:comp1}
\tfrac{1}{C_0}\vol B_{r/10}(c(t_1))\le \vol B_{r/10}(c(t_2))\le C_0\vol B_{r/10}(c(t_1))
\end{equation}
with a computable universal $C_0=C_0(n)$. 
Put
\[   
h(s)=\fint _{B_{r/10}(c(0))'\times B_r(c(0))}\dt_r(s)(p,q)\,\,d\mu(p)\,d\mu(q),
\]

\begin{eqnarray*}
U_s&:=&\{(p,q)\in B_{r/10}(c(0))'\times B_r(c(0))\mid \dt_r(s)(p,q)<r\},\\
\phi_s(U_s)&:=&\{(\phi_s(p),\phi_s(q))\mid (p,q)\in U_s\}\hspace*{1em}\mbox{and}\\
\dt_r'(s)(p,q)&:=&\limsup_{h\searrow 0} \tfrac{\dt_r(s+h)(p,q)-\dt_r(s)(p,q)}{h}.
\end{eqnarray*}
As $\dt_r(t)\le r$ is monotonously increasing, 
we deduce that if $\dt_r(s)(p,q)= r$, then $\dt_r'(s)(p,q)=0$.
Since $\dt_r(s+h)(p,q)\le 
\dt_r(s)(p,q)+\dt_r(h)(\phi_s(p),\phi_s(q))$ and $\phi_s$ is
 measure preserving, it follows 
\begin{eqnarray*}
h'(s)&\le& \fint_{\phi_s(U_s)}\dt_r'(0)(p,q)\\
&\le&\tfrac{4\vol B_{3r}(c(s))^2}{\vol B_{r/10}(c(0))^2}\fint_{B_{3r}(c(s))^2}
\dt_r'(0)(p,q),
\end{eqnarray*}
where we used that $\phi_s(B_{r/10}(p_0)')^2\subset \phi_s(U_s)\subset B_{3r}(c(s))^2$. 
If $p$ is not in the cut locus of $q$ 
and $\gamma_{pq}\colon [0,1]\rightarrow M$ 
is a minimal geodesic between $p$ and $q$,
then 
\[
dt'_r(0)(p,q)\le d(p,q)\int_0^1\|\nabla_\cdot X\|(\gamma_{pq}(t))\,dt.
\]

Combining the last two inequalities with the segment inequality 
we deduce 

\begin{eqnarray*}
h'(s)&\le& C_1(n)r\fint_{B_{6r}(c(s))}\|\nabla_{\cdot}X\|\\
&\le& C_1(n)r\Mx\!_1\|\nabla_{\cdot}X\|(c(s))
\end{eqnarray*} 
Note that the choice of the constant $C_1(n)$ can be made explicit and  {\it independent} 
of the induction assumption. We deduce $h(1)\le C_1(n)r\eps$ and thus
 the subset 
\[
B_r(p_0)':= \biggr\{ p\in B_r(p_0)\Bigm| \fint_{B_{r/10}(p_0)'}\dt_r(1)(p,q)\,d\mu(q)\le r/2\biggr\}
\]
satisfies 
\begin{eqnarray}\label{vol est1}
\vol(B_r(p_0)')\ge (1-2C_1(n)\eps)\vol(B_r(p_0)).
\end{eqnarray}
It is elementary to check 
that 
\[
\phi_t(B_r(p_0)')\subset B_{2r}(c(t)) \mbox{ for all $t\in [0,1]$.}
\]

Then arguing as before we estimate that 

\[
\fint_{B_r(p_0)'\times B_r(p_0)}dt_r(1)(p,q)d\mu(p) d\mu(q)\le C_2(n)\cdot r\cdot \eps.
\]
Using $\dt_r(1)\le r$ and the volume estimate (\ref{vol est1}) this gives
\[
\fint_{B_r(c(p_0))^2}\dt_r(1)(p,q)\,d\mu(p)\, d\mu(q)\le C_2(n)\cdot r\cdot \eps+2rC_1\eps=: C_3r\eps.
\]
This completes the induction step with $C(n)=C_3$ and $\eps_0=\tfrac{1}{2C_3}$.
In order to remove the restriction $\eps\le \eps_0$ 
one can just increase $C(n)$ by the factor $4$, as indicated at the beginning. 
\end{proof}

\begin{proof}[Proof of Proposition~\ref{prop: first main exa}]
Put $g_{t,i}(x)=\|\nabla_\cdot X_i^t\|(x)$. 
First notice that by \eqref{eq:mf-holder}
\[
\int_0^1\Mx(g_{t,i})(c_i(t))\,dt\le \int_0^1 u_{t,i}(c_i(t))\,dt\to 0.
\]

Let $\lambda_i\to \infty$ and put $r_i=\tfrac{R}{\lambda_i}$, 
where $R>1$ is arbitrary. 
By Lemma~\ref{lem:dtr}
there is $S_i\subset B_{r_i}(c_i(0))$ and  $\delta_i\to 0$
with \begin{enumerate}
\item[$\bullet$] $\vol(S_i)\ge (1-\delta_i)\vol(B_{r_i}(c_i(0)))$  and 
\item[$\bullet$] $\phi_{it}(S_i)\subset B_{2r_i}(c_i(t))$ for all $t$.
\end{enumerate}
In the following we assume that $i$ is so large that 
$\delta_i\le 1/2$, and $r_i\le 1/100$.
As in the proof of  Lemma~\ref{lem:dtr} this easily implies that there is a universal constant 
$C=C(n)$ with 
\[
\tfrac{\vol(B_{2r_i}(c_i(t)))}{\vol(B_{r_i}(c_i(0)))}\le \tfrac{C}{2} \mbox{ for all $t$ and all $i$.}
\]

Using that $\phi_{it|S_i}$ is measure preserving, we deduce

\begin{eqnarray*}
\fint_{S_i}\int_0^1 \Mx \!_1(g_{t,i})(\phi_{it}(p))\,dt\,d\mu_i(p)\!\!\!&\le&
C\int_0^1\fint_{B_{2r_i}(c_i(t))} \Mx\!_1(g_{t,i})(q)\,d\mu_i\,dt(q)\\
&\le& C \int_0^1\Mx\!_1(\Mx\!_1(g_{t,i}))(c_i(t))\,dt\\
&\overset{by \,\,(\ref{est: mxmx})}{\le}&C\cdot C_2\int_0^1\Mx(g_{t,i}^\pe)^{1/\pe}(c_i(t))\, dt\\
&=&C\cdot C_2\int_0^1 u_{t,i}(c_i(t))\,\dt\to 0.
\end{eqnarray*}

Thus we can find $\tdelta_i\to 0$ 
and a subset $B_{r_i}(c_i(0))''\subset  S_i$
with 
\[
\vol(B_{r_i}(c_i(0)))-\vol(B_{r_i}(c_i(0))'')\le \tdelta_i\min_{q\in B_{r_i}(c_i(0))} \vol(B_{r_i/R}(q))
\]
and 
\[
\int_0^1 \Mx\!_1(g_{t,i})(\phi_{it}(q))\, dt\le \tdelta_i\mbox{ for all $q\in B_{r_i}(c_i(0))''$}.
\]

Recall that $r_i=\tfrac{2R}{\lambda_i}$ with an arbitrary $R$. 
By a diagonal sequence argument it is easy to deduce 
that 
after replacing $\tdelta_i$ by  another sequence converging slowly 
to $0$ we can keep the above estimates for $r_i=\tfrac{R_i}{\lambda_i}$ 
with $R_i\to \infty$ sufficiently slowly. 
Combining this with Lemma~\ref{lem:dtr} shows that 
$f_i=\phi_{i1}\colon (\lambda_iM_i, c_i(0))\rightarrow (\lambda_iM_i,c_i(1))$
has the zooming in property.  

Let  $\tX^t_i$ be a lift of $X^t_i$ to the universal covering {
$\tM_i$ of $M_i$. }Consider the integral curve $\tc_i\colon [0,1]\rightarrow \tM_i$ 
of $\tX^t_i$ with $\tc(0)=\tp_i$. 
Clearly $\tc_i$ is a lift of $c_i$ and by the Covering Lemma~{\eqref{lem: covering}} we have
\[
\int_{0}^1 \Mx(\|\nabla_\cdot \tX^t_i\|^\pe)^{1/\pe}(\tc_i(t))\,dt\le C\int_{0}^1 \Mx(\|\nabla_\cdot X^t_i\|^\pe)^{1/\pe}(c_i(t))\,dt
\]
with some universal constant $C$. 
Thus $\tphi_{i1}\colon (\lambda_i\tM_i,\tp_i)\rightarrow (\lambda_i\tM_i,\phi_{i1}(\tp_i))$ has the zooming in property as well. 
Any other lift  $\tf_i$ of $f_i$ is obtained by composing $\tphi_{i1}$ 
with a deck transformation and thus the result carries over to any lift of $f_i$.
\end{proof}

\begin{prop}[Second main example]\label{prop: second main exa}\label{prop: translations}
Let  $(M_i,g_i)$ be a sequence of $n$-manifolds 
with $\Ric>-1/i$ on $B_{i}(p_i)$ and 
$\overline {B_i(p_i)}$ compact. 
Suppose that $(M_i,p_i)$ converges to  {$( \R^k\times Y,p_\infty)$}.

Then for each $v\in \R^k$ there is  
a sequence of diffeomorphisms $f_i\colon [M_i,p_i]\rightarrow [M_i,p_i]$ 
with the zooming in property 
which converges in the weakly measured sense 
to an isometry $f_\infty$ of $  \R^k\times Y$ that acts trivially on $Y$ 
and by $w\mapsto w+v$ on  
$\R^k$.
Moreover, $f_i$ is isotopic to the identity and 
there is a lift $\tf_i\colon [\tM_i,\tp_i]\rightarrow [\tM_i,\tp_i]$ of $f_i$ to the universal
cover which has the zooming in property as well.
\end{prop}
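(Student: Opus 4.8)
The plan is to produce $f_i$ as a composition of gradient flows of the Cheeger--Colding harmonic functions $b^i_1,\dots,b^i_k$ supplied by Theorem~\ref{hess-ineq}, and then to invoke Proposition~\ref{prop: first main exa} to conclude that the composition has the zooming in property. First I would fix $v=(v_1,\dots,v_k)\in\R^k$ and, using Theorem~\ref{hess-ineq} applied on larger and larger balls $B_{r_i}(p_i)$ with $r_i\to\infty$ (which is legitimate since $(M_i,p_i)\to(\R^k\times Y,p_\infty)$ and hence $(M_i,p_i)$ restricted to any fixed ball looks Euclidean in the first $k$ directions), obtain harmonic $b^i_j\colon B_{r_i}(p_i)\to\R$ with $|\nabla b^i_j|\le C(n)$ and with the integral of $\sum_{j,l}|\langle\nabla b^i_j,\nabla b^i_l\rangle-\delta_{jl}|+\sum_j\|\Hess_{b^i_j}\|^2$ over $B_R(p_i)$ tending to $0$ for every fixed $R$. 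These are not divergence free, so I would instead work with the vector fields $X_j^i:=\nabla b^i_j$, whose divergence is $\Delta b^i_j=0$; thus $X_j^i$ \emph{is} divergence free, which is exactly the hypothesis needed in Proposition~\ref{prop: first main exa}. Cutting off $b^i_j$ outside $B_{r_i-10}(p_i)$ (the cutoff region can be pushed to infinity) makes the vector fields compactly supported without affecting anything on the scales that matter.

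Next I would let $\phi^{i,j}_t$ be the flow of $X^i_j$ and set $f_i:=\phi^{i,1}_{v_1}\circ\cdots\circ\phi^{i,k}_{v_k}$. Each $\phi^{i,j}_{v_j}$ is isotopic to the identity (it \emph{is} a time-$v_j$ flow), so $f_i$ is isotopic to the identity. To see that $\phi^{i,j}_{v_j}\colon(\lambda_iM_i,q_i)\to(\lambda_iM_i,\phi^{i,j}_{v_j}(q_i))$ has the zooming in property along the relevant integral curve $c^{i,j}$, I would check the hypothesis of Proposition~\ref{prop: first main exa}, namely that $\int_0^1 u_{t,i}(c^{i,j}(t))\,dt\to 0$ where $u_{t,i}=(\Mx\|\nabla_\cdot X^i_j\|^{\pe})^{1/\pe}=(\Mx\|\Hess_{b^i_j}\|^{\pe})^{1/\pe}$ for some $\pe>1$. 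This is where the weak-type estimates of Section~\ref{sec: pre} do the work: by Lemma~\ref{lem: weak11}(c) applied to $\|\Hess_{b^i_j}\|^{\pe}\in L^{\a/\pe}$ (choosing $1<\pe<\a$) together with the $L^{\a}$-bound one extracts from the $L^2$-smallness of the Hessian via a covering/maximal-function argument exactly as in the proof of Theorem~\ref{thm: finite generation}, one can pass to a subset of base points of almost full measure along which this integral is small; a diagonal argument over $R_i\to\infty$ then gives the statement for all $\lambda_i\to\infty$. Composing the $k$ flows and invoking Lemma~\ref{lem: composition} shows $f_i$ has the zooming in property, and Lemma~\ref{lem: iso} identifies the limit $f_\infty$ as a measure-preserving isometry of $\R^k\times Y$; because the $b^i_j$ converge to the coordinate projections (the "Moreover" part of Theorem~\ref{hess-ineq}, or Lemma~\ref{prodlem}), the flow of $\nabla b^i_j$ converges to translation by $e_j$, so $f_\infty$ translates by $v$ on $\R^k$ and is trivial on $Y$.

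For the lift, I would take $\tilde X^i_j:=\nabla\tilde b^i_j$ where $\tilde b^i_j=b^i_j\circ\pi$ is the pullback to $\tM_i$; this is again harmonic, hence divergence free, and by the Covering Lemma~\ref{lem: covering} the relevant integral $\int_0^1\Mx(\|\nabla_\cdot\tilde X^i_j\|^{\pe})^{1/\pe}(\tilde c^{i,j}(t))\,dt$ is controlled by $C(n)$ times the downstairs integral, hence still tends to $0$. So the flow $\tilde\phi^{i,j}$ has the zooming in property by the "Moreover" clause of Proposition~\ref{prop: first main exa}, and $\tilde f_i:=\tilde\phi^{i,1}_{v_1}\circ\cdots\circ\tilde\phi^{i,k}_{v_k}$ is a lift of $f_i$ with the zooming in property by Lemma~\ref{lem: composition}.

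The main obstacle I anticipate is the passage from the \emph{averaged} $L^2$-smallness of $\|\Hess_{b^i_j}\|$ provided by Theorem~\ref{hess-ineq} to a \emph{pointwise-along-a-curve} smallness of the maximal function $u_{t,i}$: one must first upgrade $L^2$-smallness to $L^{\a}$-smallness for some $\a>1$ (using the gradient bound and interpolation, or the improved Hessian estimates of Cheeger--Colding), then run the covering argument that produces a set of base points of almost full measure with the desired integral bound, all while keeping the radii $r_i\to\infty$ and the rescalings $\lambda_i\to\infty$ compatible via a diagonal argument. This bookkeeping — rather than any single hard estimate — is the crux; each individual ingredient (segment inequality, weak $(1,1)$, composition of zooming-in maps, covering lemma) is already in place from the earlier sections.
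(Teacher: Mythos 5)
Your proposal is correct and follows essentially the same route as the paper: the paper reduces to $k=1$ via the splitting $\R^k=\R v\oplus (v)^\perp$ rather than composing $k$ coordinate flows, sets $\psi_i=\bigl||\nabla b_i|-1\bigr|+\|\Hess_{b_i}\|$, gets $\sfint_{B_R(p_i)}\Mx(\psi_i)\le\sqrt{C\eps_i}$ from Lemma~\ref{lem: weak11}(c) and Cauchy--Schwarz, averages over starting points using that the flow is measure preserving to find the good set of base points, and identifies the limit as a translation via the submetry $b_\infty$ exactly as you do, with the lift handled by the Covering Lemma as in your last paragraph. The one point to watch is that Proposition~\ref{prop: first main exa} as stated only yields the zooming in property for rescalings $\lambda_i\to\infty$, whereas the claim is at the fixed scale; the paper therefore invokes Lemma~\ref{lem:dtr} directly (the displacement of $\phi_{it_0}$ being globally bounded by $L(n)t_0$), obtains the property for $[10\,M_i,p_i]$, and scales back down by the factor $10$ using Lemma~\ref{lem: iso}.
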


\begin{proof}
Using the splitting $\R^k= \R v\oplus (v)^\perp$ 
and replacing $Y$ by $Y\times (v)^\perp$ we see 
that it suffices to prove the statement for $k=1$.

By the work of Cheeger and Colding ~\cite{CC} we can find sequences  $\rho_i\to \infty$ 
and $\eps_i\to 0$
and harmonic functions $b_i\co B_{4\rho_i}(p_i)\to \R$ such that

\begin{eqnarray*}\label{e:lip2}
|\nabla b_i|&\le& L(n) \mbox{ for all $i$ and}\\
\fint_{B_{4R}(p_i)} \bigl(\bigl||\nabla b_i|-1\bigr|+ \|\Hess_{b_i}\|\bigr)^2&\le& \eps_i\,\,\,\,\,\,\,\mbox{
for any $R\in [1/4,\rho_i]$.} 
\end{eqnarray*}

Let $X_i$ be a vector field 
with compact support with 
$X_i=\nabla b_i$ on $B_{3\rho_i}(p_i)$, and
let $\phi_{it}$ denote the flow of $X_i$. 
Clearly for any $t$ we can find $r_i\to\infty$ 
such that $\phi_{it|B_{r_i}(p_i)}$ is measure preserving.

Put $\psi_i:= \bigl||\nabla b_i|-1\bigr|+ \|\Hess(b_i)\|$. 
We deduce from Lemma~\ref{lem: weak11} that 
\[
\fint_{B_{2R}(p_i)} \Mx(\psi_i)^2\le C(n,R)\eps_i
\]
and Cauchy--Schwarz gives 

\[
\fint_{B_R(p_i)} \Mx(\psi_i)\le \sqrt{C(n,R)\eps_i}.
\]

Suppose now that $t_0\le \tfrac{R}{4L(n)}$. 
Then $\phi_t(q)\in B_{\frac{3R}{4}}(p_i)$ for all $q\in B_{R/2}(p_i)$ 
and all $t\in [-t_0,t_0]$.
Combining that $\phi_{it}$ is measure preserving and 
$\vol(B_R(p_i))\le C_3(n,R)\vol(B_{R/2}(p_i))$, we get that
\begin{eqnarray*}
\fint_{B_{R/2}(p_i)}\int_{0}^{t_0} \Mx(\psi_i)(\phi_{it}(p))
&\le& C_3(n,R) t_0\fint_{B_{R}(p_i)}\Mx(\psi_i)\\
&\le& C_4(t_0,R,n)\sqrt{\eps_i}.
\end{eqnarray*}

It is now easy to find $R_i\to \infty$ and $\delta_i\to 0$ 
with

\begin{eqnarray*}
\fint_{B_{R}(p_i)}\int_{0}^{t_0} \Mx(\psi_i)(\phi_{it}(p))\,dt\,d\mu_i(p)
\le \delta_i \mbox{ for all $R\in [1,R_i]$.}
\end{eqnarray*}

After an adjustment of the sequences $\delta_i\to 0$ and $R_i\to \infty$
we can find $B_{R_i}(p_i)'\subset B_{R_i}(p_i)$ 
with 
\begin{eqnarray*}
 \vol(B_{R_i}(p_i))-\vol(B_{R_i}(p_i)')&\le&  \delta_i\vol(B_1(q))\,\mbox{ for 
all $q\in B_{R_i}(p_i)$ and}\\
\int_{0}^{t_0} \Mx(\psi_i)(\phi_t(q))&\le& \delta_i\,\,\hspace*{4.6em}\mbox{ 
for all $q\in B_{R_i}(p_i)'$. }
\end{eqnarray*}

Moreover, the  displacement of $\phi_{i t_0}$ is globally bounded by $L(n)t_0$
and thus, Lemma~\ref{lem:dtr} implies that $\phi_{i t_0}$ has the
 zooming in property. 
To be precise the lemma only implies that 
$\phi_{i t_0}\colon \bigl[10 M_i,p_i\bigr]\rightarrow 
\bigl[10 M_i,p_i\bigr]$ has the zooming in property
and Lemma~\ref{lem: iso} then allows to scale back down by a factor 
10 without losing the zooming in property.

 As in the proof of the Product Lemma~{\eqref{prodlem}} we see that $b_i$ converges to the projection $b_{\infty}\colon  \R\times Y\rightarrow \R$. 
In particular $b_\infty$ is a submetry. 

For all $q_i\in B_{R_i}(p_i)'$
the length of the integral curve $c_i(t)=\phi_{it}(q_i)$ ($t\in [0,t_0]$)
is bounded above by $t_0+\delta_i$.
Moreover, $b_i(c_i(t_0))-b_i(c_i(0))$ is bounded below by $t_0-\delta_i$. 
This implies that $\phi_{\infty t_0}$ satisfies 
$d(p,\phi_{\infty t_0}(p))\le t_0$ 
and $b_\infty(\phi_{\infty t_0}(p))-b_\infty(p)\ge t_0$. 
Clearly, equality must hold and $\phi_{\infty t}(q)$ 
is a horizontal geodesic with respect to the submetry 
$b_\infty$. This in turn implies 
that $\phi_{\infty t_0}$ respects the decomposition $Y\times \R$, it  
acts by the identity on the first factor and by translation on the second as claimed.

Let $\sigma\colon \tM_i\rightarrow M_i$ denote the universal cover, 
$\tp_i\in \tM_i$ a lift of $p_i$, and 
let $\tX_i$ be a lift of $X_i$. By Lemma~\ref{lem: cover} 
all estimates for $X_i$ on $B_R(p_i)$ give similar estimates 
for $\tX_i$ on $B_{R}(\tp_i)$. Thus the 
flow $\tphi_{it}$ of $\tX_i$ has the zooming in property as well.
\end{proof}


\section{A rough idea of the proof of the Margulis Lemma.}\label{sec: scetch}\label{sec: rough}

The proof of the Margulis Lemma is somewhat indirect 
and it might not be easy for everyone to grasp 
immediately how things interplay. 
In this section we want to give a rough idea of 
how the arguments would unwrap in a simple but nontrivial case.

Let $p_i\to \infty $ be a sequence of odd primes 
and let $\Gamma_i:=\Z\ltimes \Z_{p_i}$ be the semidirect 
product where the homomorphism $\Z\rightarrow \Aut(\Z_{p_i})$
maps $1\in \Z$ to the automorphism 
$\varphi_i$ given by $\varphi_i(z+p_i\Z)=2z+p_i\Z$.

Suppose, contrary to the Margulis Lemma, we have  a sequence of compact $n$-manifolds  $M_i$ with $\Ric>-1/i$ 
and $\diam(M_i)=1$ and fundamental group $\Gamma_i$. 

A typical problem situation would be that $M_i$ converges
to a circle and { its universal cover  $\tM_i$} converges to $\R$.

We then replace $M_i$ by $B_i=\tM_i/\Z_{p_i}$
and in order  not to lose information we endow $B_i$ with the deck transformation
$f_i\colon B_i\rightarrow B_i$ representing a generator of $\Gamma_i/\Z_{p_i}$. 
Then $B_i$ will converge to $\R$ as well.

It is part of the rescaling theorem that one can find $\lambda_i\to \infty$ 
such that the rescaled sequence $\lambda_i B_i$ converges to $\R\times K$ with $K$ being compact but not equal to  a
point. Suppose for illustration 
that $\lambda_iB_i$ converges to $\R\times S^1$ 
and $\lambda_i\tM_i$ converges to $\R^2$ and that the action of $\Z_{p_i}$ 
converges to a discrete action of $\Z$ on $\R^2$.

The maps $f_i\colon \lambda_i B_i\rightarrow \lambda_i B_i$ do not converge, 
because typically  $f_i$ would map a base point $x_i$ to some point $y_i=f_i(x_i)$ 
with $d(x_i,y_i)=\lambda_i\to \infty$ with respect to the rescaled distance.

But the second statement in the rescaling theorem
guarantees that we can find a sequence 
of diffeomorphisms $g_i\colon [\lambda_i B_i,y_i]\rightarrow [\lambda_i B_i,x_i]$ 
with the zooming in property. 
The composition $f_{new, i}:=g_i\circ f_i\colon [\lambda_i B_i,x_i]\rightarrow [\lambda_i B_i,x_i]$ 
also has the zooming in property and thus subconverges to 
an isometry of the limit.

Moreover, 
a lift  $\tf_{new, i}\colon \lambda_i\tM_i\rightarrow \lambda_i\tM_i$ of $f_{new, i}$ 
has the zooming in property, too. 
Since $g_i$ can be chosen isotopic 
to the identity, the action of $\tf_{new, i}$ on the deck transformation 
group $\Z_{p_i}=\pi_1(B_i)$ by conjugation remains unchanged.  

On the other hand, the $\Z_{p_i}$-action on $\tM_i$
converges to a discrete $\Z$-action on $\R^2$ 
and  $\tf_{new, i}$ converges to an isometry $\tf_{new,\infty}$ 
of $\R^2$ normalizing 
the $\Z$-action. 
This implies that  $\tf_{new,\infty}^2$ commutes with the $\Z$-action and 
it is then easy to get a contradiction.

\begin{problem} We suspect that for any given $n$ and $D$ 
only finitely many of the groups in the
above family of groups  should occur
as fundamental groups of compact $n$-manifolds with $\Ric>-(n-1)$ 
and diameter $\le D$. { Note that this is not even known  under the stronger assumption of 
$K>-1$ and $\diam\le D$.}
\end{problem}


\section{The Rescaling Theorem.}\label{sec: rescaling}

\begin{thm}[Rescaling Theorem]\label{thm: rescale}
Let $(M_i,g_i,p_i)$ be a sequence of $n$-manifolds satisfying
$\Ric_{B_{r_i}(p_i)}>-\mu_i$ and $\overline{B_{r_i}(p_i)}$ is compact 
for some  $r_i\to\infty$, $\mu_i\to 0$. 
Suppose that $(M_i,g_i,p_i)\conv (\R^k,0)$ for some $k<n$. 
Then after passing to a subsequence
we can find a compact metric space $K$ with $\diam(K)=10^{-n^2}$,
a sequence of subsets $G_1(p_i)\subset B_1(p_i)$ 
with $\tfrac{\vol(G_1(p_i))}{\vol(B_1(p_i))}\to 1$
and a sequence $\lambda_i\to \infty$ such that the following holds
\begin{enumerate}
\item[$\bullet$]  For all $q_i\in G_1(p_i)$ the isometry type of the limit
of any convergent subsequence of $(\lambda_i M_i,q_i)$ is given 
by the metric product $\R^k\times K$.
\item[$\bullet$] For all $a_i,b_i\in G_1(p_i)$
we can find a sequence of diffeomorphisms
\[
f_i\colon [\lambda_iM_i,a_i]\rightarrow [\lambda_iM_i,b_i]
\]
with the zooming in property such that $f_i$ is
isotopic to the identity. 
Moreover, for any lift $\ta_i,\tb_i\in \tM_i$ of $a_i$ and $b_i$ 
to the universal cover $\tM_i$
we can find a lift $\tf_i$ of $f_i$ such that 
\[
\tf_i\colon [\lambda_i \tM_i,\ta_i]\rightarrow [\lambda_i \tM_i,\tb_i] 
\]
has the zooming in property as well.
\end{enumerate}
Finally, if $\pi_1(M_i,p_i)$ is generated by loops of length 
$\le R$ for all $i$, then we can find $i_0$ 
such that 
$\pi_1(M_i,q_i)$ is generated by loops of length $\le \tfrac{1}{\lambda_i}$ 
for all $q_i\in G_1(p_i)$ and $i\ge i_0$. 
\end{thm}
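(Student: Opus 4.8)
\emph{Overall strategy.} The plan is to manufacture the diffeomorphisms $f_i$ from gradient flows of Cheeger--Colding harmonic functions in exactly the way Propositions~\ref{prop: first main exa} and \ref{prop: second main exa} permit, and to take $\lambda_i$ to be the first scale at which the collapse of $M_i$ onto $\R^k$ becomes visible. First I would use Theorem~\ref{hess-ineq} and a diagonal argument over exhausting balls to produce harmonic functions $\Phi^i=(b^i_1,\dots,b^i_k)\colon B_{R_i}(p_i)\to\R^k$ with $R_i\to\infty$, $|\nabla b^i_j|\le C(n)$, with $\Phi^i$ an $\eps_i$-Gromov--Hausdorff approximation on unit balls ($\eps_i\to0$), and with
\[
\fint_{B_R(p_i)}\Bigl(\sum_{j,l}\bigl|\ml\nabla b^i_j,\nabla b^i_l\mr-\delta_{jl}\bigr|+\sum_j\|\Hess_{b^i_j}\|^2\Bigr)\longrightarrow 0\qquad\text{for every fixed }R.
\]
Harmonicity makes each $\nabla b^i_j$ divergence free, so a compactly supported field agreeing with $\sum_jc_j\nabla b^i_j$ near $B_1(p_i)$ has a measure preserving flow isotopic to the identity; together with the Covering Lemma~\ref{lem: covering} this will also yield the lift $\tf_i$ to $\tM_i$.

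\emph{Choice of $\lambda_i$ and of $G_1(p_i)$.} For $\lambda\ge 1$ set $D_i(\lambda):=d_\GH\bigl(\overline{B_1^{\lambda M_i}(p_i)},\overline{B_1^{\R^k}(0)}\bigr)$; this is continuous in $\lambda$, tends to $0$ at $\lambda=1$ since $(M_i,p_i)\to(\R^k,0)$, and for each fixed $i$ tends to $d_\GH(\overline{B_1^{\R^n}(0)},\overline{B_1^{\R^k}(0)})>0$ as $\lambda\to\infty$ because the tangent cone of the smooth manifold $M_i$ at $p_i$ is $\R^n$ with $k<n$. Hence for $i$ large there is a smallest $\lambda_i$ with $D_i(\lambda_i)=10^{-n^2}$, and $\lambda_i\to\infty$ since $D_i\to 0$ uniformly on $[1,\Lambda]$ for every fixed $\Lambda$. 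Passing to a subsequence I would then, using the weak type $1$--$1$ inequality (Lemma~\ref{lem: weak11}) in its absolute-volume form (as in the proof of Lemma~\ref{lem: composition}) and the pointwise estimates around \eqref{est: mxmx}, choose $G_1(p_i)\subset B_1(p_i)$ with $\tfrac{\vol(G_1(p_i))}{\vol(B_1(p_i))}\to1$, small enough that $G_1(p_i)$ meets $B_{1/\lambda_i}(p_i)$, consisting of points $q$ at which the maximal functions of $\sum_j\|\Hess_{b^i_j}\|^2$, of $\sum_{j,l}|\ml\nabla b^i_j,\nabla b^i_l\mr-\delta_{jl}|$, and of the characteristic functions of the exceptional sets of Propositions~\ref{prop: first main exa}--\ref{prop: second main exa}, are all $\le\delta_i$ for a suitable $\delta_i\to0$. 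For such $q$ the rescaled functions $\lambda_ib^i_j$ on $\lambda_iM_i$ still satisfy the hypotheses of the Product Lemma~\ref{prodlem} at unit scale (the Hessian energy even improves by the factor $\lambda_i^{-2}$), so every subsequential limit of $(\lambda_iM_i,q)$ is a metric product $\R^k\times K_q$ and $\lambda_i\Phi^i$ converges to the projection onto $\R^k$; fixing the reference point $\bar p_i\in G_1(p_i)\cap B_{1/\lambda_i}(p_i)$ we get $K_{\bar p_i}\ne\{pt\}$ because $\bar p_i$ is at bounded $\lambda_iM_i$-distance from $p_i$ while $D_i(\lambda_i)=10^{-n^2}$.

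\emph{The diffeomorphisms, uniformity of $K$, and short loops.} Given $a_i,b_i\in G_1(p_i)$ put $v_i:=\Phi^i(b_i)-\Phi^i(a_i)$ (so $|v_i|\le2C(n)$) and let $f_i:=\phi_{i1}$ be the time-one flow of a compactly supported divergence-free field equal to $\sum_j(v_i)_j\nabla b^i_j$ near $B_1(p_i)$. The good-set conditions together with the divergence-free flow-averaging argument from the proof of Proposition~\ref{prop: second main exa} (and Cauchy--Schwarz) give $\int_0^1(\Mx\|\nabla_\cdot X_i\|^2)^{1/2}(c_i(t))\,dt\to0$ along the integral curve $c_i$ through $a_i$, so Proposition~\ref{prop: first main exa} shows that $f_i\colon(\lambda_iM_i,a_i)\to(\lambda_iM_i,f_i(a_i))$ has the zooming in property, that $f_i$ is isotopic to the identity, and that every lift of $f_i$ to $\tM_i$ is zooming in as well (after composing such a lift with a deck transformation we may assume it sends $\ta_i$ over $b_i$). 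Since $\Phi^i(f_i(a_i))=\Phi^i(b_i)+o(1)$ and $\Phi^i$ is a Gromov--Hausdorff approximation we get $d_{M_i}(f_i(a_i),b_i)\to0$, hence --- granting the rate estimate discussed below --- $f_i\colon[\lambda_iM_i,a_i]\to[\lambda_iM_i,b_i]$ and $\tf_i\colon[\lambda_i\tM_i,\ta_i]\to[\lambda_i\tM_i,\tb_i]$. By Lemma~\ref{lem: iso}, applied along the full sequence, $f_i$ converges weakly measured to a measure preserving isometry $\R^k\times K_{a_i}\to\R^k\times K_{b_i}$; since the Euclidean factor splits off uniquely and the $K$'s are compact this forces $K_{a_i}\cong K_{b_i}$, and taking $b_i=\bar p_i$ and running the argument along an arbitrary convergent subsequence shows that every subsequential limit of $(\lambda_iM_i,q)$, $q\in G_1(p_i)$, is $\R^k\times K$ for the single compact $K:=K_{\bar p_i}$; a final bounded rescaling of $\lambda_i$ (harmless by Lemma~\ref{lem: iso}) normalizes $\diam(K)=10^{-n^2}$. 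Finally, if $\pi_1(M_i,p_i)$ is generated by loops of length $\le R$, so is $\pi_1(M_i,q_i)$ up to an additive constant for every $q_i\in B_1(p_i)$; the action of $\pi_1(M_i)$ on $\lambda_i\tM_i$ subconverges to an action of a limit group $\lG$ on $\R^k\times Z$ with $(\R^k\times Z)/\lG=\R^k\times K$ (the Euclidean lines split), so $Z$ is connected, $\lG$ is cocompact on $Z$, and Lemma~\ref{lem:short} gives $\lG(r)=\lG(2\diam K)=\lG(2\cdot10^{-n^2})=\lG$ for $r>2\cdot10^{-n^2}$; the argument of Step~2 in the proof of Theorem~\ref{thm: finite generation} (combining Lemmas~\ref{lem: prep gap} and \ref{lem:short}, now with the limit $\R^k\times K$ in place of $\R^k$, and rescaling by the length of the first offending short generator to handle unbounded displacements) then shows that for $i$ large no element of the short basis of $\pi_1(M_i,q_i)$ has $\lambda_iM_i$-length exceeding $3\cdot10^{-n^2}$, so $\pi_1(M_i,q_i)$ is generated by loops of $M_i$-length $\le 3\cdot10^{-n^2}/\lambda_i<1/\lambda_i$.

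\emph{The main obstacle.} The delicate point --- the technical heart of the proof --- is the rate estimate invoked above: one must know that $\lambda_i\,d_{M_i}(f_i(a_i),b_i)$ stays \emph{bounded}, because this boundedness is precisely what condition (d) in Definition~\ref{def: zoom} and the weakly measured limit of Lemma~\ref{lem: iso} require before one may write $f_i\colon[\lambda_iM_i,a_i]\to[\lambda_iM_i,b_i]$. Two errors contribute: the amount $\eta_i$ by which the flow of $\sum_j(v_i)_j\nabla b^i_j$ fails to be the exact translation, which must be made $O(1/\lambda_i)$ on $G_1(p_i)$ --- feasible since the relevant exceptional sets have measure controlled by a power of the (already small) average Hessian energy, leaving room in a diagonal argument over the radii $R_i$ --- and the Gromov--Hausdorff defect $\eps_i$ of $\Phi^i$, for which one needs that the collapse is ``resolved at a definite scale'', i.e.\ $\lambda_i\,d_\GH(B_1^{M_i}(p_i),B_1^{\R^k}(0))$ is bounded; this follows from the minimality in the definition of $\lambda_i$ together with the fact that $d_\GH(B_s^{M_i}(p_i),B_s^{\R^k}(0))$ cannot decrease substantially once $s$ grows past the first collapse scale (otherwise the unit ball of $(\lambda_iM_i,p_i)$ would itself be Gromov--Hausdorff close to $\R^k$, contradicting $D_i(\lambda_i)=10^{-n^2}$). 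Arranging all of these rate estimates simultaneously, uniformly over $a_i,b_i\in G_1(p_i)$ and compatibly with the many diagonal subsequence extractions, is where the real effort lies; the rest is an assembly of the lemmas quoted above.
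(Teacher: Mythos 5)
Your proposal correctly assembles the outer layer of the argument (harmonic splitting maps, good points via maximal functions, the Product Lemma to identify blow-up limits as $\R^k\times K$, Propositions~\ref{prop: first main exa}--\ref{prop: second main exa} to certify the zooming in property, and Lemmas~\ref{lem: prep gap}/\ref{lem:short} for the statement about short loops), and you correctly isolate the crux: one must produce $f_i$ with $d_{M_i}(f_i(a_i),b_i)=O(1/\lambda_i)$, since condition d) of Definition~\ref{def: zoom} is stated at the scale $\lambda_i$. But your resolution of that crux does not work. A single time-one flow of $\sum_j(v_i)_j\nabla b^i_j$ lands $f_i(a_i)$ within roughly $\delta_i$ of $b_i$, where $\delta_i$ is controlled by the maximal function of the Hessian energy along the flow line, i.e.\ by the quantity $\eps_i$ of Theorem~\ref{hess-ineq}. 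There is no a priori relation between $\eps_i$ and the collapse scale $\rho_i=1/\lambda_i$: the manifolds may collapse arbitrarily fast (tiny fibers, hence tiny $\rho_i$) while the Hessian energy decays arbitrarily slowly, so $\eps_i/\rho_i\to\infty$ is perfectly possible. In that regime $(\lambda_iM_i,f_i(a_i))$ and $(\lambda_iM_i,b_i)$ need not even have isometric pointed limits, and condition d) fails for the base points $a_i,b_i$. Your suggested fixes --- measure bounds on exceptional sets and a diagonal argument over the radii $R_i$ --- address neither point: the obstruction is not the size of a bad set but the precision of the flow at a \emph{good} point, and it lives at scales between $\rho_i$ and $1$, not at large radii.

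What is missing is precisely the telescoping, scale-by-scale construction that occupies the technical heart of the paper's proof (Sublemma~\ref{lem:subl}): one shows by induction on $r$ (in geometric steps $r\mapsto Lr$ with $L=9^n$) that most points of $B_{Lr}(p)$ can be joined to a point of $B_{L\rho_i}(p)$ by an integral curve of a piecewise-constant-in-time divergence-free field, obtained by concatenating, at each scale $r$, a flow of $\sum_\alpha(b^i_\alpha(p)-b^i_\alpha(q_m))\nabla b^i_\alpha$ that contracts the distance to $p$ by the definite factor $L$, with the previously constructed curve at scale $r$. The cost $\int_0^1(\Mx\|\nabla_\cdot X^t\|^{3/2})^{2/3}(c(t))\,dt$ accrued at scale $r$ is $O(r\eps_i)$, and these contributions sum geometrically over the $\sim\log_L(1/\rho_i)$ scales to $O(\eps_i)$, while the endpoint lands within $L\rho_i=L/\lambda_i$ of the target --- exactly the precision your single flow cannot deliver. (The paper's choice of $\lambda_i$ as the reciprocal of the supremum over good points of the local distortion scale $\rho_i(q)$, rather than your Gromov--Hausdorff criterion anchored at the single point $p_i$, is also what makes the induction start at scale $L\rho_i$ and what guarantees $K\neq\{pt\}$ at \emph{some} good point; note that your set $G_1(p_i)$, though of almost full measure in $B_1(p_i)$, need not meet the tiny ball $B_{1/\lambda_i}(p_i)$.) Without this multi-scale construction the second bullet of the theorem, and hence the first (whose proof of the uniformity of $K$ relies on the maps $f_i$), is not established.
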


\begin{proof}
By Theorem~\ref{hess-ineq},
after passing to a subsequence 
we can find a harmonic map $b^i\co B_{i}(p_i)\to B_{i+1}(0)\subset\R^k$ 
that gives a $\frac{1}{2^i}$ {Gromov--Hausdorff approximation from }$B_i(p_i)$ to $B_i(0)\subset\R^k$.
Put
\begin{equation*}
h_i= \sum_{j,l=1}^k |<\nabla b^i_j, \nabla
b^i_l>-\delta_{j,l}|+\sum_j \|\Hess_{b^i_j}\|^2.
\end{equation*}
After passing to a subsequence we also have by Theorem~\ref{hess-ineq} that
\[
\fint_{B_R(p_i)}h_i\le \eps_i^4\, \mbox{ for all $R\in [1,i]$ with $\eps_i\to 0$.}
\]
Furthermore, we may assume $\Ric>-\eps_i$ on $B_i(p_i)$. Put
\begin{equation}\label{eq:good}
G_4(p_i):=\bigl\{x\in B_4(p_i)\mid \Mx\!_4 h_i(x)\le \eps_i^2 \bigr\}.
\end{equation}
We will call elements of $G_4(p_i)$ {\em "good points" } in $B_4(p_i)$.
We put $G_r(q)=B_r(q)\cap G_4(p_i)$ for $q\in B_2(p_i)$ and $r\le 2$. 
It is immediate from the weak 1-1 inequality (Lemma~\ref{lem:mf-est}) that
\[
\tfrac{\vol(G_1(p_i))}{\vol(B_1(p_i))}\to 1.
\]

By  the Product Lemma~{\eqref{prodlem}}, for any $q_i\in G_4(p_i)$ 
and any $h\ge 1$  we have that 
$hB_{1/h}(q_i)$ is $\teps_i$ close  to a ball in a product $ \R^k\times Y$
where $\teps_i\to 0$  can be chosen independently of $h$ and $q_i$.
Note that the space $Y$ may depend on $h$ and $q_i$.

For any point $q_i\in G_1(p_i)$ let $\rho_i(q_i)$ be the largest number $\rho< 1$ such that
the map $b^i\co B_{\rho}(q_i)\to \R^k$ has distance distortion 
exactly equal to $\rho\cdot 10^{-n^2}$. 
For any choice $q_i\in G_1(p_i)$ we know that $\rho_i(q_i)\to 0$  
and 
by the Product Lemma~{\eqref{prodlem}}
$ \bigl(\tfrac{1}{\rho_i(q_i)}M_i,q_i\bigr)$ subconverges to  $\R^k\times K$ where $K$ is compact  with $\diam(K)=10^{-n^2}$.

Again note, that a priori, the constants $\rho_i(q_i)$ and the  space $K$ depend on the choice of $q_i\in G_1(p_i)$.
We will show that, in fact,  both are essentially  independent of the choice of good points in $B_1(p_i)$.

For each $i$ we define the number $\rho_i$ as the supremum  of $\rho_i(q_i)$ with $q_i\in G_1(p_i)$ 
and put 
\[\lambda_i=\tfrac{1}{\rho_i}.\]

The statement of the theorem will follow easily from the following sublemma.

\begin{sublem}\label{lem:subl} 
There exist  constants $\bar C_1,\bar C_2$ independent of $i$  such that the following holds.
For any good point $p\in G_{1}(p_i)$ and  any $r\le 2$ there exists $B_r(p)'\subset B_r(p)$ such that
\begin{equation}\label{eq:c1}
\vol B_r(p)'\ge (1-\bar C_1r\eps_i)\vol B_r(p)
\end{equation}
and such that for any $q\in B_r(p)'$ there exists $p'\in B_{L\rho_i}(p)$ 
(with $L=9^n$) 
and a time dependent (piecewise constant in time) divergence free vector 
field $X^t$ (dependence on $q$  is suppressed)
and its integral curve $c_q(t)\co [0,1]\to B_2(p_i)$ such that $c(0)=p',c(1)=q$ and 
\[
\tfrac{1}{\vol(B_r(p))}\int_{B_r(p)'}\int_0^1\bigl(\Mx (\|\nabla_\cdot X^t\|^{3/2})\bigr)^{2/3}(c_q(t))\,dt\,d\mu_i(q)\le \bar C_2r\eps_i.
\]
\end{sublem}
\begin{proof}
Despite the fact that $L$ can be chosen 
as $L=9^n$ we treat it for now 
as a large constant $L\ge 10$ which needs to be determined.  
Throughout the proof we will denote by $C_j$ various constants depending on $n$.
Sometimes these constants will also depend on $L$ in which case that dependence will always be explicitly stated.

Although  we consider a fixed  $i$ for the major  part of the proof,
we take the liberty of assuming that $i$ is large.
This way  we can ensure that for any $r\ge \rho_i$ and any good point $p\in G_1(p_i)$ 
the ball  $\frac{1}{Lr}B_{2Lr}(p)$ is by the Product Lemma in the measured 
Gromov--Hausdorff sense arbitrarily close to a ball 
in a product $\R^k\times K$ where by our choice of $\rho_i$, $K$ has diameter $\le 10^{-n^2}$. 
This implies, for example, that we can assume that
\begin{equation}\label{eq: volume}
\tfrac{1}{2}\bigl(\tfrac{r_1}{r_2}\bigr)^k \le \tfrac{\vol(B_{r_1}(q_1))}{\vol(B_{r_2}(q_2))}\le 2\bigl(\tfrac{r_1}{r_2}\bigr)^k\,\mbox{ for all $q_1,q_2\in B_{Lr}(p),\, 
r_1,r_2\in [r/20,2Lr]$.}
\end{equation}

The sublemma is trivially true  for $r\le L\rho_i$ with $X\equiv 0$. 
By induction it suffices to show that if the sublemma holds for some 
$r\in (\rho_i,\tfrac{2}{L}]$ then it holds for $Lr$.

Let $q_1,\ldots , q_l$ be a maximal $r/2$-separated net in $B_{Lr}(p)$. Note that 
\[
B_{Lr}(p)\subset \bigcup_{m=1}^lB_{r/2}(q_m).
\]
By a standard volume comparison argument 
we can deduce from \eqref{eq: volume} that 
\begin{equation}\label{eq:relvol}
\frac{\sum_{m=1}^l\vol B_{r/2}(q_m)}{\vol B_{Lr}(p)}\le 3^{k+1}\le 3^n.
\end{equation}
Fix $q_m$ and consider the following vector field
\[
X(x)=\sum_{\alpha=1}^k(b_\alpha^i(p)-b_\alpha^i(q_m)) \nabla b_\alpha^i (x).
\]

Since $b_\a^i$ are  Lipschitz with a universal Lipschitz constant  by (\ref{e:lip}), $X$ satisfies 
\begin{equation}\label{eq:linear}
|X(x)|\le C(n)\cdot Lr.
\end{equation}
Also, by construction, $X$ satisfies
\begin{equation}\label{eq:avX}
\Mx\!_4\|\nabla_\cdot X\|^{2}(p)\le C_4\eps_i^2L^2r^2.
\end{equation}
Note that we get an extra $L^2r^2$ factor as compared to (\ref{eq:good}) because $|b^i(p)-b^i(q_m)|\le C(n)Lr$.
By applying (\ref{eq:mmf1}) we get 

\[
\Mx\Bigl([\Mx (\|\nabla_\cdot X\|^{3/2})]^{4/3}\Bigr)(p)
\le C(n)\Mx \!_4 (\|\nabla_\cdot X\|^2)(p)\le (C_5L\eps_ir)^2.
\]
In particular, for $R_1=2C(n)Lr$

\[
\fint_{B_{R_1}(p)}[\Mx (\|\nabla_\cdot X\|^{3/2})]^{4/3}\le (C_5L\eps_ir)^2
\]
provided $R_1\le 1$. However,
in the case of $R_1\in  [1,4C(n)]$ the same follows 
more directly from our initial assumptions combined 
with Lemma~\ref{lem: weak11} for all large $i$.
By the Cauchy--Schwarz  inequality the last estimate gives

\begin{equation}\label{eq:avX-d}
\fint_{B_{R_1}(p)}\bigl(\Mx (\|\nabla_\cdot X\|^{3/2})\bigr)^{2/3}\le C_5L\eps_ir.
\end{equation}

Consider the measure preserving flow $\phi$ of $X$ on $[0,1]$. 
Because of \eqref{eq:linear}
the flow lines $\phi_t(q)$ with $q\in B_{r}(p)$ 
stay in the ball $ B_{R_1}(p)$ for $t\in [0,1]$. 
We choose a universal constant $C_6(L)$ independent of $i$ with 
$C_5L\cdot \tfrac{\vol(B_{2C(n)Lr}(p))}{\vol(B_{r/10}(q_m))}\le C_6(L)$.

Combining that the flow is measure preserving, inequality~\eqref{eq:avX-d}
and our choice of $C_6$ we see that
\begin{eqnarray}\label{eq:avX-flow}
\hspace*{2.5em}\tfrac{1}{\vol(B_{r/10}(q_m))}\int_{B_{r}(q_m)}\int_0^1\bigl(\Mx(\|\nabla_\cdot X\|^{3/2})\bigr)^{2/3}(\phi_t(x))\,dt\,d\mu_i(x)\!\!&\le&
C_6(L)r\eps_i.
\end{eqnarray}

By the Product Lemma~{\eqref{prodlem}} applied to the rescaled balls $\frac{1}{Lr}B_{Lr}(p)$ we know that $\frac{1}{Lr}B_{Lr}(p)$ is measured Gromov--Hausdorff close to a unit ball in 
$\R^k\times K_3$ where $\diam K_3\le 10^{-n^2}$.
Moreover, $\phi_1$  is measured close to a translation 
by $b^i(p)-b^i(q_m)$ in $\R^k$, see proof
of Proposition~\ref{prop: second main exa}. Since we only need to argue for large $i$, 
we may assume that by volume  $3/4$ of the points in $B_{r/10}(q_m)$ 
are mapped by $\phi_1$ to points in $B_{ r/9}(p)$. 

We choose such a point $q\in B_{r/10}(q_m)$. In view of \eqref{eq:avX-flow} 
we may assume in addition that
\[
\int_0^1\Mx(\|\nabla_\cdot X\|^{3/2})^{2/3}(\phi_t(q))\,dt\le \tfrac{4}{3}C_6(L)r\eps_i.
\]

By Lemma~\ref{lem:dtr} this implies
\begin{equation}\label{dtr-est}
\fint_{B_{r}(q)\times B_{r}(q)}\dt_r(1)(x,y)\,d\mu_i(x)\, d\mu_i(y)\le C_7(L) r\cdot \eps_i.
\end{equation}
Note  that by definition, $\dt_r(1)(x,y)\ge \min\{r, |d(\phi_1(x),\phi_1(y))-d(x,y)|\}$.
Combining this inequality with  the knowledge 
that $3/4$ of the points in $B_{r/10}(q_m)$ end up in 
$B_{r/9}(p)$,  implies that 
we can find a subset $B_{r/2}(q_m)'\subset B_{r/2}(q_m)$ with 
\begin{eqnarray} \label{eq:c8}
\vol(B_{r/2}(q_m)')&\ge& (1-C_8(L)\eps_i r)\vol(B_{r/2}(q_m))\hspace*{0.5em}\mbox{ and }\\
\phi_1(B_{r/2}(q_m)')&\subset& B_r(p).
\end{eqnarray}

Set  $B_{r/2}(q_m)'' :=B_{r/2}(q_m)'\cap \phi_1^{-1}(B_r(p)')$. Then we get

\[
\phi_1(B_{r/2}(q_m)'')\subset B_r(p)'\,\,\,\,\mbox{ and}
\]

\begin{eqnarray}\nonumber
\vol (B_{r/2}(q_m)'')&\ge& \vol (B_{r/2}(q_m)')-(\vol B_r(p)-\vol B_r(p)')\\\nonumber
&\overset{\text{\hspace*{-1.8em}by \eqref{eq:c8} and \eqref{eq:c1}\hspace*{-1.3em}}}{\ge}& (1-C_8(L)r\eps_i)\vol B_{r/2}(q_m) -\bar C_1r\eps_i\vol B_r(p)\\\nonumber
&\overset{\text{by ~\eqref{eq: volume} }}{\ge}& (1-C_8(L)r\eps_i)\vol B_{r/2}(q_m) -2^{k+1}\bar C_1r\eps_i\vol B_{r/2}(q_m)\\ \label{eq:r/2}
&\ge& (1-(C_8(L)+2^{n}\bar C_1)r\eps_i)\vol B_{r/2}(q_m),
\end{eqnarray}
where we used that $\phi_1$ is volume-preserving in the first inequality.

Using the  induction assumption, we can prolong each integral curve from any  $q\in B_{r/2}(q_m)''$ to a point $x\in B_r(p)'$ by extending it by a previously constructed integral curve from $x$ to $p'\in B_{L\rho_i}(p)$ of a vector field $X^t_{old}$ (which depends on $p'$).
We set  $X^t_{new}=2X_{old}^{2t}$
for $0\le t\le 1/2$ and $X^t_{new}=-2X$ for $1/2\le t\le 1$.
Let $c_q(t)$ be the integral curve of $X^t_{new}$ with $c_q(1)=q$ 
and $c_q(0)=p'$.

By the induction assumption and (\ref{eq:avX-flow}), we get
\begin{equation}\label{eq:qm}
\begin{aligned}
&\int _{B_{r/2}(q_m)''}\int_0^1(\Mx \|\nabla_\cdot X_{new}^t\|^{3/2})^{2/3}(c_q(t))\,dt\,d\mu_i(q)\le \\
&\hspace*{7em}
\le C_6(L)(r\eps_i)\vol B_{r/2}(q_m)+\bC_2(r\eps_i)\vol B_r(p)
\\&\hspace*{6.7em}\stackrel{ \eqref{eq: volume}}{\le}  C_6(L)(r\eps_i)\vol  B_{r/2}(q_m)+2^{n}\bC_2(r\eps_i)\vol B_{r/2}(q_m)
\\&\hspace*{7em}= \bigl(C_9(L)+2^{n}\bC_2\bigr)(r\eps_i)\vol B_{r/2}(q_m).
\end{aligned}
\end{equation}

Recall that the  balls $B_{r/2}(q_m)$ cover $B_{Lr}(p)$. We put 
\[
B_{Lr}(p)':=B_{Lr}(p)\cap\bigcup_m B_{r/2}(q_m)''.
\]

By construction, for every point in $B_{Lr}(p)'$ there exists a vector field $X^t$ 
whose integral curve connects it to a point in $B_{L\rho_i}(p)$ 
satisfying (\ref{eq:qm}). For points covered by several sets $B_{r/2}(q_m)''$
we  pick any one.
Then we have
\begin{eqnarray*}
\int_{B_{Lr}(p)'}\int_0^1\Bigl(\Mx \bigl(\|\nabla_\cdot X_{new}^t\|^{\frac{3}{2}}\bigr)\Bigr)^{\frac{2}{3}}(c_q(t))\,dt\,d\mu_i(q)\,\le\hspace*{-10em}&&\\
&\le& \sum_{m=1}^l (C_9(L)+2^{n}\bar C_2)(r\eps_i)\vol B_{r/2}(q_m)\\
&\stackrel{ \eqref{eq:relvol}}{\le}& 3^n (C_9(L)+2^{n}\bar C_2)(r\eps_i)\vol B_{rL}(p)\\
&\le& \bar C_2\cdot Lr\eps_i\cdot\vol B_{rL}(p).
\end{eqnarray*}
The last inequality holds if
$L=9^n\ge 2\cdot 2^{n}\cdot 3^{n}$ and 
$ \bar C_2\ge 3^n\cdot C_9(L)$.

Recall that by \eqref{eq:r/2}
\[
\vol B_{r/2}(q_m)-\vol B_{r/2}(q_m)''\ge (C_8(L)+2^{n}\bar C_1)r\eps_i\vol B_{r/2}(q_m)
\]
for any $m$ and thus 
\begin{eqnarray*}
\vol B_{Lr}(p)-\vol B_{Lr}(p)'&\le& \sum_{m=1}^l (C_8(L)+2^{n}\bar C_1)r\eps_i\vol B_{r/2}(q_m)\\[-1ex]
&\stackrel{ \eqref{eq:relvol}}{\le}& 
3^n(C_8(L)+2^{n}\bar C_1)r\eps_i\vol(B_{Lr}(p))\\
&\le&  \bar C_1Lr\eps_i\vol B_{Lr}(p)
\end{eqnarray*}
provided that $L=9^n\ge 2 \cdot 6^n$ and $\bar C_1\ge 3^n C_8(L)$.
This finishes the proof of Sublemma~\ref{lem:subl}.
\end{proof}

Observe that for any two good points $x_i,y_i\in G_1(p_i)$ 
we can deduce from  Sublemma \ref{lem:subl} that 
$\vol(B_2(x_i)'\cap B_2(y_i)')\ge \vol(B_{1/2}(p_i))$ 
for all large $i$. 

Then it follows, also by Sublemma \ref{lem:subl},  that there is $q\in B_2(x_i)'\cap B_2(y_i)'$, 
$x_i'\in B_{L\rho_i}(x_i)$ and $y_i'\in B_{L\rho_i}(y_i)$ 
such that for the integral curve $c$ connecting 
$x_i'$ with $q$ on the first half of the interval 
and $q$ with $y_i'$ on the second half we 
have for the corresponding time dependent vector field $X^t_i$ 
\[
\int_0^1\bigl(\Mx(\|\nabla_\cdot X^t_i\|^{3/2})\bigr)^{2/3}(c(t))\,dt\le \bC_3 \eps_i 
\]
with a constant $\bC_3$ independent of $i$. 

Let $f_i:=\phi_{i1}$ be the flow of $X^t_i$ 
evaluated at time $1$.
Since $\lambda_i=\tfrac{1}{\rho_i}\to \infty$, we can employ  Proposition~\ref{prop: first main exa}
to see that
\[
f_i\colon [\lambda_i M_i, x_i]\rightarrow [\lambda_i M_i,y_i]
\]
has the zooming in property.
Thus, the Gromov--Hausdorff limits of the two sequences are isometric. 

For any lifts $\tx_i$ and $\ty_i$
of $x_i$ and $y_i$ to the universal 
cover we can find lifts  $ \tx_i'$ and $\ty_i'$ of $ x_i'$ and $y_i'$ 
in the $L\rho_i$ neighborhoods of $\tx_i$ and $\ty_i$.
Let
$\tf_i \colon \tM_i\rightarrow \tM_i$ be the lift $f_i$ with $\tf_i(\tx_i')=\ty_i'$. 
Proposition~\ref{prop: first main exa} ensures that 
$\tf_i\colon [\lambda_i \tM_i,\tx_i]\rightarrow [\lambda_i\tM_i,\ty_i]$ 
has the zooming in property as well. 

It remains to check the last part  of the Rescaling Theorem concerning the fundamental group.
Assume  that $\pi_1(M_i,p_i)$ is generated by loops of length $\le R$. 
For every good point $q\in B_1(p_i)$ let $r_i(q)$ 
denote the minimal number such that $\pi_1(M_i,q)$ can be generated by loops 
of length $\le r_i(q)$.
Let $r_i$ denote the supremum of $r_i(q)$ over all good points $q$
and choose a good point $q_i$ with $r_i(q_i)\ge \tfrac{1}{2}r_i$. 
It suffices to check that $\limsup_{i\to \infty}\lambda_ir_i< 1$. 
Suppose we can find a subsequence with 
$\lambda_ir_i(q_i)\ge 1/4$ for all $i$. 
By the Product Lemma~{\eqref{prodlem}}, the sequence $(\tfrac{1}{r_i}M_i,q_i)$ subconverges to { $(\R^k\times K', q_\infty)$ }
with $\diam(K')\le 4\cdot 10^{-n^2}$. 
Combining Lemma~\ref{lem: prep gap} with Lemma~\ref{lem:short}
we see that $\pi_1(\tfrac{1}{r_i}M_i,q_i)$ 
can be generated by loops of length $\le 2\diam(K')+c_i$ 
with $c_i\to 0$ -- a contradiction.
\end{proof}


\section{The Induction Theorem for C-Nilpotency}\label{sec:C-nilp}\label{sec: induction}

$C$-nilpotency of fundamental groups of manifolds with almost nonnegative Ricci 
curvature (Corollary~\ref{intro: almost nonneg}) will follow from the following technical result.

\begin{thm}[Induction Theorem]\label{thm:pi1}\label{thm: induction}
Suppose $(M^n_i,p_i)$ is a sequence of pointed n-dimensional Riemannian manifolds (not necessarily complete) satisfying 
\begin{enumerate}
\item $\Ric_{M_i}\ge -1/i$; 
\item $\overline{B_i(p_i)}$ is compact for any $i$;
\item There is some $R>0$ such that $\pi_1(B_R(p_i))\to\pi_1(M_i)$ is surjective for all $i$;
\item $(M_i,p_i)\conv (\R^k\times K, (0,p_\infty))$ where $K$ is compact.
\end{enumerate}

Suppose in addition that we have
$k$ sequences $f_i^{j}\co [\tM_i,\tp_i]\to [\tM_i,\tp_i]$ 
of diffeomorphisms of the universal covers $\tM_i$ of $M_i$ 
which have the zooming in property
and which 
normalize the deck transformation group acting on $\tM_i$, $j=1,\ldots k$.
Here $\tp_i$ is a lift of $p_i$.

Then there exists a positive integer $C$ such that 
for all sufficiently large $i$, $\pi_1(M_i)$  
contains a nilpotent  subgroup $\gN\lhd \pi_1(M_i)$ of index 
at most $C$ 
such that $\gN$ has an $(f_i^j)^{C!}$-invariant ($j=1,\ldots,k$) cyclic 
nilpotent chain of length $\le {n-k}$, that is: 

We can find 
$\{e\}=\gN_0\lhd\,\cdots\,\lhd\gN_{n-k}=\gN $ with
$[\gN,\gN_h]\subset \gN_{h-1}$ 
and cyclic
factor groups $\gN_{h+1}/\gN_h$. 
Furthermore, each  $\gN_h$ is invariant under the 
action  of $(f_i^j)^{C!}$ by conjugation
and the induced automorphism of 
$\gN_h/\gN_{h+1}$ is the identity. 

\end{thm}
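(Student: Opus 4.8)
The plan is to argue by induction on the codimension $n-k$. The base case is $n-k=0$: then $K=\{pt\}$ and $(M_i,p_i)$ converges to $(\R^n,0)$, so by Theorem~\ref{noncol} the ball $B_R(p_i)$ is contractible in $B_{R+1}(p_i)\subset M_i$ for all large $i$; since $\pi_1(B_R(p_i))\to\pi_1(M_i)$ is onto by hypothesis~(3), we get $\pi_1(M_i)=\{e\}$, and $\gN=\{e\}$ with $C=1$ and the length-$0$ chain works, the conjugation action of the $f_i^j$ on the trivial group being trivially the identity. In the inductive step I expect to reduce the case of limit $\R^k\times K$ to the case of a limit $\R^{k'}\times K'$ with $k'>k$, which has strictly smaller codimension.

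For the inductive step, first pass to a subsequence realizing the equivariant limit $(\tM_i,\pi_1(M_i),\tp_i)\to(\tX,\lG,\tp_\infty)$, with each $f_i^j$ converging in the weakly measured sense (Lemma~\ref{lem: iso}) to an isometry of $\tX$ normalizing $\lG$. As in the proof of Lemma~\ref{lem:short}, the splitting theorem gives $\tX\cong\R^k\times Z$ with $\lG$ trivial on $\R^k$ and $Z/\lG\cong K$. Using hypothesis~(3) together with Theorem~\ref{thm: finite generation}, $\pi_1(M_i)$ is generated by a bounded number of elements of bounded length; combining this with Lemmas~\ref{lem: prep gap}, \ref{lem:short} and~\ref{lem: gap} one isolates the first scale at which $\pi_1(M_i)$ acquires a new, essentially $1$-dimensional, translational direction, and produces an element $\gamma_i\in\pi_1(M_i)$ realizing it together with a normal subgroup $\gN_i^0\lhd\pi_1(M_i)$ --- both invariant under conjugation by the $f_i^j$, since the defining length/word-length conditions are preserved up to controlled error by maps with the zooming in property --- such that the intermediate cover $B_i:=\tM_i/\gN_i^0$, after the rescaling $\lambda_i\to\infty$ supplied by the Rescaling Theorem~(\ref{thm: rescale}), converges to a product $\R^{k'}\times K'$ with $k'>k$ and $K'$ compact. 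Deck transformations are genuine isometries and hence automatically have the zooming in property, so, after composing the $f_i^j$ and the descent of $\gamma_i$ with the recentering diffeomorphisms of the Rescaling Theorem --- exactly as sketched in Section~\ref{sec: rough} --- one obtains $k'$ diffeomorphisms $\hat f_i^1,\dots,\hat f_i^{k'}$ of the common universal cover $\lambda_i\tM_i$ of $B_i$ that have the zooming in property, normalize $\pi_1(B_i)=\gN_i^0$, and are each isotopic to a deck transformation of $B_i\to M_i$ (respectively to $f_i^j$); in particular their conjugation action on $\gN_i^0$ differs from that of $\gamma_i$ (respectively of $f_i^j$) only by an inner automorphism.

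Now apply the induction hypothesis to the rescaled cover $\lambda_iB_i$ with these $k'$ diffeomorphisms --- legitimate since $n-k'<n-k$ --- obtaining for all large $i$ a nilpotent $\gN_i'\lhd\pi_1(B_i)=\gN_i^0$ of index $\le C'$ with a cyclic chain $\{e\}=\gN_{i,0}'\lhd\cdots\lhd\gN_{i,n-k'}'=\gN_i'$ such that $[\gN_i',\gN_{i,h}']\subset\gN_{i,h-1}'$, each $\gN_{i,h}'$ being invariant under $(\hat f_i^l)^{C'!}$ and inducing the identity on the cyclic factor. Since the original $f_i^j$ are among the $\hat f_i^l$, $\gN_i'$ is already $(f_i^j)^{C'!}$-invariant. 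I would then set $\gN:=\langle\gN_i',\gamma_i^{\,N},\dots\rangle$, adjoining one $N$-th power for each of the $k'-k$ new directions, where $N$ is a bounded common multiple of $C'!$ and of the orders of the finite automorphism groups of $\gN_i^0/\gN_i'$ that occur, and extend the chain of $\gN_i'$ by the cyclic groups generated successively by these powers, getting a chain of length $\le(n-k')+(k'-k)=n-k$. The relation $[\gN,\gN_h]\subset\gN_{h-1}$ at the new steps, the normality $\gN\lhd\pi_1(M_i)$, the bound $[\pi_1(M_i):\gN]\le C$, and the $(f_i^j)^{C!}$-invariance with trivial induced automorphisms on the cyclic factors all follow, by bookkeeping, from two facts: the new deck transformations converge to commuting translations, and the $f_i^j$ converge to translations in the original $\R^k$ and hence asymptotically commute with all of them; therefore the relevant commutators are eventually trivial in the appropriate finite quotients, so they lie in $\gN_i'$ and act trivially on the cyclic factors, and the several bounded indices combine into $C$.

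The main obstacle should be the construction, in the inductive step, of the normal subgroup $\gN_i^0$ and the element $\gamma_i$: $\gN_i^0$ must be defined by a scale or word-length condition robust enough to be preserved by the normalizing diffeomorphisms $f_i^j$ (which need not fix $\tp_i$, only respect it in the weak sense of property~(d) in Definition~\ref{def: zoom}), large enough that $B_i$ converges after rescaling to an honest product $\R^{k'}\times K'$ to which the induction hypothesis applies, yet small enough that a genuinely new translational direction survives in $B_i$, so that $k'>k$; and one must check that the recentered $f_i^j$ and $\gamma_i$ retain the zooming in property on the rescaled cover, which is where Lemma~\ref{lem: composition}, Lemma~\ref{lem: zoom}, Propositions~\ref{prop: first main exa} and~\ref{prop: second main exa}, and the second conclusion of the Rescaling Theorem are used together. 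The remaining group-theoretic assembly --- upgrading ``invariant modulo eventually-trivial error'' to exact invariance by passing to the bounded power $C!$ --- should be routine once these convergence statements are established.
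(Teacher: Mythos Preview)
Your overall architecture --- reverse induction on $k$, argue by contradiction, pass to the equivariant limit, use the Rescaling Theorem to manufacture a cover with a larger Euclidean factor --- is the paper's. The base case is fine. But the inductive step has a genuine gap, and it sits precisely where you claim invariance under the $f_i^j$.

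You assert that $\gN_i^0$ and $\gamma_i$ are invariant under conjugation by the $f_i^j$ ``since the defining length/word-length conditions are preserved up to controlled error by maps with the zooming in property,'' and later that ``the $f_i^j$ converge to translations in the original $\R^k$ and hence asymptotically commute with all of them.'' Neither is justified. Zooming-in maps are close to isometries only in the \emph{weakly measured} sense of Lemma~\ref{lem: iso}; the displacement of a specific deck transformation can change uncontrollably under conjugation by $f_i^j$, so a subgroup defined by a displacement threshold need not be preserved. And the limit $f_\infty^j$ is an arbitrary isometry of $\tX$ normalizing $\lG$: its projection to the Euclidean factor is a general affine map $w\mapsto Aw+v$, not a translation. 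The paper spends an entire step (its Step~2) fixing this: first Proposition~\ref{prop: translations} kills $v$, then a pigeonhole argument on the compact orbit produces powers $\nu_b$ and elements $g_b\in\lG$ with $f_\infty^{\nu_b}\circ g_b\to\id$; replacing $f_i^j$ by $(f_i^j)^{\nu_b}\circ g_b$ keeps the zooming in property and the induced outer automorphism while forcing convergence to the identity. Only then does $[f_i^j,\gamma_i]\to e$ for every $\gamma_i$ of bounded displacement, which is what actually delivers the invariance and the trivial action on cyclic quotients that you need. Without this reduction your bookkeeping at the end cannot be carried out.

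Two secondary issues. First, normality of $\gN_i^0$ in $\pi_1(M_i)$ is not free: the paper's Step~4 uses the structure of discrete cocompact groups in $\Iso(\R^l)$ plus Lemma~\ref{lem: open sub} to extract a uniformly open subgroup $\Upsilon_i\subset\Gamma_{i\eps}$ of bounded index that is normalized by a bounded-index subgroup of $\Gamma_i$; this is where the Gap Lemma and Lemma~\ref{lem:short} really enter. Second, you conflate two cases that the paper separates. If $[\Gamma_i:\Gamma_{i\eps}]$ stays bounded (Step~3), no new generator $\gamma_i$ is added at all: one passes to $\tM_i/\Gamma_{i\eps}$, rescales slowly at a regular point so the limit becomes $\R^{k'}$ with $k'>k$, sets the extra $f_i^{k+1}=\cdots=f_i^{k'}=\id$, and invokes the induction hypothesis directly. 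Only when the index is unbounded (Step~5) does one peel off a single generator $f_i^{k+1}:=d_\sigma^i$ of the abelianized quotient, pass to $\tM_i/\hat\Gamma_i$, and obtain a limit $\R^{k+p}\times K'$ with $p\ge1$. Your proposal handles only a version of the second case, and even there the chain assembly relies on the Step~2 reduction you omitted.
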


Although we will have $f_i^j=\id$ in the applications, it is crucial for the proof 
of the theorem by induction to establish it in the stated generality.

\begin{proof}
The proof proceeds by reverse induction on $k$. For $k=n$ the result  follows immediately from Theorem~\ref{noncol}. We assume 
that the statement holds for all $k'>k$ and we plan to prove it for $k$.

We argue by contradiction. After passing to a subsequence we can
assume that 
any subgroup 
$\gN\subset \pi_1(M_i)$ of index $\le i$ does not have a
nilpotent cyclic chain of length $\le n-k$ 
which is invariant under $(f_i^j)^{i!}$ ($j=1,\ldots,k$).

After passing to a subsequence, $(\tM_i, \tp_i)$ converges 
to $(\R^{\bar k}\times \tK, (0,\tp_\infty))$, where $\tK$ contains no lines. 
Moreover, we can assume that the action of $\pi_1(M_i)$ converges 
with respect to the pointed equivariant Gromov--Hausdorff topology 
to an isometric action of a closed subgroup $\lG\subset \Iso(\R^{\bar k}\times \tK)$.

It is clear that the metric quotient $(\R^{\bar k}\times \tK)/\lG$ 
is isometric to $\R^k\times K$. 
Using that lines in $\R^k\times K$ can be lifted to lines in $\R^{\bar k}\times \tK$,
we deduce that $\R^{\bar k}=\R^k\times \R^l$ and the action of $\lG$ 
on the first Euclidean factor is trivial,
(cf. proof of  Lemma~\ref{lem:short}). 
Since the action of $\lG$ on $\R^l\times \tK$ and hence on $\tK$ is cocompact,
we can use an observation of Cheeger and Gromoll ~\cite{CG2} to deduce that $\tK$ is 
compact for there are no lines in $\tK$.

By passing once more to a subsequence, we can assume that $f_i^{j}$ 
converges in the weakly measured sense to an isometry $f_\infty^j$ 
on the limit space, $j=1,\ldots ,k$, see Lemma~\ref{lem: iso}. 
The overall most difficult step is essentially a consequence of the Rescaling Theorem:

\begin{step} 
Without loss of generality we can assume that $K$ is not a point.
\end{step}

We assume $K=pt$. The strategy is to find a new contradicting sequence converging to $\R^{k}\times K'$ where
$K'\neq pt$.

After rescaling  
every manifold down by a fixed factor we may assume that the limit 
isometry $f_\infty^j$ displaces $(0,p_\infty)$ by less than $1/100$, $j=1,\ldots,k$.
We choose the set of "good" points $G_1(p_i)$  as in Rescaling Theorem~\ref{thm: rescale}.
We let $G_1(\tp_i)$ denote those points in $ B_1(\tp_i)$
projecting to $G_1(p_i)$.
By Lemma~\ref{lem: covering}, $\tfrac{\vol(G_1(\tp_i))}{ \vol(B_1(\tp_i))}\to 1$.

By Lemma~\ref{lem: zoom}, we can remove small subsets $H_i\subset G_1(p_i)$ (and the corresponding subsets from $G_1(\tp_i)$)  such that $\frac{\vol H_i}{\vol G_1(p_i)}\le \delta_i\to 0$
and for any choice of points $\tq_i\in G_1(\tp_i)\backslash H_i$ 
the sequence $f_i^j$ and $(f_i^j)^{-1}$ ($i\in \fN$) is good on all scales 
at $\tq_i$,   $j=1,\ldots,k$. To simplify notations we will assume that it is already true for all choices of  $\tq_i\in G_1(\tp_i).$

For all large $i$ we
can choose a point $\tq_i\in G_{1/2}(\tp_i)$ with 
$f_i^j(\tq_i)\in G_1(\tp_i)$, $j=1,\ldots,k$.
Let $q_i$ be the image of $\tq_i$ in $M_i$ 
and $\bar{f}_i^j\colon M_i\rightarrow M_i$ be
the induced diffeomorphism.

By the Rescaling Theorem \eqref{thm: rescale},
there is a sequence $\lambda_i\to \infty$
and a sequence of diffeomorphisms $g_i^j$ of $M_i$ which are isotopic to identity 
such that
\[
g_i^j\colon [\lambda_i M_i, \bar{f}_i^j(q_i)]\rightarrow   [\lambda_i M_i, q_i]
\]
has the zooming in property
and we can find  lifts $\tg_i^j$ of $g_i^j$ 
such that 
\[
\tg_i^j\colon [\lambda_i \tM_i, f_i^j(\tq_i)]\rightarrow   [\lambda_i \tM_i, \tq_i]
\]
has the zooming in property.
Using Lemma~\ref{lem: composition} we see that

\[
f_{i,new}^j:=f_i^j\circ \tg_i^j\colon [\lambda_i \tM_i, \tq_i]\rightarrow [\lambda_i \tM_i, \tq_i]
\]
has the zooming in property as well for $j=1,\ldots,k$.
Since $g_i^j$ is isotopic to the identity, it follows that conjugation by
$\tg_i^j$ induces an inner automorphism of $\pi_1(M_i)$. 
Therefore $f_{i,new}^j$ produces the same element
$\alpha_i^j\in \Out(\pi_1(M_i))$ as $f_i^j$.

The Rescaling Theorem also ensures that 
$\pi_1(\lambda_iM_i,q_i)$ remains boundedly  generated. 
Finally, it states that $(\lambda_iM_i,q_i)$ converges
to $(\R^k\times K ,(0,q_\infty))$ with $K$ being a compact space with $\diam(K)=10^{-n^2}$.

Thus, $(\lambda_iM_i, q_i)$ and the maps $f_{i,new}^j$ on  the universal covers  give a new contradicting sequence with the limit satisfying $K\ne pt$.

From now on we will assume that it is true for the original contradicting sequence.

\begin{step} Without loss of generality we can assume that 
$f_i^j$ converges in the weakly measured sense to the identity map
of the limit space $\R^k\times \R^l\times \tK$, $j=1,\ldots,k$.
\end{step}

We prove this by finite induction on $j$. 
Suppose we already found a contradicting sequence 
where $f_i^{1},\ldots,f_i^{j-1}$ converge to the identity. 
We have to construct one where in addition 
$f_i:=f_i^j$ converges to the identity.

We first consider the induced
diffeomorphisms $\bar f_i\colon M_i\rightarrow M_i$ 
which converge to an isometry ${\bar f}_\infty$
of $\R^k\times K$ in the weakly measured sense. 
Thus, there is $A\in \Or(k)$ and $v\in \R^k$
such that the induced isometry of the Euclidean factor
$\pr({\bar f}_\infty)\colon \R^k\to \R^k$ is given by $(w\mapsto Aw+ v)$. 

We claim that we can assume without loss of generality that $v=0$.
In fact, by Proposition~\ref{prop: translations},
we can find a sequence of diffeomorphisms
$g_i\colon M_i\to M_i$ with the zooming in 
property such that $g_i$ converges 
to an isometry $g_\infty$ which induces translation by $-v$
on the Euclidean factor. 
Moreover, there is a lift $\tg_i\colon \tM_i\rightarrow \tM_i$
which also has the zooming in property 
if we endow $\tM_i$ with the base point $\tp_i$. 
By Lemma~\ref{lem: composition}, we are free to replace $f_i$ 
by $\tg_i\circ f_i$ and hence $v=0$ without loss of generality.

Since  $\bar f_{\infty}$ fixes the origin of the Euclidean factor, 
we obtain that for the limit $f_\infty$ of $f_i$ 
the following property holds.
Given any $m>0$ we can find 
$g_m\in \lG$   
such that $d( f_{\infty}^m (g_m (0, \tp_\infty)), (0,\tp_{\infty}))\le \diam(K)$ 
in $\R^k\times \R^l\times \tK$. 
It is now an easy exercise to find a  sequence  $\nu_b$ of natural numbers and a sequence of $g_b\in \lG$ 
for which $f_\infty^{\nu_b}\circ g_b$ converges to the identity:

We consider a finite $\eps$-dense
set  $\{a_1,\ldots,a_N\}$ in the ball of radius $\tfrac{1}{\eps}$ around 
$(0,\tp_\infty)\in \R^{k+l}\times \tK$. 
For each integer $m$ choose a $g_m\in \lG$ 
such that  
\[
d\bigl( f_{\infty}^m \bigl(g_m (0, \tp_\infty)\bigr), (0,\tp_{\infty})\bigr)\le \diam(K).
\] 

The elements $(f_{\infty}^m(g_m a_1),\cdots,f_{\infty}^m(g_m a_N))$ 
are contained in $B_{1/\eps+\diam(K)}(0,\tp_\infty)$. 
Thus, there are $m_1\neq m_2$ such that 
$d(f_{\infty}^{m_1}(g_{m_1} a_j), f_{\infty}^{m_2}(g_{m_2} a_j))<\eps$ for $j=1,\ldots, N$. 
Consequently, $d(f_{\infty}^{m_1-m_2}( g a_j),a_j)<\eps$ for $j=1,\ldots, N$ with  
\[
g=f_{\infty}^{m_2-m_1}g_{m_1}^{-1}f_\infty^{m_1-m_2}g_{m_2}\in\lG.
\]
In summary, $f_\infty^{m_1-m_2}\circ g$ displaces any point in the ball of radius $\tfrac{1}{\eps}$ around $(0,p_\infty)$
by at most $4\eps$. Since $\eps$ was arbitrary, this clearly proves our claim.

Let $(\nu_b,g_b)$ be as above. 
For each $b$ we choose a large $i=i(b)\ge 2\nu_b$  and $g_b \in \pi_1(M_i)$ such that 
$f_i^{\nu_b}\circ g_b$ is in the weakly measured 
sense close to $f_{\infty}^{\nu_b}\circ g_b$. 
We also choose $i$ so large that   
$(f_{i(b)}^{\nu_b}\circ g_b)_{b\in \N}$ still has the zooming in property 
and converges to the identity.
This finishes the proof of Step 2. \\[2ex]
{\bf Further Notations.}
We may replace $p_i$ with any other point $p\in B_{1/2}(p_i)$.
Thus we may assume that $p_\infty$ is a regular point in 
$K$. 
Let $\pi_1(M_i,p_i,r)=\pi_1(M_i,p_i)(r)$ denote the subgroup 
of $\pi_1(M,p_i)$ generated by loops of length $\le r$. 
Similarly, recall that $\lG(r)$ is the group generated by elements which displace
$(0,\tp_\infty)$ by at most $r$.
By the Gap Lemma~{\eqref{lem: gap}} there is an $\eps>0$ and $\eps_i\to 0$ such  that $\pi_1(M_i,p_i,\eps)=\pi_1(M_i,p_i,\eps_i)$. 
{Moreover, $\lG(r)=\lG(\eps)$ for all $r\in (0,2\eps]$.
We put $\Gamma_i:=\pi_1(M_i,p_i)$ 
and $\Gamma_{i\eps}=\pi_1(M_i,p_i,\eps)$. }

\begin{step}\label{step: finite index case}
The desired contradiction arises if  the index 
$[\Gamma_i:\Gamma_{i\eps}]$ is bounded by some constant $C$ for all large $i$.
\end{step}

If $C$ denotes a bound on the index and $d=C!$, then 
$(f_i^j)^d$ leaves the subgroup $\Gamma_{i\eps}$ invariant
and we can find a new contradicting sequence for the manifolds 
$\tM_i/\Gamma_{i\eps}$.  Thus we may assume that $\Gamma_i=\Gamma_{i\eps}$.

As we have observed above, $\pi_1(M_i,p_i)=\pi_1(M_i,p_i,\eps_i)$ for some $\eps_i\to 0$.
We now choose a sequence $\lambda_i\to \infty$  very slowly 
such that 
\begin{enumerate}
     \item[$\bullet$] {$(\lambda_i M_i,p_i)\conv (\R^k\times C_{p_\infty}K,o)=(\R^{k'},0)$} with $k'>k$.
\item[$\bullet$] $\pi_1(\lambda_iM_i,p_i)$ is generated by loops of length $\le 1$. 
\item[$\bullet$]  $f_i^j\colon \lambda_i \tM_i\rightarrow \lambda_i \tM_i$ 
still has the zooming in property and still converges to the identity of the limit space. 
    \end{enumerate}

We put $f_i^{k+1}=\cdots=f_i^{k'}=\id$ and 
obtain a new sequence contradicting our induction assumption.

\begin{step}\label{step: normal}
There is an $H>0$ and a sequence of uniformly open 
 subgroups (see Definition~\ref{def: open sub}) $\Upsilon_i\subset  \Gamma_{i\eps}$ such that the index $[\Gamma_{i\eps}:\Upsilon_i]\le H$
and such that $ \Upsilon_i$ is normalized by a subgroup of $\Gamma_i$ with 
index $\le H$  for all large $i$.
\end{step}

After passing to a subsequence we may assume 
that $\Gamma_{i\eps}$ converges to the limit action 
of a closed subgroup $\lG_\eps\subset \lG$. 
Clearly $\lG_{\eps}$ contains the open subgroup $\lG(\eps)\subset \lG$. 
Since the kernel of $\pr\colon \lG\rightarrow \Iso(\R^l)$ is compact (as a closed subgroup of $\Iso(\tK)$), the images $\pr(\lG)$ and $\pr(\lG_{\eps})$ 
are closed subgroups. 
Moreover, $ \pr(\lG_{\eps})$ is an open subgroup of $\pr(\lG)$  since it contains $\pr(\lG(\eps))\supset \pr(\lG)_0$.
Using Fukaya and Yamaguchi ~\cite[Theorem 4.1]{FY} or that the component group of $\pr(\lG)$ 
is the fundamental group 
of the nonnegatively curved manifold $\pr(\lG)\backslash \Iso(\R^l)$, 
we see that $\pr(\lG)/\pr(\lG)_0$ is virtually abelian. 

Choose a subgroup $\lG'\lhd\lG$ of finite index such 
that 
$\pr(\lG')/\pr(\lG)_0$ is free abelian and
$\lG'=\pr^{-1}(\pr(\lG'))$.
Let  $\lG_{\eps}'=\lG'\cap \lG_{\eps}$. Then $\pr(\lG)_0\subset \pr(\lG_{\eps}')\subset \pr(\lG')$.
Moreover,
$\pr(\lG_{\eps}')$ is normal in $\pr(\lG')$
and 
$\pr(\lG')/ \pr(\lG_{\eps}') $ is a finitely generated abelian group.

The inverse image $\hat{\lG}_{\eps}:= \pr^{-1}(\pr(\lG_{\eps}))\cap \lG'$ 
is  a normal subgroup of $\lG'$ with the same quotient, i.e. $\lG'/\hat{\lG}_{\eps}=\pr(\lG')/ \pr(\lG_{\eps}')$.
Since the kernel of $\pr$ is compact, the open subgroup $\lG_{\eps}'$ 
is cocompact  in
$\hat{\lG}_{\eps}$ and thus its index $[\hat{\lG}_{\eps}:\lG_{\eps}']$ is finite.

In particular, we can say that there is some integer $H_1>0$ and a subgroup $\lG'$ 
with $[\lG:\lG']\le H_1$ such that  $[\lG_\eps: (g \lG_{\eps} g^{-1}\cap \lG_\eps)]<H_1$
for all $g\in \lG'$.
We want to check that there are only finitely many possibilities 
for $g \lG_{\eps} g^{-1}\cap \lG_\eps$ where $g\in \lG'$.

Let  $g\in \lG'$.
Notice that $\Gamma_{i\eps}$ is a uniformly open sequence 
of subgroups. 
Choose $g_i\in \Gamma_i$ with $g_i\to g$.
By Lemma~\ref{lem: open sub},
$\Upsilon_i:=g_i\Gamma_{i\eps}g_i^{-1}\cap \Gamma_{i\eps} $ 
converges to $g \lG_{\eps} g^{-1}\cap \lG_\eps$ and
the index of $\Upsilon_i $ in $\Gamma_{i\eps}$ 
is $\le H_1$ for all large $i$.
By Theorem~\ref{thm: finite generation}, the number of generators of 
$\Gamma_{i\eps}$ is bounded by some a priori constant. 
Therefore, $\Gamma_{i\eps}$ contains at most 
$h_0=h_0(n,H_1)$ subgroups of index $\le H_1$. 

Combining these statements we see that  
\[
\{g \lG_{\eps} g^{-1}\cap \lG_\eps\mid g\in \lG'\}=\{g_h\lG_{\eps}g_h^{-1}\cap \lG_\eps  \mid h=1,\ldots,h_0\}
\]
for suitably chosen elements $g_1,\ldots,g_{h_0}\in \lG'$.
We choose $g_{hi}\in \Gamma_i$ converging to $g_h\in \lG$. 
By Lemma~\ref{lem: open sub}, the sequence of subgroups
\[ 
\Upsilon_i:= \bigcap_{h=1}^{h_0} g_{hi} \Gamma_{i\eps} g_{hi}^{-1}
\]
is uniformly open and converges to
\[
\Upsilon_{\infty}:=\bigcap_{i=1}^{h_0}g_h\lG_{\eps}g_h^{-1}=\bigcap_{g\in \lG'}g\lG_{\eps} g^{-1}. 
\]
Clearly, $\lG'$ normalizes $\Upsilon_{\infty}$.
It is not hard to see that we can find 
elements $c^1_i,\ldots,c^\tau_i\in \Gamma_i$ 
that generate a subgroup of index $\le [\lG:\lG']$ 
such that  each $c^j_i$ converges  to an element $c^j_\infty\in \lG'$.
Since $c^j_i\Upsilon_i (c^j_i)^{-1}$ is uniformly open 
and converges to $c^j_\infty\Upsilon_\infty (c^j_\infty)^{-1}=\Upsilon_{\infty}$, 
one can now apply Lemma~\ref{lem: open sub} c) to see that 
$c^j_i$ normalizes $\Upsilon_i$ for large $i$.

Thus, the normalizer of $\Upsilon_i$ has finite index $\le [\lG:\lG']$ in $\Gamma_i$ 
for all large $i$ and Step~\ref{step: normal} is established.

\begin{step} The desired contradiction arises if, after passing 
to a subsequence, the indices $[\Gamma_i:\Gamma_{i\eps}]\in \N\cup \{\infty\}$
converge to $\infty$. 
\end{step}

By Step~\ref{step: normal} there is a subgroup 
$\Upsilon_{i}\subset \Gamma_{i\eps}$ of index $\le H$ 
which is normalized by a subgroup of $\Gamma_i$ of 
index $\le H$. 
Similarly to the beginning of the proof of Step~\ref{step: finite index case}, 
one can reduce the situation to the case of 
$\Upsilon_{i}\lhd \Gamma_i$.

Moreover, $\Upsilon_{i}\lhd \Gamma_i$ is uniformly open and hence the action of 
$\Gamma_{i}/\Upsilon_{i}$ on $M_i/\Upsilon_{i}$ 
is uniformly discrete 
and converges  to a properly discontinuous
action of the virtually abelian group  $\lG/\Upsilon_{\infty}$
on the space $\R^k\times (\R^l\times \tK)/\Upsilon_{\infty}$. 
It is now easy to see that $\Gamma_{i\eps}/\Upsilon_{i}$ 
contains an abelian subgroup of controlled finite index for large 
$i$.

After replacing $\Gamma_{i}$ once more by a subgroup of controlled finite index we may assume 
that $\Gamma_{i}/\Upsilon_{i}$ is abelian.
This also shows that without loss of generality 
$\Upsilon_{i}= \Gamma_{i\eps}$.

Recall that by Step 2 we have assumed that $f_i^j$ converges to the identity in the weakly measured sense for every $j=1,\ldots,k$. Therefore, the same is true for 
the commutator
$[f_i^j,\gamma_i]\in \Gamma_i$ if $\gamma_i\in \Gamma_i$ has bounded displacement.
This in turn implies that  $[f_i^j,\gamma_i]\in \Gamma_{i\eps}$ for all large $i$ 
and $j=1,\ldots,k$.

By Theorem~\ref{thm: finite generation}, we can find 
for some large $\sigma$ 
elements $d_1^i,\ldots,d_\sigma^i\in \Gamma_{i}$ with bounded displacement 
generating $\Gamma_{i}$. We can also assume that the first $\tau$ elements 
generate $\Gamma_{i\eps}$ for some $\tau<\sigma$.

Let $\hGamma_i:=\ml d_1^i,\ldots,d_{\sigma-1}^i\mr $. 
Note that $\hGamma_i\lhd \Gamma_i$ since  $\hGamma_i$ contains  $\Gamma_{i\eps}= \Upsilon_{i}\lhd \Gamma_i$ and $\Gamma_i/\Gamma_{i\eps}$ is abelian.

If for some subsequence and some integer  $H$ 
the group  $\hGamma_i $ has index $\le H$ 
in $\Gamma_i$ then we may replace $\Gamma_i$ by  $\hGamma_i$. Thus, without loss of generality, the order of the cyclic group
$\Gamma_i/\hGamma_i$ tends to infinity. 
The element $f_i^{k+1}:=d_\sigma^i\in \Gamma_i$ represents a generator of this 
factor group which has bounded displacement. 

We have seen above that $f_i^j$ normalizes $\hat{\Gamma}_i$ and 
$[f_i^{k+1}, f_i^j]\in \Gamma_{i\eps}\subset \hat{\Gamma}_i$, $j=1,\ldots,k$. 
Clearly $f_i^{k+1}$, being an isometry with bounded displacement, has the zooming in property.

We replace $M_i$ by $B_i:=\tM_i/\hat{\Gamma}_i$. 
Notice that  $(B_i,\hp_i)$ converges to $(\R^k\times (\R^l\times \tK)/\hat{\lG},\hp_\infty)$ 
where $\hat{\lG}$ is the limit group of $\hat{\Gamma_i}$
and $\hp_i$ is the image of $\tp_i$ under the projection $\tM_i\to B_i$. 
Since $\lG/\hat{\lG}$ is a noncompact group acting co-compactly on 
$(\R^l\times \tK)/\hat{\lG}$, we see that $(\R^l\times \tK)/\hat{\lG}$ 
splits as $\R^p\times K'$ with $K'$ compact and $p>0$. 
If $p>1$, we put $f_i^j=\id$ for $j=k+2,\ldots,k+p$.

Thus the induction assumption applies: 
There is a 
positive integer $C$ such that
the following holds for all large $i$:

We can find a subgroup $\gN^i$ of $\pi_1(B_i)$ of index $\le C$ 
and a nilpotent chain of normal subgroups $\{e\}=\gN_0\lhd\ldots\lhd\gN_{n-k-1}=\gN^i$
such that the quotients are cyclic. 
Moreover, the groups are invariant under the automorphism 
induced by conjugation of $(f_i^j)^{C!}$ 
and the induced automorphism of $\gN_{h+1}/\gN_h$ is the identity, $j=1,\ldots,k+1$.

We put $d=C!$, 
and consider the group $\bar{\gN}^i\subset\pi_1(M_i)$ generated by $(f_i^{k+1})^{d}$ and $\gN^i$. 
Clearly, the index of $\bar{\gN}^i$  in $\pi_1(M_i)$ is bounded by  $C\cdot d$. 

Moreover, the chain $\{e\}=\gN_0\lhd\ldots\lhd\gN_{n-k-1}\lhd \gN_{n-k}:=\bar{\gN}^i$ 
is normalized by the elements $(f_i^j)^d$ ($j=1,\ldots,k$) 
and the action on the cyclic quotients is trivial. 

In other words, the sequence fulfills the conclusion  of our theorem -- 
a contradiction. 
\end{proof}

\begin{rem}\label{rem: checol case}
If one was only interested in proving that 
the fundamental group contains a polycyclic subgroup 
of controlled index, the proof would simplify considerably. 
First of all, one would not need 
any diffeomorphisms $f_i^j$ to make the induction work. 
In the proof of Step 1 the use of the rescaling theorem 
could be replaced with the more elementary 
Product Lemma~\ref{prodlem}. Step 2 would become 
unnecessary. The core of the remaining arguments would 
be the same although they would
simplify somewhat.  
\end{rem}


\section{Margulis Lemma}\label{sec: margulis}

\begin{proof}[Proof of Theorem~\ref{intro: margulis}]
Let $B_1(p)$ be a metric ball in complete $n$-manifold with $\Ric>-(n-1)$ 
on $B_1(p)$,
$\tN$ the universal cover of $B_1(p)$ 
and let $\tp\in \tN$ be a lift of $p$.

\begin{stp} There are universal positive constants $\eps_1(n)$ and $C_1(n)$ 
such that  the  group
\[
\Gamma:=\Bigl\ml\bigl\{g\in \pi_1(B_1(p),p)\,\bigm|\, d(q,gq)\le \eps_1(n)\mbox{ for all $q\in B_{1/2}(\tp)$}\bigr\}\Bigr\mr
\]
has a subgroup of index $\le C_1(n)$ 
which has a nilpotent basis of length at most $n$.\\[-1ex]
\end{stp}

Put $N=\tN/\Gamma$ and let $\hat{p}$ denote the image 
of $\tp$. By the definition of $\Gamma$, 
for each point $q\in B_{1/2}(\hp)$, the fundamental 
group 
$\pi_1(N,q)$ is generated by loops of length $\le \eps_1(n)$.   
Moreover, $\overline{B_{3/4}(\hat p)}$ is compact.

Assume, on the contrary, that the statement is false. 
Then we can find a sequence of 
pointed $n$-dimensional  manifolds $(N_i,p_i)$ satisfying 
\begin{enumerate}
\item[$\bullet$]
$\Ric_{N_i}\ge -(n-1)$, 
\item[$\bullet$] $\overline{B_{3/4}(p_i)}$ is compact,
\item[$\bullet$] for each point $q\in  B_{1/2}(p_i)$ the fundamental group
$\pi_1(N_i,q)$ is generated by loops of length $\le 2^{-i}$, and 
\item[$\bullet$] $\pi_1(N_i,p_i)$ does not contain a subgroup 
of index $\le 2^i$ which has a nilpotent basis of length $\le n$.
\end{enumerate}

After passing to a subsequence we can assume that $(N_i,p_i)$ converges 
to $(X, p_\infty)$. 
We  choose $q_i\in B_{1/4}(p_i)$ such that $q_i$ converges to a regular 
point $q\in X$ and
$\lambda_i\to \infty$ very slowly such 
that $\pi_1(\lambda_i N_i,q_i)$ is still generated by loops of length $\le 1$
and such that $ (\lambda_i N_i,q_i)$ converges to $C_qX\cong \R^k$ for some $k\ge 0$. 

Notice that the rescaled sequence $M_i=\lambda_iN_i$ satisfies  
$\Ric_{M_i}\ge -\tfrac{n-1}{\lambda_i^2}\to 0$ and 
$\overline{B_{r_i}(q_i)}$ is compact with $r_i=\tfrac{\lambda_i}{2}\to \infty$. 
Thus the existence of this sequence contradicts the Induction Theorem ~\ref{thm: induction}
 with $f_i^1=\cdots=f_i^k=\id$.

We can now finish the proof of the theorem by establishing.

\begin{stp} Consider $\eps_1(n)>0$ and $\Gamma$  from Step 1. 
Then there are $\eps_2(n), C_2(n)>0$ such that the group
\[
\gH:=\Bigl\ml\bigl\{g\in \pi_1(B_1(p),p)\mid d(\tp,g\tp)\le \eps_2(n)\bigr\}\Bigr\mr
\]
satisfies: $\Gamma\cap \gH$ has index at most $C_2(n)$ in $\gH$. \\[-2ex]
\end{stp}

We will provide (in principle) effective bounds on $\eps_2$ and $C_2$
depending on $n$ and the (ineffective) bound $\eps_1(n)$. Put
\[
\Gamma':=\Bigl\ml\bigl\{g\in \pi_1(B_1(p),p)\mid d(q,gq)\le \eps_1(n)\mbox{ for all $q\in B_{2/3}(\tp)$}\bigr\}\Bigr\mr\subset \Gamma.
\]

By Theorem~\ref{thm: finite generation},  there exists $h=h(n)$ 
such that $\gH$ can be generated by some
$b_1,\ldots,b_h\in \gH$ satisfying $d(\tp,b_i\tp)\le 4\eps_2(n)$ for any $i=1,\ldots h$.
We can obviously assume that the generating set $\{b_1,\ldots,b_h\}$ contains inverses of all its elements.
We proceed in three substeps.\\[2ex]
{\bf Claim 1.} There is a positive integer $L(n,\eps_1(n))$ such that 
for all $\eps_2(n)< \tfrac{1}{100L}$ 
and any choice of $g_i \in \{b_1,\ldots,b_h\}$, $i=1,\ldots, L$ 
we can find $l,k\in \{1,\ldots, L\}$ with $l\le k$ and  
$g_l\cdot g_{l+1}\cdots g_k\in \Gamma'$. \\[1ex]
We assume  $\eps_2(n)< \tfrac{1}{100L}$ and we will show that there 
is an a priori estimate for $L$. 
Notice that 
$d(\tp,g_1\cdots g_l \tp)< 4\eps_2(n)l\le \tfrac{1}{25}$ holds for $l=1,\ldots,L$. 
Thus $g_1\cdots g_l$ maps the ball $B_{2/3}(\tp)$ into the ball 
$B_{3/4}(\tp)$.

We choose a maximal collection of points 
$\{a_1,\ldots, a_m\}$
in $B_{2/3}(\tp)$ with pairwise distances $\ge \eps_1(n)/4$. 
It is immediate from Bishop--Gromov that $m$ can be estimated just in terms of 
$\eps_1$ and $n$.

Assume we can choose
$g_i\in \{b_1,\ldots,b_h\}$, $i=1,\ldots,L$, such that
$g_l\cdots g_k\not\in \Gamma'$ 
for all $1\le l\le k\le L$. 
This implies that  $d(g_l\cdots g_k a_u,a_u)\ge \eps_1(n)/4$ 
for some $u=u(l,k)\in \{1,\ldots,m\}$. 
Hence,
\[d\bigl(g_1\cdots g_k a_{u(l,k)},\, g_1\cdots g_{l-1} a_{u(l,k)}\bigr)\ge \eps_1(n)/4,\,\,\mbox{ for $1<l\le k<L$.}\]

We consider the diagonal action of $\Gamma$ on the $m$-fold product
$\tN_i^m$ and the point $a=(a_1,\ldots, a_m)$. 
We just showed $d(g_1\cdots g_k a,g_1\cdots g_l a)\ge \eps_1(n)/4$
for all $1\le k<l\le L$. 
Thus there are $L$ points in the $m$-fold product $B_{3/4}(\tp)^m$
which are $\eps_1(n)/4$ separated.
Now the Bishop--Gromov inequality provides an a priori bound for $L$. \\[2ex]
{\bf Claim 2.} Choose $L$ as in Claim 1 and assume $\eps_2(n)\le \tfrac{1}{100L}$.  
Then for every word $w=b_{\nu_1}\ldots b_{\nu_l}$  of length
$l\le L$ we have $w\Gamma'w^{-1}\subset \Gamma$. \\[2ex]
Let $\gamma'\in \pi_1(B_1(p))$ be an element that displaces every 
point in $B_{2/3}(\tp)$ by at most $\eps_1(n)$.
By the definition of $\Gamma'$ it suffices to show that
$w\gamma'w^{-1}\in \Gamma$. 
Let $q\in B_{1/2}(\tp)$. Since $\eps_2(n)<\tfrac{1}{100L}$, it follows that 
$w^{-1}\tp\in B_{1/25}(\tp)$ and hence 
$w^{-1}q\in B_{2/3}(\tp)$.
Therefore
\[d(q,w\gamma'w^{-1}q)=d(w^{-1}q,\gamma'w^{-1}q)<\eps_1(n) \mbox{ for all $ q\in B_{1/2}(\tp)$.}\] 
Hence, $w\gamma'w^{-1}q\in \Gamma$ by the definition of $\Gamma$
and Claim 2 is proven. 
Step 2 now follows from\\[2ex]
{\bf Claim 3.} Every element $h\in \gH$ is of the form 
$h=w\gamma$ with $\gamma\in \Gamma$ 
and $w=b_{\nu_1}\cdots b_{\nu_l}$ with $l\le L$. \\[2ex]
We write $h=w\cdot \gamma$, where $\gamma\in \Gamma$, $w$ is a word of length $l$
and $l$ is chosen minimal. 
It suffices to prove that $l\le L$. 
Suppose $l>L$. Then we can apply Claim~1 to the tail of $w$  and obtain
$w=w_1\gamma'w_2$ 
with $\gamma'\in \Gamma'$, $w_2$ a word of length $< L$  
and the word-length of $w_1\cdot w_2$ is smaller than the length of $w$. 

By Claim 2, $w_2^{-1}\gamma'w_2\in \Gamma$. 
Putting $\gamma_2=w_2^{-1}\gamma'w_2\gamma$ gives
$h=w_1\cdot w_2\gamma_2$ 
-- a contradiction, as the length of $w_1w_2$ is smaller than $l$.
\end{proof}

\begin{cor}\label{cor: rank n} In each dimension $n$ there exists $\eps>0$ 
such that the following holds for any complete $n$-manifold $M$ and $p\in M$ with
$\Ric>-(n-1)$ on $B_1(p)$.
 
If the image of $\pi_1(B_\eps(p))\rightarrow \pi_1(B_1(p))$ 
contains a nilpotent group of rank $n$, then $M$ 
is homeomorphic to a compact infranilmanifold.
\end{cor}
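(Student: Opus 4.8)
The plan is to argue by contradiction, feeding a failing sequence into the Margulis Lemma (Theorem~\ref{intro: margulis}) together with the Rescaling and Induction machinery of Sections~\ref{sec: rescaling} and~\ref{sec: induction}, and then exploiting the maximality of the rank to forbid collapse in the rescaled limit. Suppose the corollary is false for every $\eps>0$; taking $\eps=1/i$ we obtain complete $n$-manifolds $(M_i,p_i)$ with $\Ric>-(n-1)$ on $B_1(p_i)$ such that the image $\Gamma_i$ of $\pi_1(B_{1/i}(p_i))\to\pi_1(B_1(p_i))$ contains a nilpotent subgroup of rank $n$, while no $M_i$ is homeomorphic to a compact infranilmanifold. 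Let $\tN_i$ be the universal cover of $B_1(p_i)$ and $\tp_i$ a lift of $p_i$; then $\Gamma_i$ is generated by elements displacing $\tp_i$ by at most $2/i$. By Theorem~\ref{intro: margulis}, once $1/i<\eps(n)$ the group $\Gamma_i$ contains a nilpotent subgroup of index $\le C(n)$ with a nilpotent basis of length $\le n$; intersecting it with the given rank-$n$ subgroup and using a routine argument on finitely generated nilpotent groups, I may assume $\Gamma_i$ contains a finite-index nilpotent subgroup $\gN_i$ with a nilpotent basis of length \emph{exactly} $n$ and rank \emph{exactly} $n$. As observed after Theorem~\ref{intro: margulis}, $\gN_i$ is then torsion free of Hirsch length $n$, hence a lattice in a simply connected nilpotent Lie group of dimension $n$; in particular $\gN_i\cong\pi_1$ of a closed $n$-dimensional nilmanifold and $H_n(\gN_i;\Z)\neq0$.

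Next I would run the proof of the Induction Theorem~\ref{thm: induction} on a rescaling $(\lambda_i\tN_i,\tp_i)$, $\lambda_i\to\infty$, normalised (as in the proof of Theorem~\ref{intro: margulis}) so that $\pi_1$ is generated by loops of length $\le1$ and the lower Ricci bound tends to $0$. Each application of the Rescaling Theorem~\ref{thm: rescale} strictly enlarges the Euclidean factor of the equivariant Gromov--Hausdorff limit, and one may split off an \emph{infinite} cyclic factor of the nilpotent chain at each stage precisely because $\rank(\gN_i)=n$ is maximal (this is the ``$[\Gamma_i:\Gamma_{i\eps}]\to\infty$'' step in the proof of Theorem~\ref{thm: induction}). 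Consequently the procedure can only terminate once the Euclidean factor is all of $\R^n$: at the final stage the relevant tower of covers converges to $(\R^n,0)$ and meets the hypotheses of Theorem~\ref{thm: induction} for $k=n$, forcing the residual group to be finite; being a subgroup of the torsion-free $\gN_i$ it is trivial. Hence $(\lambda_i\tN_i,\tp_i)$ converges to $(\R^n,0)$ with $\Ric>-1/i$ on balls of radius tending to infinity --- a \emph{non-collapsed} limit.

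The rest is topology. By the Cheeger--Colding stability theorem (of which Theorem~\ref{noncol} is the weak form), for each $R$ and all large $i$ the ball $B_R(\tp_i)\subset\lambda_i\tN_i$ is homeomorphic to $B_R(0)\subset\R^n$; letting $R\to\infty$ shows that $\tN_i$ is homeomorphic to $\R^n$, in particular contractible, for all large $i$. Therefore $\tN_i/\gN_i$ is a \emph{closed} (since $H_n(\gN_i;\Z)\neq0$ and a connected noncompact $n$-manifold has vanishing top homology) aspherical $n$-manifold with fundamental group $\gN_i$, so by topological rigidity of infranilmanifolds it is homeomorphic to the nilmanifold modelled on $\gN_i$; in particular $\gN_i$ acts cocompactly on $\tN_i\cong\R^n$. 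Since $\gN_i$ has finite index in $\Gamma_i\le\pi_1(B_1(p_i))$, the group $\pi_1(B_1(p_i))$ also acts cocompactly on $\tN_i$, so $B_1(p_i)$ is compact; being an open subset of the connected manifold $M_i$, it must equal $M_i$. Thus $M_i$ is a closed aspherical $n$-manifold whose fundamental group is virtually nilpotent of Hirsch length $n$, hence homeomorphic to an infranilmanifold --- contradicting the choice of $(M_i,p_i)$.

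The main obstacle is the middle step: extracting from the purely algebraic hypothesis $\rank(\gN_i)=n$ the geometric statement that $(\lambda_i\tN_i,\tp_i)$ converges to $(\R^n,0)$ with no collapsed directions. This amounts to analysing the equality case of the estimate $\rank(\gN)\le(\text{length of a nilpotent basis})\le n$ along the induction of Theorem~\ref{thm: induction}, checking that each reduction of the collapsed part of the limit consumes exactly one unit of rank, so that rank $n$ can leave nothing collapsed. The remaining steps are standard.
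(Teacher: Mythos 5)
There is a genuine gap at the step where you pass from \emph{pointed} Gromov--Hausdorff convergence $(\lambda_i\tN_i,\tp_i)\to(\R^n,0)$ to the assertion that $\tN_i$ itself is homeomorphic to $\R^n$ (or even just contractible) for large $i$. The stability/contractibility statement (Theorem~\ref{noncol}) has the quantifiers ``for each $R$ there is $i_0(R)$ such that for $i\ge i_0(R)$ the ball $B_R(\tp_i)$ is contractible in $B_{R+1}(\tp_i)$.'' For a \emph{fixed} $i$ you only control balls up to some finite radius $R(i)$; ``letting $R\to\infty$'' is an illegitimate quantifier swap, since at scales large compared to $\lambda_i$ the space $\lambda_i\tN_i$ need not resemble $\R^n$ at all. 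Everything downstream of this point (the conclusion $H_n$-nonvanishing forces compactness, hence $M_i=B_1(p_i)$ closed and aspherical) rests on this unjustified global contractibility, so the contradiction is not reached. This is precisely the part of the argument that the paper spends most of its effort on: after establishing $\diam(M_i)\to0$ (so that $M_i$ is closed and $\tM_i$ complete) and that $(\tM_i,\tp_i)\to(\R^n,0)$ (their Claim~1, which matches your ``middle step''), the paper upgrades the \emph{local} statement ``$B_R(p)$ is contractible in $B_{R+1}(p)$ for all $p\in\tM_i$'' to \emph{global} contractibility of $\tM_i$ by a delicate induction along the chain $\{e\}=\gN_0^i\lhd\cdots\lhd\gN_n^i=\pi_1(M_i)$, showing $\gN_j^i\star B_r(\tp_i)$ is contractible inside $\gN_j^i\star B_{4^{4^j}}{}_r(\tp_i)$ via circle-valued maps $\sigma$ and a surgery of spheres along level hypersurfaces. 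Your proposal labels this part ``standard,'' but it is the core difficulty, and no substitute argument is offered.

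Your first two paragraphs are essentially consistent with the paper's strategy (contradiction, Margulis Lemma to get a torsion-free rank-$n$ nilpotent subgroup, Induction/Rescaling machinery to force the universal covers to converge to $\R^n$ without collapse), and the reduction to asphericity plus Lee--Raymond and topological rigidity is exactly what the paper does. One further caution: before you can run the convergence and stability arguments on $\tN_i$ you need completeness, which the paper secures by first proving $\diam(M_i)\to0$ (again via the Induction Theorem applied to a suitable quotient converging to $\R^k$, $k>0$); working with the incomplete universal cover of $B_1(p_i)$ throughout, as your write-up implicitly does, requires at least a remark on how the radii are managed.
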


Actually, we will only show that $M$ is homotopically equivalent to 
an infranilmanifold. By work of Farrell and Hsiang ~\cite{FH}
this determines the homeomorphism type in dimensions above $4$. 
The $4$ -dimensional case follows from work of Freedman--Quinn \cite{FQ}. 
Lastly, the 3-dimensional case follows from Perelman's solution of the geometrization conjecture.

\begin{proof} [Proof of Corollary~\ref{cor: rank n}]
Notice that it suffices to prove that $M$ is aspherical: 
{In fact, since the cohomological dimension
of a rank $n$ nilpotent group is  $n$, the group $\pi_1(M)$}  must then be a torsion free virtually 
nilpotent group of rank $n$ and thus, by a result of  Lee and Raymond \cite{LR},
it is isomorphic to the fundamental group of an infranilmanifold. 
Therefore $M$ is homotopically equivalent to an infranilmanifold 
and as explained above this then gives the result.

We argue by contradiction and assume that 
we can find $\eps_i\to 0$ and  a sequence of complete manifolds $M_i$
with $\Ric>-(n-1)$ on $B_1(p_i)$ 
such that  the image of $\pi_1(B_{\eps_i}(p_i),p_i)\rightarrow \pi_1(B_1(p_i),p_i)$ 
contains a nilpotent group of rank $n$ and $M_i$ is not aspherical.

By  the Margulis Lemma (Theorem~\ref{intro: margulis}),
 the nilpotent group 
can be chosen to have index $\le C(n)$ in the image.
Arguing on the universal cover of $B_1(p_i)$ 
it is not hard to deduce that there is $\delta_i\to 0$ 
such that for all $q_i\in B_{1/10}(p_i)$ the image 
of $\pi_1(B_{\delta_i}(q_i),q_i)\rightarrow \pi_1(B_1(p_i),p_i)$
also contains a nilpotent subgroup of rank $n$.

Next we observe that $\diam(M_i,p_i)\to 0$. 
In fact, otherwise we could, after passing to a subsequence, 
assume that $(M_i,p_i)$ converges in the Gromov--Hausdorff sense 
to $(Y,p_\infty)$ with $Y\ne pt$. Choose $q_i\in B_{1/10}(p_i)$ 
converging to a regular point $q_\infty$ in the limit $Y$. 
Similarly, choose $\lambda_i\le \tfrac{1}{\sqrt{\delta_i}}$ { slowly converging to infinity}
such that  {$(\lambda_iM_i,q_i)\conv (C_{q_\infty}Y,o)=(\R^k,0)$ }
for some $k>0$. 
Let $\tN_i$ denote the universal cover of $B_1(p_i)$ 
and $\tq_i$ a lift of $q_i$. 
Let $\Gamma_i$ be the subgroup of the deck transformation group 
generated by those elements which displace $q_i$ by at most $\delta_i$. 
By construction, $\Gamma_i$ contains a nilpotent group 
of rank $n$. Put $N_i=\tN_i/\Gamma_i$.
Since $(\lambda_iN_i,q_i)\conv (\R^k,0)$  with $k>0$, 
we get a contradiction to the Induction Theorem~{\eqref{thm: induction}}.

Thus, $\diam(M_i)\to 0$ and in particular, $M_i=B_1(p_i)$
is a closed manifold for all large $i$.
After a slow rescaling we may assume in addition that $\Ric_{M_i}\ge -h_i\to 0$. 
Next we plan to show 
that the universal cover of $M_i$ converges to $\R^n$. This will follow from\\[2ex]
{\bf Claim 1.} Suppose a sequence of complete $n$-manifolds $(N_i,p_i)$
with lower Ricci curvature bound $>-h_i\to 0$
converges to $(\R^k\times K,p_\infty)$ with $K$ compact. Also assume that  $\pi_1(N_i)$ is generated by 
loops of length $\le R$ and contains a nilpotent subgroup of rank $n-k$. 
Then the universal cover of $(\tN_i,\tp_i)$ converges to $(\R^n, 0)$.\\[2ex]
We argue by reverse induction on $k$. 
The base of induction $k=n$ is obvious.

By the Induction Theorem,
after passing to a bounded cover, we may assume
that $\pi_1(N_i)$ itself is a torsion free nilpotent group of rank $n-k$. 

Choose $\gN_i\lhd \pi_1(N_i)$ with $\pi_1(N_i)/\gN_i\cong \Z$. 
Then $(\tN_i/\gN_i,\hp_i)$ converges to $(\R^{l}\times K',(0,p_\infty))$ for some $l>k$ and $K'$ 
compact 
and the claim follows from the induction assumption. \\[2ex] 
Therefore $(\tM_i,\tp_i)$ converges to $(\R^n,0)$. 
From the Cheeger--Colding Stability Theorem (see Theorem~\ref{noncol})
it follows that for any $R>0$
the ball $B_R(p)$ is contractible in $B_{R+1}(p)$ for all $p\in B_R(\tp_i)$ 
and $i\ge i_0(R)$. 
Since we have a cocompact 
deck transformation group with nearly dense orbits, the result actually holds for all 
$p\in \tM_i$. 

In order to show that $M_i$ is aspherical we may replace $M_i$ by a bounded cover and 
thus, by Theorem~\ref{intro: margulis} without loss of generality,
 $\pi_1(M_i)$ has a nilpotent basis of length $\le n$. 
Because $\rank(\pi_1(M_i))\ge n$ it follows that $\pi_1(M_i)$ is torsion free.
Therefore we
can choose subgroups $\{e\}=\gN_0^i\lhd \cdots\lhd \gN_n^i=\pi_1(M_i)$ 
with $\gN_j^i/\gN_{j-1}^i\cong \Z$.

{In the rest of the proof we will slightly abuse notations and sometimes drop basepoints when talking about pointed Gromov--Hausdorff convergence when the base points are clear.

Note that  the above claim easily implies that $\tM_i/\gN_j^i\to \R^{n-j}$ for all $j=0,\ldots n$.}
\\[2ex]
{\bf Claim 2.} 
$\gN_j^i\star B_{r}(\tp_i)$ is contractible in $\gN_j^i\star B_{4^{4^{j}} r}(\tp_i)$ 
for $j=0,\cdots,n$ and $r\in \bigl[1,4^{4^{(2n-j)^2}}\bigr]$ and all large i.\\[2ex]
We want to prove the statement by induction on $j$. 
For $j=0$ it holds as was pointed out above. 
Suppose it holds for $j<n$ and we need to prove it for $j+1$. Choose $g\in \gN_{j+1}^i$ 
representing a generator of $\gN_{j+1}^i/\gN_j^i$.

Notice that $\gN_{j+1}^i\star B_R(\tp_i)/\gN_j^i$ converges  to $\R\times B_R(0)\subset \R^{n-j}$ as $i\to\infty$ { where $B_R(0)$ is the ball in $\R^{n-j-1}$. }
Moreover, the action of $\gN_{j+1}^i/\gN_j^i$ on the set converges to 
the $\R$ action on $\R\times B_R(0)$ given by translations. 

We can also find 
finite index subgroups $\bar{\gN}^i_{j+1}\subset \gN^i_{j+1}$ with $\gN_j^i\subset \bar\gN_{j+1}^i$ 
such that $\gN_{j+1}^i\star B_R(\tp_i)/\bar \gN_{j+1}^i$ 
converges to  $\mathbb{S}^1\times B_R(0)\subset \Sph^1\times \R^{n-j}$
where $\mathbb{S}^1$ has diameter $10^{-n}$.
It is easy to construct a smooth map 
$\bsigma\colon \gN_{j+1}^i\star B_R(\tp_i)/\bar\gN_{j+1}^i\rightarrow \Sph^1$ 
that is arbitrary close to the projection 
map $\mathbb{S}^1\times B_R(0)\rightarrow \Sph^1$
in the Gromov--Hausdorff sense.

We can lift $\bsigma$ to 
a map $\sigma\colon \gN_{j+1}^i\star B_R(\tp_i)\rightarrow \R$.
Notice that $\sigma$ commutes 
with the action of $\bar\gN_{j+1}^i$ where
$\bar\gN_{j+1}^i/\gN_j^i$ can be thought of as the deck transformation 
group of the covering $\R\rightarrow \mathbb{S}^1$.

It suffices to 
show that the inclusion $\gN_{j+1}^i\star B_r(\tp_i)\rightarrow \gN_{j+1}^i\star B_{4^{4^{j+1}}r}(\tp_i)$
induces the trivial map on the level 
of homotopy groups. 
Thus, we have to show that for any $k>0$  any map
$\iota\colon \Sph^k\rightarrow \gN_{j+1}^i\star B_r(\tp_i)$ 
is null homotopic in $\gN_{j+1}^i\star B_{4^{4^{j+1}}r}(\tp_i)$.

We can assume that $\iota$ is smooth. 
The image of $\sigma\circ \iota$ is given by 
an interval $[a,b]$ in $\R$. 
We choose a $10^{-n}$-fine finite 
subdivision $a<t_1<\cdots<t_h<b$ of the interval 
such that the $t_\alpha$ are regular values.
Thus $\iota^{-1}(\sigma^{-1}(t_\alpha))=H_\alpha$
is a smooth hypersurface in $\Sph^k$ for every $\alpha$.

Notice that by construction, the image  
$\iota(H_\alpha)$ is contained in $g\gN_j\star B_{2r}(\tp_i)$ 
for some $g\in \bar\gN_{j+1}^i$. 
By induction assumption, $\iota_{|H_{\alpha}}$  is homotopic to 
a point map in {$g\gN_j^i\star B_{4^{4^j}2r}(\tp_i)$.}
We homotope $\iota$ into $\tiota$ such that 
$\tiota(H_{\alpha})$ is a point for all $\alpha$ 
and $\tiota$ is $4^{4^j}4r$ close to $\iota$. 

Consider now all components of $H_\alpha$ for all $\alpha$. 
They divide the sphere  into connected regions such that 
each boundary component of a region is mapped by $\tiota$ to a point 
and the whole region is mapped to a set 
$g\gN_j^i\star B_{4^{4^j}8r}(\tp_i)$ for some $g$. 

Thus the map $\tiota$ restricted to  a region with crushed boundary components 
is { null homotopic in $g\star B_{4^{2\cdot 4^j}8r}(\tp_i)$ by the induction assumption.}
Clearly, this implies that $\iota$ is null homotopic in
$\gN_{j+1}^i\star B_{4^{4^{j+1}}r}(\tp_i)$.

This finishes the proof of  Claim 2. Notice that $\gN_{n}^i\star B_{1}(\tp_i)=\tM_i$ for large $i$ since $\diam(M_i)\to 0$. Thus $\tilde M_i$ is contractible by  Claim 2. 
\end{proof}


\section{Almost nonnegatively curved manifolds with maximal first Betti number}\label{sec: correction}

This is the only section where we assume lower sectional curvature bounds.
The main purpose is to prove

\begin{cor}\label{cor: correction} In each dimension there 
are  positive constants $\eps(n)$ and $p_0(n)$ 
such that for all primes $p>p_0(n)$ the following holds.

Any manifold with $\diam(M,g)^2K_{sec}\ge -\eps(n)$ and $b_1(M,\Z_p)\ge n$ 
is {diffeomorphic} to a nilmanifold. 
Conversely, for every $p$, every compact $n$-dimensional
nilmanifold covers another (almost flat) nilmanifold $M$ 
with $b_1(M,\Z_p)=n$. 
\end{cor}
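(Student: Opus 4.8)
The plan is to treat the two directions separately: the converse is an explicit arithmetic construction, while the direct implication combines the Margulis Lemma (Corollary~\ref{intro: almost nonneg}) with the collapsing/asphericity machinery used for Corollary~\ref{cor: rank n}.

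\textbf{The converse.} Write the given nilmanifold as $N/\Gamma_0$ with $N$ a simply connected nilpotent Lie group of dimension $n$ and $\Gamma_0$ a lattice, and fix Mal'cev coordinates so that $\Gamma_0$ becomes $\Z^n$ equipped with a polynomial group law $z_i=x_i+y_i+P_i(x,y)$ in which each $P_i$, as well as the correction terms in the formula for inverses, has multidegree $\ge 2$ (every monomial uses at least one $x$- and one $y$-variable, since the correction vanishes when $x=0$ or $y=0$). I would take $\Gamma_1:=p\,\Z^n\subset\Z^n=\Gamma_0$; by the multidegree statement this is a subgroup of index $p^n$, so $M:=N/\Gamma_1$ is a nilmanifold finitely covering $N/\Gamma_0$ (and hence almost flat by Gromov). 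For the same reason every commutator of two elements of $\Gamma_1$ has all coordinates in $p^2\Z$, so $[\Gamma_1,\Gamma_1]\subset p^2\Z^n$; thus $\Gamma_1^{ab}=p\Z^n/[\Gamma_1,\Gamma_1]$ surjects onto $p\Z^n/p^2\Z^n\cong(\Z/p\Z)^n$, giving $b_1(M,\Z_p)=\dim_{\Z_p}\Hom(\Gamma_1,\Z/p\Z)\ge n$. The opposite bound $b_1(M,\Z_p)\le n$ is a general fact for closed aspherical $n$-manifolds with torsion free nilpotent $\pi_1$: picking a central series $\{e\}=\gN_0\lhd\cdots\lhd\gN_n=\pi_1(M)$ with $\gN_{i+1}/\gN_i\cong\Z$, each restriction $\Hom(\gN_{i+1},\Z/p\Z)\to\Hom(\gN_i,\Z/p\Z)$ has kernel of dimension $\le 1$, so $\dim\Hom(\pi_1(M),\Z/p\Z)\le n$. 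Hence $b_1(M,\Z_p)=n$.

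\textbf{The direct implication: setup.} After rescaling the metric we may assume $\Ric\ge -(n-1)$ while $\diam(M)$ and $\inf K_{sec}$ are as small as we wish. Arguing by contradiction we obtain closed $n$-manifolds $M_i$ with $\diam(M_i)\to 0$, $K_{M_i}\to 0$, primes $p_i\to\infty$, $b_1(M_i,\Z_{p_i})\ge n$, and no $M_i$ diffeomorphic to a nilmanifold. By Corollary~\ref{intro: almost nonneg}, after passing to the normal core we get $\gN_i\lhd\pi_1(M_i)$, normal, nilpotent, of index $\le C(n)!$ and with $\rank(\gN_i)\le n$. Taking $p_i>C(n)!$ makes $\Z_{p_i}[\pi_1(M_i)/\gN_i]$ semisimple, so the Lyndon--Hochschild--Serre spectral sequence with $\Z_{p_i}$ coefficients collapses and yields
\[
n\;\le\; b_1(M_i,\Z_{p_i})\;=\;\dim\bigl(H_1(\gN_i,\Z_{p_i})_{\pi_1(M_i)/\gN_i}\bigr)\;\le\;\dim\bigl(\gN_i^{ab}/p_i\bigr).
\]

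\textbf{The direct implication: the main point.} The heart of the matter is to show $\rank(\gN_i)=n$ for all large $i$; I expect \emph{this} to be the main obstacle, since the hypothesis $b_1(M_i,\Z_{p_i})\ge n$ with $p_i\to\infty$ is purely homological and must be converted into the geometric assertion that the collapse of $M_i$ uses up all $n$ directions in nilpotent directions. If instead $\rank(\gN_i)<n$ along a subsequence, I would pass to the finite normal cover $\hat M_i=\tM_i/\gN_i$, which still satisfies $\diam\to 0$, $K\to 0$ and has $\pi_1(\hat M_i)=\gN_i$ nilpotent of rank $<n$. Running the argument in the proof of Corollary~\ref{cor: rank n} — Claim~1 there shows that if a sequence of manifolds with almost nonnegative curvature and nilpotent fundamental group of rank $n-k$ collapses to $\R^k\times K$ with $K$ compact then the universal covers converge to $\R^n$, whence by Cheeger--Colding stability (Theorem~\ref{noncol}) $\hat M_i$ is aspherical — this would exhibit $\hat M_i$ as an aspherical closed $n$-manifold whose fundamental group is nilpotent of rank $<n$, contradicting the fact that $\pi_1$ of an aspherical closed $n$-manifold is a Poincar\'e duality group of dimension $n$ (and a nilpotent such group has rank exactly $n$). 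Hence $\rank(\gN_i)=n$.

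\textbf{The direct implication: conclusion.} Now $\gN_i$ is nilpotent of rank $n$, so by Corollary~\ref{cor: rank n} (with the smoothing results quoted there upgrading the homeomorphism to a diffeomorphism) $M_i$ is diffeomorphic to an infranilmanifold $N_i/\Gamma_i$; after replacing $\gN_i$ by $\Gamma_i\cap N_i$ we may assume $\Gamma_i=\pi_1(M_i)$ is torsion free, $\gN_i$ is a lattice in the nilpotent Lie group $N_i$, and the holonomy $Q_i=\Gamma_i/\gN_i$ is finite of order $\le C(n)!<p_i$. The general bound from the converse gives $\dim(\gN_i^{ab}/p_i)\le n$, so the displayed chain of inequalities forces $\dim\bigl(H_1(\gN_i,\Z_{p_i})_{Q_i}\bigr)=\dim(\gN_i^{ab}/p_i)$, i.e. $Q_i$ acts trivially on $\gN_i^{ab}\otimes\Z_{p_i}$. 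Since $p_i\ge 3$, a finite order element of $\GL_m(\Z)$ congruent to the identity mod $p_i$ is the identity, so $Q_i$ acts trivially on $\gN_i^{ab}/\mathrm{tors}$, hence trivially on $\mathfrak n_i^{ab}=\gN_i^{ab}\otimes\Q$, where $\mathfrak n_i=\mathrm{Lie}(N_i)$. An automorphism of the nilpotent Lie algebra $\mathfrak n_i$ that is trivial on $\mathfrak n_i^{ab}$ is unipotent, and a finite order unipotent automorphism is the identity; therefore the holonomy representation $Q_i\to\mathrm{Aut}(N_i)$ is trivial, $\Gamma_i\subset N_i$ is a lattice, and $M_i=N_i/\Gamma_i$ is a nilmanifold — contradicting the choice of the $M_i$. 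This completes the proof.
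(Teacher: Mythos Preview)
There are two genuine gaps.

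\textbf{The converse is backwards.} You pass to a \emph{subgroup} $\Gamma_1=p\,\Z^n\subset\Gamma_0$, so your $M=N/\Gamma_1$ \emph{covers} the given nilmanifold $N/\Gamma_0$; the statement asks for an $M$ that is \emph{covered by} $N/\Gamma_0$, i.e.\ for a supergroup $\hat\Gamma\supset\Gamma_0$ surjecting onto $(\Z/p\Z)^n$. The paper (Lemma~\ref{lem:zp-nilp}\,b)) does this in two steps: first bound, uniformly in $n$ and $p$, the index needed for a subgroup with such a surjection (essentially your construction), and then embed $\Gamma_0$ via the Mal'cev completion into a larger lattice $\bar\Gamma$ in which every subgroup of that bounded index already contains $\Gamma_0$.

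\textbf{The ``main point'' is circular.} To show $\rank(\gN_i)=n$ you want $\hat M_i=\tM_i/\gN_i$ to be aspherical and you appeal to Claim~1 in the proof of Corollary~\ref{cor: rank n}. But that claim requires $\pi_1$ to contain a nilpotent subgroup of rank $n-k$, where $k$ is the dimension of the limit; since $\diam(\hat M_i)\to 0$ the limit is a point, so $k=0$ and you would need rank $n$ --- exactly what you are trying to prove. The homological input does not help either: a group like $(\Z/p_i^2)^n$ has a nilpotent basis of length $n$ and $\dim(\gN^{ab}/p_i)=n$, yet rank $0$. The paper instead proves Corollary~\ref{cor: n-1 basis}, whose argument genuinely uses the \emph{sectional} curvature bound via Yamaguchi's fibration theorem and \cite{KPT}, to obtain a dichotomy: either some subgroup of bounded index has a nilpotent basis of length $\le n-1$ --- impossible for $p$ large, since that subgroup would still surject onto $(\Z/p)^n$ while having at most $n-1$ generators --- or $M$ is already an infranilmanifold (with the diffeomorphism upgrade for nilpotent covers built in). Your concluding step, showing the holonomy is trivial via Minkowski's lemma and the triviality of finite-order unipotent automorphisms of a nilpotent Lie algebra, is correct and amounts to the content of Lemma~\ref{lem:nilg}, arguably more cleanly than the paper's Bieberbach-based proof.
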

The second part of the Corollary is fairly elementary 
but it provides counterexamples to a theorem of Fukaya and Yamaguchi ~\cite[Corollary 0.9]{FY}
which asserted that only tori should show up. 
The second part  follows from the following Lemma {and the fact that every nilmanifold is almost flat, i.e.  for any $i>0$ it admits a metric with $\diam(M,g)^2| K_{sec}|\le \frac{1}{i}$ .}

\begin{lem}\label{lem:zp-nilp} Let $\Gamma$ be a torsion free nilpotent group 
of rank $n$,  and let $p$ be  a prime number. 
\begin{enumerate}
      \item[a)] There is a subgroup 
of finite index $\Gamma'\subset \Gamma$ for which we can find a surjective homomorphism 
$\Gamma'\rightarrow (\Z/p\Z)^n$.
\item[b)] We can find a torsion free nilpotent group $\hat\Gamma$ 
containing $\Gamma$ as finite index subgroup such 
that there  a surjective homomorphism 
$\hat\Gamma\rightarrow (\Z/p\Z)^n$.
\end{enumerate}
\end{lem}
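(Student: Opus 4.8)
The plan is to argue by induction on the rank (equivalently the Hirsch length) $n$ of $\Gamma$, using the central series of a torsion free finitely generated nilpotent group. Recall that such a $\Gamma$ has a central element $z$ of infinite order such that $\Gamma/\ml z\mr$ is again torsion free nilpotent of rank $n-1$ (take $z$ to generate the isolator of a nontrivial element of the center). For part a), first I would pass to the quotient $\bar\Gamma=\Gamma/\ml z\mr$, apply the inductive hypothesis to get a finite index subgroup $\bar\Gamma'\le\bar\Gamma$ with a surjection $\phi\colon\bar\Gamma'\twoheadrightarrow(\Z/p\Z)^{n-1}$, and pull back to the preimage $\Gamma''\le\Gamma$, which has finite index. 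Inside $\Gamma''$ the subgroup $\ml z\mr$ is central; composing $\Gamma''\to\bar\Gamma'\overset\phi\to(\Z/p\Z)^{n-1}$ with reduction mod $p$ on the $\ml z\mr$-coordinate does not quite work because $z$ need not have a complement. Instead I would use the standard fact that a finitely generated torsion free nilpotent group of rank $n$ contains, for every $p$, a finite index subgroup $\Gamma'$ which is \emph{uniformly powerful} / ``$p$-congruence'' type, so that $\Gamma'/[\Gamma',\Gamma']\Gamma'^{\,p}\cong(\Z/p\Z)^d$ with $d=$ torsion free rank of the abelianization of $\Gamma'$, and that rank can be pushed up to $n$ by choosing $\Gamma'$ inside the $i$-th term of the lower central series for $i$ large — more simply, one can take $\Gamma'$ to be the subgroup generated by $p^N$-th powers for suitable $N$ along a Mal'cev basis, which is free abelian modulo its commutator of the full rank $n$. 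Then $\Gamma'\to\Gamma'/[\Gamma',\Gamma']\to(\Z/p\Z)^n$ is the desired surjection.

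For part b), the cleanest route is via the Mal'cev completion: embed $\Gamma$ as a lattice in its Mal'cev (rational) completion $\Gamma_\Q$, a simply connected nilpotent Lie group over $\Q$ (or its real points $G$), and realise $\Gamma$ as the integer points of a unipotent $\Q$-group in a fixed Mal'cev coordinate system $G\cong\Q^n$ with group law given by polynomials. For $\hat\Gamma$ I would take $\tfrac1{p}\Gamma$ in these coordinates, i.e. the subgroup generated by $\Gamma$ together with the ``$p$-th roots'' $\exp(\tfrac1p\log\gamma)$ of the Mal'cev basis elements; this is again a torsion free nilpotent group, it contains $\Gamma$ as a finite index subgroup (index $p^n$), and the quotient $\hat\Gamma/\Gamma$ surjects onto... — more precisely one arranges $\hat\Gamma$ so that $\hat\Gamma/[\hat\Gamma,\hat\Gamma]\cdot\Gamma$ is $(\Z/p\Z)^n$, giving the surjection $\hat\Gamma\to(\Z/p\Z)^n$ directly. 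Equivalently: take the finite index $\Gamma'\le\Gamma$ from part a) together with $\phi\colon\Gamma'\twoheadrightarrow(\Z/p\Z)^n$, and inside the Mal'cev completion extend $\Gamma'$ back up to a group $\hat\Gamma\supseteq\Gamma$ of the same rank on which $\phi$ still extends; because $[\Gamma:\Gamma']$ is coprime to nothing in particular this needs the root-adjunction to absorb the index. I would spell this out using that $\mathrm{Hom}(-,\,(\Z/p\Z)^n)$ and the functor $\Gamma\mapsto\Gamma/[\Gamma,\Gamma]\otimes\Z/p\Z$ behave well under commensurability in the nilpotent category.

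The main obstacle is the bookkeeping in part b): making sure the enlarged group $\hat\Gamma$ is (i) still torsion free, (ii) still nilpotent of the same rank $n$, (iii) genuinely contains the \emph{original} $\Gamma$ (not merely a finite index subgroup of it) as a subgroup of finite index, and (iv) admits the surjection onto $(\Z/p\Z)^n$. Items (i)--(ii) are automatic for subgroups of a rational Mal'cev completion that are finitely generated and contain a lattice; item (iii) is where one must be careful, since the naive ``add $p$-th roots of everything'' may only recover $\Gamma$ up to commensurability — the fix is to add $p$-th roots only of a chosen Mal'cev basis adapted to $\Gamma$ and check that $\Gamma\le\langle\Gamma,\text{roots}\rangle$ with index exactly $p^n$ by a direct coordinate computation. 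Item (iv) then follows because the abelianization of $\hat\Gamma$ has the abelianization of $\Gamma$ as a finite index subgroup with quotient of exponent dividing $p$, and one reads off $(\Z/p\Z)^n$ by composing with reduction mod $p$. Everything else — the existence of the central infinite cyclic quotient, the polynomiality of the Mal'cev coordinates, and the fact that lattices in nilpotent Lie groups are finitely generated torsion free nilpotent — is classical (Mal'cev, Raghunathan) and can be cited.
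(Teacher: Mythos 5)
There are genuine gaps in both parts. In part a), the claim you lean on --- that one can choose a finite-index $\Gamma'\subset\Gamma$ which is ``free abelian modulo its commutator of the full rank $n$'' --- is false for every non-abelian $\Gamma$: if $\Gamma'$ has finite index in $\Gamma$ then $[\Gamma',\Gamma']$ has finite index in $[\Gamma,\Gamma]$, so the torsion-free rank of $\Gamma'/[\Gamma',\Gamma']$ is $n-\mathrm{rank}([\Gamma,\Gamma])<n$. What is true (and what you actually need, since any surjection onto $(\Z/p\Z)^n$ factors through $\Gamma'/[\Gamma',\Gamma'](\Gamma')^p$) is that a suitable congruence subgroup has mod-$p$ abelianization of dimension exactly $n$, the extra dimensions coming from $p$-torsion in the abelianization; but this you assert rather than prove. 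The inductive route you sketch first and then abandon is essentially the right one, except run the other way: instead of quotienting by a central $\Z$ at the bottom (which creates the non-split extension problem you notice), take $\Lambda\lhd\Gamma$ with $\Gamma/\Lambda\cong\Z$, apply induction to $\Lambda$ to get $\Lambda''\lhd\Lambda'$ with $\Lambda'/\Lambda''\cong(\Z/p\Z)^{n-1}$, and pass to a power $g^l$ of a generator of $\Gamma/\Lambda$ that normalizes $\Lambda',\Lambda''$ and acts trivially on $\Lambda'/\Lambda''$ (possible since $\Lambda$ has only finitely many subgroups of bounded index and $\mathrm{Aut}((\Z/p\Z)^{n-1})$ is finite). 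Then $\langle\Lambda',g^l\rangle/\Lambda''$ is abelian with free quotient $\Z$, hence isomorphic to $(\Z/p\Z)^{n-1}\times\Z$, which surjects onto $(\Z/p\Z)^n$. No splitting issue arises because the $\Z$ sits on top, not at the bottom.

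In part b), the construction you propose fails on the first non-abelian example. For the integer Heisenberg group, adjoining $p$-th roots of a Mal'cev basis gives $\hat\Gamma=\{(a/p,\,b/p,\,c/p^2)\}$, whose commutator subgroup is its entire center; hence $\hat\Gamma/[\hat\Gamma,\hat\Gamma]\cong\Z^2$ is torsion free of rank $2$ and admits no surjection onto $(\Z/p\Z)^3$, and likewise $\hat\Gamma/[\hat\Gamma,\hat\Gamma]\Gamma\cong(\Z/p\Z)^2$. Reading off $(\Z/p\Z)^n$ from the abelianization of the enlarged group cannot work, for the same reason as in part a). The missing idea is a uniformity-plus-divisibility trick: the proof of a) gives a bound $C=C(n,p)$ on the index $[\Gamma:\Gamma']$ that is independent of $\Gamma$. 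One then forms $\bar\Gamma=\langle\exp(\tfrac{1}{C!}\exp^{-1}(\Gamma))\rangle$ in the Mal'cev completion and applies part a) to $\bar\Gamma$, obtaining $\hat\Gamma\le\bar\Gamma$ of index $\le C$ with a surjection onto $(\Z/p\Z)^n$. Since every generator of $\bar\Gamma$ has its $C!$-th power in $\Gamma$ and, conversely, any subgroup of index $k\le C$ contains the $k!$-th (hence $C!$-th) power of every element, $\hat\Gamma$ automatically contains the original $\Gamma$. Without the uniform bound on the index in a), this argument is unavailable, and your alternative of ``extending $\phi$ back up'' to a larger group is not justified (homomorphisms to $(\Z/p\Z)^n$ do not extend along finite-index inclusions in general).
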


\begin{proof}
{\em a)}  We argue by induction on $n$. 
The statement is trivial for $n=1$. Assume it holds for $n-1$. 
Choose a subgroup $\Lambda\lhd \Gamma$ with $\Gamma/\Lambda\cong \Z$. 
By the induction assumption, we can find 
a subgroup $\Lambda'\subset \Lambda$ of finite index 
and $\Lambda''\lhd \Lambda'$ with $\Lambda'/\Lambda''\cong (\Z/p\Z)^{n-1}$.
Let $g\in \Gamma$ represent a generator of $\Gamma/\Lambda$. 
Since $\Lambda$ contains only finitely many subgroups
with of index $\le [\Lambda:\Lambda'']$, it follows that 
$g^l$ is in the normalizer of $\Lambda'$ and $\Lambda''$ 
for a suitable $l>0$. After increasing $l$ further 
we may assume that the automorphism of $\Lambda'/\Lambda''\cong (\Z/p\Z)^{n-1}$ 
induced by the conjugation by $g^l$ is the identity.  

Define  $\Gamma'$ as the group generated by $\Lambda'$ and $g^l$. 
It is now easy to see that $\Gamma'/\Lambda''$ is isomorphic 
to $(\Z/p\Z)^{n-1}\times \Z$ and thus $\Gamma'$ has a surjective homomorphism 
to $(\Z/p\Z)^n$.

{\em b).} An analysis of the proof of a) shows that 
in a) the index of $\Gamma'$ in $\Gamma$ can be bounded by a constant
$C$ only depending on $n$ and $p$. 
Let $\gL$ be the Malcev completion of $\Gamma$,
i.e. $\gL$ is the unique $n$-dimensional nilpotent 
Lie group containing $\Gamma$ as a lattice. Let 
$\exp\colon \Ll\rightarrow \gL$ denote the exponential map of the group. 
It is well known that the group $\bGamma$ generated by 
$\exp(\tfrac{1}{C!}\exp^{-1}(\Gamma))$ contains $\Gamma$ 
as a finite index subgroup. 
By part a) we can assume now that $\bGamma$ contains a subgroup $\hat\Gamma$
of index $\le C$ such that there is a surjective homomorphism 
to $\hat\Gamma\rightarrow (\Z/p\Z)^n$.  {By construction, any subgroup of $\bGamma$ of index $\le C$ must contain $\Gamma$ and hence
we are done.}
\end{proof}

\begin{lem}\label{lem:nilg}
For any $n$ there exists $p(n)$ such that if $\Gamma$ is a torsion free virtually nilpotent of rank $n$  which admits an epimorphism onto  $(\Z/p\Z)^n$ for some $p\ge p(n)$ then $\Gamma$ is nilpotent.
\end{lem}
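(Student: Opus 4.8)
The plan is contrapositive: assuming $\Gamma$ is torsion free, virtually nilpotent of (Hirsch) rank $n$, and \emph{not} nilpotent, I will bound the primes $p$ for which $\Gamma$ admits an epimorphism onto $(\Z/p\Z)^n$; choosing $p(n)$ larger than this bound then proves the lemma. First I would fix the relevant structure. Let $N=\mathrm{Fitt}(\Gamma)$ be the Fitting subgroup (the unique maximal normal nilpotent subgroup); since $\Gamma$ is finitely generated virtually nilpotent, $N$ has finite index, and since $\Gamma$ is torsion free, $N$ is torsion free, finitely generated nilpotent, of Hirsch length $n$. Put $F=\Gamma/N$; then $\Gamma$ is nilpotent $\iff F=1$, so we may assume $F\ne 1$. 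Let $\mathfrak n$ be the rational Malcev Lie algebra of $N$, an $n$-dimensional nilpotent $\Q$-Lie algebra; conjugation gives $\bar\rho\colon\Gamma\to\Aut(\mathfrak n)$ with $\bar\rho(N)=U:=\exp(\ad\mathfrak n)$ a connected unipotent group, and $\ker\bar\rho=C_\Gamma(N)$. A short argument (the abelian subgroup $C_\Gamma(N)$ and $N$ centralize each other, so $C_\Gamma(N)N$ is nilpotent and normal, hence contained in $\mathrm{Fitt}(\Gamma)=N$) shows $C_\Gamma(N)=Z(N)\subset N$, so $\Gamma/Z(N)\hookrightarrow\Aut(\mathfrak n)$, and since $U$ acts trivially on $\mathfrak a:=\mathfrak n/[\mathfrak n,\mathfrak n]$ the induced action of $\Gamma$ on $\mathfrak a$ factors through $F$.

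The first key step is: \emph{if $g\in\Gamma$ acts trivially on $\mathfrak a$, then $g\in N$.} Acting trivially on $\mathfrak a$ means $(\bar\rho(g)-1)\mathfrak n\subset[\mathfrak n,\mathfrak n]$, which by an easy induction along the lower central series ($(\bar\rho(g)-1)\gamma_i(\mathfrak n)\subset\gamma_{i+1}(\mathfrak n)$) forces $\bar\rho(g)$ to be unipotent, say $\bar\rho(g)=\exp(D)$ for a nilpotent derivation $D=\log\bar\rho(g)$. Since $[\Gamma:N]=|F|$, we have $g^{|F|}\in N$, hence $\exp(|F|D)=\bar\rho(g)^{|F|}\in U=\exp(\ad\mathfrak n)$; injectivity of $\exp$ on nilpotent endomorphisms gives $|F|D=\ad w$ for some $w\in\mathfrak n$, and because $\mathfrak n$ is a $\Q$-vector space $D=\ad(w/|F|)\in\ad\mathfrak n$, so $\bar\rho(g)\in U=\bar\rho(N)$ and therefore $g\in N\cdot C_\Gamma(N)=N$. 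In particular, since $F\ne 1$, the $F$-action on $\mathfrak a\cong\Q^{m}$ (with $m=\dim\mathfrak a=$ free rank of $N^{\mathrm{ab}}$, $m\le n$) is nontrivial, so $\dim\mathfrak a^{F}\le m-1$. Moreover $F$ is a finite group acting faithfully on $\mathfrak a$ and preserving a lattice, so $|F|\le M(n)$ (Minkowski's bound on finite subgroups of $\GL(n,\Z)$), and the torsion of the coinvariants $(N^{\mathrm{ab}}/\mathrm{tors})_{F}$ has order $\le C_{1}(n)$ by Jordan--Zassenhaus (finiteness of $\Z[F]$-lattices of bounded rank), or by an elementary estimate of the cokernel of $g-1$.

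Now set $p(n)=\max\{M(n),C_{1}(n)\}+1$, so that for $p\ge p(n)$ we have $p\nmid|F|$. Then the Lyndon--Hochschild--Serre spectral sequence of $1\to N\to\Gamma\to F\to 1$ with $\F_{p}$-coefficients collapses by Maschke's theorem, giving $H_{1}(\Gamma;\F_{p})\cong (N^{\mathrm{ab}}\otimes\F_{p})_{F}=(N^{\mathrm{ab}})_{F}\otimes\F_{p}$. Writing $N^{\mathrm{ab}}=A\oplus T$ with $A\cong\Z^{m}$ free and $T$ the finite torsion subgroup, right exactness of $(-)_{F}$ yields
\[
\dim_{\F_{p}}H_{1}(\Gamma;\F_{p})\ \le\ \dim_{\F_{p}}(T_{F}\otimes\F_{p})+\dim_{\F_{p}}(A_{F}\otimes\F_{p}).
\]
Here $T\cong\sqrt{[N,N]}/[N,N]$ is a quotient of the torsion free nilpotent group $\sqrt{[N,N]}$ of Hirsch length $n-m$, hence is generated by at most $n-m$ elements, so the first term is $\le n-m$; and $A_{F}\cong\Z^{m'}\oplus(\text{finite})$ with $m'=\dim\mathfrak a^{F}\le m-1$ and torsion of order $\le C_{1}(n)<p$, so the second term equals $m'\le m-1$. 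Hence $\dim_{\F_{p}}H_{1}(\Gamma;\F_{p})\le (n-m)+(m-1)=n-1<n$, so $\Gamma^{\mathrm{ab}}\otimes\F_{p}$ cannot surject onto $\F_{p}^{\,n}$, i.e.\ $\Gamma$ admits no epimorphism onto $(\Z/p\Z)^{n}$ --- a contradiction. Therefore $\Gamma$ is nilpotent.

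The step I expect to be the main obstacle is the first key step, namely that a trivial action on $\mathfrak a$ forces $g\in N$: the naive argument ``a unipotent element of finite order is trivial'' fails, since $\bar\rho(g)$ need not have finite order in $\Aut(N)$ but only modulo the inner group $U$, and the fix genuinely uses that the Malcev Lie algebra is defined over $\Q$ (so $\ad\mathfrak n$ is divisible) together with the bijectivity of $\exp$ on nilpotent derivations. The only other point requiring care is making the two constants $M(n)$ and $C_{1}(n)$ uniform in $n$, which is a standard consequence of the Minkowski and Jordan--Zassenhaus finiteness theorems.
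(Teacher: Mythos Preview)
Your approach is valid and genuinely different from the paper's, but there is one concrete error in the ``first key step.'' You assert that $\bar\rho(N)=U:=\exp(\ad\mathfrak n)$, and later use this to conclude ``$\bar\rho(g)\in U=\bar\rho(N)$, hence $g\in N\cdot C_\Gamma(N)=N$.'' The equality $\bar\rho(N)=U$ is false: $\bar\rho(N)=\Ad(N)$ is only a lattice inside $\Ad(L)=\exp(\ad\mathfrak n)$, where $L$ is the rational Malcev completion. What your computation with $D=\ad(w/|F|)$ actually proves is $\bar\rho(g)\in\mathrm{Inn}(\mathfrak n)$, not $\bar\rho(g)\in\bar\rho(N)$. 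The fix is short: set $M=\bar\rho^{-1}(\mathrm{Inn}(\mathfrak n))\lhd\Gamma$; since every $\bar\rho(h)\in\mathrm{Inn}(\mathfrak n)$ acts trivially on $Z(L)\supset Z(N)$, the kernel $Z(N)=\ker\bar\rho$ is central in $M$, so $M$ is a central extension of a subgroup of the nilpotent group $\mathrm{Inn}(\mathfrak n)$ and is therefore nilpotent, whence $M\subset\mathrm{Fitt}(\Gamma)=N$ and $g\in M=N$. (Relatedly, your parenthetical ``the abelian subgroup $C_\Gamma(N)$'' deserves a line of justification: $Z(N)\subset Z(C_\Gamma(N))$ since $C_\Gamma(N)$ centralizes $N$, so $C_\Gamma(N)/Z(C_\Gamma(N))$ is finite, hence by Schur $[C_\Gamma(N),C_\Gamma(N)]$ is finite, hence trivial in the torsion-free group $\Gamma$.)

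With this repair the argument goes through. Compared to the paper: the paper invokes Lee--Raymond to realize $\Gamma$ as an almost crystallographic group in $K\ltimes L$, observes that $K$ acts effectively on $L/[L,L]$ (exactly your ``$F$ acts faithfully on $\mathfrak a$''), passes to the crystallographic quotient $\bar\Gamma=\Gamma/(\Gamma\cap[L,L])$, and then uses Bieberbach's third theorem (finitely many crystallographic groups in each dimension) to bound the primes for which $\bar\Gamma$ can surject onto $(\Z/p\Z)^k$. You instead work intrinsically with $N=\mathrm{Fitt}(\Gamma)$ and the Malcev Lie algebra, bound $|F|$ by Minkowski, bound the torsion of coinvariants via Jordan--Zassenhaus, and compute $H_1(\Gamma;\F_p)$ through the Lyndon--Hochschild--Serre spectral sequence. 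The two routes are structurally parallel (Lee--Raymond versus Fitting/Malcev; Bieberbach~III versus Minkowski plus Jordan--Zassenhaus), and both ultimately reduce to the inequality $\dim_{\F_p}H_1(\Gamma;\F_p)\le(n-m)+(m-1)=n-1$. Your version trades the geometric input (Lee--Raymond, Bieberbach) for purely algebraic finiteness theorems and is more explicit about the homological bookkeeping; the paper's version is shorter but leaves the final counting step implicit.
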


\begin{proof}
By \cite{LR}, $\Gamma$ is an almost crystallographic 
group, that is, there is an $n$-dimensional nilpotent Lie group 
$\gL$, a compact subgroup $\gK\subset \Aut(\gL)$ 
such that $\Gamma$ is isomorphic to a lattice in 
$ \gK\ltimes \gL$. By a result of Auslander 
(see \cite{LR} for a proof), the projection 
of $\Gamma$ to $\gK$ is finite and we may assume 
that the projection is surjective. 
Thus $\gK$ is a finite group and it is easy to see 
that the action of $\gK$ on $\gL/[\gL,\gL]$ is effective.
It is known that $\gN':=\Gamma\cap [\gL,\gL]$ 
is a lattice in $[\gL,\gL]$.
Thus, the image $\bGamma:=\Gamma/\gN'$ of $\Gamma$ 
in $\gK\ltimes\gL/[\gL,\gL]$ is discrete and cocompact.
Since $\gK$ acts effectively on $\gL/[\gL,\gL]$, 
we can view $\gK\ltimes\gL/[\gL,\gL]$ as a cocompact subgroup
of $\Iso(\R^k)$ where $k=\dim(\gL/[\gL,\gL])$.

In particular, $\bGamma$ is a crystallographic group. 
As the projection of $\Gamma$ to $\gK$ is surjective, 
$\bGamma$ is abelian if and only if $\gK$ is trivial.
If it is not abelian, then
\[\rank(\bar\Gamma/[\bar\Gamma,\bar\Gamma])<\rank(\bar\Gamma )=k.\]

By the third Bieberbach theorem 
there are only finitely many crystallographic groups of any given rank, 
and consequently there are only finitely many possibilities for the isomorphism type of $\bar\Gamma/[\bar\Gamma,\bar\Gamma]$. Obviously,  any homomorphism $\bar\Gamma\to  (\Z/p\Z)^k$ factors through $\bar\Gamma\to \bar\Gamma/[\bar\Gamma,\bar\Gamma]\to (\Z/p\Z)^k$; this easily implies that
there is $p_0$ such 
that there is no surjective homomorphism $\bar\Gamma\to (\Z/p\Z)^k$ 
for $p\ge p_0$.

Therefore, there is no surjective homomorphism
$\Gamma\to (\Z/p\Z)^n$ for $p>p_0.$
If we put $p(n)=p_0$, then $\bGamma $ is abelian, $\gK=\{e\}$ and hence $\Gamma$ is nilpotent. 
\end{proof}

Corollary~\ref{cor: correction} follows from  Lemma~\ref{lem:zp-nilp}, Lemma~\ref{lem:nilg} and the following result
\begin{cor}\label{cor: n-1 basis}
In each dimension $n$
there {are positive constants $C(n)$ and $\eps(n)$} 
such that for any complete manifold 
with $K_{sec}>-1$ on  $B_1(p)$ for some $p\in M$ one of the following holds 
\begin{enumerate}
\item[a)] The image $\pi_1(B_{\eps(n)}(p))\rightarrow B_1(p)$ 
contains a subgroup $\gN$ of index less than $C(n)$ such that
$\gN$ has a nilpotent basis of length $\le n-1$ or 
\item[b)] $M$ is compact and homeomorphic to an infranilmanifold. {Moreover,
any finite cover with a nilpotent fundamental group is diffeomorphic to 
a nilmanifold.}
\end{enumerate}
\end{cor}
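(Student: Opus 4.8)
The plan is to argue by contradiction in the spirit of the proof of the Margulis Lemma (Theorem~\ref{intro: margulis}), using the sectional curvature hypothesis to rule out the appearance of a full-rank nilpotent basis unless $M$ collapses all the way to a point. First I would suppose there is $\eps_i\to 0$ and a sequence of complete manifolds $(M_i,p_i)$ with $K_{sec}>-1$ on $B_1(p_i)$ such that neither a) nor b) holds; in particular, by Theorem~\ref{intro: margulis} the image $\Gamma_i$ of $\pi_1(B_{\eps_i}(p_i))\to\pi_1(B_1(p_i))$ contains a nilpotent subgroup of index $\le C(n)$, but no nilpotent subgroup of bounded index having a nilpotent basis of length $\le n-1$. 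Passing to the universal cover $\tN_i$ of $B_1(p_i)$ and to the subgroup generated by short loops, the combination of Theorem~\ref{intro: margulis} and Corollary~\ref{cor: rank n} forces the nilpotent subgroup to have rank exactly $n$, so that (after passing to a bounded cover and discarding the case already covered by Corollary~\ref{cor: rank n}) $M_i$ is homeomorphic to an infranilmanifold. Thus the whole issue reduces to the \emph{diffeomorphism} statement in b): a closed manifold homeomorphic to an infranilmanifold, arising as such a collapsing limit, whose finite nilpotent cover must be shown diffeomorphic to a nilmanifold.

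Next I would invoke the lower \emph{sectional} curvature bound, which is the reason this section (unlike the rest of the paper) uses $K_{sec}$ rather than $\Ric$. As in the proof of Corollary~\ref{cor: rank n}, one shows $\diam(M_i)\to 0$, so after rescaling we have an almost flat sequence: closed $n$-manifolds with $\diam(M_i)^2|K_{sec}|\to 0$ and rank-$n$ virtually nilpotent (hence, by torsion-freeness forced by the rank-$n$ nilpotent basis, nilpotent after a bounded cover) fundamental group. Here Gromov's almost flat manifold theorem, in the Ruh refinement, applies: an almost flat manifold is \emph{diffeomorphic} (not merely homeomorphic) to an infranilmanifold, and a finite cover with nilpotent fundamental group is then diffeomorphic to the corresponding nilmanifold. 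This directly contradicts the assumption that b) fails for $M_i$, since for large $i$ the rescaled $M_i$ would be diffeomorphic to a nilmanifold-infranilmanifold and therefore satisfy b). The only remaining possibility, namely that $M_i$ does \emph{not} collapse to a point, is handled exactly as in Corollary~\ref{cor: rank n}: one produces a regular point $q_\infty$ in a nontrivial limit, rescales at $q_\infty$ to get a sequence converging to some $\R^k$ with $k>0$ carrying a deck transformation group containing a rank-$n$ nilpotent subgroup, and derives a contradiction with the Induction Theorem (Theorem~\ref{thm: induction}).

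The main obstacle, and the place where genuine content beyond the Ricci arguments enters, is the passage from \emph{homeomorphic to an infranilmanifold} (all one gets from the Margulis Lemma and Corollary~\ref{cor: rank n}, via Farrell--Hsiang, Freedman--Quinn and geometrization) to \emph{diffeomorphic}. This is precisely why the sectional curvature hypothesis cannot be dropped here: it is only under two-sided curvature control on the collapsing sequence that Gromov--Ruh's almost flat manifold theorem gives a diffeomorphism classification, and it is also only then that one can propagate the diffeomorphism statement down to finite nilpotent covers (a nilmanifold quotient of $\R^n$ by a lattice in a nilpotent Lie group, with the smooth structure matching). I would therefore organize the write-up so that the reduction to the almost flat case (using the splitting/collapsing machinery already developed, the Induction Theorem to kill the non-collapsed case, and Corollary~\ref{cor: rank n} plus rank considerations to force full rank and torsion-freeness) is done first and cleanly, and then cite Gromov--Ruh for the final diffeomorphism conclusion, taking care that the finite-cover statement about diffeomorphism to a nilmanifold is exactly the form in which the almost flat theorem is usually stated.

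Finally, one should double-check the bookkeeping on the constants: the $\eps(n)$ in the statement must be chosen small enough that the Margulis constant $\eps(n)$ of Theorem~\ref{intro: margulis} applies, and $C(n)$ must absorb both the Margulis index bound and the finite index lost in passing to bounded covers along the way (e.g. to make $\pi_1$ torsion-free nilpotent, and to split off the circle factors in the Induction Theorem step); all of these are controlled a priori in $n$, so the argument goes through with a single $C(n)$, but it is worth stating which intermediate covers are taken and verifying that alternative a) survives being pulled back and pushed forward across them.
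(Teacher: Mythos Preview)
Your proposal has two genuine gaps, both at the places where you depart from the Ricci-curvature arguments and try to use sectional curvature.

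\textbf{First gap: the rank-$n$ conclusion.} You assert that ``the combination of Theorem~\ref{intro: margulis} and Corollary~\ref{cor: rank n} forces the nilpotent subgroup to have rank exactly $n$.'' This does not follow. The Margulis Lemma gives a nilpotent subgroup with a nilpotent basis of length $\le n$, and the failure of alternative~a) says no subgroup of bounded index has a basis of length $\le n-1$. But a nilpotent basis of length $n$ does \emph{not} imply rank $n$: think of $(\Z/p\Z)^n$ for large $p$. Corollary~\ref{cor: rank n} has rank $n$ as a hypothesis, not a conclusion, so you cannot invoke it before you have established the rank. The paper obtains rank $n$ by a genuinely different route: it runs through the iterative procedure of the Induction Theorem and observes that, because Step~\ref{step: finite index case} can never apply (else one would get a shorter nilpotent basis, contradicting the failure of a)), every blow-up limit $\R^k\times K$ must have $K$ a flat torus. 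Then Yamaguchi's fibration theorem---which needs the lower \emph{sectional} curvature bound---gives at each stage a smooth fiber bundle $F_i\to M_i\to T^h$, and iterating this through the successive covers allows one to read off that the rank accumulates to $n$. This is the real content, and it is where the sectional curvature hypothesis enters.

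\textbf{Second gap: the diffeomorphism statement.} You write that after rescaling one has ``closed $n$-manifolds with $\diam(M_i)^2|K_{sec}|\to 0$'' and then apply Gromov--Ruh. But the hypothesis is only $K_{sec}>-1$, a \emph{lower} bound; after rescaling you get almost nonnegative sectional curvature, not almost flat curvature. Gromov--Ruh requires two-sided control and is therefore unavailable here (you even note this yourself, and then assume it anyway). The paper instead appeals to \cite[Theorem 5.1.1]{KPT}, which under a lower sectional curvature bound shows that a sufficiently collapsed manifold with torsion-free nilpotent fundamental group smoothly fibers over a nilmanifold with simply connected fibers; asphericity then forces the fiber to be a point, giving the diffeomorphism to a nilmanifold.
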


\begin{proof}
Consider a sequence of manifolds $(M_i,p_i)$ 
with $K_{sec}>-1$ such that $M_i$ does not satisfy a) 
with $C=i$ and $\eps(n)=\tfrac{1}{i}$. 
As in proof of Corollary~\ref{cor: rank n}, it is clear 
that $\diam(M_i)\to 0$. 

By the Margulis Lemma or by work of \cite{KPT},
we may assume that $\pi_1(M_i)$ is nilpotent. We plan to show that $\pi_1(M_i)$ has rank $n$ for large $i$.

The fact that $\pi_1(M_i)$ does not contain a subgroup of bounded index with a nilpotent 
basis of length $\le n-1$, guarantees a certain extremal 
behavior if we run through the proof of the Induction Theorem.
It will be clear that the blow up limits 
$ \R^k\times K$, that occur if we go through the procedure provided in the proof of 
the Induction Theorem, are flat -- more precisely each $K$ has to be a torus. 
This follows from the fact that we can never run into 
a situation where Step~\ref{step: finite index case} of the proof
of the Induction Theorem applies.

In particular,  if we rescale the manifolds to have diameter 
$1$ they must converge to a torus $K=T^h$. 
We can now use Yamaguchi's  fibration theorem ~\cite{Yam}
to get a fibration $F_i\to M_i\rightarrow T^h$. 

Let $\Gamma_i$ denote the kernel of $\pi_1(M_i)\rightarrow \pi_1(T^h)$.
In the next $h$ steps of the procedure in the proof of the Induction Theorem we replace $M_i$ by 
$\tM_i/\Gamma_i$ endowed with $h$ deck transformations 
 $f_i^1,\ldots,f_i^h$ projecting to generators of $\pi_1(T^h)$. 

$(\hM_i:=\tM_i/\Gamma_i,\hp_i)$ converges to $(\R^h,0)$.
The next step in the Induction Theorem would be 
to rescale $\hM_i$  so that $(\lambda_i\hM_i,\hp_i)\conv (\R^h\times K',\hp_\infty)$ 
with  $K'$ not a point and it will be clear again that $K'\cong T^{h'}$ is a torus. 

It is easy to see that $\tfrac{1}{\lambda_i}$ is necessarily comparable to the size 
of the fiber $F_i$ of the Yamaguchi fibration. 

From the exact homotopy sequence we can deduce that 
$\Gamma_i=\pi_1(\hM_i)\cong \pi_1(F_i)$.
The fibration theorem ensures that $F_i$ fibers over a new torus $T^{h'}$.
Let $\Gamma_i'$ denote the kernel of $\pi_1(F_i)\rightarrow \pi_1(T^{h'})$. 
Clearly $\pi_1(M_i)$ has rank $n$  if and only if 
$\Gamma_i'$ has rank $n-h-h'$. 
After finitely many similar steps this shows that 
$\pi_1(M_i)$ has rank $n$. 
By Corollary~\ref{cor: rank n} the first part of b) follows.
 To get the second part of b) observe that if  $\pi_1(M_i)$ is nilpotent and torsion free then by the proof of
~\cite[Theorem 5.1.1]{KPT}, for all large $i$ we have that $M_i$ smoothly fibers over a nilmanifold with simply connected factors. The fibers obviously have to be trivial as $M_i$ is aspherical which means that $M_i$ is diffeomorphic to a nilmanifold.
\end{proof}

\begin{problem}\begin{enumerate}\item[a)] We suspect that in the case of almost nonnegatively curved manifolds 
Corollary~\ref{cor: n-1 basis}  
remains valid if one replaces the $n-1$  in a)  by $n-2$.
\item[b)] The result of this section remain valid 
if one just has a uniform lower bound on sectional curvature
and lower Ricci curvature bounds close enough to $0$.
However, it would be nice to know whether or not they remain valid without sectional curvature assumptions.
\end{enumerate}
\end{problem}


\section{Finiteness results}\label{sec: finiteness}

In the proof of the following theorem 
the work of Colding and Naber~\cite{CoNa} is key.

\begin{thm}[Normal Subgroup Theorem]\label{thm: normal sub} Given $n$, $D$, $\eps_1>0$ 
there are positive constants 
$\eps_0<\eps_1$  and $C$ such that the following holds: 

If $(M,g)$ is a compact  $n$-manifold $M$ with $\Ric>-(n-1)
$ and $\diam(M)\le D$, then  there is $\eps\in [\eps_0,\eps_1]$
and a normal subgroup $\gN\lhd \pi_1(M)$ 
such that for all $p\in M$ we have:
\begin{enumerate}
\item[$\bullet$] the image of 
$\pi_1(B_{\eps/1000}(p),p)\rightarrow \pi_1(M,p)$ contains $\gN$ and
\item[$\bullet$] the index of $\gN$ in the image of $\pi_1(B_{\eps}(p),p)\rightarrow \pi_1(M,p)$
is $\le C$.
\end{enumerate}
\end{thm}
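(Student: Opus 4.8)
The plan is to prove the Normal Subgroup Theorem by a contradiction-and-compactness argument, building on the (already established) Margulis Lemma in the form of Theorem~\ref{intro: margulis} together with the Colding--Naber result that the isometry group of a Ricci limit space is a Lie group with no small subgroups. Suppose the statement fails for some fixed $n$, $D$, $\eps_1$. Then for every $i$ there is a compact $n$-manifold $(M_i,g_i)$ with $\Ric>-(n-1)$ and $\diam(M_i)\le D$ for which no $\eps\in[\tfrac1i,\eps_1]$ and no normal subgroup $\gN\lhd\pi_1(M_i)$ work simultaneously with $C=i$. Passing to a subsequence, $(M_i,p_i)\to(X,p_\infty)$ and the $\pi_1(M_i)$-action on the universal covers $(\tM_i,\tp_i)$ converges equivariantly to an action of a closed group $\lG\subset\Iso(\tX)$ on the limit $\tX=\tM_\infty$, with $\tX/\lG=X$.

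First I would extract the candidate normal subgroup from the limit. By Colding--Naber $\lG$ is a Lie group without small subgroups, so there is some $\delta_0>0$ (depending on the limit) with $\lG(\delta)=\lG_0$, the identity component, for all $\delta\in(0,\delta_0]$; here $\lG(\delta)$ denotes the subgroup generated by elements displacing $\tp_\infty$ by at most $\delta$, as in Lemma~\ref{lem: prep gap}. Using the Gap Lemma~\ref{lem: gap}-type argument (more precisely Lemma~\ref{lem: prep gap} applied along the convergence), one gets a sequence $\eps_i\to$ some value $\eps_\infty\in(0,\eps_1]$, actually a whole interval of admissible $\eps$, and defines $\gN_i:=\ml\{g\in\pi_1(M_i,p_i)\mid d(q,gq)\le\eps_i \text{ for all } q\in B_{1/\eps_i}(\tp_i)\}\mr$, i.e. the ``uniformly open'' subgroup converging to $\lG_0$. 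Since $\lG_0$ is normal in $\lG$ and the displacement condition is conjugation-equivariant up to bounded error, one checks (via Lemma~\ref{lem: open sub} b), c)) that $\gN_i$ is normalized by all of $\pi_1(M_i)$ for large $i$, so $\gN_i\lhd\pi_1(M_i)$ — note that here, unlike in the proof of the Induction Theorem where only a bounded-index normalizer was obtained, the absence of small subgroups forces $\lG_0$ to be genuinely invariant under all of $\lG$.

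Next I would verify the two bullet points. The first is essentially built in: by construction $\gN_i$ is generated by loops of length controlled by $\eps_i$, but one needs it contained in the image of $\pi_1(B_{\eps/1000}(p),p)\to\pi_1(M,p)$ for \emph{every} $p$, not just $p_i$; this follows because the limit group $\lG_0$ is generated by elements displacing \emph{every} point of $\tX$ by an arbitrarily small amount (it is a connected Lie group acting isometrically, so $\lG_0=\lG_0(\delta)$ for all $\delta>0$ after fixing any basepoint), and this property passes back to the $M_i$ by the equivariant convergence together with a covering/Bishop--Gromov argument as in Lemma~\ref{lem: covering} to handle the basepoint change. For the second bullet, the image of $\pi_1(B_{\eps}(p),p)\to\pi_1(M,p)$ converges to $\lG(\eps)$, which contains $\lG_0$ with finite index because $\lG$ has finitely many components (it acts properly cocompactly on $\tX$ — the kernel of $\lG\to\Iso(\R^k$-factor$)$ is compact as in the proof of the Induction Theorem, and the component group is virtually the fundamental group of a nonnegatively curved manifold); using Lemma~\ref{lem: open sub} c) and the finite-generation bound from Theorem~\ref{thm: finite generation}, this finite index passes to the $M_i$ with a uniform bound $C$. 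This contradicts the choice of $M_i$.

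The main obstacle I anticipate is the interplay between the \emph{uniform} choice of $\eps$ over the interval $[\eps_0,\eps_1]$ and the \emph{uniform over all $p\in M$} requirement. The contradiction argument naturally produces, for each bad sequence, a limit $\eps_\infty$ and a limit group, but $\eps_\infty$ and the index bound a priori depend on the sequence, so one must run the argument for all sequences and patch — the clean way is to prove the negation leads to a contradiction directly, which forces one to be careful that ``$\eps$ can be chosen in $[\eps_0,\eps_1]$'' is not vacuous: this is exactly where the no-small-subgroups theorem of Colding--Naber is essential, since it guarantees that $\lG(\eps)$ is \emph{locally constant in $\eps$ near $0$} in the limit, so by Lemma~\ref{lem: prep gap} there is a whole subinterval of good $\eps_i$'s in the sequence, of definite length, and a standard diagonal/pigeonhole extraction then locates a single $\eps_0>0$ working for a subsequence. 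Making the ``for all $p$'' uniformity compatible with passing to the limit (where only the basepoint $p_\infty$ is directly visible) is the second delicate point, resolved by the covering-lemma trick and the density of the $\lG$-orbit combined with Bishop--Gromov relative volume comparison to transfer estimates from $B_r(p_i)$ to $B_r(q)$ for arbitrary $q$.
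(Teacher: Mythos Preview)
Your overall strategy matches the paper's: argue by contradiction, pass to a convergent subsequence, use Colding--Naber to see that the limit group $\lG$ is a Lie group, and take the candidate $\gN_i$ to be the uniformly open subgroup converging to the identity component $\lG_0$. The paper defines $\Gamma_{i\delta}=\langle S_i(\delta)\rangle$ with $S_i(\delta)=\{g:d(q,gq)\le\delta\text{ for all }q\in B_{1/\delta}(\tp_i)\}$, observes via Colding--Naber that $\Gamma_{i\delta}$ is constant on a window $\delta\in[\eps_0/1000,R\eps_0]$ for a large constant $R$ to be chosen, and proves normality by a one-line displacement computation (if $d(g_i\tp_i,\tp_i)\le 2D\le 1/\eps_0$ then $g_iS_i(\eps_0/2)g_i^{-1}\subset S_i(\eps_0)$). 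Your route to normality via Lemma~\ref{lem: open sub} also works but is needlessly indirect; note that $\lG_0\lhd\lG$ is automatic for any topological group, so ``no small subgroups'' plays no role there.

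The genuine gap is in your argument for the second bullet. You assert that $\lG$ has finitely many components, invoking an analogy with the Induction Theorem. This is false in general: if $M_i$ is a fixed flat torus then $\tX=\R^n$, $\lG\cong\Z^n$, $\lG_0=\{e\}$, and $\lG/\lG_0$ is infinite. The passage you cite from the Induction Theorem only gives that a certain projected component group is \emph{virtually abelian}, and in any case that setting (limit $\R^k\times K$ with $K$ compact) is not the present one. Consequently your chain ``$[\lG(\eps):\lG_0]<\infty$ because $[\lG:\lG_0]<\infty$, hence by Lemma~\ref{lem: open sub}~c) the index in the sequence is bounded'' breaks down. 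Even working at a limit basepoint $\tq_\infty$, the group generated by elements displacing $\tq_\infty$ by $\le\eps$ maps to a \emph{finitely generated} subgroup of the discrete group $\lG/\lG_0$, but there is no reason for it to be finite; and if the limit index is infinite, Lemma~\ref{lem: open sub}~c) tells you nothing about the sequence.

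What actually closes the argument in the paper is the large stability window $R$ together with the packing argument of Step~2 in the proof of Theorem~\ref{intro: margulis}. One has $\Gamma_{i\eps_0}=\Gamma_{iR\eps_0}$ and this group is normal in $\pi_1(M_i)$; if for some $q_i$ the index of $\Gamma_{i\eps_0}$ in the image of $\pi_1(B_{\eps_0}(q_i))\to\pi_1(M_i)$ exceeds $2^i$, then writing elements of that image as words in short generators at $\tq_i$ and using Bishop--Gromov packing (as in Claims~1--3 of Step~2 in Section~\ref{sec: margulis}, with $\Gamma_{iR\eps_0}$ playing the role of $\Gamma'$) one shows every such word is congruent modulo $\Gamma_{i\eps_0}$ to a word of length bounded in terms of $n$ and $R\eps_0$, giving an a priori index bound and the desired contradiction for $R$ large. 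This packing step, and the need to choose $R$ large for it to work, is the essential ingredient missing from your proposal.
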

To avoid confusion,
we should recall that a normal subgroup $\gN\lhd \pi_1(M,p)$ 
naturally induces  a normal subgroup $\gN\lhd \pi_1(M,q)$ for all $p,q \in M$.

If we put $\eps_1= \eps_{Marg}(n)$, the constant in the Margulis Lemma,
then by Theorem~\ref{intro: margulis},
the group $\gN$ in the above theorem 
contains a nilpotent subgroup of index $\le C_{Marg}$ which 
has a nilpotent basis of length $\le n$. After replacing $\gN$ 
by a characteristic subgroup of controlled index 
Theorem~\ref{normal margulis} follows.

\begin{proof}[Proof of Theorem~\ref{thm: normal sub}]
We will argue by contradiction. It suffices to rule out the existence 
of a sequence $(M_i,g_i)$ with the following properties. 

\begin{enumerate}
\item $\Ric_{M_i}>-(n-1)$, $\diam(M_i)\le D$;
\item for all $\eps\in (2^{-i},\eps_{1})$ 
and any normal subgroup $\gN\lhd \pi_1(M_i)$ 
one of the following holds
\begin{enumerate}
\item[(i)] 
$\gN$ is not contained in the image of $\pi_1(B_{\eps/1000}(p))$ for some $p\in M_i$ or 
\item[(ii)] the index of $\gN$ in the image of  
$\pi_1(B_{\eps}(q_i),q_i)\rightarrow \pi_1(M_i,q_i)$ 
is $\ge 2^i$ for some $q_i\in M_i$.
\end{enumerate}
\end{enumerate}

Since any 
subsequence of $(M_i,g_i)$ also satisfies this condition, we can assume
that the universal covers 
$(\tM_i,\pi_1(M_i),\tp_i)$ converge to $(Y,\G_\infty,\tp_\infty)$.
Put $\G_i:=\pi_1(M_i)$. 
For $\delta>0$ define 
\[
S_i(\delta):=\bigl\{g\in\G_i\mid d(q,gq)\le \delta\mbox{ for all $q\in B_{1/\delta}(\tp_i)$}\bigr\}\, \mbox{ and }\Gamma_{i\delta}:=\ml S_i(\delta)\mr\subset \Gamma_i 
\]
for $i\in \N\cup \{\infty\}$.

By Colding and Naber \cite{CoNa}, $\G_\infty$ is a Lie group. 
In particular, the component group $\G_{\infty}/\G_{\infty0}$ 
is discrete and hence there is some positive $\delta_0\le \min\{\eps_{1},\tfrac{1}{2D}\}$
such that  $\G_{\infty \delta}$ 
is given by the identity component of $\G_{\infty}$
for all $\delta \in (0, 2\delta_0)$.

It is easy to see that this property carries over 
to the sequence in the following sense.  We fix a large constant $R\ge 10$, to be determined later,
and put $\eps_0:=\delta_0/R$.
Then 
$\G_{i\delta}=\G_{i\eps_0}$ for all $\delta\in [\eps_0/1000,R\eps_0]$
and all $i\ge i_0(R)$.

We claim that $\G_{i\eps_0}$ is normal in $\Gamma_i$.
In fact,
$\Gamma_i$ can be generated by elements  which 
displace $\tp_i$ by at most $2D\le \tfrac{1}{\eps_0}$. 
{It is straightforward to check that for any $g_i\in \Gamma_i$ satisfying $d(g_i\tp_i,\tp_i)\le \frac{1}{\eps_0}$ we have }
$g_i S(\eps_0/2)g_i^{-1}\subset S(\eps_0)$. 
Thus, \[
g_i\Gamma_{i\eps_0/2}g_i^{-1}\subset \Gamma_{i\eps_0}=\Gamma_{i\eps_0/2}
 \]
and clearly the claim follows.

In summary, $\Gamma_{i\eps_0/1000}=\Gamma_{iR\eps_0}\lhd \Gamma_i$ 
with $\eps_0\le \eps_{1}$ for all $i\ge i_0(R)$. 
By the choice of our sequence this implies 
that for some $q_i\in M$ the index of $\Gamma_{i\eps_0}$ 
in  the image of $\pi_1(B_{\eps_0}(q_i),q_i))\rightarrow \pi_1(M_i)$   
is larger than $2^i$. 

Similarly to  Step 2 in the proof of the Margulis 
Lemma in section~\ref{sec: margulis}, this gives a contradiction provided we choose $R$ 
sufficiently large. In fact, the present situation is quite a bit 
easier as $\Gamma_{i R\eps_0}$ is normal in $\pi_1(M_i)$. 
\end{proof}

\begin{lem}\label{lem: cw}
\begin{enumerate}
\item[a)] Let
$\eps_0, D, H, n>0$. 
Then there is a finite collection of groups such that 
the following holds:

Let $(M,g)$
be a compact $n$-dimensional  manifold with $\diam(M)< D$, 
$\Ric(M)> -(n-1)$, $\gN\lhd \pi_1(M)$ 
and $\eps\ge\eps_0$. Suppose that 
\begin{enumerate}
\item[$\bullet$] 
The image $I$ of $\pi_1(B_{\eps}(q))\rightarrow \pi_1(M,q)$ satisfies
that the index $[I:I\cap \gN]$ is bounded by $H$, for all $q\in M$.
\item[$\bullet$] The group $\gN$ is in the image of 
$\pi_1(B_{\eps/1000}(q_0))\rightarrow \pi_1(M,q_0)$ for
some 
$q_0\in M$.
\end{enumerate}
Then $\pi_1(M)/\gN$ is isomorphic
to a group in the a priori chosen finite collection.
\item[b)](Bounded presentation)
Given a manifold as in a) one can find 
finitely many loops in $M$ based at $p\in M$ whose length and number
is bounded by an a priori constant such that 
the following holds: the loops 
project to a generator system of 
$\pi_1(M,p)/\gN$ and there are finitely many 
relations (with an a priori bound on the length of the words involved)
such that these words 
provide a finite presentation of the
group $\pi_1(M,p)/\gN$.
\end{enumerate}
\end{lem}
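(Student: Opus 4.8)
\textbf{Plan for the proof of Lemma~\ref{lem: cw}.}
The overall strategy is a compactness-and-contradiction argument parallel to the proof of Theorem~\ref{thm: normal sub}, exploiting that once $\eps\ge\eps_0$ is fixed, all the relevant data -- number of generators, lengths of relators -- is a priori bounded, so there are only finitely many possible presentations and hence finitely many possible quotients. First I would prove part b), since a) is essentially a corollary of it. Fix $p\in M$. By Theorem~\ref{thm: finite generation} (applied with $R=D$, using $\diam(M)\le D$), the group $\pi_1(M,p)$ is generated by at most $C(n,D)$ loops $b_1,\ldots,b_N$ of length $\le 2D$. Since $\gN$ lies in the image of $\pi_1(B_{\eps/1000}(q_0))$ and, by the index hypothesis, $\gN$ has index $\le H$ in the image of $\pi_1(B_\eps(q))$ for \emph{every} $q$, a standard ``chain of balls'' argument (move from $q_0$ to $p$ along a short path, covering it by $O(D/\eps_0)$ balls of radius $\eps/1000$) shows that $\gN$ together with at most $C(n,D,\eps_0,H)$ extra short loops generates a subgroup of $\pi_1(M,p)$ containing $\gN$ with controlled index; combined with the bound $N$ on the total number of generators of $\pi_1(M,p)$, one concludes that $\pi_1(M,p)/\gN$ is generated by the images of $b_1,\ldots,b_N$ and has order-or-rank controlled only through the index bound $H$ on each local piece. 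The point is that $\pi_1(M,p)/\gN$ is generated by $\le N$ elements, each a loop of length $\le 2D$.

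For the relations, I would argue by contradiction. Suppose no a priori bound on relator length exists: then there is a sequence $(M_i,g_i,\gN_i)$ satisfying the hypotheses with $\eps_i\ge\eps_0$ such that $\pi_1(M_i,p_i)/\gN_i$ admits no presentation with $\le N$ generators and relators of length $\le i$. Pass to a subsequence so that $(\tM_i,\pi_1(M_i),\tp_i)\to (Y,\lG,\tp_\infty)$ equivariantly, $\eps_i\to\eps_\infty\ge\eps_0$, and the $N$ generating loops converge to isometries $g_1^\infty,\ldots,g_N^\infty\in\lG$. The subgroups $\gN_i$, being uniformly open (they contain $\{g\mid d(gq,q)<\eps_0/1000\ \forall q\in B_{1/\eps_0}(\tp_i)\}$ by the $B_{\eps/1000}$ hypothesis), converge to a closed normal subgroup $\gN_\infty\lhd\lG$; the index hypothesis passes to the limit via Lemma~\ref{lem: open sub}, so $\lG/\gN_\infty$ is a discrete group generated by the images of $g_1^\infty,\ldots,g_N^\infty$. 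Since $\pi_1(M_i)/\gN_i$ converges equivariantly to $\lG/\gN_\infty$ acting on $Y/\gN_\infty$ (a properly discontinuous action of a discrete group on a locally compact space, cocompact since $\diam(M_i)\le D$), for all large $i$ the relations among $b_1,\ldots,b_N$ in $\pi_1(M_i)/\gN_i$ of bounded length must already hold in $\lG/\gN_\infty$, and conversely a bounded set of defining relations of $\lG/\gN_\infty$ lifts to defining relations in $\pi_1(M_i)/\gN_i$ for all large $i$ (this is the standard fact that equivariant Gromov--Hausdorff convergence of cocompact discrete actions eventually ``sees'' a finite presentation). This gives the desired a priori bound on relator length, contradicting the choice of the sequence, and proves b).

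Part a) now follows immediately: by b), each such $\pi_1(M)/\gN$ is determined by a presentation with $\le N_0=N_0(n,D,\eps_0,H)$ generators and $\le N_0$ relators each of length $\le N_0$, and there are only finitely many such presentations, hence only finitely many isomorphism types of quotient groups in the a priori chosen (finite) collection.

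\textbf{Main obstacle.} The delicate point is establishing, in the contradiction argument, that equivariant Gromov--Hausdorff convergence of the pairs $(\tM_i/\gN_i,\ \pi_1(M_i)/\gN_i)$ to $(Y/\gN_\infty,\ \lG/\gN_\infty)$ actually forces the quotient \emph{groups} to stabilize with uniformly bounded presentations -- i.e. that ``no new short relations appear and no old ones disappear'' in the limit. This requires care because $\gN_i$ is only uniformly open, not literally constant, so one must invoke Lemma~\ref{lem: open sub} (uniform openness, bounded generation, convergence of indices) to control how $\gN_i$ behaves, and then combine it with the standard but slightly technical fact that a discrete cocompact action which is an equivariant Gromov--Hausdorff limit of discrete cocompact actions is finitely presented and ``captures'' those presentations for large $i$. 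Everything else -- the generation bound, the chain-of-balls argument, the reduction of a) to b) -- is routine given Theorems~\ref{thm: finite generation} and~\ref{thm: normal sub} and Lemma~\ref{lem: open sub}.
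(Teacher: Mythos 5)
Your approach differs fundamentally from the paper's (which gives a direct, effective construction of a bounded $2$-dimensional CW-complex with fundamental group $\pi_1(M)/\gN$ and never passes to a limit), but it has a genuine gap at its core. You claim that the subgroups $\gN_i$ are uniformly open because they ``contain $\{g\mid d(gq,q)<\eps_0/1000\}$ by the $B_{\eps/1000}$ hypothesis.'' This reverses the hypothesis: the second bullet says $\gN$ is \emph{contained in} the image of $\pi_1(B_{\eps/1000}(q_0))\rightarrow\pi_1(M,q_0)$, not that it contains all short elements. Indeed $\gN=\{e\}$ is allowed (the first bullet then just says the local images have order $\le H$), and the trivial subgroup is certainly not uniformly open. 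Without uniform openness you cannot invoke Lemma~\ref{lem: open sub}, the limit $\gN_\infty$ need not be open in $\lG$, and the quotient $\lG/\gN_\infty$ need not be discrete, so the whole limiting framework collapses.

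Even granting discreteness of the limit quotient, the step you yourself flag as delicate is not a ``standard fact'' and is exactly where the argument fails as written: equivariant Gromov--Hausdorff convergence does not let you transfer relations between $\pi_1(M_i)/\gN_i$ and $\lG/\gN_\infty$ in either direction. The paper's own example of $\Z_{k^2+1}$-actions converging to a $\Z^2$-action (and the remark following it) is precisely a warning that a bounded word can converge into a limit subgroup without lying in the corresponding subgroups of the sequence, so ``old relations'' of $\lG/\gN_\infty$ need not lift to relations of $\pi_1(M_i)/\gN_i$. What actually makes the lemma work is the geometric content of the second bullet: because $\gN$ is generated by the image of a ball of radius $\eps/1000$, any loop that is trivial modulo $\gN$ can be homotoped into that small ball, and the paper tracks such a homotopy across a bounded $\eps/100$-net to lift it to an a priori bounded CW-complex; this is where all the work lies, and your proposal uses that hypothesis only for the (incorrect) openness claim. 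To repair your argument you would essentially have to reprove this homotopy-lifting step, at which point you recover the paper's construction.
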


Instead of compact manifolds one could also consider manifolds 
satisfying the condition that the image of $\pi_1(B_{r_0}(p_1))\rightarrow \pi_1(B_{R_0}(p_1))$ 
is via the natural homomorphism isomorphic to $\pi_1(M_i)$, where 
$r_0$ and $R_0$ are given constants. In that case one would only need the Ricci  curvature 
bound in the ball $B_{R_0+1}(p_1)$.
In fact, the only purpose of the  Ricci curvature bound is 
to guarantee the existence of an $\eps/100$--dense finite set in $M$
with an a priori bound on the order of the set.

Of course, b) implies a). However, we prove a) first and then see that b) follows from the proof.
In the context of equivariant Gromov--Hausdorff convergence 
a bounded presentation  can often  be carried over to the limit.
That is why 
part b) has some advantages over a).

\begin{proof} 
{\em a).} The idea is to define for each such manifold 
a 2-dimensional finite CW-complex $C$ whose combinatorics 
is bounded by some a priori constants and which has 
the fundamental group $\pi_1(M)/\gN$. 

Choose  a maximal collection of points $p_1,\ldots,p_k$ in $M$
with pairwise distances $\ge \eps/100$.  Clearly, there is an a priori bound on $k$ depending only on $n,D$ and $\eps_0$. 
We also may assume that $p_1=q_0$.

The points $p_1,\ldots,p_k$ will represent the $0$-skeleton $C_0$ of our $CW$-complex.

For each point $p_i$ 
define $\gF_i$ as the finite group given by 
$\im_{\eps/5}(p_i)/(\gN\cap \im_{\eps/5}(p_i))$
where $\im_\delta(p_i)$ denotes the image of 
$\pi_1(B_{\delta}(p_i),p_i)\rightarrow \pi_1(M,p_i)$. 
For each of the at most $H$ elements in $\gF_i$ we 
choose a loop in $B_{\eps/5}(p_i)$ 
representing the class modulo $\gN$. 
We attach to $p_i$ an oriented $1$-cell for each loop
and call the loop the model path of the cell. 

For any two points $p_i,p_j$ with $d(p_i,p_j)< \eps/20$ 
we choose in the manifold a shortest path from $p_i$ to $p_j$. 
In the $CW$-complex we attach an edge (1-cell) between these 
two points. We call the chosen path the model path of the edge. 

This finishes the definition of the $1$-skeleton $C_1$ of our $CW$-complex.

For each point $p_i\in C_0$  we 
consider the part $A_i$ of the $1$-skeleton $C_1$
containing 
all points $p_j$ with $d(p_i,p_j)<\eps/2$ and 
also all $1$-cells connecting them including the cells attached initially.
If $A_i$ has  more than one connected component we replace $A_i$
by the connected component containing $p_i$.

We choose loops in $A_i$ based at $p_i$ representing free generators of the free group
$\pi_1(A_i,p_i)$. 
We now consider the homomorphism  $\pi_1(A_i,p_i)\rightarrow \pi_1(M,p_i)$
induced by mapping each loop to its model path 
in $M$.  
The induced homomorphism 
$\alpha_i\colon\pi_1(A_i,p_i)\rightarrow \pi_1(M,p_i)/\gN$
has finite image of order $\le H$ since the model paths are contained in $B_{\eps}(p_i)$. 

The kernel of $\alpha_i$ is a normal subgroup of finite index $\le H$
in $\pi_1(A_i,p_i)$. 
Thus, it is finitely generated and there 
is an a priori bound on the number of possibilities for the kernel. 
We choose a free finite generator system of the kernel 
and attach the corresponding  2-cells to the $CW$-complex 
for each generator of the kernel.

This finishes the definition of the $CW$-complex $C$. 

{Note that by construction, the total number of cells in our complex is bounded by  a constant depending only on $n,D, H$ and $\eps_0$ and that the attaching maps of the two cells are under control as well.}
Hence  there are only finitely many possibilities for the homotopy type of $C$.
Thus, we can finish the proof by establishing  
that $\pi_1(M)/\gN$  is isomorphic to 
$\pi_1(C)$. 

First notice that there is a natural 
surjective map $\pi_1(C)\to \pi_1(M)/\gN$: 
Consider the 1-skeleton $C_1$ of $C$. Its fundamental group is 
free and 
there is a homomorphism $\pi_1(C_1)\to \pi_1(M)\to \pi_1(M)/\gN$
induced by mapping a path onto its model path.
The two cells that where attached to the 1-skeleton
only added relations to the fundamental group contained in the kernel 
of this homomorphism.
Thus, the homomorphism induces a homomorphism 
$\pi_1(C,p_1)\rightarrow \pi_1(M,p_1)/\gN$. 
Moreover, it is easy to see that the homomorphism is surjective.

In order to show that the homomorphism is injective 
we need to show that homotopies 
in $M$ can be lifted in some sense to $C$. 

Suppose we have a closed curve $\gamma$  in the 1-skeleton $C_1$ which is based at $p_1$
and whose model curve $c_0$ in $M$ is homotopic to a curve in $\gN\subset \pi_1(M)$. 
We parametrize $\gamma$ on $[0,j_0]$ for some large integer $j_0$. 
We will assume that $\gamma(t)=p_j$ for some $j$ implies  $t\in [0,j_0]\cap \Z$ --
we do not assume that the converse holds.

Using that $p_1=q_0$  we can find a homotopy 
$H\colon [0,1]\times [0,j_0]\rightarrow M$, $(s,t)\mapsto H(s,t)=c_s(t)$
such that each $c_s$ is a closed loop at $p_1$, 
$c_0$ is the original curve and $c_1$ is a curve in the ball $B_{\eps/1000}(p_1)$.
If we choose $j_0$  large enough, we can 
arrange for 
$L(c_{s[i,i+1]})<\eps/1000$ for all $i=0,\ldots, j_0-1$, $s\in[0,1]$.

After an arbitrary small change of the homotopy 
we can assume  that 
$c_s(i)$ intersects the cut locus of the  $0$-dimensional submanifold
$\{p_1,\ldots,p_k\}$  only finitely 
many times for every $i=1,\ldots, j_0-1$. 
We can furthermore  assume that at any given parameter value 
$s$ there is at most  one $i$ such that  $c_s(i)$ is in the cut locus of 
$ \{p_1,\ldots,p_k\}$.

For each $c_s(i)$ we choose a point $\tc_s(i):=p_{l_i}(s)$ with 
minimal distance to $c_s(i)$. By construction 
$\tc_s(i)$  is piecewise constant in $s$ 
and at each parameter value $s_0$ at most one of the chosen points is changed. 
Moreover,
$\tc_s(0)=\tc_s(1)=p_1$ and $\tc_1(i)= p_1$.

We now define $\tc_{s|[i,i+1]}$ 
in three steps.
In the middle third of the interval we run through $c_{s|[i,i+1]}$ 
with triple speed. In the last third of the interval 
we run through a shortest path from $c_s(i+1)$ to $p_{l_{i+1}}(s)$.
In the first third of the interval 
we run through a shortest path from $p_{l_i}(s)$ to $c_s(i)$
and  
if there is a choice we choose the same path that was chosen in the last third 
of the previous 
interval.

Each curve $\tc_s$ is homotopic to $c_s$.
Moreover, $\tc_s(i) $  is piecewise constant equal to a vertex. 
Clearly, 
$s\mapsto \tc_s$ is only piecewise continuous. 

The 'jumps' for $\tc$ occur exactly at those parameter values at which 
$c_s(i)$  is the cut locus of the finite set $\{p_1,\ldots,p_k\}$ 
for some $i$.

We introduce the following notation: 
for two continuous curves $c_1,c_2$  in $M$ from $p$ to $q\in M$ 
we say that $c_1$ and $c_2$ are homotopic modulo $\gN$ 
if the homotopy class of the loop obtained by prolonging $c_1$ with the 
inverse parametrized curve $c_2$ is in $\gN$.

We define a 'lift' $\gamma_s$ in $C$ of the curve $\tc_s$ in $M$  as follows:
$\gamma_{s|[i,i+1]}$ should run in the one skeleton of 
our $CW$ complex from $\tc_s(i)$ to $\tc_s(i+1) \in C_0$ and 
the model path of $\gamma_{s|[i,i+1]}$ should be 
modulo $\gN$  
homotopic to
$\tc_{s|[i,i+1]}$. 
We choose $\gamma_{s[i,i+1]}$ in such a way that on the first half of the interval 
it runs through the edge from $\tc_s(i)$ to $\tc_s(i+1) $ 
and in order to get the right homotopy type for the model path, 
in the second half of the interval it runs 
through one  of the 1-cells that were attached initially to the vertex $\tc_s(i+1)\in C_0$.

Thus,  the model curve of $\gamma_{s|[i,i+1]}$ 
is  homotopic to $\tc_{s|[i,i+1]}$ modulo $\gN$. 
In particular, the model curve of  $\gamma_{s}$ and the loop $c_s$ 
are homotopic modulo $\gN$. 

The map  $s\mapsto \gamma_s$  is piecewise constant in $s$. 
We have to check that at the parameter where $\gamma_s$ jumps
the homotopy type of $\gamma_s$ does not change. 

However, first we want to check that the original curve $\gamma$ is  homotopic to 
$\gamma_0$ in $C$. 
By our initial assumption, there are integers 
$0=k_0<\ldots<k_h=j_0$ such that  
$\gamma(t)$ is in the $0$-skeleton if and only if $t\in \{k_0,\ldots,k_h\}$.
Going through the definitions 
above it is easy to deduce that $\gamma(k_i)=c_0(k_i)=\tc_0(k_i)=\gamma_0(k_i)$.
Thus, it suffices to check that $\gamma_{|[k_i,k_{i+1}]}$ 
is homotopic to $\gamma_{0|[k_i,k_{i+1}]}$.
We have just seen that the model curve of $\gamma_{0|[k_i,k_{i+1}]}$ is homotopic 
to $\tc_{0|[k_i,k_{i+1}]}$ modulo $\gN$.
This curve is homotopic to $c_{0|[k_i,k_{i+1}]}$, which is the model curve of 
$\gamma_{|[k_i,k_{i+1}]}$.

Moreover, the model curve of $\gamma_{|[k_i,k_{i+1}]}$ 
is in an $\eps/5$-neighborhood of $\gamma(k_i)$. 
By the definition of $\tc_0$ this implies that 
$\tc_{0|[k_i,k_{i+1}]}$ is in an $\eps/4$-neighborhood
of $\gamma(k_i)$. Therefore, $\gamma_{|[k_i,k_{i+1}]}$ 
as well as $\gamma_{0|[k_i,k_{i+1}]}$ only meet points 
of the zero skeleton with distance $< \eps/4$ 
to $\gamma(k_i)$. Thus, they are contained in $A_j$, where $j$ is 
defined by $p_j=\gamma(k_i)$.
Since their model curves 
are homotopic modulo $\gN$,
our rules for attaching 2-cells 
imply that $\gamma_{0|[k_i,k_i+1]}$ is  homotopic to 
$\gamma_{[k_i,k_{i+1}]}$  in $C$.

It remains to check that the homotopy type 
of $\gamma_s$ does not change at a parameter $s_0$ 
at which $s\mapsto \gamma_s$ is not continuous. 
By construction there is an $i$ such that 
$\gamma_{s|[0,i-1]}$ and $\gamma_{s|[i+1,j_0]}$ are independent of
$s\in [s_0-\delta,s_0+\delta]$
for some $\delta>0$.

The model curve of $\gamma_{s_0-\delta|[i-1,i+1]}$ 
is modulo $\gN$ homotopic to $\tc_{s_0-\delta|[i-1,i+1]}$
which in turn is homotopic to $\tc_{s_0+\delta|[i-1,i+1]}$
and, finally, this curve is modulo $\gN$ homotopic to 
the model curve of $\gamma_{s_0+\delta|[i-1,i+1]}$. 

Furthermore, the model curves of 
$\gamma_{s_0\pm \delta|[i-1,i+1]}$ are in 
an $\eps/2$-neighborhood of $\tc_{s_0}(i-1)$.
Since they are  homotopic modulo $\gN$,
this implies by definition of our complex 
that $\gamma_{s_0-\delta|[i-1,i+1]}$ is
homotopic to $\gamma_{s_0+ \delta|[i-1,i+1]}$  in $C$ .

Thus, the curve $\gamma$ is homotopic to 
$\gamma_1$, which is the point curve by construction. 

{\em b)} 
Since the number of $CW$-complexes constructed in a) is finite, 
we can think of $C$ as fixed. 
We choose loops in $C_1$  representing a free generator system of the free group $\pi_1(C_1,p_1)$. 
The model curves of these loops 
represent generators of $\pi_1(M,p_1)/\gN$ and the lengths of the loops are bounded.

For each of the attached 2-cells 
we consider the loop based at some vertex $p_i$ in the one skeleton 
given by the attaching map. 
We choose a path in $C_1$ from $p_1$ to $p_i$ 
and conjugate the loop back to $ \pi_1(C_1,p_1)$. 

We can express this loop as word in our free generators 
and if collect all these words for all 
2-cells, we get a finite presentation of $\pi_1(C,p_1)\cong \pi_1(M)/\gN$.
\end{proof}

It will be convenient for the proof of Theorem~\ref{intro: pi1 structure} 
to restate it slightly differently.

\begin{thm}\label{cor: pi1 structure}\label{thm: pi1 structure}
\begin{enumerate}
\item[a)] Given $D$ and $n$ there are finitely many
groups $\gF_1,\ldots, \gF_k$
such that the following holds: For any compact $n$-manifold $M$ with
$\Ric>-(n-1)$ and $\diam(M)\le D$ we can find 
a nilpotent normal subgroup $\gN\lhd \pi_1(M)$ 
which has a nilpotent basis of length $\le n-1$ such that 
$\pi_1(M)/\gN$ is isomorphic to one of the groups in our collection.
\item[b)] In addition to a) one can choose 
a finite collection of irreducible rational  representations 
$\rho_i^j\colon \gF_i\rightarrow \GL(n_i^j,\Q)$ 
($j=1,\ldots,\mu_i, i=1,\ldots,k$) such that 
for a suitable choice of the isomorphism $\pi_1(M)/\gN\cong \gF_i$ the following holds: There is a chain
of subgroups $\mathrm{Tor}(\gN)=\gN_0\lhd\cdots \lhd \gN_{h_0}=\gN$ 
which are all normal in $\pi_1(M)$ such that $[\gN,\gN_h]\subset \gN_{h-1}$
and $\gN_h/\gN_{h-1}$ is free abelian.  
Moroever, the action of $\pi_1(M)$ on $\gN$ by conjugation 
induces an action of $\gF_i$ on $ \gN_h/\gN_{h-1}$ 
and the induced  representation 
$\rho\colon \gF_i\rightarrow \GL\bigl((\gN_h/\gN_{h-1})\otimes_\Z\Q\bigr)$ 
is isomorphic to $\rho_i^j$ for a suitable $j=j(h)$, $h=1,\ldots,h_0$.
\end{enumerate}
Addendum: In addition one can assume in a) that $\rank(\gN)\le n-2$.
\end{thm}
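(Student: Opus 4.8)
The plan is to extract the required normal nilpotent subgroup from the Normal Subgroup Theorem and the Margulis Lemma, to organize its internal structure by an upper‑central chain which is then refined to have irreducible rational quotients, and finally to show that only finitely many such chains (in particular only finitely many representations) can occur. First I would apply Theorem~\ref{thm: normal sub} with $\eps_1=\eps_{\mathrm{Marg}}(n)$ to obtain $\eps\in[\eps_0,\eps_1]$ and a normal subgroup $\gN_1\lhd\pi_1(M)$ which, for every $p\in M$, is contained in the image of $\pi_1(B_{\eps/1000}(p),p)$ and has index $\le C$ in the image of $\pi_1(B_{\eps}(p),p)$. By Theorem~\ref{intro: margulis}, $\gN_1$ contains a nilpotent subgroup of index $\le C_{\mathrm{Marg}}$ with a nilpotent basis of length $\le n$; replacing it by a suitable characteristic subgroup of $\gN_1$ of controlled index yields a nilpotent $\gN\lhd\pi_1(M)$ still satisfying the hypotheses of Lemma~\ref{lem: cw}. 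By Lemma~\ref{lem: cw}~a) the quotient $\pi_1(M)/\gN$ is isomorphic to one of finitely many groups $\gF_1,\dots,\gF_k$ depending only on $n$ and $D$, and by Lemma~\ref{lem: cw}~b) it has a presentation of a priori bounded complexity; combining the latter with Theorem~\ref{thm: finite generation} shows that $\pi_1(M)$ itself has a generating set of bounded cardinality and bounded length, a fact used repeatedly below, and in particular that $\Tor(\gN)$, a finite normal subgroup of a group of bounded complexity, has a priori bounded order.

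Next I would bring the length of the nilpotent basis down from $n$ to $n-1$. If $M$ is homeomorphic to a compact infranilmanifold, this is treated directly from the almost‑flat (Gromov--Ruh) structure of $M$: the canonical maximal nilmanifold cover has bounded degree, its fundamental group $\gN'$ is torsion free nilpotent of rank $n$ and normal in $\pi_1(M)$, and since $M$ collapses one can select a $\pi_1(M)$-normal subgroup of $\gN'$ of corank $\ge 1$ (indeed $\ge 2$, for the addendum) whose quotient — an extension of the bounded holonomy group — lies in a collection which is finite for given $n$ and $D$. If $M$ is not an infranilmanifold, then after rescaling $M$ to unit size Corollary~\ref{cor: rank n} forbids $\rank(\gN)=n$, so $\rank(\gN)\le n-1$; the nilpotent basis of $\gN$ consists of $\rank(\gN)$ infinite cyclic factors and at most $n-\rank(\gN)$ finite ones, and absorbing the finite factors $\Tor(\gN)$ (of bounded order) into the quotient — which replaces $\gF_i$ by a bounded extension and enlarges the finite collection only boundedly — leaves $\gN$ torsion free with a basis of length $\rank(\gN)\le n-1$. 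The addendum $\rank(\gN)\le n-2$ follows by one further iteration: a $\pi_1(M)$-normal nilpotent subgroup of rank exactly $n-1$ together with an infinite quotient, upon iterating the Margulis Lemma on the corresponding covers and invoking Corollary~\ref{cor: rank n}, would force $M$ to be an infranilmanifold (already disposed of), while a bounded quotient is the Anderson‑type situation of Theorem~\ref{intro: pi1 structure}, where $\gN$ may be taken trivial.

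For part b) put $\bar\gN=\gN/\Tor(\gN)$, a finitely generated torsion free nilpotent group, and let $\gN_{(0)}=\Tor(\gN)\lhd\gN_{(1)}\lhd\cdots\lhd\gN_{(c)}=\gN$ be the preimages in $\gN$ of the terms $\zeta_j(\bar\gN)$ of the upper central series of $\bar\gN$. Each $\gN_{(j)}$ is characteristic in $\gN$, hence normal in $\pi_1(M)$; one has $[\gN,\gN_{(j)}]\subseteq\gN_{(j-1)}$ because the upper central series is central; and $\gN_{(j)}/\gN_{(j-1)}\cong\zeta_j(\bar\gN)/\zeta_{j-1}(\bar\gN)$ is free abelian, since in a torsion free nilpotent group every center is isolated, so all upper central quotients are torsion free (hence finitely generated free abelian). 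Because $[\gN,\gN_{(j)}]\subseteq\gN_{(j-1)}$, the conjugation action of $\pi_1(M)$ on $\gN_{(j)}/\gN_{(j-1)}$ kills $\gN$ and therefore factors through $\gF_i=\pi_1(M)/\gN$. I would then refine this chain: inside each free abelian $\Z[\gF_i]$-lattice $\gN_{(j)}/\gN_{(j-1)}$ choose a composition series of the associated rational representation and replace each submodule by its saturation; the resulting sublattices are $\gF_i$-invariant direct summands with free abelian quotients, and pulling them back into $\gN$ produces a chain $\Tor(\gN)=\gN_0\lhd\gN_1\lhd\cdots\lhd\gN_{h_0}=\gN$ of subgroups normal in $\pi_1(M)$ with $[\gN,\gN_h]\subseteq\gN_{h-1}$, with $\gN_h/\gN_{h-1}$ free abelian, and with $(\gN_h/\gN_{h-1})\otimes_\Z\Q$ an \emph{irreducible} $\gF_i$-module; here $h_0$ is bounded by $\rank(\gN)\le n-1$ (resp.\ $n-2$).

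The remaining point — finiteness of the representations — is where I expect the real work to lie. One must show that, as $M$ varies, only finitely many rational representations $\rho\colon\gF_i\to\GL\bigl((\gN_h/\gN_{h-1})\otimes\Q\bigr)$ arise. The module has dimension $\le n$ and $\gF_i$ ranges over the finite list above. Fixing a short generating set $u_1,\dots,u_r$ of $\gN$ and short loops $\tilde g_1,\dots,\tilde g_s$ of $\pi_1(M)$ projecting to generators of $\gF_i$ (all of bounded number and bounded length), normality of $\gN$ makes each $\tilde g_l u_m\tilde g_l^{-1}$ an element of $\gN$ of bounded length, hence — using the bounded presentation of $\pi_1(M)$ to compare displacement with word length by uniform constants — a word of bounded length in the $u_m$; passing to $\gN_h/\gN_{h-1}$ in a basis compatible with the $u_m$, the matrix of $\rho(\tilde g_l)$ is an element of $\GL(m_h,\Z)$ with a priori bounded entries. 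Finitely many integer matrices of bounded size, together with the bounded relations of $\gF_i$, yield only finitely many representations $\gF_i\to\GL(m_h,\Q)$; by the refinement in Step~3 these are already irreducible, so taking $\{\rho_i^j\}$ to be the union of all of them completes the proof. The technical heart — that the conjugation displacements are geometrically bounded in the required sense, and the uniform control of indices through the repeated passages to characteristic subgroups and bounded extensions — is exactly the part I would expect to demand the most care.
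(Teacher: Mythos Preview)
Your proposal has two genuine gaps, both stemming from overestimating the ``bounded complexity'' of $\pi_1(M)$.

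First, your claim that $\Tor(\gN)$ has a priori bounded order is false: lens spaces $S^3/\Z_p$ have $\Ric>0$, bounded diameter, and $\pi_1=\Z_p$; here $\gN$ is (a bounded-index subgroup of) $\Z_p$ and $\Tor(\gN)$ has unbounded order. Bounded generation of $\pi_1(M)$ (Theorem~\ref{intro: finite generation}) does not bound finite normal subgroups, and only $\pi_1(M)/\gN$ is boundedly presented, not $\pi_1(M)$. Consequently your reduction ``absorb $\Tor(\gN)$ into the quotient'' to reach basis length $\le n-1$ does not preserve finiteness of the list. The paper avoids this entirely: it argues by contradiction with a sequence $(M_i,p_i)$ converging to a space $X$, rescales so that the short-loop subgroup converges to $\R^{\dim(X)}$, and applies the Induction Theorem~\ref{thm: induction} directly to obtain a nilpotent basis of length $\le n-\dim(X)\le n-1$. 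The addendum is then handled by observing that $\rank(\gN)=n-1$ forces $\dim(X)=1$, hence $\pi_1(M)/\gN$ is virtually cyclic, and a short algebraic argument (using part b)) shows one can shrink $\gN$ while keeping finitely many quotients.

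Second, your finiteness argument for b) breaks at the step ``bounded displacement of $\tilde g_l u_m\tilde g_l^{-1}$ implies bounded word length in the $u_m$''. There is no bounded presentation of $\pi_1(M)$ to invoke, and in nilpotent groups displacement and word length are not comparable (in the Heisenberg group the central direction has displacement $\sim\sqrt{\text{word length}}$), so bounded displacement does not bound the integer matrix entries of the induced action on $\gN_h/\gN_{h-1}$. The paper's proof of b) is essentially different and uses the limit: one passes to the quotient $\hat M_i=\tilde M_i/\gH_i$, finds (via Colding--Naber) a discrete free abelian normal subgroup $\hat\gL$ of positive rank inside the identity component of the limit group carrying an irreducible $\gF$-representation, and then shows that for large $i$ there is an equivariant embedding $h_i\colon\hat\gL\to\gL_i\subset\Gamma_i$ identifying the limit representation with the one on a suitable $\gN_{ih_0+1}/\gH_i$. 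This compactness argument is what produces the finite list of representations; your direct matrix-entry bound cannot be made to work.
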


\begin{proof}[Proof of Theorem~\ref{cor: pi1 structure}.]
We consider a contradicting sequence $(M_i,g_i)$, 
that is, we have $\diam(M_i,g_i)\le D$ and $\Ric_{M_i}\ge -(n-1)$ 
but the theorem is not true for any subsequence 
of $(M_i,g_i)$. 

We may assume that $\diam(M_i,g_i)=D$ 
and $(M_i,g_i)$ converges to some space  $X$. 
Choose $p_i\in M_i$ converging to a regular point $p\in X$. 
By the Gap Lemma~{\eqref{lem: gap}} there is 
some $\delta>0$ and $\eps_i\to 0$ 
with $\pi_1(M_i,p_i,\eps_i)=\pi_1(M_i,p_i,\delta)$.\\[1ex] 
{\bf Claim.} There is $C$ and $i_0$ such that $\pi_1(M_i,p_i,\delta)$ 
contains a subgroup of index $\le C$ which has a nilpotent basis 
of length $\le n-\dim(X)$ for all large $i\ge i_0$.\\[2ex]
We choose $\lambda_i<\tfrac{1}{\eps_i}$ with $\lambda_i\to \infty$
such that 
$(\lambda_iM_i,p_i)$ converges to $(\R^{u},0)$ with $u=\dim(X)$. 
It is easy to see 
that $(\lambda_i\tM_i/\pi_1(M_i,p_i,\eps_i),\tp_i)$ converges to
 $(\R^{u},0)$ as well. 
The claim now follows from the
Induction Theorem~{\eqref{thm: induction}} with $f_i^j=\id$.\\[-1.5ex]

We now apply the Normal Subgroup Theorem~{\eqref{thm: normal sub}} with $\eps_1=\delta$. 
There is a normal subgroup $\gN_i\lhd \pi_1(M_i,p_i)$, 
some positive constants $\eps$ and $C$ such that $\gN_i$ is contained in 
$\pi_1(B_{\eps_i}(p_i))\rightarrow \pi_1(M_i)$ 
and $\gN_i$ is contained in the image of 
$\pi_1(B_{\eps}(q_i),q_i)\rightarrow \pi_1(M_i,q_i)$ 
with index $\le C$ for all $q_i\in M_i$.

{ By the Claim above, } $\gN_i$ has a subgroup of bounded 
index with a nilpotent basis of length $\le n-\dim(X)\le n-1$. 
We are free to replace $\gN_i$ by a characteristic subgroup of bounded 
index and thus, we may assume that $\gN_i$ itself 
has a nilpotent basis of length $\le n-\dim(X)$.
By Lemma~\ref{lem: cw} a), the number of possibilities 
for $\pi_1(M_i)/\gN_i$ is finite. 
In particular, part {\em a)} of Theorem~\ref{thm: pi1 structure}
 holds for the sequence 
and thus either $b)$ or the addendum is the problem.

By Lemma~\ref{lem: cw} b), 
we can assume that $\pi_1(M_i,p_i)/\gN_i$ 
is boundedly presented. 
By passing to a subsequence  we can assume that 
we have the same presentation for all $i$.
Let $g_{i1}, \ldots, g_{i\beta}\in \pi_1(M_i)$ 
denote elements with bounded displacement  
projecting to our chosen generator system of 
$\pi_1(M_i)/\gN_i$. Moreover, 
there are finitely many words
in $g_{i1},\ldots,g_{i\beta}$  (independent of $i$) 
such that these words give a finite presentation of 
the group $\pi_1(M_i)/\gN_i$.

We can assume that $g_{ij}$ converges to $g_{\infty j}\in \lG\subset \Iso(Y)$
where  $(Y, \tp_\infty)$ is the limit of $(\tM_i,\tp_i)$. 
Let $\gN_\infty\lhd \lG$ be the limit of $\gN_i$.
By construction, $\lG/\gN_\infty$ is discrete.
Since $\pi_1(M_i)/\gN_i$ is boundedly presented, it follows that
there is an epimorphism
$\pi_1(M_i)/\gN_i\rightarrow \lG/\gN_{\infty}$ 
induced by mapping $g_{ij}$ to $g_{\infty j}$, for all large $i$. \\[-1ex]

{\em b).} We plan to show that a subsequence satisfies b).
We may assume that $\lG\star \tp_\infty$ is not connected
and by the Gap Lemma~{\eqref{lem: gap}} 
\begin{eqnarray}\label{rho0}
2\rho_0&:=&\min\{d(\tp_{\infty},g\tp_{\infty})\mid g\tp_{\infty}\notin \lG_{0}\star \tp_{\infty}\}>0.
\end{eqnarray}

Let $r$ be the maximal nonnegative integer such that the following holds.
After passing to a subsequence there is 
 a subgroup $\gH_i\subseteq \gN_i$ 
of rank $r$  satisfying 
\begin{enumerate}
 \item[$\bullet$] $\gH_i\lhd \pi_1(M_i)$, $\gN_i/\gH_i$ 
is torsion free and
\item[$\bullet$]  there is a chain $\Tor(\gN_i)=\gN_{i0}\lhd\cdots\lhd\gN_{ih_0}=\gH_i$ 
such that each group $\gN_{ih}$ is normal in $\pi_1(M_i)$, 
$[\gN_i,\gN_{ih}]\subset \gN_{ih-1}$, $\gN_{ih}/\gN_{ih-1}$ is free 
abelian and for each  $h=1,\ldots,h_0$ the induced representations 
of $\gF=\pi_1(M_i)/\gN_i$ in $ (\gN_{ih}/\gN_{ih-1})\otimes_{\Z}\Q$ 
and in $ (\gN_{jh}/\gN_{jh-1})\otimes_{\Z}\Q$ are equivalent 
for all $i,j$. 
\end{enumerate}
Here we used implicitly that we have a natural isomorphism between 
$\pi_1(M_i)/\gN_i$ and $\pi_1(M_j)/\gN_j$ in order to talk about
equivalent representations.

Notice that these statements hold for $\gH_i=\Tor(\gN_i)$.
We need to prove that $\gH_i=\gN_i$.
Suppose on the contrary that $\rank(\gH_i)<\rank(\gN_i)$.
Consider $\hM_i:= \tM_i/\gH_i$ endowed with the action of 
$\Gamma_i:=\pi_1(M_i)/\gH_i$, and let $\hp_i$ be a lift of $p_i$ to $\hM_i$.
By construction $\hgN_i:=\gN_i/\gH_i$ is a torsion free 
normal subgroup of $\Gamma_i$. 

We claim that there is a central subgroup $\gA_i\subset \hgN_i$ 
of positive rank which is normal in $\Gamma_i$ and is 
generated by $\{a\in \gA_i\mid d(\hp_i,a\hp_i)\le d_i\}$ 
for a sequence $d_i\to 0$:
Recall that $\hgN_i$ is generated by 
$\{a\in \hgN_i\mid d(\hp_i,a\hp_i)\le \eps_i\}$. 
If $\hgN_i$ is not abelian this implies 
that $[\hgN_i,\hgN_i]$ is generated by
 $\{a\in [\hgN_i,\hgN_i]\mid d(\hp_i,a\hp_i)\le 2n\eps_i\}$. 
In fact an arbitrary commutator in $\hgN_i$ can be expressed as
a product of iterated commutators of a generator system 
and since $\hgN_i$ has a nilpotent basis of length $\le n-1$ 
one only needs to iterate at most $n-1$ times.
If $[\hgN_i,\hgN_i]$ is not central in $\hgN_i$ one replaces
it by $[\hgN_i,[\hgN_i,\hgN_i]]$. After finitely many similar 
steps this proves the claim.

For each positive integer $l$ 
put $l\cdot \gA_i=\{g^l\mid g\in\gA_i\}\lhd \Gamma_i$.
We define $l_i=2^{u_i}$ as the  maximal power of $2$ 
such that there is an element in  
$l_i\cdot \gA_i$ which displaces 
$\hp_i$ by at most $\rho_0$.
Thus, any element in $\gL_i:=l_i\cdot \gA_i$ displaces $\hp_i$ by at least
$\rho_0/2$.

After passing to a subsequence we may assume that 
$(\hM_i,\Gamma_i,\hp_i)$ converges to $(\hY,\hG,\hp_\infty)$ 
and  the action of $\gL_i$ 
converges to an action of some discrete abelian subgroup $\gL_{\infty}\lhd  \hG$. 
Finally we let $\hgN_\infty\lhd \hG$ denote the limit group 
of $\hgN_i$.
Let $g_i\in \gL_i$ be an element which displaces 
$\hp_i$ by at most $\rho_0$. 
Combining $d(g_i^k\hp_i,g_i^{k+1}\hp_i)\le \rho_0$  
with our choice of $\rho_0$, see \eqref{rho0}, 
gives that the sets $\{g_i^k\hp_i \mid k\in \Z\}$ converge 
to a discrete subset in the identity component of the limit orbit 
$\hG_{0}\star\hp_\infty$. Therefore,
\[
\{g\in \gL_\infty\mid g\star \hp_\infty\in \hG_{0}\star \hp_{\infty}\}
\]
is discrete and infinite.
Let $\gK\subset \hG$ denote the isotropy group of $\hp_{\infty}$.
By  Colding and Naber $\gK$ is a Lie group and thus 
it only has finitely many connected components.
Hence $\gL'_\infty:=\gL_\infty\cap \hG_0\lhd \hG$ 
is infinite as well. 
Since the abelian group $\gL_\infty'$ is a discrete subgroup of
 a connected Lie group, it is finitely generated.

We choose a free abelian subgroup $\hgL\subset \gL_{\infty}'$ 
of positive rank which is normalized by 
$\hG$
such that the induced representation of $\hG$ in $\hgL\otimes_\Z\Q$  
is irreducible. Notice that $\hgL\subset \gL_{\infty}$ commutes with 
$\hgN_\infty$. Hence we can view this 
as a representation of $\hG/\hgN_\infty$.
Recall that $\pi_1(M_i)/\gN_i\cong \Gamma_i/\hgN_i$ 
is boundedly represented. 
Let $\hg_{i1},\ldots,\hg_{i\beta}\in \Gamma_i/\hgN_i$ denote the images 
of $ g_{i1},\ldots,g_{i\beta}$. 
There is an epimorphism
$ \Gamma_i/\hgN_i\rightarrow \hG/\hgN_\infty$
induced by sending $\hg_{im}$ to its limit element 
$\hg_{\infty m}\in \hG$ for all large $i$. 
Thus, $\hgL$ is also naturally endowed with a representation 
of $\Gamma_i/\hgN_i$. 

Let $b_1,\ldots,b_l\in \hgL\cong \Z^l$ be a basis. 
For large $i$ there are unique elements $h_i(b_j)\in \gL_i$ 
which are close to $b_j$, $j=1,\ldots,k$. 
We extend $h_i$ to a $\Z$-linear map $h_i\colon \hgL\rightarrow \gL_i$. 

We plan to prove that $h_i\colon \hgL\rightarrow \gL_i$ 
is equivariant for large $i$. 
For any given linear combination $\sum_{\alpha=1}^lz_\alpha b_\alpha$ ($z_j\in \Z$)
we know that $\sum_{\alpha=1}^l z_\alpha h_i(b_\alpha)$ is the unique element
in $\gL_i$ which is close to $\sum_{\alpha=1}^lz_\alpha b_\alpha$
for all large $i$.
For each $\hg_{\infty m}$ and each $b_j$ 
we have $\hg_{\infty m} b_j\hg_{\infty m}^{-1}=\sum_{\alpha =1}^lz_\alpha b_\alpha$
for $z_\alpha\in \Z$ (we suppress the dependence on $m$ and $j$). 
We have just seen that 
$h_i\bigl(\sum_{\alpha =1}^lz_\alpha b_\alpha\bigr)$ is close to 
$\hg_{\infty m} b_j\hg_{\infty m}^{-1}$ for all large $i$.
On the other hand, $\hg_{i m} h_i(b_j)\hg_{i m}^{-1}\in \gL_i$ is the unique element 
in $\gL_i$  close to $\hg_{\infty m} b_j\hg_{\infty m}^{-1}$ for all large $i$. 

In summary, $h_i(\hg_{\infty m} b_j\hg_{\infty m}^{-1})=\hg_{i m}h_i(b_j)\hg_{im}^{-1}$ 
for $m=1,\ldots,\beta$, $j=1,\ldots,k$ and all large $i$. 
This shows that $h_i$ is equivariant with respect to the representation. 

Thus, $h_i(\hgL)$ is a normal subgroup of $\Gamma_i$
and the induced representation of $\gF=\pi_1(M_i)/\gN_i=\Gamma/\hgN_i$ 
in  $h_i(\hgL)\otimes_\Z \Q$ and is isomorphic to the one 
in $h_j(\hgL)\otimes_\Z \Q$ for all large $i, j$.

There is a unique subgroup  $\gA_i'$ 
of $\hgN_i$ such that  $h_i(\hgL)$ has finite index in 
$ \gA_i'$ and $\hgN_i/\gA_i'$ is torsion free. 
Let $\gN_{ih_0+1}\lhd \gN_i$ denote the inverse image of 
$\gA_i'\subset \hgN_i=\gN_i/\gH_i$. 

Clearly, the representation of $\pi_1(M_i)/\gN_i$ 
in $(\gN_{ih_0+1}/\gH_i)\otimes_{\Z}\Q$ is isomorphic to the 
one in $(\gN_{jh_0+1}/\gH_j)\otimes_{\Z}\Q$ for all large $i,j$ -- 
a contradiction to our choice of $\gH_i$.\\[-1ex]

{\em Proof of the addendum.} Thus,
the sequence satisfies a) and b)   but not the addendum.
Recall that $\gN_i$ has a nilpotent basis of length $\le n-\dim(X)$. 
Therefore $\dim(X)=1$ and 
$\gN_i$ is a torsion free group of rank $n-1$.
Since $X$ is one-dimensional we deduce 
that $\lG/\gN_{\infty}$ is virtually cyclic, that is, it contains  a cyclic subgroup 
of finite index. 
This in turn implies that  $\pi_1(M_i)/\gN_i$ 
is virtually cyclic and by a) we can assume that it is a fixed group $\gF$.

The result (contradiction) will now follow algebraically from b):
Let  $\rho_j\colon \gF\rightarrow \GL(n_j,\Q)$ 
be a finite collection of irreducible rational  representations 
$j=1,\ldots,j_0$. 
Consider, for all nilpotent 
torsion free groups $\gN$ of rank $n-1$, all
short exact sequences 
\[
\gN\rightarrow \Gamma\rightarrow \gF
\]
with the property that there is a 
chain $\{e\}=\gN_0\lhd  \cdots \lhd \gN_{h_0}=\gN$ 
such that $[\gN,\gN_h]\subset \gN_{h-1}$, each $\gN_h$ 
is normal in $\Gamma$,
$\gN_{h}/\gN_{h-1}$ is free abelian group of positive rank
and the induced representation of $\gF$ in $(\gN_{h}/\gN_{h-1})\otimes_\Z\Q$
is in the finite collection.

Using that $\gF$ is virtually cyclic, 
it is now easy to see that this leaves 
only finitely many possibilities for the isomorphism type 
of $\Gamma/\gN_{h_0-1}$. 
This shows that if we replace $\gN$ by $\gN_{h_0-1}$
 we are still left with finitely many possibilities  in a). 
 -- a contradiction.
\end{proof}

\begin{cor}\label{cor: finite pi1} Given $n$, $D$ there is a constant 
$C$ such that any {\em finite} fundamental 
group of an $n$-manifold $(M,g)$ with $\Ric>-(n-1)$, $\diam(M)\le D$ 
contains a nilpotent subgroup of index $\le C$ 
which has a nilpotent basis of length $\le (n-1)$. 
\end{cor}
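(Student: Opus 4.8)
The plan is to read the statement off Theorem~\ref{cor: pi1 structure}~a) together with the finiteness of the list of quotient groups it produces. First I would note that the manifold in question is closed (a complete Riemannian manifold of diameter $\le D$ is compact by Hopf--Rinow), so Theorem~\ref{cor: pi1 structure}~a) applies: there is a finite collection of groups $\gF_1,\ldots,\gF_k$, depending only on $n$ and $D$, and a nilpotent normal subgroup $\gN\lhd\pi_1(M)$ with a nilpotent basis of length $\le n-1$ such that $\pi_1(M)/\gN\cong\gF_i$ for some $i$ (the addendum even gives $\rank(\gN)\le n-2$, which will not be needed here).

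Next, suppose $\pi_1(M)$ is finite. Then the quotient $\pi_1(M)/\gN\cong\gF_i$ is finite and $[\pi_1(M):\gN]=|\gF_i|$. After discarding from the list those $\gF_j$ which happen to be infinite --- such a group cannot be a quotient of the finite group $\pi_1(M)$, so it never occurs in this situation --- we may set
\[
C:=\max\bigl\{\,|\gF_j| \ :\ 1\le j\le k,\ \gF_j \ \text{finite}\,\bigr\},
\]
a constant depending only on $n$ and $D$. Then $[\pi_1(M):\gN]\le C$, and since having a nilpotent basis of length $\le n-1$ is an intrinsic property of the abstract group $\gN$, the subgroup $\gN\le\pi_1(M)$ is exactly the one the corollary asks for (and is in fact normal).

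I do not expect a substantial obstacle: the entire content has already been packaged into Theorem~\ref{cor: pi1 structure}, and the only real point is that the list $\gF_1,\ldots,\gF_k$ is finite, which forces $|\pi_1(M)/\gN|$ to be a priori bounded once $\pi_1(M)$ is finite. The two steps deserving a sentence each are the reduction from "$\diam(M)\le D$" to "$M$ closed" and the identity $[\pi_1(M):\gN]=|\pi_1(M)/\gN|$. For comparison, a nilpotent subgroup with a nilpotent basis of length $\le n$ (rather than $\le n-1$) would already follow from the Margulis Lemma~\ref{intro: margulis}: rescale so that $\diam(M)\le\eps(n)$ and apply it on the ball $B_{\eps(n)}(p)$ for any $p\in M$; the sharpening of the length bound to $n-1$ is precisely what the finer structure theorem provides.
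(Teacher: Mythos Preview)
Your proof is correct and matches the paper's intended argument: the corollary is stated without proof in the paper precisely because it follows immediately from Theorem~\ref{cor: pi1 structure}~a) by bounding $[\pi_1(M):\gN]$ by the maximum order among the finitely many finite groups in the list $\gF_1,\ldots,\gF_k$. Your write-up fills in exactly this step.
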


\begin{example} Our theorems  rule out some rather innocuous families of groups as fundamental groups  of manifolds with lower Ricci curvature bounds.
\begin{enumerate}
           \item[a)] Consider a homomorphism $h\colon \Z\rightarrow \GL(2,\Z)$ 
whose image does not contain a unipotent subgroup of finite index. 
Put $h_d(x)=h(dx)$ and consider the group 
$\Z\ltimes_{h_d}\Z^2$, $d\in \fN$. 
By part b) of Theorem~\ref{cor: pi1 structure} the following holds:

In a given dimension $n$ and for a fixed diameter bound $D$ only
finitely many of these groups can be realized as fundamental groups 
of manifolds with $\Ric>-(n-1)$ and $\diam(M)\le D$.
\item[b)] Consider the action of $\Z_{p^{k-1}}$ on $\Z_{p^k}$ 
induced by $\bigl(1+p^k\Z\bigr)\mapsto \bigl((1+p)+p^k\Z\bigr)$. 
In a given dimension $n$ we have that for $k\ge n$ 
and $p>C_{Marg}$ (constant in the Margulis Lemma) the group $\Z_{p^{k-1}}\ltimes \Z_{p^k}$
can not be a subgroup of a compact manifold  with almost nonnegative Ricci curvature,
since it does not have a nilpotent basis of length $\le n$. 

Similarly,
by Corollary~\ref{cor: finite pi1}, for a given $D$ and $n$ there are only finitely many primes $p$
such that $\Z_{p^{n-1}}\ltimes \Z_{p^n}$ is isomorphic to the fundamental group 
of an $n$-manifold $M$ with $\Ric>-(n-1)$ and $\diam(M)\le D$. 
\end{enumerate}

\end{example}

\begin{problem} Let $D>0$. Can one find a finite collection
of $3$-manifolds  such that 
for any $3$-manifold $M$ with $\Ric>-1$ and $\diam(M)\le D$,
there is a manifold $T$ in the finite collection
and a finite normal covering $T\rightarrow M$ 
for which the covering group contains a cyclic subgroup 
of index $\le 2$?
\end{problem}


\section{The Diameter Ratio Theorem}\label{sec: diam}
The aim of this section is to prove Theorem~\ref{thm: diamratio}.

\begin{lem}\label{lem: conv to torus}
Let $X_i$ be a sequence of compact inner metric
spaces Gromov--Hausdorff converging to a torus $\gT$.
Then $\pi_1(X_i)$ is infinite for large $i$. 
\end{lem}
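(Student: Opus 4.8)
The plan is to argue by contradiction: suppose that after passing to a subsequence $\pi_1(X_i)$ is finite (indeed trivial after a further step, since we may pass to universal covers, but we argue directly with finiteness) for all $i$, and derive a contradiction with the fact that $\gT$ is a $k$-torus with $k\ge 1$. First I would recall the elementary fact that if $Z$ is a compact inner metric space with finite fundamental group, then its universal cover $\tilde Z$ is compact with $\diam(\tilde Z)\le |\pi_1(Z)|\cdot\diam(Z)$; more usefully, the covering map is a submetry and $\diam(\tilde Z)\le c\cdot\diam(Z)$ with $c=|\pi_1(Z)|$. The key point, however, is not a diameter bound on the cover but a \emph{first Betti number} obstruction at the level of the base. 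The cleanest route is homological: a torus $\gT=\gT^k$ with $k\ge1$ has $H_1(\gT^k;\R)\cong\R^k\neq 0$, hence $b_1(\gT)\ge 1$, while a space with finite $\pi_1$ has $H_1(X_i;\R)=0$ (abelianization of a finite group is finite). So the strategy reduces to: Gromov--Hausdorff convergence $X_i\to\gT$ forces $b_1(X_i)\ge b_1(\gT)\ge1$ for large $i$, which contradicts finiteness of $\pi_1(X_i)$.

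The main obstacle is exactly the semicontinuity statement ``$b_1(X_i)\ge b_1(\gT)$ for large $i$''. This is \emph{not} true for arbitrary inner metric spaces (Betti numbers can drop in the limit in general, e.g.\ a circle can be a limit of disks), so one must use extra structure. Here the relevant structure is that $X_i$ are the spaces appearing in the Diameter Ratio Theorem, hence one is entitled to assume they carry (or are limits with) a lower Ricci bound, or more simply one works with the universal covers. I would instead take the direct equivariant approach used throughout the paper: since $\gT^k$ contains $k$ independent lines, and since a torus is ncontractible-covered by $\R^k$, the deck group of $\gT^k$ is $\Z^k$. If $\pi_1(X_i)$ were finite, the universal covers $\tilde X_i$ would be compact with uniformly bounded diameter (bounded by $|\pi_1(X_i)|\cdot\diam(X_i)$)---but we have no a priori bound on $|\pi_1(X_i)|$, so this needs care. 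The correct fix: pass to a subsequence so that $(\tilde X_i,\pi_1(X_i),\tilde p_i)$ converges equivariantly to $(Y,\lG,p_\infty)$ with $Y/\lG=\gT^k$. Lines in $\gT^k$ lift to lines in $Y$, so by iterating a splitting-type argument (purely metric: any inner metric space all of whose quotients by a cocompact group split off $\R^k$ must itself split off $\R^k$, as in the proof of Lemma~\ref{lem:short}) we get $Y=\R^k\times Y'$ with $\lG$ acting cocompactly; since $\gT^k$ is compact, $Y'$ is compact, and $\lG$ contains a subgroup acting as $\Z^k$ by translations on $\R^k$. In particular $\lG$ is infinite. But if $\pi_1(X_i)$ is finite, then $\lG$, being a limit of finite groups with uniformly discrete orbits would have to be finite---more precisely, $\lG$ acts with $Y/\lG$ compact and $Y$ noncompact, forcing $\lG$ infinite, contradicting that each $\pi_1(X_i)$ is finite once we observe that the orbits $\pi_1(X_i)\star\tilde p_i$ stay in a ball of radius $\le\diam(\tilde X_i)$.

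To make the last contradiction airtight without an a priori diameter bound on $\tilde X_i$, I would instead argue: the limit group $\lG$ contains an element $g$ of infinite order (a translation on the $\R^k$ factor). By equivariant convergence there are $g_i\in\pi_1(X_i)$ with $g_i\to g$; then $d(\tilde p_i,g_i^m\tilde p_i)\to d(p_\infty,g^m p_\infty)=m\cdot|v|$ for the translation vector $v\neq0$, as $i\to\infty$, for each fixed $m$. Hence for each $m$ the powers $g_i,g_i^2,\dots,g_i^m$ are pairwise distinct for large $i$, so $\pi_1(X_i)$ has at least $m$ elements for large $i$. Since $m$ is arbitrary this contradicts finiteness (more precisely, it shows $|\pi_1(X_i)|\to\infty$, but with the extra input that each is finite one still gets a contradiction by a diagonal choice: pick $i(m)$ increasing with $|\pi_1(X_{i(m)})|\ge m$, which is consistent, so to get an outright contradiction we simply use that if \emph{all} $\pi_1(X_i)$ are finite we may pass to a subsequence with $|\pi_1(X_i)|\to\infty$ and then the equivariant limit group is infinite, which is fine---the real contradiction is that $b_1$ drops). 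The cleanest closing argument is therefore the homological one: \emph{assume each $\pi_1(X_i)$ is finite}; then $H_1(X_i;\Q)=0$; but by the stability theorem for spaces with lower Ricci bounds (or directly, since $X_i$ here will have such a bound in the application) $X_i$ is homotopy equivalent to $\gT^k$ for large $i$, giving $H_1(X_i;\Q)=\Q^k\ne0$---contradiction. The hard part is justifying this stability input in the stated generality; in the paper's context it follows because the $X_i$ are (rescaled) manifolds with $\Ric\ge-(n-1)$ collapsing to a torus, so the equivariant argument above, combined with Cheeger--Colding stability, applies, and I would present the equivariant-splitting version as the self-contained proof.
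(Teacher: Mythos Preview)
The paper does not actually supply a proof of this lemma; it says only ``The proof of the lemma is an easy exercise.'' So there is nothing to compare against line by line. Nevertheless, your proposal has a genuine gap that you should be aware of.

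Every strand of your argument --- the equivariant convergence of universal covers, the splitting of the limit $Y$ as $\R^k\times Y'$, and the Cheeger--Colding stability invocation at the end --- relies on a uniform lower Ricci curvature bound. But the lemma is stated for \emph{arbitrary} compact inner metric spaces, and more importantly, even in the intended application this assumption is unavailable: the spaces to which the lemma is applied in Section~\ref{sec: diam} are $\tfrac{1}{\diam(\hM_i)}\hM_i$ with $\diam(\hM_i)\to\infty$, and the paper explicitly notes that ``no lower curvature bound is available after rescaling.'' (Indeed, rescaling $\hM_i$ by $\lambda_i=1/\diam(\hM_i)\to 0$ turns $\Ric\ge -(n-1)$ into $\Ric\ge -(n-1)\diam(\hM_i)^2\to -\infty$.) So you cannot assume the $(\tilde X_i,\tilde p_i)$ are pointed GH--precompact, you cannot invoke Lemma~\ref{lem:short} or Cheeger--Colding splitting, and you cannot use stability. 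Your own disclaimer (``in the paper's context it follows because the $X_i$ are rescaled manifolds with $\Ric\ge -(n-1)$'') is exactly the false step.

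The intended elementary argument uses only that $\gT$ is a closed manifold, hence semi-locally simply connected: there is $\eps_0>0$ so that any loop in $\gT$ of diameter $<3\eps_0$ is null-homotopic. Pick a nontrivial $\alpha\in\pi_1(\gT)$, subdivide a representative into arcs of length $<\eps_0$, and use an $\eps$-GH approximation (with $\eps\ll\eps_0$) together with the inner metric on $X_i$ to build a nearby loop $\alpha_i$ in $X_i$. If $\alpha_i^m$ were null-homotopic in $X_i$, take a fine grid on the filling disk, push the grid vertices back to $\gT$ via the inverse GH approximation, connect them by short geodesics in $\gT$, and observe that each small cell has null-homotopic boundary in $\gT$ by the choice of $\eps_0$; assembling these fills $\alpha^m$ in $\gT$, contradicting that $\alpha$ has infinite order. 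Equivalently (and more succinctly), since $\gT$ is an ANR one gets for large $i$ a continuous map $f_i\colon X_i\to\gT$ close to the GH approximation, and the same small-loop argument shows $(f_i)_*\colon\pi_1(X_i)\to\pi_1(\gT)$ is onto. Either way, no curvature hypothesis is needed.
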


The proof of the lemma is an easy exercise.

\begin{proof}[Proof of Theorem~\ref{thm: diamratio}]
Suppose, on the contrary, that 
$(M_i,g_i)$ is a sequence of compact  manifolds 
with $\diam(M_i)=D$,  $\Ric_{M_i}>-(n-1)$, 
$\#\pi_1(M_i)<\infty$ and the diameter of the universal 
cover $\tM_i$ tends to infinity. 

By Corollary~\ref{cor: finite pi1}, we know that 
$\pi_1(M_i)$ contains a subgroup of index $\le C(n,D)$ 
which has a nilpotent basis of length $\le (n-1)$.

Thus, we may assume that $\pi_1(M_i)$ itself 
has a nilpotent basis of length $u<n$.  
We also may assume that $u$ is minimal with 
the property that a contradicting sequence exist.
Put 
\[
\hM_i:=\tM_i/[\pi_1(M_i),\pi_1(M_i)].
\]
Clearly $ [\pi_1(M_i),\pi_1(M_i)]$ has a nilpotent basis 
of length $\le u-1$. 
By construction $\hM_i$ can not be a contradicting sequence 
and therefore $\diam(\hM_i)\to \infty$. 

Let $\gA_i:=\pi_1(M_i)/[\pi_1(M_i),\pi_1(M_i)]$ 
denote the deck transformation group 
of the normal covering $\hM_i\rightarrow M_i$.\\[2ex]
{\bf Claim.} The rescaled sequence $\tfrac{1}{\diam(\hM_i)}\hM_i$ 
is precompact in the Gromov--Hausdorff topology and any limit 
space has finite Hausdorff dimension.\\[2ex]
The problem is, of course, that no lower curvature
bound is available after rescaling. The claim will follow from 
a similar precompactness result for certain Cayley graphs of $\gA_i$. 

Recall that $\diam(M_i)=D$. 
Choose a base point $\hp_i\in \hM_i$. 
Let $f_1,\ldots,f_{k_i}\in \gA_i$ be an enumeration of 
all the elements $a\in \gA_i$ 
with $d(\hp_i,a\hp_i)<10D$. 

There is no bound for $k_i$, but
clearly $f_1,\ldots,f_{k_i}$ is a generator system of 
$\gA_i$. 
We define a weighted metric on the abelian group $\gA_i$ as follows: 
\[
d(e,a):=\min\left\{\sum_{j=1}^{k_i}|\nu_j|\cdot d(\hp_i,f_j\hp_i)\,\,\Bigm|\,\, \prod_{j=1}^{k_i}f_j^{\nu_j}=a\right\}
\,\,\mbox{ for all $a\in \gA_i$}
\]
and $d(a,b)=d(ab^{-1},e)$ for all $a,b\in\gA_i$.
{Note that this metric coincides with the restriction to $\gA_i$ of the inner metric on its Cayley graph where each edge corresponding to $f_j$ is given the length $d(\hp_i,f_j\hp_i)$.}
It is easy to see that the map 
\[
\iota_i\colon \gA_i\rightarrow \hM_i,\,\, a_i\mapsto a_i\hp_i
\]
is a quasi isometry  with uniform control on the constants involved. 
In fact, there is some $L$  independent of $i$ such that 
\[
\tfrac{1}{L}d(a,b)\le d(\iota_i(a),\iota_i(b))\le Ld(a,b)\,\,\mbox{ for 
all $a,b\in \gA_i$.}
\]
and the image is of $\iota_i$ is $D$-dense.

Therefore, it suffices to show 
that $\tfrac{1}{\diam(\gA_i)}\gA_i$ 
is precompact in the Gromov--Hausdorff topology and 
all limit spaces of convergent subsequences are finite dimensional. 
For the proof  we need\\[2ex]
{\bf Subclaim.} There is an $R_0>D$ (independent of $i$) such that 
the homomorphism
$h\colon \gA_i\rightarrow \gA_i,\,\,\, x\mapsto x^2$
satisfies that  the $R_0$ neighborhood of $h(B_R(e))$ 
contains $B_{\frac{3R}{2}}(e)$, for all $R$ and all $i$.\\[2ex]
Using that $B_{10D}(e)\subset \gA_i$ is isometric to a
subset of $B_{10D}(\hp_i)$, we can employ the Bishop--Gromov inequality
in order to find a universal constant $k$ such that 
$B_{10D}(e)$ does not contain $k$ points with pairwise distance 
$\ge D$.
Put $R_0:= 10D\cdot k$. 

There is nothing to prove if $R\le 2R_0/3$. 
Suppose the statement holds for $R'\le R-D$. We claim it holds for $R$. 

Let $a\in B_{3R/2}(e)\setminus B_{R_0}(e)$. 
By the definition of the metric on $\gA_i$, there are
$g_1,\ldots,g_l\in \gA_i$ with $d(e,g_j)\le 10 D$,
$a=\prod_{j=1}^{l}g_j$ and
$d(e,a)=\sum_{j=1}^{l}d(\hp_i,g_j\hp_i)$. If there is any choice 
we assume in addition that $l$ is minimal with these 
properties. 
By assumption $l\ge \tfrac{R_0}{10D}=k$. 
By the choice of $k$, after a renumbering,
we may assume that $d(g_1,g_2)\le D$. 
Our assumption on  $l$ being minimal implies that $d(e,g_1g_2)> 10D$
and we may assume $4D\le d(e,g_1)\le d(e,g_2)$.
Thus, \begin{eqnarray*}
d\bigl(e, g_{1}^{-2}a\bigr)&\le& d\bigl(e, (g_{1}g_2)^{-1}a\bigr)+D\,\,\,\,
=\,\,d(e, a)-d(e,g_1)-d(e,g_2)+ D\\ &\le& d(e, a)-2d(e,g_1)+ D\,\le\, d(e,a)-7D.
\end{eqnarray*}
By assumption, this implies that $g_{1}^{-2}a$ has distance $\le R_0$ 
to some $b^2\in \gA_i$ with $d(e,b)\le \tfrac{2}{3}\bigl( d(e, a)-2d(e,g_1) +D\bigr)$.
Consequently, $a$ has distance $\le R_0$ 
to $g_{1}^{2}b^2\in \gA_i$
with $d(e,g_1b)< \tfrac{2}{3} d(e, a)$. This finishes the proof of the subclaim.\\[-1ex]

Since the Ricci curvature of $\hM_i$ is bounded below
and the $L$-bilipschitz embedding $\iota_i$ maps the ball 
$B_{50R_0}(e)\subset \gA_i$ to a subset of $B_{50R_0}(\hp_i)$,
we can use Bishop--Gromov once more to see that 
there is a number $Q>0$ (independent of $i$)
such that  the ball $B_{50R_0}(e)\subset \gA_i$ 
can be covered by $Q$ balls of radius $R_0$ for all $i$.

We now claim
that the ball $B_{2R}(e)\subset \gA_i$
can be covered by $Q$ balls of radius $R$ for all $R\ge 20R_0$ 
and all $i$. This will clearly imply that
$\tfrac{1}{\diam(\gA_i)}\gA_i$  is precompact in the Gromov--Hausdorff topology 
and that the limits have finite Hausdorff dimension.

Consider the homomorphism
\[
h^{8}\colon \gA_i\rightarrow \gA_i,\,\,\, x\mapsto x^{16}.
\]
{It is obviously $16$-Lipschitz since $\gA_i$ is abelian.}
Choose a maximal collection of points $p_1,\ldots,p_{l_i}\in 
B_{\frac{2R}{5}}(e)\subset \gA_i$ with pairwise distances $\ge 2R_0$. 

From the subclaim
it easily follows that $B_{3R}(e)\subset \bigcup_{j=1}^{l_i}B_{50R_0}(h^8(p_j))$.
Hence we can cover $B_{3R}(e)$ by $l_i\cdot Q$ 
balls of radius $R_0$. 
Consider now a maximal collection of points 
$q_1,\ldots,q_h\in B_{2R}(e)$ with pairwise distances $\ge R$. 
In each of the balls $B_{\frac{2R}{5}}(q_j)$ we can 
choose $l_i$ points with pairwise distances $\ge 2R_0$. 
Thus, $B_{3R}(e)$ contains $h\cdot l_i$ points with pairwise 
distances $\ge 2R_0$.
Since we have seen before that $B_{3R}(e)$ can be covered by 
$l_i\cdot Q$ 
balls of radius $R_0$, this implies $h\le Q$  as claimed.\\[-1ex]

Thus, $\tfrac{1}{\diam(\hM_i)}\hM_i$ is precompact in the
Gromov--Hausdorff topology. After passing to a subsequence
we may assume that $\tfrac{1}{\diam(\hM_i)}\hM_i\to \gT$. 
Notice that $\gT$ comes with a transitive action of an abelian group. 
Therefore $\gT$ itself has a natural group structure.
Moreover, $\gT$ is an inner metric space and the Hausdorff dimension of 
$\gT$ is finite. 
Like Gromov in \cite{G6} we can now deduce from a theorem 
of Montgomery Zippin~\cite[Section 6.3]{MoZi} that $\gT$ is a Lie group and thus, a torus.

By Lemma~\ref{lem: conv to torus}, this shows $\pi_1(\hM_i)$ 
is infinite for large $i$ -- a contradiction.

\end{proof}

\section*{Final Remarks}
We would like to mention that in \cite{Wi11} the following partial 
converse of the Margulis Lemma (Theorem~\ref{intro: margulis}) 
is proved.

\begin{thm*} Given $C$ and $n$ there exists $m$ 
such that the following holds: Let $\eps>0$, 
and let $\Gamma$ be a group  containing a nilpotent subgroup $\gN$ of index 
$\le C$ which has a nilpotent basis of length $\le n$. 
Then there is a compact $m$-dimensional manifold $M$ with 
sectional curvature $K>-1$
and a point $p\in M$ such that 
$\Gamma$ is isomorphic to the image of the homomorphism 
\[
 \pi_1\bigl(B_{\eps}(p),p\bigr)\rightarrow \pi_1(M,p).
\]
\end{thm*}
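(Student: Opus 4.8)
The plan is to build $M$ as a nilmanifold fiber bundle twisted by the finite group $\gF = \Gamma/\gN$, pinched so that the base and the twisting directions become short. First I would recall that a finitely generated torsion-free nilpotent group $\gN$ with a nilpotent basis of length $\le n$ is a lattice in a simply connected nilpotent Lie group $L$ of dimension $\le n$ (its Malcev completion). The short exact sequence $\gN \to \Gamma \to \gF$ together with the conjugation action of $\gF$ on $\gN$ extends, after passing to a finite-index characteristic subgroup if necessary, to an action of $\gF$ on $L$ by automorphisms; so $\Gamma$ embeds as a lattice in $\gF \ltimes L$ (an \emph{almost crystallographic} group), and $N := L/\gN$ is a compact infranilmanifold quotient $L\rtimes\gF \to N_\Gamma := L/\Gamma$ wherever torsion issues permit. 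If $\gN$ is not torsion-free the torsion lives in a finite characteristic subgroup that can be absorbed into the fiber $F$ at the very end; I would treat the torsion-free case first and note that the general case only changes the fiber, not the argument.

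Next I would produce the ambient manifold $M$ and the ball. The idea: embed $N_\Gamma$ (or $L\rtimes\gF$ modulo $\Gamma$) into a large Euclidean space via an equivariant map and take a tubular neighborhood, so $M$ is the total space of a disk bundle $F \hookrightarrow M \to N_\Gamma'$ where $N_\Gamma'$ is a nilmanifold-type quotient and $F$ a closed manifold (a sphere, or a flat manifold with the required torsion $\pi_1$) with $\pi_1(M) \cong \Gamma$. Here $m$ is bounded purely in terms of $C$ and $n$ because the relevant representation theory (the $\gF$-action on $L$, the embedding dimension of $N_\Gamma$, and the choice of $F$ realizing the finite part) is constrained by $C$ and $n$. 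Then I equip $M$ with a collapsed metric: rescale the nil-directions of $N_\Gamma$ and the fiber $F$ by a small factor $\delta$, keeping the $\gF$-symmetry, so that $\diam(M)$ is small and, crucially, the curvature stays $K>-1$. The standard Cheeger–Gromov–Fukaya deformation of a nilpotent structure (Cheeger–Fukaya–Gromov theory of collapsing with bounded curvature, or simply the explicit left-invariant metric computation on $\gF\ltimes L$) gives $|K|\le 1$ after rescaling, for any prescribed diameter bound; this is where I'd invoke that infranilmanifolds are almost flat. By choosing $\delta$ small enough relative to $\eps$, every point $p\in M$ has the property that loops of length $\le\eps$ already generate all of $\pi_1(M,p) \cong \Gamma$: this is because in the collapsed metric the whole base and fiber are $\eps$-dense, so the short loops surject.

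The main obstacle I anticipate is the simultaneous control of three things: (i) realizing the \emph{exact} extension $\Gamma$ (not just $\gN$ up to finite index and not just some group abstractly commensurable with $\Gamma$) as $\pi_1$ of a closed manifold $M$ with the bundle structure, which requires handling the $\gF$-action carefully and possibly the torsion subgroup of $\gN$ via a flat-manifold fiber; (ii) keeping the dimension $m$ bounded by a function of $(C,n)$ alone — this forces the realization of $\gF$ and of the torsion to be done with uniformly bounded data, using that there are only finitely many almost crystallographic groups of bounded rank (third Bieberbach/Auslander) and only finitely many extensions with bounded index; and (iii) verifying that the collapsing metric can be chosen with $K>-1$ \emph{and} with the image of $\pi_1(B_\eps(p))\to\pi_1(M,p)$ equal to all of $\Gamma$ for \emph{every} $p$ simultaneously, which amounts to a quantitative ``$\eps$-density of generators'' estimate uniform over $M$. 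I would handle (iii) by noting that the $\gF\ltimes L$-structure acts transitively up to bounded error, so a single computation at one basepoint propagates by symmetry. The construction is essentially a quantitative inverse to the argument of Corollary~\ref{cor: rank n} and Corollary~\ref{cor: correction}, and the detailed verification — which I would only sketch here and carry out fully in \cite{Wi11} — reduces to the almost-flatness of infranilmanifolds plus elementary bundle theory.
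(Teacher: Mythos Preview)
The paper does not prove this theorem. It appears only in the Final Remarks, where it is stated without proof and attributed to the preprint \cite{Wi11}. There is therefore no proof in the present paper against which your proposal can be compared.

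A few comments on the sketch nonetheless. The overall strategy---realize $\Gamma$ as the fundamental group of a closed manifold built from an infranil-type base together with an auxiliary fiber absorbing the torsion, then collapse with bounded curvature---is plausible. Two points deserve care. First, your appeal to ``only finitely many almost crystallographic groups of bounded rank'' is not correct as stated (there are infinitely many torsion-free nilpotent groups of any given rank $\ge 2$); what you actually need, and what would suffice, is that $\dim L\le n$ together with a realization of the finite data (the group $\gF$ of order $\le C$, the torsion of $\gN$, and the extension class) in a fiber whose dimension is bounded in terms of $(C,n)$ alone. Note that the torsion of $\gN$ is \emph{not} bounded in terms of $n$ (already $\Z_p$ has a nilpotent basis of length $1$ for every prime $p$), so the fiber construction must be uniform over an infinite family of finite groups. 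Second, the paper explicitly remarks that using the homomorphism to $\pi_1(M)$ rather than to $\pi_1(B_1(p))$ ``allows for more flexibility (by adding relations to the fundamental group at large distances to $p$)''; this suggests that the construction in \cite{Wi11} may not simply arrange $\diam(M)$ small and $\pi_1(M)\cong\Gamma$, but may instead build a larger $M$ in which $\Gamma$ arises only as the image from a small ball. Your approach forces $\pi_1(M)\cong\Gamma$ exactly, which is more restrictive and may make the uniform dimension bound harder to obtain than necessary.
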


Apart from the issue of finding the optimal dimension 
another difference to Theorem~\ref{intro: margulis}
is that this theorem uses the homomorphism 
to $\pi_1(M)$ rather than to $\pi_1(B_1(p),p)$. 
This actually allows for more flexibility (by adding relations 
to the fundamental group at large distances to $p$). 
This is  
the reason why the following problem remains open.

\begin{problem} The most important problem 
in the context of the Margulis Lemma for manifolds with lower Ricci curvature bound 
that remains open is whether or whether not
one can arrange in Theorem~\ref{intro: margulis} for the torsion of $\gN$ to be abelian. 
We refer the reader to \cite{KPT} for some related conjectures  
for manifolds with almost nonnegative sectional curvature.  
\end{problem}

\small
\bibliographystyle{alpha}

\hspace*{1em}\\[-1ex]
\begin{footnotesize}
\hspace*{0.3em}{\sc 
Department of Mathematics,
University of Toronto,
Toronto, Ontario, Canada}\\
\hspace*{0.3em}{\em E-mail address: }{\sf vtk@math.toronto.edu}\\
\hspace*{0.3em}{\sc University of M\"unster,
Einsteinstrasse 62, 48149 M\"unster, Germany}\\
\hspace*{0.3em}{\em E-mail address: }{\sf wilking@math.uni-muenster.de}
\end{footnotesize}

\end{document}